\documentclass[11pt,a4paper]{article}
\usepackage[margin=26mm]{geometry}
\usepackage[T1]{fontenc}
\usepackage{amsmath,amssymb,amsthm,mathtools}
\usepackage{microtype,enumitem,booktabs,array}
\usepackage{graphicx,float}
\usepackage[hidelinks]{hyperref}
\hypersetup{pdftitle={Integrable distortion II: the p-Weil-Petersson variational problem},pdfauthor={Gaven Martin and Cong Yao}}
\newcommand{\D}{\mathbb D}
\newcommand{\ID}{\mathbb D}
\newcommand{\Sone}{\mathbb S^1}
\newcommand{\IS}{\mathbb S^1}
\newcommand{\IR}{\mathbb R}
\newcommand{\IC}{\mathbb C}
\newcommand{\K}{\mathbb K}
\newcommand{\IK}{\mathbb K}
\newcommand{\I}{\mathcal I}
\newcommand{\E}{\mathcal E}

\newcommand{\area}{\,dA}
\newcommand{\bz}{\bar z}
\newcommand{\bw}{\bar w}
\DeclareMathOperator{\loc}{loc}
\DeclareMathOperator{\id}{id}
\newtheorem{mainthm}{Theorem}

\newtheorem{theorem}{Theorem}[section]
\newtheorem{proposition}[theorem]{Proposition}
\newtheorem{lemma}[theorem]{Lemma}
\newtheorem{corollary}[theorem]{Corollary}
\theoremstyle{definition}

\newtheorem{example}[theorem]{Example}
\newtheorem{remark}[theorem]{Remark}
\numberwithin{equation}{section}
\setlist{itemsep=4pt,topsep=5pt}
\allowdisplaybreaks[1]
\title{Integrable distortion II:\\ $p$-Weil--Petersson circle homeomorphisms\thanks{{\tiny
{\bf Keywords.} Teichm\"uller theory, quasiconformal mappings, finite distortion, extremal mappings, calculus of variations, polyconvexity
\newline
{\bf MSC Subject.} 30C62 31A05 49J10 35J60 } }}
\author{Gaven Martin \and Cong Yao}
\date{}
\begin{document}
\maketitle
\begin{abstract}
We study the hyperbolically weighted distortion energies
\[
 \I_p(f)=\int_\D
 \frac{|\mu_f|^p}{(1-|\mu_f|^2)^{\max\{2,p\}/2}}\,dA_\eta
\]
for finite-distortion disk homeomorphisms with prescribed boundary values, together with the associated inverse problem in the monotone Sobolev class. For $1<p\leq2$, the inverse density is polyconvex precisely when $p\geq p_{\mathrm c}=1+1/\sqrt2$. In this range, quasisymmetric data with finite-energy monotone barriers yield minimisers with holomorphic Ahlfors--Hopf differentials and strong local Sobolev convergence of minimising subsequences.

For $p_{\mathrm c}<p<2$, a finite-energy finite-distortion barrier yields a unique absolute monotone minimiser, which is a quasiconformal diffeomorphism and is asymptotically conformal. Together with the boundary-class theorem of Part~I, this characterises the admissible boundary data as $T^p$. The proof combines a selected barrier approximation, uniform scalar gradient estimates and a weighted Reich--Strebel comparison.

For $1<p<p_{\mathrm c}$, finite rank-one comparisons bound distortion on the positive-Jacobian set by a cutoff strictly below the ellipticity cutoff. An attained monotone minimum is either a quasiconformal diffeomorphism or has nonzero rank-one derivatives on a set of positive area. For $0<p\leq1$, minimisers with finite distortion, or homeomorphic minimisers with a locally Sobolev inverse, are M\"obius maps. We also obtain energy-dependent quasiconformal bounds for regular minimisers when $p>2$.
\end{abstract}

\begingroup
\small
\makeatletter
\renewcommand*\l@section[2]{\@dottedtocline{1}{0em}{2em}{\bfseries #1}{\bfseries #2}}
\makeatother
\tableofcontents
\endgroup

\section{Introduction}
This paper continues the study of hyperbolically weighted distortion energies on the disk. Write
\[
 \eta(z)=(1-|z|^2)^{-2},\qquad dA_\eta=\eta\,dA,
\]
where $dA$ is Lebesgue area and $\eta|dz|^2$ has curvature $-4$. With $\mu_f=f_{\bar z}/f_z$, we consider

\begin{equation}\label{intro-functional}
 \I_p(f)=\int_\D\frac{|\mu_f|^p}{(1-|\mu_f|^2)^{p_*/2}}\,dA_\eta,\qquad p_*=\max\{2,p\},
\end{equation}
over finite-distortion homeomorphisms $f:\D\to\D$ with prescribed boundary values, and the inverse functional $\E_p(h)=\I_p(h^{-1})$, which for $1\leq p\leq2$ is
\begin{equation}\label{intro-inverse}
 \E_p(h)=\int_\D\eta(h)|h_w|^{2-p}|h_{\bar w}|^p\,dA ,
\end{equation}
over continuous monotone surjections $h:\overline\D\to\overline\D$ with trace $\beta$, $h\in W^{1,2}(\D)$, $J_h\geq0$ and $\E_p(h)<\infty$. We denote this class by $\mathcal M_\beta$, suppressing its dependence on $p$. Part~I treated $p=2$, when the minimisers are hyperbolic harmonic maps, and proved the Schoen conjecture with explicit constants. Its boundary capacity theorem also identifies finite-energy finite-distortion boundary values with Guo's $p$-integrable Teichm\"uller space $T^p$ for every $p>1$ \cite[Theorem E]{MYWP}. Thus a finite-distortion barrier can be replaced by a quasiconformal one. For $p<2$, the distortion profile is concave, the inverse density loses polyconvexity below a sharp threshold, and the Euler--Lagrange equation is no longer the harmonic-map equation.

Write $r=|\mu|$ and, for ease of notation, define, for $1<p\leq2$, the {\em flux} functions
\begin{equation}\label{flux-functions}
 \begin{split}
 F_p(r)&=\tfrac12r^{p-1}[p+(2-p)r^2],\\
 H_p(r)&=p(p-1)-2p(2-p)r-(p-1)(2-p)r^2,\\
 Q_p(r)&=\frac{4F_p(r)}{(1+r)^2},\qquad
 \ell_p=2(p-1)^2-1,\qquad p_{\mathrm c}=1+\frac1{\sqrt2}.
 \end{split}
\end{equation}
For $1<p<2$, the polynomial $H_p$ is strictly decreasing on $[0,1]$ with $H_p(0)=p(p-1)>0$, and $H_p(1)=2\ell_p$ changes sign at $p_{\mathrm c}$. At $p=2$, $H_2\equiv2$. For $p<p_{\mathrm c}$ let $r_{\mathrm c}(p)\in(0,1)$ be its zero, and let $r_{\mathrm v}(p)\in(0,r_{\mathrm c}(p))$ be the unique root of $Q_p(r)=1$ there; explicitly $r_{\mathrm v}$ solves $2r^{p-1}[p+(2-p)r^2]=(1+r)^2$. Put $K_{\mathrm v}(p)=(1+r_{\mathrm v})/(1-r_{\mathrm v})$ and $K_{\mathrm c}(p)=(1+r_{\mathrm c})/(1-r_{\mathrm c})$. These functions govern everything that follows; Lemma~\ref{flux-dictionary} below explains how, and Figure~\ref{fig-cutoffs} shows the two cutoffs.

\begin{figure}[H]
\centering
\includegraphics[width=0.78\textwidth]{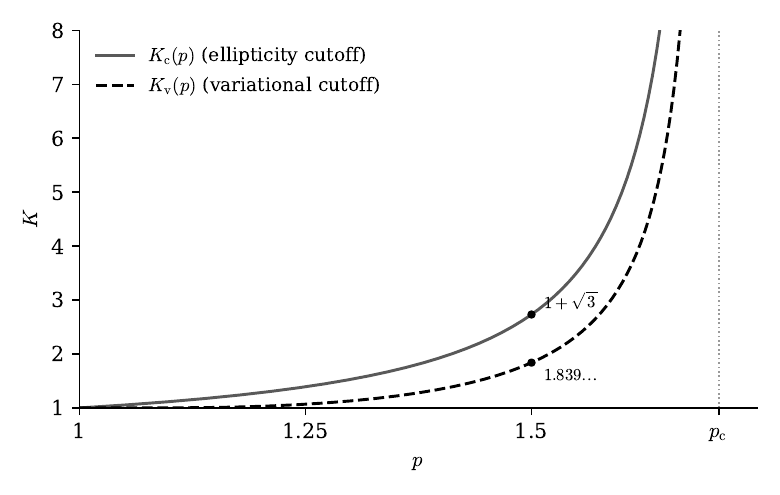}
\caption{The ellipticity cutoff $K_{\mathrm c}$ and the smaller variational cutoff $K_{\mathrm v}$ for $1<p<p_{\mathrm c}$. Both tend to $1$ as $p\downarrow1$ and to $\infty$ as $p\uparrow p_{\mathrm c}$. At $p=3/2$ they are $1+\sqrt3$ and the tribonacci constant $1.839\ldots$, respectively.}
\label{fig-cutoffs}
\end{figure}

\subsection{Main results}
A \emph{barrier} for the inverse trace $\beta$ is a map in $\mathcal M_\beta$ with finite energy; by \cite[Lemma 5.8]{MYWP} the inverse of any finite-distortion homeomorphic extension $F$ of $\beta^{-1}$ with $\I_p(F)<\infty$ is one, and then $\beta$ is automatically quasisymmetric \cite[Proposition 5.9]{MYWP}. A barrier of this kind is a \emph{finite-distortion barrier}; it is a \emph{quasiconformal barrier} if $F$ is quasiconformal. By \cite[Theorem E]{MYWP}, a finite-distortion barrier exists if and only if a quasiconformal barrier exists, since both are equivalent to $\beta^{-1}\in T^p$. Throughout, ``minimiser'' means a minimiser of $\E_p$ on $\mathcal M_\beta$ unless a smaller class is named.

\begin{mainthm}[Thresholds, existence and the lower range]\label{thm-A}
Let $1<p\leq2$.
\begin{enumerate}[label=\textup{(\roman*)}]
\item The inverse density $W_p(a,b)=|a|^{2-p}|b|^p$ on $\{|b|<|a|\}$ is polyconvex if and only if $p\geq p_{\mathrm c}$.
\item For $p_{\mathrm c}\leq p\leq2$ and every quasisymmetric $\beta$ with a barrier, $\E_p$ attains its minimum on $\mathcal M_\beta$. Every minimiser is stationary under compactly supported changes of domain and has holomorphic Ahlfors--Hopf differential
\begin{equation}\label{intro-hopf}
 \Phi_p=\tfrac12\eta(h)\bigl[p+(2-p)r^2\bigr]r^{p-2}h_w\overline{h_{\bw}},\qquad r=|h_{\bw}|/|h_w|,
\end{equation}
and every minimising sequence has a subsequence converging uniformly on $\overline\D$ and strongly in $W^{1,2}_{\loc}(\D)$ to a minimiser. This includes $p=p_{\mathrm c}$.
\item For $1<p<p_{\mathrm c}$, every minimiser $h$ of $\E_p$ in $\mathcal M_\beta$, or in any subclass preserved by compactly supported smooth changes of domain, satisfies $r\in[0,r_{\mathrm v}(p)]\cup\{1\}$ almost everywhere, with $r=0$ where $Dh=0$. Consequently an attained minimum in $\mathcal M_\beta$ is of exactly one of two kinds: either $h$ has finite distortion, and is then a $K_{\mathrm v}(p)$-quasiconformal $C^{2,\alpha}_{\loc}$ diffeomorphism for every $\alpha<\min\{1,(2-p)/(p-1)\}$, smooth off its conformal set; or $Dh$ has rank one on a set of positive area. There are smooth boundary data with finite-energy quasiconformal barriers for which only the second alternative is possible.
\item For $0<p\leq1$, every minimiser which has finite distortion, or is a homeomorphism with $h^{-1}\in W^{1,1}_{\loc}$, is a M\"obius transformation. Non-M\"obius smooth data with smooth quasiconformal barriers exist, so the homeomorphic problem with Sobolev inverse has no minimiser for them.
\end{enumerate}
\end{mainthm}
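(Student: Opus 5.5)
The proof will treat the four parts separately. Each part reduces to a one-variable analysis of the density $W_p(a,b)=|a|^{2-p}|b|^p$, written in terms of $r=|b|/|a|$ and the flux functions $F_p,H_p,Q_p$. The weight $\eta(h)$ is smooth and positive on compact subsets, so it only contributes lower-order terms.

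For (i), I will use the structure of polyconvexity in two dimensions. A function of $(a,b)$ that is invariant under rotations of $a$ and $b$ separately is polyconvex on $\{|b|<|a|\}$ exactly when it can be written as $\Psi(|a|,|b|,|a|^2-|b|^2)$ with $\Psi$ convex and nondecreasing in the first two variables. Setting $s=|a|$, $t=|b|$, I will look for the best such representation, $W_p = \Psi_0(s,t) + c\,(s^2-t^2)$, where $c$ is the optimal null-Lagrangian correction. Because $W_p$ is homogeneous of degree $2$, it suffices to check convexity on the slice $s=1$. On that slice the condition becomes: there is a $c$ with
\[ r\mapsto r^p+c(r^2-1)\ \text{convex},\qquad \text{and a compatible first-order (monotonicity) condition.} \]
Working out these conditions should give $H_p(r)\ge 0$ on $[0,1)$. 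Since $H_p$ is decreasing, this is the same as $H_p(1)=2\ell_p\ge0$, that is, $p\ge p_{\mathrm c}$. For the converse I will check rank-one convexity along the direction in which $H_p$ is negative near $r=1$. Polyconvexity implies rank-one convexity, so failure there rules out polyconvexity.

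For (ii), I will run the direct method on $\mathcal M_\beta$ with $p_{\mathrm c}\le p\le 2$. The steps are as follows.
\begin{itemize}
\item The barrier gives a finite infimum.
\item Minimising sequences are bounded in $W^{1,2}$, since $|Dh|^2\lesssim$ energy plus Jacobian, and the Jacobian integrates to the area.
\item Monotonicity together with the fixed quasisymmetric trace gives equicontinuity up to the boundary by the Courant--Lebesgue lemma.
\item Lower semicontinuity follows from polyconvexity via Ball's theorem, using weak continuity of $J_h$.
\end{itemize}
Stationarity comes from inner variations $h\circ\phi_t$. These preserve $\mathcal M_\beta$, and differentiating in $t$ gives weak holomorphy of $\Phi_p$; Weyl's lemma then gives holomorphy. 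To upgrade to strong $W^{1,2}_{\loc}$ convergence, I will use strict polyconvexity when $p>p_{\mathrm c}$ and a Hopf-differential/energy-convergence argument at $p=p_{\mathrm c}$. I expect the endpoint $p=p_{\mathrm c}$ to be the main obstacle, because strictness degenerates at $r=1$ there. At that endpoint I will aim to show that the energies converge and that no energy concentrates near the set where $r\to1$.

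For (iii), I will use finite rank-one (laminate) comparisons. Suppose $r(z_0)\in(r_{\mathrm v},1)$ at a Lebesgue point $z_0$. Then I replace $Dh$ locally by a rank-one splitting whose pieces have $r=0$ and $r=1$, with the same averaged gradient. The comparison inequality holds exactly when $Q_p(r)>1$, so such an $r$ is excluded, and this yields $r\in[0,r_{\mathrm v}]\cup\{1\}$. I will realise the splitting by compactly supported domain changes, which keeps the competitor in the admissible subclass.

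The dichotomy then follows. If $h$ has finite distortion, then $r<1$ almost everywhere, so $r\le r_{\mathrm v}<r_{\mathrm c}$. In that case the Euler--Lagrange equation is uniformly elliptic, and regularity of the Beltrami/harmonic-type system gives a $C^{2,\alpha}$ diffeomorphism. The Hölder exponent is limited by the $r^{p-1}$ singularity of $F_p$ at $r=0$. For the example where only the second alternative is possible, I will take smooth data for which the Hopf constraint forces $r$ into $(r_{\mathrm v},1)$ somewhere.

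For (iv), the profile $r^p/(1-r^2)$ is subadditive with concave behaviour near $0$ when $p\le 1$. Using this, I will show that any nonconformal point can be improved by a local conformal rescaling competitor. Hence $\mu\equiv0$, and a conformal self-map of the disk is Möbius. Non-existence for non-Möbius smooth data then follows immediately, since every candidate minimiser would have to be Möbius and so could not match the data.
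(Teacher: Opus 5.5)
\paragraph{Main gap: the comparison in (iii) gives the wrong condition.}
Normalise $Dh(w_0)=\operatorname{diag}(1,\tau)$ with $\tau=(1-r)/(1+r)$. Along this rank-one line the frozen density is $\phi_p(t)=\tfrac14(1-t)^p(1+t)^{2-p}$. It satisfies $\phi_p(1)=0$ at the conformal end ($r=0$) and $\phi_p(0)=\tfrac14$ at the rank-one end ($r=1$).

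Your split $\operatorname{diag}(1,\tau)=\tau I+(1-\tau)\operatorname{diag}(1,0)$ only tests the chord inequality $\phi_p(\tau)\le(1-\tau)/4$. No other conformal/rank-one split does better. The reason is that $\tfrac14(\|A\|_{\mathrm{op}}^2-\det A)$ is polyconvex, equals $W_p$ on both kinds of phase, and equals $(1-\tau)/4$ at $A$.

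This is not the condition $Q_p(r)\le1$. At $p=3/2$ one has $\phi_{3/2}(\tau)=\tfrac14(1-\tau)\sqrt{1-\tau^2}<(1-\tau)/4$ for every $0<\tau<1$. So your competitor excludes nothing, whereas $Q_{3/2}>1$ on all of $(r_{\mathrm v},1)$; for example $Q_{3/2}(1/2)\approx1.021$.

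Because $\phi_p$ is not convex on $(0,1)$ when $p<p_{\mathrm c}$, you need the tangent (Weierstrass) test, not a chord:
\begin{itemize}
\item Laminate a small volume fraction $\theta$ of stripes with derivative $\operatorname{diag}(1,\delta)$ against $1-\theta$ of stripes with $\operatorname{diag}(1,s_\theta)$, where $s_\theta=(\tau-\theta\delta)/(1-\theta)$.
\item Let $\theta\downarrow0$ and then $\delta\downarrow0$. This gives $\phi_p(0)\ge\phi_p(\tau)-\tau\phi_p'(\tau)$.
\item The identity $4(\phi_p(\tau)-\tau\phi_p'(\tau))=Q_p(r)$ turns this into exactly the cutoff.
\end{itemize}
Keep $\delta>0$ in every competitor. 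A phase with $r=1$ forces $\det D\psi=0$, so it is not a diffeomorphic change of domain. Take the limit only in the numerical inequality.

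The rest of (iii) is also underspecified.
\begin{itemize}
\item For the finite-distortion alternative, first use $|h_{\bw}|\le r_{\mathrm v}|h_w|$ to make $h$ quasiregular. Being monotone, it is then a $K_{\mathrm v}(p)$-quasiconformal homeomorphism. Only then run the forward stress-potential argument, whose contraction comes from $H_p>0$ on $[0,r_{\mathrm v}]$. Ellipticity alone does not turn a weakly holomorphic Hopf differential into $C^{2,\alpha}$ regularity.
\item The boundary data forcing the rank-one alternative come from this universal bound, not from a Hopf constraint. Smooth circle diffeomorphisms with no $K_{\mathrm v}(p)$-quasiconformal extension exist by normalised quasiconformal compactness. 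They still have smooth finite-energy barriers by Part~I.
\end{itemize}

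\paragraph{Part (i): the slice reduction is wrong.}
For $f(s,t)=s^2\phi(t/s)$, convexity is not the same as convexity of $\phi$. The Hessian determinant is $2\phi\phi''-(\phi')^2$, so you also need $2\phi\phi''\ge(\phi')^2$.

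With the natural correction $\phi(r)=r^p+\tfrac{p-1}2(1-r^2)$, both $\phi''>0$ and your monotonicity conditions hold on $(0,1]$ for every $p>1$. Your criterion would therefore certify polyconvexity below $p_{\mathrm c}$ as well, contradicting your own rank-one test. The threshold sits in the determinant, which equals $2(p-1)^2-1=\ell_p$ at $r=1$. The paper proves the full bound $D^2g_p\ge\ell_pI$ for $g_p(x,y)=x^{2-p}y^p+\tfrac{p-1}2(x^2-y^2)$. It then glues $g_p$ in a $C^1$ way to $\tfrac12(|a|^2+|b|^2)$ across $|a|=|b|$.

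\paragraph{Part (ii).}
What is lost at $p_{\mathrm c}$ is only the uniform modulus $\ell_p$, not strict convexity of $G_{p_{\mathrm c}}$. Two-homogeneity and compactness of the unit sphere then give $|A-B|^2\le\epsilon(|A|^2+|B|^2)+C_\epsilon\Delta_p(A,B)$, where $\Delta_p$ is the midpoint convexity defect. That is all Radon--Riesz needs; no Hopf or concentration argument is required.

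You also need the NoHairs and compact-containment results of Part~I. They keep $h(\D)\subset\D$ for the limit and let you localise the unbounded weight $\eta(h_j)$. Equicontinuity alone does not give either.

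\paragraph{Part (iv).}
A conformal change of domain leaves the energy unchanged. Since $\int_QA\,D\psi=|Q|A$, no change of domain makes the gradient conformal on a whole cell. The competitor must laminate $I$ with $\operatorname{diag}(1,\delta)$ in proportions $1-\theta$ and $\theta=(1-\tau)/(1-\delta)$. The contradiction then comes from strict increase of $\phi_p(t)/(1-t)=\tfrac14(1-t)^{p-1}(1+t)^{2-p}$ when $p\le1$.

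This yields only ``conformal or nonzero rank one'' almost everywhere, since rank-one gradients pass every frozen test by Jensen. What forces $h_{\bw}=0$ is the hypothesis: finite distortion, or $h^{-1}\in W^{1,1}_{\loc}$ via the planar bi-Sobolev theorem. Your ``hence $\mu\equiv0$'' skips this step.

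Finally, for $p\le1$ an extension with $|\mu|=O(1-|z|)$ has infinite $\I_p$. The nonattainment examples therefore need data whose extensions are conformal near the circle, such as $e^{i\theta}\mapsto e^{i(\theta+\epsilon\sin2\theta)}$.
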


The threshold $p_{\mathrm c}$ also governs scalar ellipticity and convexity of the principal-stretch profile; the flux dictionary below relates these three calculations.

In the strict upper range, let $\mathfrak B_p$ denote the orientation-preserving circle homeomorphisms $e^{i\theta}\mapsto e^{i\varphi(\theta)}$ with $\varphi$ absolutely continuous, $\varphi'>0$ a.e. and $\log\varphi'\in B^{1/p}_{p,p}(\IS)$; by Wei--Matsuzaki \cite{WeiMatsuzakiIntegrable}, $\mathfrak B_p=T^p$, the $p$-integrable Teichm\"uller space of Guo \cite{Guo}.

\begin{mainthm}[The strict upper range]\label{thm-B}
Let $p_{\mathrm c}<p<2$ and let $f_0$ be an orientation-preserving circle homeomorphism. The following are equivalent:
\begin{enumerate}[label=\textup{(\alph*)},leftmargin=*,itemsep=1pt]
\item $f_0\in T^p=\mathfrak B_p$;
\item $f_0$ has a finite-distortion homeomorphic extension $G:\D\to\D$ with $\I_p(G)<\infty$;
\item $f_0$ has a quasiconformal extension of finite energy $\I_p$;
\item $f_0$ has a quasiconformal diffeomorphic extension of finite energy which minimises $\I_p$ among all finite-distortion homeomorphic extensions of $f_0$.
\end{enumerate}
The equivalence of \textup{(a)} and \textup{(b)} is \cite[Theorem E]{MYWP}; the variational content is \textup{(c)}$\Rightarrow$\textup{(d)}. When these hold, the minimiser $F_{(p)}$ in \textup{(d)} is unique, and its inverse $H_{(p)}$ is the unique minimiser of $\E_p$ in the full monotone class $\mathcal M_{f_0^{-1}}$; in particular the monotone, finite-distortion homeomorphic and quasiconformal inverse infima agree, and $\I_p(F_{(p)})\leq\I_p(G)$ for every extension $G$ as in \textup{(b)}, with equality only for $G=F_{(p)}$. Both maps are $C^{2,\alpha}_{\loc}$ diffeomorphisms for $0<\alpha<\min\{1,(2-p)/(p-1)\}$, real analytic off their conformal sets, globally quasiconformal and asymptotically conformal. The Hopf differential $\Phi_p[f_0]$ of $H_{(p)}$ is holomorphic and, with $q=p/(p-1)$,
\begin{equation}\label{intro-hopf-norm}
 C^{-1}\,\I_p(F_{(p)})\leq\int_\D|\Phi_p[f_0]|^q\eta^{1-q}\,dA\leq C\,\I_p(F_{(p)}),
\end{equation}
where $C$ depends only on $p$ and the dilatation of $F_{(p)}$. For fixed $f_0\in T^1$, the maps $F_{(p)},H_{(p)}$ and the differentials $\Phi_p[f_0]$ are right-continuous in $p$ on $(p_{\mathrm c},2)$, and continuous at every continuity point of the minimum energy $p\mapsto\I_p(F_{(p)})$.
\end{mainthm}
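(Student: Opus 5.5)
The proof reduces Theorem~\ref{thm-B} to the inverse problem on $\mathcal M_\beta$, $\beta=f_0^{-1}$, and then upgrades regularity using the strict inequality $p>p_{\mathrm c}$.

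\textbf{Reductions and existence.} The implication (a)$\Leftrightarrow$(b) is \cite[Theorem E]{MYWP}. The implications (d)$\Rightarrow$(c)$\Rightarrow$(b) are trivial. So the content is (c)$\Rightarrow$(d) together with the uniqueness, regularity and continuity claims. Given a quasiconformal extension $G$ of finite energy, $G^{-1}$ is a barrier in $\mathcal M_\beta$. By Theorem~\ref{thm-A}(ii), $\E_p$ attains its minimum at some $h\in\mathcal M_\beta$. Every minimising sequence converges, along a subsequence, uniformly and in $W^{1,2}_{\loc}$, and $h$ has holomorphic Hopf differential $\Phi_p$ as in \eqref{intro-hopf}. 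The task is to show that $h$ is a quasiconformal diffeomorphism, and in particular that $r=|h_{\bw}|/|h_w|$ stays bounded away from $1$.

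\textbf{Selected barrier approximation and gradient estimates.} I would approximate $\beta$ by smooth or real-analytic quasisymmetric data $\beta_n$, for instance by conjugating with dilations $z\mapsto\rho z$ of a quasiconformal barrier, with $\rho\uparrow1$. These $\beta_n$ carry barriers $G_n^{-1}$ whose energies converge and whose dilatations are uniformly bounded. For smooth data I would solve the Euler--Lagrange system and show that the minimisers $h_n$ are smooth diffeomorphisms. For $p>p_{\mathrm c}$ one has $H_p>0$ on $[0,1]$, and the flux dictionary (Lemma~\ref{flux-dictionary}) then says the scalar equation satisfied by $\log|h_w|$, or by $r$, is uniformly elliptic.

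From this I would derive a maximum-principle bound on the distortion $K_n=(1+r_n)/(1-r_n)$, in terms of the boundary dilatation of the barrier and the energy, uniform in $n$. The mechanism is that $|\Phi_p|\eta^{-1}$ is comparable to $F_p(r)|h_w|^2\eta(h)/\eta$, and the holomorphy of $\Phi_p$ gives a subharmonicity inequality for a function of $r$. Uniform quasiconformality, together with the ellipticity of the linearised equation, gives $C^{1,\alpha}_{\loc}$ and then $C^{2,\alpha}_{\loc}$ bounds with $\alpha<\min\{1,(2-p)/(p-1)\}$. This restriction reflects the Hölder exponent of $r^{p-2}$ near the conformal set $r=0$. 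Real analyticity off $\{r=0\}$ then follows from elliptic analytic regularity.

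Passing to the limit, via Theorem~\ref{thm-A}(ii) compactness plus lower semicontinuity, gives a quasiconformal diffeomorphic minimiser $H_{(p)}$.

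\textbf{Uniqueness, absolute minimality, asymptotic conformality and norm bounds.} For uniqueness I would use a weighted Reich--Strebel comparison. For a competitor $\tilde h$ with the same trace, I would integrate $\Phi_p$ against the Beltrami coefficient of $\tilde h\circ H_{(p)}^{-1}$. The inequality $\E_p(\tilde h)\ge \E_p(H_{(p)})$ follows from the concavity and convexity of the principal-stretch profile, which holds exactly when $p>p_{\mathrm c}$. Strict convexity forces $\tilde h=H_{(p)}$.

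Because the comparison runs over all of $\mathcal M_\beta$, the monotone, homeomorphic and quasiconformal infima coincide. Translating back through $\I_p(G)=\E_p(G^{-1})$ gives $\I_p(F_{(p)})\le \I_p(G)$, with equality only for $G=F_{(p)}$.

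Estimate \eqref{intro-hopf-norm} comes from the pointwise comparability
\[
|\Phi_p|^q\eta^{1-q}\asymp_{p,K} \eta(h)|h_w|^{2-p}|h_{\bw}|^p J_h^{-1}\cdot J_h ,
\]
followed by the change of variables $z=h(w)$.

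Asymptotic conformality comes from finite energy. The weight $\eta$ blows up at the boundary, while local $C^{2,\alpha}$ bounds hold on hyperbolic discs with uniform constants, since the problem is Möbius invariant. Together these force $r\to0$ at $\partial\D$.

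Right-continuity in $p$ follows from the stability of the uniform estimates under $p'\downarrow p$: the energies $\I_{p'}$ of a fixed barrier converge, since $f_0\in T^1\subset T^{p'}$. Uniqueness identifies the limit. At continuity points of the minimum energy, left limits are handled the same way.

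\textbf{Main obstacle.} The main obstacle is the uniform, dilatation-independent bound on $r$ for the approximating minimisers, i.e.\ ruling out degeneration $r\to1$. This is where $p>p_{\mathrm c}$, rather than $p\ge p_{\mathrm c}$, is used, since $H_p(1)=2\ell_p>0$. I would first try a maximum principle for $\log F_p(r)$ plus a multiple of $\log(|h_w|^2\eta(h)/\eta)$, using the holomorphy of $\Phi_p$ and the Gauss curvature $-4$ of the weight.
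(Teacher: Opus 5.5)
Your reductions are fine, apart from one gap in the cycle of implications. You also need (a)$\Rightarrow$(c). This follows from the definition of $T^p$ together with $\I_p(F)\leq(1-k^2)^{-1}\int_\D|\mu_F|^p\,dA_\eta$ when $\|\mu_F\|_\infty\leq k$.

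The real gap is the central step. The sentence ``for smooth data I would solve the Euler--Lagrange system and show that the minimisers $h_n$ are smooth diffeomorphisms'' is not a reduction; it is the whole difficulty. Smoothness of $\beta_n$ gives no a priori interior regularity for a weak minimiser in $\mathcal M_{\beta_n}$. Nothing in your plan excludes nonzero rank-one gradients on a set of positive area, interior hairs, or a degenerate Jacobian. Below $p_{\mathrm c}$, Proposition~\ref{lower-range-boundary-obstruction} shows that smooth data can force such behaviour, so excluding it above $p_{\mathrm c}$ needs a genuine argument.

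The a priori estimates you sketch are also not usable as stated:
\begin{itemize}
\item The inverse $r$-equation presupposes $C^2$ regularity and has ellipticity bounds only on $\{r\geq k\}$.
\item Its Moser bound controls $(r-k)_+$ by $\|r\|_{L^2}$ and $\|\Phi_p\|_\infty$. These are not small in the interior, so the bound does not prevent $r\to1$.
\item A global maximum principle has no boundary to start from, because the circle is at hyperbolic infinity.
\item The sup bound on $|\Phi_p|/\eta$ it would need is, in the paper, a consequence of hyperbolic derivative bounds, not an input.
\item Theorem~\ref{thm-A}(ii) concerns a fixed trace, so a limit over varying $\beta_n$ is not automatically a minimiser for $\beta$.
\end{itemize}

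The paper's missing idea is to regularise the density rather than the data.
\begin{enumerate}
\item The pole in $|\mu|^p/(\lambda-|\mu|^2)$ makes every finite-energy monotone competitor $\sqrt\lambda$-quasiregular, hence a homeomorphism (Proposition~\ref{rational-lambda-existence}).
\item Because the pole blocks inner variations, polynomial truncation plus an a.e.\ selection of $\lambda$ with bounded stress gives weak forward stationarity.
\item The fixed-$\lambda$ contraction $k_{p,\lambda}<1$ then gives $C^{2,\alpha}$ regularity (Lemma~\ref{rational-fixed-potential-contraction}, Theorem~\ref{selected-rational-regularity}).
\item A second selection $(1-\lambda_j)(-m'(\lambda_j))\to0$ yields convergent holomorphic Hopf differentials.
\item $\lambda$-independent gradient estimates for the minimal-graph-type forward scalar equation on common Hopf patches follow (Proposition~\ref{lambda-uniform-interior-gradient}). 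This is where $\ell_p>0$ enters, via Lemma~\ref{lambda-uniform-gradient-tail}.
\item Branch-free propagation across Hopf zeros (Proposition~\ref{lambda-branch-free-propagation}) then gives local uniform quasiconformal bounds and a regular homeomorphic limit.
\end{enumerate}

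The later steps also lack ingredients.

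\emph{Uniqueness.} A competitor $\tilde h\in\mathcal M_\beta$ is only monotone, so it has no inverse or Beltrami coefficient to compose with. Moreover, $\Phi_p$ is generally not in $L^1(\D)$. The paper forms $\chi=H_{(p)}^{-1}\circ\tilde h$ on the disk where $\Phi_p$ lives. It writes the excess $\mathcal R_\Phi(\chi)$ using only $\chi_w,\chi_{\bw}$. It proves nonnegativity by diffeomorphic approximation for bounded differentials, then by dilation in $\mathcal Q^q$ (Lemma~\ref{excess-reich-strebel-monotone}). The convexity used is that of $P_p(t)=\Psi_p((t+t^{-1})/2)$, valid for $p\geq p_{\mathrm c}$, not of the concave $\Psi_p$. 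The equality case also needs the rank-one margin $\mathfrak n_p(r)>0$.

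\emph{Hopf-norm estimate.} Your comparability $|\Phi_p|^q\eta^{1-q}\asymp\eta(h)W_p(Dh)$ requires $\eta(h)|h_w|^2\asymp\eta$. This fails for general $K$-quasiconformal maps. The paper derives it from stationarity in Lemma~\ref{qc-stationary-hyperbolic-bilip}, and the same bounds make the comparison integrals converge. Once a regular stationary reference exists, Theorem~\ref{absolute-monotone-comparison} gives absolute minimality without assuming the reference is a minimiser. This would also repair your varying-data limit.

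\emph{Continuity in $p$.} This needs no uniform-in-$p$ regularity. For $p_j\downarrow p_0$, $m_{p_j}\leq m_{p_0}$ by monotonicity of the density in $p$. Lower semicontinuity with varying exponents gives the reverse inequality, and uniqueness identifies the limit. The convexity defect of $G_{p_0}$ then gives strong convergence, and hence convergence of the Hopf differentials.
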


The proof has three stages. Polynomial truncation and parameter selection produce regular minimisers for the barrier density $|\mu|^p/(\lambda-|\mu|^2)$, $\lambda<1$. Uniform scalar gradient estimates on common Hopf patches give a diffeomorphic limit as $\lambda\uparrow1$. A weighted Reich--Strebel comparison then identifies this limit with the unique absolute monotone minimiser. The comparison uses convexity of the principal-stretch profile and weighted holomorphic approximation; it remains valid at $p=p_{\mathrm c}$, where the construction of a regular reference map is still missing.

The next theorem is the general mechanism behind the quasiconformality assertions: once a minimiser is regular, a scalar equation for the distortion gives global quasiconformality and asymptotic conformality from finite energy alone.

\begin{mainthm}[Regular minimisers are quasiconformal]\label{thm-C}
Let $1<p<\infty$. Suppose $h:\overline\D\to\overline\D$ is an orientation-preserving homeomorphism with quasisymmetric boundary values, is a $C^1$ diffeomorphism in $\D$ and is $C^2$ on $\{|\mu_h|>0\}$, and that $h$ is a finite-energy minimiser of $\E_p$ in a class preserved by compactly supported smooth changes of domain. Then $h$ and $f=h^{-1}$ are globally quasiconformal and asymptotically conformal. For $1<p<p_{\mathrm c}$ there is the bound $K_h=K_f\leq K_{\mathrm v}(p)$, depending only on $p$. For $p>2$ there is instead the energy-dependent estimate
\begin{equation}\label{early-superquadratic-qc-bound}
 \|\K_f-1\|_{L^\infty(\D)}\leq C_p\I_p(f)^{2/p},
\end{equation}
with $C_p$ depending only on $p$; hence $K_f\leq1+\Lambda+\sqrt{\Lambda(\Lambda+2)}$ with $\Lambda=C_p\I_p(f)^{2/p}$. No bound depending only on $p$ holds for $p_{\mathrm c}<p\leq2$.
\end{mainthm}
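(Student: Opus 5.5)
The plan is to work with the inverse map $h$ and its Ahlfors--Hopf differential. We derive a scalar elliptic equation, or inequality, for a function of $r=|\mu_h|$. A maximum-principle argument then gives the $L^\infty$ bound. Asymptotic conformality follows by a blow-up or normal-families argument using finiteness of the energy.

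\textbf{Step 1: stationarity.} Because the admissible class is preserved by compactly supported changes of domain $h\mapsto h\circ\phi_t$, the inner variation vanishes. This shows that $\Phi_p$, given by \eqref{intro-hopf} for $p\le2$ and by the analogous expression from the density $|\mu|^p(1-|\mu|^2)^{-p_*/2}$ for $p>2$, is holomorphic in $\D$. The $C^1$ diffeomorphism hypothesis is enough to justify this computation.

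\textbf{Step 2: the scalar equation.} On the open set $\{r>0\}$, where $h$ is $C^2$, we write $|\Phi_p|=\eta(h)J_h\,F_p(r)/\bigl(r(1-r^2)\bigr)$, up to the normalisation in \eqref{flux-functions}. Since $\log|\Phi_p|$ is harmonic off the zeros of $\Phi_p$, taking $\Delta\log$ of this identity gives
\[
 0=\Delta\log\eta(h)+\Delta\log J_h+\Delta\log G_p(r), \qquad G_p(r)=F_p(r)/\bigl(r(1-r^2)\bigr).
\]
The curvature identity $\Delta\log\eta(h)=4\eta(h)J_h\cdot(\text{const})$ holds because $\eta$ has curvature $-4$. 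Combining this with the compatibility of $h_w$, $h_{\bar w}$ (Beltrami equation), the equation becomes a quasilinear scalar equation for $u=\log$ of a suitable function of $r$. Its principal coefficient is, up to positive factors, $H_p(r)$, and its zeroth-order term has a sign; this is the content of the flux dictionary, Lemma~\ref{flux-dictionary}.

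\textbf{Step 3: the bounds.} For $1<p<p_{\mathrm c}$, the bound $K\le K_{\mathrm v}(p)$ comes from Theorem~\ref{thm-A}(iii): the minimiser satisfies $r\in[0,r_{\mathrm v}]\cup\{1\}$ a.e. Since $h$ is a diffeomorphism, $r<1$ everywhere. By continuity of $r$ and connectedness of $\D$, we get $r\le r_{\mathrm v}$ everywhere, and hence $K_h=K_f\le K_{\mathrm v}(p)$.

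For $p>2$, I would apply the equation of Step 2 to $v=\K_f-1$, or to a comparable function of $r$, to obtain a subsolution inequality of the form $\Delta_g v\ge c\,v^{1+\epsilon}$ in the hyperbolic metric $g$ pulled back by $h$. Combining hyperbolic mean-value estimates on unit hyperbolic balls with the energy density $|\mu|^p(1-|\mu|^2)^{-p/2}\gtrsim(\K-1)^{p/2}$ gives
\[
 \sup v\lesssim\Bigl(\int_{B_{\rm hyp}}v^{p/2}\,dA_\eta\Bigr)^{2/p}\le C_p\,\I_p(f)^{2/p},
\]
using the conformal invariance of $dA_\eta$ under $f$. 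Solving $(K+1/K)/2-1=\Lambda$ for $K$ yields the stated formula $K_f\le1+\Lambda+\sqrt{\Lambda(\Lambda+2)}$.

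For $p_{\mathrm c}<p\le2$, a local mean-value estimate of the same kind still works, because $H_p>0$ makes the equation uniformly elliptic where $r$ is bounded. The difficulty is an a priori bound keeping $r$ away from $1$. I would obtain it by a continuity or maximum-principle argument on the sets $\{r>1-\delta\}$, using that the zeroth-order term has a good sign and that finite energy forces $\int_{\{r>1-\delta\}}dA_\eta$ to be small near $\partial\D$.

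\textbf{Step 4: asymptotic conformality and sharpness.} For $z_n\to\partial\D$, precompose with Möbius maps sending $z_n$ to $0$ and postcompose with disk isometries. The energy is invariant under this normalisation, and the energy on a fixed hyperbolic ball around $z_n$ tends to zero. By the mean-value estimate of Step 3, $r(z_n)\to0$.

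For the final claim that no bound depending only on $p$ holds when $p_{\mathrm c}<p\le2$, I would use Theorem~\ref{thm-B}. It supplies smooth minimisers for data $f_0\in T^p$. Taking $f_0$ to be boundary values of $K$-quasiconformal maps with arbitrarily large $K$, the minimiser's dilatation cannot stay bounded, since the quasisymmetry constant of $f_0$ is controlled by $K_{F_{(p)}}$.

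The main obstacle is Step 2/3 in the range $p\ge p_{\mathrm c}$: extracting a clean subsolution inequality with the right sign, and ruling out $r\to1$ in the interior without a threshold like $r_{\mathrm v}$.
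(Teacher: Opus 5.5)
Your overall architecture matches the paper's. Holomorphy of $\Phi_p$ comes from inner variations. For $p<p_{\mathrm c}$ you use the cutoff of Theorem~\ref{weak-variational-cutoff}, which is correct as you apply it, since a diffeomorphism has no rank-one set. Asymptotic conformality comes from escaping M\"obius normalisations, and sharpness from boundary data with degenerating quasisymmetry constants. On sharpness: Theorem~\ref{thm-B} only covers $p<2$, so $p=2$ needs Theorem~\ref{quadratic-wp-existence}. The data must also lie in $T^p$, for example the smooth maps of Proposition~\ref{lower-range-boundary-obstruction}.

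\textbf{The scalar equation for $p\leq2$.} This is where the analytic content lies, and Step~2 does not produce it. The identity $\Delta\log\eta(h)=c\,\eta(h)J_h$ holds only for holomorphic $h$. When $h_{\bw}\neq0$ it has $\|Dh\|^2$ in place of $J_h$, plus terms in $h_wh_{\bw}$ and $h_{w\bw}$. Moreover, in a natural coordinate $\zeta$ with $d\zeta^2=\Phi\,dw^2$, the Hopf modulus relation reads $\eta(h)|h_\zeta|^2F_p(r)=1$ identically, so applying $\Delta\log$ to it yields nothing. The equation for $r$ is an integrability condition. The paper observes that in this coordinate $\mu_h=r$ is real, so the pulled-back metric is $L(r)^2dx^2+M(r)^2dy^2$ with $L,M=(1\pm r)/\sqrt{F_p(r)}$. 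Setting its Gauss curvature equal to $-4$, weakly through the connection form so that $C^2$ suffices, gives $-\operatorname{div}(\mathsf A_p\nabla r)=4|\Phi|(1-r^2)/F_p(r)$. The eigenvalues of $\mathsf A_p$ are $H_p(\mp r)/(2r[p+(2-p)r^2])$.

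\textbf{The range $p>2$.} Here the analogous inverse-side equation, with $\widetilde F_p$ in place of $F_p$, is not uniformly elliptic: its conductivity blows up like $(1-r_h)^{-2}$. Your absorption inequality $\Delta_g v\geq cv^{1+\epsilon}$ is not supported by the structure. Via Keller--Osserman it would give a dilatation bound independent of the energy, whereas the actual source term has the opposite sign. The missing idea is to transport the equation to the forward side and use $\sigma=\log(r/\sqrt{1-r^2})$. This satisfies $-\operatorname{div}(\mathsf B_p\nabla\sigma)=8\eta$ with $(p-2)I\leq\mathsf B_p\leq(p+2)I$. Local boundedness for $(\sigma\circ T-a)_+$ on $\D_{1/2}$, with $a=p^{-1}\log e_T$, then gives \eqref{early-superquadratic-qc-bound}; see Theorem~\ref{superquadratic-original-qc}.

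\textbf{The range $p_{\mathrm c}\leq p\leq2$.} You have misplaced the difficulty. On compact subsets $r$ is continuous and $<1$, so nothing is needed there. What is needed is $\sup(r_j-k)_+\to0$ near the centre for the normalised maps $h_j$. For $p>p_{\mathrm c}$ the $r$-equation is uniformly elliptic on $\{r\geq k\}$ all the way to $r=1$, because $H_p(1)=2\ell_p>0$. At $p=p_{\mathrm c}$, which the theorem includes, the small eigenvalue vanishes linearly at $r=1$ and needs the modified iteration of Lemma~\ref{ceiling-iteration}.

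Smallness of local energy is also not the direct input to this estimate. It requires $r_j\to0$ in $L^2_{\loc}$ and $\Phi_p(h_j)\to0$ locally uniformly. Lemma~\ref{escaping-normalisations} obtains both from $h_j\to\id$ in $W^{1,2}_{\loc}$ together with compact containment, and this is where the quasisymmetric trace enters. Your proposal never uses that hypothesis.
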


For $1<p\leq2$ the scalar equation is a divergence-form equation for $r=|\mu_h|$ in a natural coordinate of the Hopf differential, whose conductivity has the eigenvalues $H_p(\mp r)/(2r[p+(2-p)r^2])$; it is elliptic exactly where $H_p(r)>0$, and the explicit iteration of Section~\ref{scalar-equations} handles the linear degeneracy at the critical exponent. For $p>2$ the natural variable is different.

\begin{mainthm}[The uniformly elliptic equation above two]\label{thm-D}
Let $p>2$ and let $f$ be as in Theorem~\textup{\ref{thm-C}}. Then $\sigma=\log\bigl(|\mu_f|/\sqrt{1-|\mu_f|^2}\bigr)$ satisfies, weakly on $\{\mu_f\ne0\}$,
\[
 -\operatorname{div}(\mathsf B_p\nabla\sigma)=8\eta,\qquad (p-2)I\leq\mathsf B_p\leq(p+2)I,
\]
with $\mathsf B_p$ the explicit matrix \eqref{superquadratic-forward-log-equation}, and for every $z\in\D$,
\[
 \K_f(z)-1\leq C_p\left(\int_{\mathcal B_z}\frac{|\mu_f|^p}{(1-|\mu_f|^2)^{p/2}}\,dA_\eta\right)^{2/p},
\]
where $\mathcal B_z$ is the hyperbolic ball of fixed radius about $z$.
\end{mainthm}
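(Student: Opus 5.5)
The plan is to derive the equation from the holomorphic Hopf differential of the forward map, pass to a natural coordinate, and then run a local De Giorgi--Moser argument on hyperbolic balls. For $p>2$ we have $p_*=p$, and the forward density is $\mathcal L(\mu)=|\mu|^p(1-|\mu|^2)^{-p/2}=e^{p\sigma}$, where $\sigma=\log(|\mu|/\sqrt{1-|\mu|^2})$.

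\textbf{Step 1: the Hopf differential and natural coordinate.} Stationarity of $h$ under compactly supported smooth changes of domain, which is the hypothesis of Theorem~\ref{thm-C}, is equivalent to stationarity of $f=h^{-1}$ under inner variations of the target. As in \eqref{intro-hopf}, this makes a Hopf-type quadratic differential $\Phi$ holomorphic in the $f$-coordinates. Its modulus is an explicit function of $r=|\mu_f|$ times $\eta$ and $|f_z|^2$, and $\arg\Phi$ is tied to $\arg\mu_f$. On $\{\mu_f\ne0\}$ we have $\Phi\neq0$, so we may pass locally to the natural coordinate $\zeta=\int\sqrt\Phi$. There the Euler--Lagrange system, namely the first-variation equation with respect to the map together with the domain-variation identity, reduces to a single scalar equation for the one remaining unknown, $r$ or equivalently $\sigma$. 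The $C^2$ regularity on $\{|\mu_h|>0\}$ justifies all the differentiations.

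\textbf{Step 2: the equation for $\sigma$ and its ellipticity.} I would compute this equation following the $1<p\le2$ computation that produced the $H_p$ conductivity, then pull it back to $z$. The zeroth-order term comes from the curvature $-4$ of $\eta$ and gives the right-hand side $8\eta$. The reason for choosing $\sigma$ is that $e^{p\sigma}$ is the density: $\mathcal L$ is convex in the hyperbolic displacement $\operatorname{arctanh}r$, and in these variables the coefficient matrix should have eigenvalues $p\pm2$ times a common factor that cancels. This yields $(p-2)I\le\mathsf B_p\le(p+2)I$. The lower bound $p-2>0$ is exactly where $p>2$ is used. Confirming this cancellation is the main computational obstacle. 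If the factor does not cancel exactly, I would renormalise $\sigma$ by an additive function of $r$ until the conductivity is pinched.

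\textbf{Step 3: local estimate.} Consider the function $u=e^{p\sigma/2}$, or more simply the positive part $\sigma_+$ over a threshold. From $-\operatorname{div}(\mathsf B_p\nabla\sigma)=8\eta\ge0$, $\sigma$ is a supersolution; what is actually needed is subsolution behaviour of $e^{c\sigma}$. For convex increasing $\phi$, $\phi(\sigma)$ satisfies $-\operatorname{div}(\mathsf B\nabla\phi(\sigma))=\phi'\cdot8\eta-\phi''\langle\mathsf B\nabla\sigma,\nabla\sigma\rangle$. The positive source has the wrong sign for a subsolution, but it is bounded by $8\eta\phi'$, which is a bounded inhomogeneity on a hyperbolic ball. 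After Möbius normalisation moving $z$ to $0$, the ball $\mathcal B_z$ becomes a fixed Euclidean disc on which $\eta\simeq1$, and both the equation and the energy are Möbius invariant. Moser iteration with uniform ellipticity $p\pm2$ then bounds $\sup_{\mathcal B/2}e^{p\sigma}$ by $C\bigl(\int_{\mathcal B}e^{p\sigma}dA_\eta+1\bigr)$.

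This additive constant must be removed to reach the homogeneous bound in Theorem~\ref{thm-D}. For that I would use the structure near $\sigma\to-\infty$: where $|\mu|$ is small, the source is dominated. Alternatively, a scaling argument in the amplitude works, since $\sigma\mapsto\sigma+t$ is compatible with the equation up to the source. Finally, $\K_f-1\simeq|\mu|^2\simeq e^{2\sigma}$ for small $\mu$, and taking the power $2/p$ gives the stated inequality. Points where $\mu_f=0$ are trivial, and near the zero set one works on $\{\mu_f\ne0\}$ and uses that $\sigma\to-\infty$ there, so truncations $\max(\sigma,k)$ are admissible test functions.
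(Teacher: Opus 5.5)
Your overall route is the paper's: the curvature $-4$ identity for the pulled-back target metric in a Hopf natural coordinate, transported to $z$, then M\"obius normalisation to $\D_{1/2}$ and a Moser estimate on truncations above a finite level. The two steps that carry the content are not actually done, though. The decisive one, the local estimate, is set up so that it cannot give the stated inequality.

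\textbf{The local estimate (Step~3).} The homogeneous form $\K_f-1\le C_p(\int_{\mathcal B_z}\cdots)^{2/p}$ is the point of the statement, since it is what forces $|\mu_f(z)|\to0$ as $|z|\to1$. Your argument does not reach it.
\begin{itemize}
\item Moser on $\sigma_+$ gives only $\sup\sigma\le C(\|\sigma_+\|_{L^2}+1)$. That yields $\K_f-1\le2\exp(CE^{1/2}+C)$, which does not even tend to zero with the local energy $E$.
\item The intermediate bound $\sup e^{p\sigma}\le C(\int e^{p\sigma}\,dA_\eta+1)$ that you state does not follow from either of your choices.
\item ``The source is dominated where $|\mu|$ is small'' is not an argument, because the source $8\eta$ does not involve $\mu_f$.
\end{itemize}
The missing idea is that the additive constant in the Moser bound is additive in $\sigma$. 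It therefore becomes multiplicative in $\K_f-1=2e^{2\sigma}$ (an exact identity, not only for small $\mu$), provided the truncation level is chosen from the energy.

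The paper takes $v=(\sigma\circ T-a)_+$ with $a=p^{-1}\log e_T$ and $e_T=\int_{\D_{1/2}}e^{p\sigma\circ T}\eta\,dA$. With frozen coefficients, $v$ satisfies $-\operatorname{div}(\widehat{\mathsf B}_p\nabla v)\le8\eta\le128/9$ for every $a$, so $v(0)\le M_p(\|v\|_{L^2(\D_{1/2})}+1)$. The inequality $(s-a)_+^2\le\frac{4}{e^2p^2}e^{p(s-a)}$ together with $\eta\ge1$ gives $\|v\|_{L^2}\le2/(ep)$, independently of the map. Hence $\sigma(z)\le p^{-1}\log e_T+C$, which is the claim.

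Your first candidate $u=e^{p\sigma/2}$ also works, but not for the reason you give. The term $8\eta\phi'(\sigma)=4p\eta u$ is not a bounded inhomogeneity; it is a bounded potential times $u$. So $u$ is a nonnegative subsolution of $-\operatorname{div}(\mathsf B_p\nabla u)\le4p\eta u$. Local boundedness for such subsolutions is homogeneous, $\sup_{\D_{1/4}}u\le C\|u\|_{L^2(\D_{1/2})}$, with no additive term.

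\textbf{The equation (Step~2).} This is only a heuristic, and the structure you predict is wrong. $\mathsf B_p$ does not have eigenvalues $p\pm2$ times a cancelling factor. Its eigenvalues are $(1+r)^2+p-2$ and $(1-r)^2+p-2$; they depend on $r=|\mu_f|$ and merely lie in $[p-2,p+2]$. No Euler--Lagrange equation beyond holomorphy of $\Phi$ is needed to obtain them:
\begin{itemize}
\item In the natural coordinate, $\mu_h=r$ and $|\Phi|=\eta(h)|h_w|^2\widetilde F_p(r)$ with $\widetilde F_p(r)=\frac p2r^{p-1}(1-r^2)^{1-p/2}$.
\item So the pulled-back metric is $L^2dx^2+M^2dy^2$ with $L,M=(1\pm r)/\sqrt{\widetilde F_p}$.
\item Its curvature identity gives an inverse-side conductivity with eigenvalues $\frac1{2r}\bigl(1+\frac{p-2}{(1\pm r)^2}\bigr)$ and source $4\eta(h)J_h$.
\item Transporting by $z=h(w)$ multiplies these eigenvalues by $(1\pm r)^2/(1-r^2)$ and turns the source into $4\eta(z)$.
\item Finally $d\sigma/dr=1/[r(1-r^2)]$ makes the flux $\frac12\mathsf B_p\nabla\sigma$, which gives the right side $8\eta$.
\end{itemize}
Your fallback of adding a function of $r$ to $\sigma$ would produce an equation for a different variable, not the stated one.
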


For $p_{\mathrm c}<p<2$, it remains open whether a finite-energy monotone barrier forces the boundary data to lie in $T^p$. Further questions are regularity at $p=p_{\mathrm c}$ and attainment in the monotone class for $1<p<p_{\mathrm c}$. For the canonical family we prove right-continuity in the exponent; full continuity remains conditional on the recovery estimate \eqref{left-exponent-recovery}.

\subsection{A dictionary for the scalar flux}
The functions in \eqref{flux-functions} enter through three different calculations, and the following lemma records how they are related. It is proved in the sections indicated.

\begin{lemma}[Flux, ellipticity and variational selection]\label{flux-dictionary}
For $1<p\leq2$ and $0<r<1$,
\begin{equation}\label{flux-log-derivative}
 \frac{Q_p'}{Q_p}
 =\frac{(1-r)H_p(r)}{r(1+r)[p+(2-p)r^2]},
 \qquad Q_p(1)=1,\quad Q_p'(1)=0,\quad
 (\log Q_p)''(1)=-\ell_p/2.
\end{equation}
With the scalar parameter and conductivity defined by
\[
 t=\Theta_p(r)=\frac{\sqrt{F_p(r)}}{1-r},\qquad
 A_p(t)=\frac{1-r}{1+r},
\]
one has $(2tA_p(t))^2=Q_p(r)$.
The forward scalar equation is elliptic exactly where $H_p(r)>0$,
or equivalently where its flux $tA_p(t)$ is increasing.
For $p\geq p_{\mathrm c}$ this holds at every $r<1$, and
$Q_p(r)<1$. The same range is exactly the polyconvexity range of
the inverse density.

For $1<p<p_{\mathrm c}$, $H_p$ has one zero
$r_{\mathrm c}\in(0,1)$, whereas $Q_p(r)=1$ has one solution
$r_{\mathrm v}\in(0,r_{\mathrm c})$ besides the endpoint $1$.
Minimality forces $Q_p(r)\leq1$ at positive-Jacobian points.
Thus the excluded interval is $(r_{\mathrm v},1)$; the interval
where scalar ellipticity fails is the strictly smaller
$(r_{\mathrm c},1)$.
\end{lemma}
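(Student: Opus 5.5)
The plan splits the lemma into its algebraic identities, which are elementary computations, and its three structural claims (ellipticity, polyconvexity, minimality), which are proved elsewhere in the paper.

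\emph{The identities.} Differentiating $\log Q_p=\log 4+\log F_p-2\log(1+r)$ gives
\[
 \frac{Q_p'}{Q_p}=\frac{p-1}{r}+\frac{2(2-p)r}{p+(2-p)r^2}-\frac{2}{1+r}.
\]
Placing this over the common denominator $r(1+r)[p+(2-p)r^2]$, the numerator should factor as $(1-r)H_p(r)$. This can be checked by comparing coefficients, or by observing that the numerator is a cubic vanishing at $r=1$ and dividing out $(1-r)$. From $F_p(1)=1$ we get $Q_p(1)=1$. Since the factor $1-r$ appears, $Q_p'(1)=0$. For the second derivative, differentiate $(1-r)G(r)$ with $G=H_p/(r(1+r)[p+(2-p)r^2])$. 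At $r=1$ this equals $-G(1)=-2\ell_p/(2\cdot 2)=-\ell_p/2$, using $H_p(1)=2\ell_p$.

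For $t$ and $A_p$, we have $2tA_p=2\sqrt{F_p}/(1+r)$, so squaring gives $4F_p/(1+r)^2=Q_p$ directly.

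\emph{Flux monotonicity and $Q_p<1$.} On $(0,1)$, $t$ is increasing in $r$: $\sqrt{F_p}$ increases and $1/(1-r)$ increases. Hence $tA_p(t)=\sqrt{Q_p}/2$ is increasing in $t$ exactly where $Q_p'>0$, that is, where $H_p(r)>0$.

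For $p\ge p_{\mathrm c}$, $H_p$ is decreasing with $H_p(1)=2\ell_p\ge0$, so $H_p>0$ on $[0,1)$. At $p=2$ we have $H_2\equiv2$. Then $Q_p$ is strictly increasing up to $Q_p(1)=1$, so $Q_p<1$ on $(0,1)$.

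For $p<p_{\mathrm c}$, $H_p$ has a single zero $r_{\mathrm c}$, so $Q_p$ increases on $(0,r_{\mathrm c})$ and decreases on $(r_{\mathrm c},1)$ down to $Q_p(1)=1$. Thus $Q_p>1$ on $[r_{\mathrm c},1)$. Since $Q_p(0^+)=0$ and $Q_p$ is monotone on $(0,r_{\mathrm c})$, there is exactly one crossing $r_{\mathrm v}<r_{\mathrm c}$, and $Q_p>1$ exactly on $(r_{\mathrm v},1)$. This gives the stated configuration of the two excluded intervals.

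\emph{Structural claims.} The remaining assertions are stated as proved in later sections, so the plan is to cite those sections.

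The claim that ellipticity of the forward scalar equation is equivalent to $H_p>0$ comes from the eigenvalue formula quoted after Theorem~\ref{thm-C}. The eigenvalues are $H_p(\mp r)/(2r[p+(2-p)r^2])$. The eigenvalue with argument $-r$ is always positive, since $H_p(-r)>H_p(0)>0$, so ellipticity reduces to $H_p(r)>0$.

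The polyconvexity range is Theorem~\ref{thm-A}(i), which gives $p\ge p_{\mathrm c}$.

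The statement that minimality forces $Q_p\le1$ at positive-Jacobian points is the rank-one comparison behind Theorem~\ref{thm-A}(iii). By the monotonicity analysis above, $Q_p\le1$ is equivalent to $r\le r_{\mathrm v}$.

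The main obstacle is the factorisation of the cubic numerator. It is routine but easy to get wrong; dividing out the root $r=1$ first should make it clean.
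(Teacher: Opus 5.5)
Your proposal is correct and is essentially the paper's own proof: differentiate the flux functions, use $H_p(1)=2\ell_p$, the strict decrease of $H_p$ and $Q_p(0)=0$, $Q_p(1)=1$ to get the monotonicity and the two cutoffs, and cite Corollary~\ref{sharp-polyconvexity-range} and Theorem~\ref{weak-variational-cutoff} for the two variational assertions. One citation needs correcting. The eigenvalues $H_p(\mp r)/(2r[p+(2-p)r^2])$ quoted after Theorem~\ref{thm-C} belong to the inverse-side equation \eqref{scalar-original} for $r=|\mu_h|$, not to the forward equation \eqref{forward-closed-scalar}. For the forward equation the principal eigenvalues are $A_p>0$ and $(tA_p)'$ by Proposition~\ref{forward-scalar-reduction}, so the ellipticity claim should be taken from \eqref{forward-scalar-eigenvalues}, which is what the paper cites, or directly from your own flux-monotonicity paragraph.
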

\begin{proof}
Differentiate \eqref{flux-functions}. The endpoint formula follows
from $H_p(1)=2\ell_p$. For $p<2$, $H_p$ is strictly decreasing;
$H_p(0)>0$ and the sign of $H_p(1)$ changes at $p_{\mathrm c}$.
These facts give the monotonicity and the two cutoffs, since
$Q_p(0)=0$ and $Q_p(1)=1$. The scalar eigenvalue identity
\eqref{forward-scalar-eigenvalues} gives the ellipticity assertion;
Corollary~\ref{sharp-polyconvexity-range} and
Theorem~\ref{weak-variational-cutoff} supply the two variational
assertions. At $p=2$ the same identities hold directly.
\end{proof}
\section{Preliminaries}\label{preliminaries}
\subsection{Conventions}\label{conventions}
We keep the conventions of Part~I \cite[Section 1]{MYWP}, which we recall briefly. Let $\ID$ be the unit disk, $\IS=\partial\ID$, and $dA$ Euclidean area. The Beltrami equation is $f_{\bar z}=\mu_ff_z$ with $|\mu_f|<1$ almost everywhere. A mapping of finite distortion is a $W^{1,1}_{\loc}$ map with locally integrable nonnegative Jacobian and $Df=0$ almost everywhere on $\{J_f=0\}$; at points where $Df=0$ we set $\mu_f=0$. We use the normalised Hilbert--Schmidt norm and distortion
\begin{equation}\label{db}
 \|Df\|^2=|f_z|^2+|f_{\bar z}|^2,\qquad
 \IK_f=\frac{1+|\mu_f|^2}{1-|\mu_f|^2},\qquad
 \|Df\|^2=\IK_f J_f,\qquad \IK_f-1=\frac{2|\mu_f|^2}{1-|\mu_f|^2},
\end{equation}
and the operator-norm distortion $K_f=(1+|\mu_f|)/(1-|\mu_f|)$; $K$ and $\K$ always denote these two quantities. ``Homeomorphism of finite distortion'' means orientation preserving. A continuous surjection of the closed disk is \emph{monotone} if every point fibre is connected; a \emph{hair} is a fibre containing more than one point. The hyperbolic density is
\begin{equation}\label{eta}
 \eta(z)=(1-|z|^2)^{-2},\qquad dA_\eta=\eta(z)\,dA(z),
\end{equation}
so that $\eta|dz|^2$ has curvature $-4$; where a formula is best written for curvature $-1$ we write $g_{\mathbb H}=4\eta|dz|^2$ explicitly. Quasisymmetry of a circle homeomorphism $e^{i\theta}\mapsto e^{iu(\theta)}$ is the adjacent-interval condition $M^{-1}\leq[u(\theta+t)-u(\theta)]/[u(\theta)-u(\theta-t)]\leq M$ for $0<t\leq\pi$, and symmetry means that this ratio tends uniformly to $1$ as $t\downarrow0$. Boundary traces of inverse maps $h$ are denoted $\beta$, those of forward maps $f$ are $\beta^{-1}$.

\paragraph{The energies.}
Set $p_*=\max\{2,p\}$ and $R_p(s)=s^{p/2}(1-s)^{-p_*/2}$ for $0\leq s<1$. For a map $F$ of finite distortion,
\begin{equation}\label{funct}
 \mathcal I_p(F)=\int_\ID R_p(|\mu_F|^2)\,dA_\eta,
\end{equation}
so that $\I_p(F)=2^{-p/2}\int_\D(\K_F-1)^{p/2}\,dA_\eta$ for $p\geq2$. Under the inverse and area formulas, $H=F^{-1}$ satisfies
\begin{equation}\label{inverseenergy}
 \mathcal I_p(F)=\mathcal E_p(H)
 :=\int_\ID\eta(H)J_H R_p(|\mu_H|^2)\,dA,
\end{equation}
and, writing $a=H_w$, $b=H_{\bar w}$, $J=|a|^2-|b|^2$,
\begin{equation}\label{inversedensity}
 \mathcal E_p(H)=
 \begin{cases}
 \displaystyle\int_\ID\eta(H)|b|^p|a|^{2-p}\,dA,&1\leq p\leq2,\\[4pt]
 \displaystyle\int_\ID\eta(H)|b|^p J^{1-p/2}\,dA,&p\geq2.
 \end{cases}
\end{equation}
A \emph{finite-energy barrier} is a competitor with the prescribed boundary values and finite energy.

\paragraph{M\"obius invariance and the inverse Sobolev estimate.}
Both energies are invariant under pre- and postcomposition by disk automorphisms, $\mathcal I_p(\psi\circ F\circ\phi)=\mathcal I_p(F)$, since $|\mu_{\psi\circ F\circ\phi}|=|\mu_F|\circ\phi$ and $\eta(\phi)|\phi'|^2=\eta$; the prescribed boundary data transform along with the map. The energies are ordered in $p$: since $0\leq r<1$,
\begin{equation}\label{subquadratic-energy-order}
 \frac{r}{1-r^2}\geq\frac{r^p}{1-r^2}\geq\frac{r^2}{1-r^2},
 \qquad 1\leq p\leq2,
\end{equation}
so $\I_1(F)\geq\I_p(F)\geq\I_2(F)$, and correspondingly $|a||b|\geq|a|^{2-p}|b|^p\geq|b|^2$ for $|b|\leq|a|$. Thus bounded $p$-energy controls the quadratic anti-holomorphic energy for every $1\leq p\leq2$, and with the pointwise identity $(\K_H-1)J_H=2|H_{\bar w}|^2$ and $\int_\D J_H\,dA=\pi$ this gives the inverse Sobolev estimate
\begin{equation}\label{sobolev}
 \int_\ID(|H_w|^2+|H_{\bar w}|^2)\,dA\leq\pi+2\mathcal E_p(H)\qquad(1\leq p\leq2),
\end{equation}
with $3\pi$ in place of $\pi$ for $p\geq2$; see \cite[Section 1.4]{MYWP}. It does not give a forward $W^{1,2}$ bound on $F$. For quasisymmetric boundary data \eqref{sobolev} gives the equicontinuity and monotone compactness recalled in Section~\ref{monotone-class}; the threshold $p_{\mathrm c}$ below concerns lower semicontinuity of the energy being minimised, not this Dirichlet bound.
\subsection{The monotone class and the results of Part~I that are used}\label{monotone-class}
Fix an orientation-preserving quasisymmetric boundary homeomorphism $\beta$. For $0<p\leq2$, let $\mathcal M_\beta$ consist of maps $h$ such that
\begin{enumerate}
\item $h:\overline\D\to\overline\D$ is a continuous monotone surjection, $h|_{\Sone}=\beta$, and $h(\D)\subset\D$;
\item $h\in W^{1,2}(\D)$ and $J_h\geq0$ almost everywhere;
\item $\E_p(h)=\int_\D\eta(h)W_p(h_w,h_{\bw})\area<\infty$, where $W_p(a,b)=|a|^{2-p}|b|^p$ on $|b|\leq|a|$, and $W_p(0,0)=0$.
\end{enumerate}
The definition permits nonzero rank-one gradients: when $|a|=|b|$, $W_p(a,b)=|a|^2$. Such a map need not have finite distortion, which requires $Dh=0$ on $J_h=0$. The following results are proved in Part~I; we restate them in the form used here. Write $\mathcal D(u)=\int_\D(|u_w|^2+|u_{\bw}|^2)\area$.

\begin{proposition}[Compactness and inverse calculus from Part~I]
\label{boundary-modulus}\label{no-boundary-hairs}\label{monotone-compactness}\label{compact-containment}\label{finite-distortion-inverse-calculus}
The following statements hold.
\begin{enumerate}[label=\textup{(\roman*)},leftmargin=*]
\item If continuous monotone surjections $u_j:\overline\D\to\overline\D$ in $W^{1,2}(\D)$ have bounded $\mathcal D(u_j)$ and their boundary maps admit $K_0$-quasiconformal extensions fixing $1,i,-i$, then
\begin{equation}\label{no-hairs-log-modulus}
 |u_j(x)-u_j(y)|^2\leq
 \frac{4\pi(\mathcal D(u_j)+C_0)}{\log(R_0/|x-y|)}
 \qquad(|x-y|<R_0),
\end{equation}
where $R_0,C_0$ depend only on $K_0$ \cite[Lemma 2.1]{MYWP}.
\item \emph{NoHairs.} Let $h:\overline\D\to\overline\D$ be a continuous monotone surjection in $W^{1,2}(\D)$ with quasisymmetric boundary trace $\beta$. If
\begin{equation}\label{orbit-bound}
 \|\psi\circ h\circ\phi^{-1}\|_{W^{1,2}(\D)}\leq C_*
\end{equation}
for every pair of disk automorphisms for which $\psi\circ\beta\circ\phi^{-1}$ fixes $1,i,-i$, then $h^{-1}(\Sone)=\Sone$ \cite[Theorem 2.2]{MYWP}.
\item For $0<p\leq2$ and $h\in\mathcal M_\beta$,
\begin{equation}\label{trace-jacobian}
 \int_\D J_h\,dA=\pi,\qquad
 \mathcal D(h)\leq\pi+2\E_p(h),
\end{equation}
and $\E_p$ is invariant under pre- and postcomposition by disk automorphisms. Every sequence in $\mathcal M_\beta$ with bounded energy has a subsequence converging uniformly on $\overline\D$ and weakly in $W^{1,2}(\D)$ to a continuous monotone surjection $h$ with trace $\beta$, $J_h\geq0$ and $h^{-1}(\Sone)=\Sone$. For each $C\Subset\D$, all $h_j(C)$ lie in a common compact subset of $\D$. This does not yet assert finite energy of the limit. The same compactness and containment conclusions hold for $W^{1,2}$ homeomorphisms with fixed quasisymmetric trace and bounded $\E_p$, for every $p\geq1$ \cite[Lemma 2.5 and Corollary 2.3]{MYWP}.
\item If $F:\D\to\D$ is a finite-distortion homeomorphism with $\I_p(F)<\infty$, $p\geq1$, then $h=F^{-1}\in W^{1,2}(\D)$ has finite distortion. For $1\leq p\leq2$,
\begin{equation}\label{finite-distortion-inverse-energy}
 \mathcal D(h)\leq\int_\D\K_F\,dA\leq\pi+2\I_p(F),
 \qquad \E_p(h)\leq\I_p(F).
\end{equation}
Thus $h\in\mathcal M_\beta$ for its boundary trace $\beta$ in this range \cite[Lemma 5.8]{MYWP}.
\end{enumerate}
\end{proposition}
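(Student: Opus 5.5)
The plan is to reduce each item to a length–area estimate, a degree argument, or a pointwise inequality between densities. Where the argument needs only bookkeeping, I would cite the Part~I statement \cite[Lemma 2.1, Theorem 2.2, Lemma 2.5, Corollary 2.3, Lemma 5.8]{MYWP} directly.

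For (i) I would use the Courant--Lebesgue argument. Fix $x,y$ with $\delta=|x-y|<R_0$. Extend $u_j$ across $\Sone$ by reflecting through a $K_0$-quasiconformal extension of its boundary map, which costs an additive Dirichlet constant $C_0$ depending only on $K_0$; the normalisation at $1,i,-i$ keeps this constant uniform. For $\delta<\rho<R_0$, Cauchy--Schwarz on the circle $|z-x|=\rho$ bounds the squared oscillation by $2\pi\rho\int_{|z-x|=\rho}|Du_j|^2\,ds$. Dividing by $\rho$ and integrating $d\rho/\rho$ from $\delta$ to $R_0$ yields a radius $\rho$ with oscillation squared at most $4\pi(\mathcal D(u_j)+C_0)/\log(R_0/\delta)$. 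Monotonicity is the step that transfers this bound from the circle to the disk it bounds, and hence to $|u_j(x)-u_j(y)|$. Its fibres are connected and it is a uniform limit of homeomorphisms, so it satisfies the maximum principle: the image of the disk lies in the filled image of its boundary circle.

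For (ii) I would argue by contradiction. Suppose $h(x_0)\in\Sone$ for some $x_0\in\D$. Choose automorphisms $\phi_k$ moving points toward $x_0$ and $\psi_k$ restoring the normalisation at $1,i,-i$. By \eqref{orbit-bound} and (i), the normalised maps $\psi_k\circ h\circ\phi_k^{-1}$ are equicontinuous with one modulus. On the other hand, a hair reaching the boundary from an interior point forces these normalised maps to send a fixed interior disk onto sets accumulating on a boundary arc of definite size. Quasisymmetry of $\beta$ keeps the normalised boundary maps in a compact family of homeomorphisms, so this contradicts the uniform modulus. I expect this normalisation bookkeeping to be the main obstacle.

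For (iii) I would proceed in four steps.
\begin{enumerate}[label=\textup{(\alph*)},leftmargin=*]
\item \emph{Jacobian integral.} The identity $\int J_h=\pi$ follows from the degree formula for $W^{1,2}$ monotone maps, which are uniform limits of homeomorphisms by Youngs' theorem; together with $J_h\ge0$ this gives the area formula with degree one.
\item \emph{Dirichlet bound.} Pointwise $|a|^2+|b|^2=J+2|b|^2$. Since $|b|\le|a|$ we have $|b|^2\le|a|^{2-p}|b|^p$, and $\eta\ge1$, so $|b|^2\le\eta(h)W_p$. Integrating gives $\mathcal D(h)\le\pi+2\E_p(h)$. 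Möbius invariance of $\E_p$ is the identity $\eta(\psi)|\psi'|^2=\eta$.
\item \emph{Compactness.} Bounded energy gives bounded $\mathcal D$, and after normalising, (i) gives equicontinuity. Arzelà--Ascoli and weak compactness in $W^{1,2}$ then produce a limit. Uniform limits of monotone maps are monotone; $J_h\ge0$ passes to the limit by weak continuity of Jacobians; the trace is preserved. Applying (ii) to the limit, with \eqref{orbit-bound} coming from Möbius invariance and the energy bound, gives $h^{-1}(\Sone)=\Sone$.
\item \emph{Compact containment.} If $h_j(x_j)\to\Sone$ with $x_j\in C$, pass to a limit point $x\in\overline C$ to get $h(x)\in\Sone$, contradicting $h^{-1}(\Sone)=\Sone$.
\end{enumerate}
The homeomorphic case is the same argument.

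For (iv) I would use the inverse calculus for finite-distortion homeomorphisms with $\K_F\in L^1$. Since $\K_F\le1+2\,|\mu_F|^p/(1-|\mu_F|^2)$ for $1\le p\le2$, we get $\int\K_F\,dA\le\pi+2\I_p(F)$, using $\eta\ge1$ on the energy term. The inverse $h$ then lies in $W^{1,2}$ with finite distortion, and $\int|Dh|^2=\int\K_F$ by the change of variables $\|Dh\|^2(F(z))J_F(z)=\K_F(z)$. The inequality $\E_p(h)\le\I_p(F)$ follows from \eqref{inverseenergy} via the area formula, with inequality allowing for the Lusin-null exceptional set.
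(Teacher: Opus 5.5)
The paper gives no argument for this proposition beyond citing Part~I. Your outlines of (i), (iii) and (iv) are essentially sound. Your sketch of (ii) has a genuine gap, and (ii) is what (iii) relies on for $h^{-1}(\Sone)=\Sone$.

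\textbf{The gap in (ii).} As written, your automorphisms concentrate at the interior point $x_0$, so they stay in a compact subset of $\mathrm{Aut}(\D)$, and so do the normalisations $\psi_k$. The maps $\psi_k\circ h\circ\phi_k^{-1}$ then converge to a single reparametrisation of $h$. A single continuous monotone map with an interior point sent to $\Sone$ is consistent with every modulus of continuity; pinching a radius to a boundary point is an example. Likewise, a fixed disk being sent onto sets near a boundary arc contradicts nothing. The contradiction must come from \emph{divergent} automorphisms, blowing up at the boundary end of the fibre rather than at $x_0$.

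\textbf{The missing step.} Let $\xi_0=\beta^{-1}(h(x_0))$, and after a rotation take $\xi_0=1$. Put $\phi_s(z)=(z-s)/(1-sz)$ for $s\uparrow1$, and let $\psi_s$ be the automorphism for which $\beta_s=\psi_s\circ\beta\circ\phi_s^{-1}$ fixes $1,i,-i$.
\begin{itemize}
\item We have $\phi_s(1)=1$ and $\phi_s(x_0)\to-1$. The normalised map $u_s=\psi_s\circ h\circ\phi_s^{-1}$ therefore satisfies $u_s(1)=\psi_s(h(x_0))=u_s(\phi_s(x_0))$.
\item Conjugating one $K_0$-quasiconformal extension of $\beta$ by $\psi_s,\phi_s$ gives $K_0$-quasiconformal extensions of every $\beta_s$ fixing the three points. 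So \eqref{orbit-bound} and (i) give one modulus of continuity for all $u_s$ on $\overline\D$.
\item Hence $|u_s(\phi_s(x_0))-u_s(-1)|\to0$, that is, $|\beta_s(1)-\beta_s(-1)|\to0$.
\item But $\beta_s(1)=1$, and by orientation $\beta_s(-1)$ lies on the arc from $i$ to $-i$ through $-1$. So $|\beta_s(-1)-1|\geq\sqrt2$, a contradiction.
\end{itemize}
No hair or continuum is needed, only two points of one fibre. Monotonicity enters only through the maximum-principle step in (i).

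\textbf{Smaller points.}
\begin{itemize}
\item In (iii)(c) the limit $h$ is not known to have finite energy, so M\"obius invariance cannot be applied to it. Apply it to the $h_j$ instead, which gives \eqref{orbit-bound} uniformly in $j$. Then pass the bound to $\psi\circ h\circ\phi^{-1}$ by weak lower semicontinuity of the $W^{1,2}$ norm.
\item In (iii)(a), uniform approximation by homeomorphisms (Youngs) does not control Jacobians. Use instead the degree formula for continuous $W^{1,2}$ maps, proved by interior mollification, or the null-Lagrangian identity against a quasiconformal extension of $\beta$.
\item For homeomorphisms with $p>2$, the Dirichlet bound comes from $|b|^2\leq\tfrac2p|b|^pJ^{1-p/2}+(1-\tfrac2p)J$. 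This gives $3\pi+2\E_p$ rather than $\pi+2\E_p$.
\item The first sentence of (iv) also covers $p>2$. There $\K_F-1\leq1+(\K_F-1)^{p/2}$ gives $\K_F\in L^1$.
\item To place $h$ in $\mathcal M_\beta$ you also need the homeomorphic extension of $F$ to $\overline\D$ with quasisymmetric trace. This is Proposition~\ref{finite-energy-boundary-moduli}, not part of the inverse calculus.
\end{itemize}
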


\begin{lemma}[From local to global invertibility]\label{local-to-global}\label{hair-homeomorphism}
Let $h:\overline\D\to\overline\D$ be a continuous monotone surjection whose boundary trace is a homeomorphism. If $h$ is a local homeomorphism on $\D$, it is a homeomorphism of $\overline\D$. The same conclusion holds if $h^{-1}(\Sone)=\Sone$ and there is a continuous $g:\D\to\D$ with $h\circ g=\id_\D$; then $g=h^{-1}$. A local $C^k$ diffeomorphism in the first case is a global $C^k$ diffeomorphism on $\D$.
\end{lemma}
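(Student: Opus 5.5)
The argument is topological: first show that $h$ is injective on $\D$, then that $h$ is a homeomorphism of $\overline\D$.

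\emph{First case.} Assume $h$ is a local homeomorphism on $\D$. We begin by showing $h^{-1}(\Sone)=\Sone$. The open mapping property on $\D$ gives $h(\D)\cap\Sone=\emptyset$, since a local homeomorphism sends an open neighbourhood of any point of $\D$ onto an open set of $\overline\D$ contained in $h(\overline\D)$. If some $z\in\D$ had $h(z)=\zeta\in\Sone$, that open image would contain points of $\overline\D$ near $\zeta$, so it would meet $\Sone$ in a relatively open arc around $\zeta$ (which is fine) but also contain a full planar neighbourhood of $\zeta$. That neighbourhood meets $\IC\setminus\overline\D$, which is impossible. Hence $h(\D)\subset\D$. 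Because the trace is a homeomorphism, $h|_{\Sone}$ is injective and $h(\Sone)=\Sone$, so $h^{-1}(\Sone)=\Sone$.

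Next take any fibre $h^{-1}(w)$ with $w\in\D$. It is a connected compact subset of $\D$. A local homeomorphism has discrete fibres, and a connected discrete compact set is a single point. So $h$ is injective on $\D$ and on $\Sone$, and the two images are disjoint. Thus $h$ is a continuous bijection of the compact Hausdorff space $\overline\D$, hence a homeomorphism.

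For the $C^k$ statement: the inverse of a local $C^k$ diffeomorphism is $C^k$ locally, by the inverse function theorem, and a global inverse exists, so $h|_\D$ is a $C^k$ diffeomorphism onto $\D$.

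\emph{Second case.} Assume $h^{-1}(\Sone)=\Sone$ and $h\circ g=\id_\D$ with $g:\D\to\D$ continuous. Then $g$ is injective. By invariance of domain, $g(\D)$ is open in $\D$ and $g$ is a homeomorphism onto it; on $g(\D)$ the map $h$ equals $g^{-1}$.

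The main step is to show $g(\D)$ is closed in $\D$; connectedness of $\D$ then gives $g(\D)=\D$. Let $z_n=g(w_n)\to z\in\D$. Then $w_n=h(z_n)\to h(z)$, and $h(z)\in\D$ because $h^{-1}(\Sone)=\Sone$. By continuity of $g$, $z_n=g(w_n)\to g(h(z))$, so $z=g(h(z))\in g(\D)$. Hence $g$ is surjective, and $h|_\D=g^{-1}$ is a bijection $\D\to\D$.

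Combined with the bijective trace on $\Sone$, $h$ is a continuous bijection of $\overline\D$ and therefore a homeomorphism, with $g=h^{-1}$ on $\D$.

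The only delicate point is the boundary-exclusion argument in the first case. Monotonicity is used exactly to upgrade the discreteness of fibres to injectivity.
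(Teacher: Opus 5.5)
Your proposal is correct and follows the paper's proof essentially step for step: show $h(\D)\subset\D$, combine monotonicity with discreteness of fibres to get injectivity in the first case, show $g(\D)$ is open and closed in the second, and conclude by compactness and the inverse function theorem. The one step to tighten is the boundary exclusion. Your claim that $h(U)$ contains a full planar neighbourhood of $\zeta$ is exactly invariance of domain applied to $h|_U$, since relative openness in $\overline\D$ alone does not give it; the paper states this step as invariance of domain.
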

\begin{proof}
In the first case invariance of domain gives $h(\D)\subset\D$. Every interior fibre is connected by monotonicity and discrete by local injectivity, hence a singleton. The boundary fibres are singletons as well. In the second case $g$ is injective, so $g(\D)$ is open. If $g(z_j)\to w\in\D$, then $z_j=h(g(z_j))\to h(w)\in\D$ and continuity gives $w=g(h(w))$. Thus $g(\D)$ is also closed in $\D$, hence equals $\D$. In both cases compactness gives a homeomorphism of the closed disk; the inverse function theorem gives the final assertion.
\end{proof}

For $p\geq1$, let $T^p$ denote the circle homeomorphisms admitting a quasiconformal extension $F$ with $\mu_F\in L^p(dA_\eta)$ \cite{Guo,Cui,TT}, and let $\mathcal T_p$ denote those admitting a finite-distortion homeomorphic extension with finite $\I_p$.
\begin{proposition}[Boundary classes and barriers from Part~I]
\label{finite-energy-boundary-moduli}\label{smooth-barrier}\label{finite-distortion-boundary-class}
A finite-distortion homeomorphism $F$ with $\I_p(F)<\infty$, $p\geq1$, extends homeomorphically to the closed disk; its trace is quasisymmetric and symmetric, with modulus bounds depending only on $p$ and $\I_p(F)$ \cite[Proposition 5.9]{MYWP}. For $p>1$, $\mathcal T_p=T^p$: after three-point normalisation, the trace has a quasiconformal extension with $L^p(dA_\eta)$ Beltrami norm bounded in terms of $p$ and $\I_p(F)$ \cite[Theorem E]{MYWP}. Consequently a finite-distortion barrier exists exactly when a quasiconformal barrier exists. Every smooth orientation-preserving circle diffeomorphism admits a smooth quasiconformal extension with $|\mu_F(z)|=O(1-|z|)$, hence finite $\I_p$ for every $p>1$ \cite[Lemma 4.8]{MYWP}.
\end{proposition}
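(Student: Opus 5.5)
The statement collects results of Part~I, so the plan is to invoke them in turn and then check the two short deductions that are stated here but not cited verbatim.

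\textbf{Step 1: citations.} The first sentence is \cite[Proposition 5.9]{MYWP}: finite $\I_p(F)$ bounds the conformal moduli of images of boundary quadrilaterals, which gives quasisymmetry and symmetry of the trace with constants depending only on $p$ and $\I_p(F)$. The identity $\mathcal T_p=T^p$ for $p>1$, together with the quantitative $L^p(dA_\eta)$ bound on the Beltrami coefficient after three-point normalisation, is \cite[Theorem E]{MYWP}. I will quote both.

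\textbf{Step 2: the ``consequently'' clause.} A quasiconformal barrier is in particular a finite-distortion barrier. Conversely, let $F$ be a finite-distortion extension of $\beta^{-1}$ with $\I_p(F)<\infty$. Theorem E yields a quasiconformal extension $G$ with $\|\mu_G\|_\infty\leq k<1$ and $\mu_G\in L^p(dA_\eta)$. Since $R_p(|\mu|^2)\leq(1-k^2)^{-p_*/2}|\mu|^p$ when $|\mu|\leq k$, we get
\[
 \I_p(G)\leq(1-k^2)^{-p_*/2}\int_\D|\mu_G|^p\,dA_\eta<\infty .
\]
Proposition~\ref{finite-distortion-inverse-calculus}(iv) then shows that $G^{-1}\in\mathcal M_\beta$ has finite energy, so it is a quasiconformal barrier. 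The M\"obius normalisation is harmless because $\I_p$ is invariant under disk automorphisms.

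\textbf{Step 3: smooth data.} For a smooth circle diffeomorphism, the smooth extension of \cite[Lemma 4.8]{MYWP} can be taken to be, for instance, a radial or Beurling--Ahlfors-type extension that is smooth up to the boundary. Its Beltrami coefficient vanishes on $\Sone$ and is Lipschitz, so $|\mu_F(z)|\leq C(1-|z|)$ and $\|\mu_F\|_\infty<1$. Then
\[
 \I_p(F)\lesssim\int_\D(1-|z|)^{p-2}\,dA,
\]
which is finite precisely because $p>1$.

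The only point requiring care is the conformality of the extension at the boundary. This is exactly the content of the cited lemma, so I will rely on it rather than reprove it. Beyond that I expect no real obstacle: the work is bookkeeping of which Part~I result supplies each clause.
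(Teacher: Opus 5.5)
Your proposal is correct and follows the paper, which gives no argument beyond citing Part~I for each clause. Your Step~2 comparison $\I_p(G)\leq(1-k^2)^{-p_*/2}\int_\D|\mu_G|^p\,dA_\eta$ is exactly the inequality the paper records later as \eqref{besov-barrier-energy-equivalence}.

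Delete the radial extension from Step~3, because it is a false example. The map $re^{i\theta}\mapsto re^{i\varphi(\theta)}$ has $\mu=e^{2i\theta}(1-\varphi')/(1+\varphi')$, which is constant along rays. So it is not conformal on $\Sone$, and $\I_p=\infty$ unless $\varphi'\equiv1$. Smoothness up to the boundary therefore does not give $|\mu_F|=O(1-|z|)$; that bound is genuinely the content of the cited lemma, as you then acknowledge.
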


We also use the quadratic existence theorem of Part~I:

\begin{theorem}[{\cite[Theorem A]{MYWP}}]\label{quadratic-wp-existence}
Let $\beta$ be an orientation-preserving Weil--Petersson circle homeomorphism. The inverse energy $\E_2$ has a unique minimiser in $\mathcal M_\beta$. This minimiser is a real analytic hyperbolic harmonic diffeomorphism, is globally quasiconformal, and is asymptotically conformal.
\end{theorem}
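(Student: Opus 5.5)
The plan follows the direct method of the calculus of variations, then regularity theory for hyperbolic harmonic maps. In the final step the holomorphic Hopf differential gives the quasiconformality.

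\emph{Existence.} By the identification of Weil--Petersson homeomorphisms with $T^2$, Proposition~\ref{finite-distortion-boundary-class} supplies a quasiconformal extension $F$ of $\beta^{-1}$ with $\I_2(F)<\infty$. Proposition~\ref{finite-distortion-inverse-calculus}(iv) then shows that $F^{-1}\in\mathcal M_\beta$, so the infimum is finite.

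Take a minimising sequence $h_j$. Proposition~\ref{monotone-compactness}(iii) gives a subsequence converging uniformly and weakly in $W^{1,2}$ to a monotone surjection $h$ with trace $\beta$ and $h^{-1}(\Sone)=\Sone$. The same proposition gives containment: for each $C\Subset\D$ the images $h_j(C)$ lie in a fixed compact set. Hence $\eta(h_j)\to\eta(h)$ uniformly on $C$. The integrand $\eta(h)|h_{\bw}|^2$ is convex in the gradient, so it is weakly lower semicontinuous on $C$. Exhausting $\D$ by such sets gives $\E_2(h)\leq\liminf\E_2(h_j)$, and $h$ is a minimiser.

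\emph{Euler--Lagrange equation and regularity.} Locally
\[
\eta(h)|h_{\bw}|^2=\tfrac12\eta(h)\bigl(|h_w|^2+|h_{\bw}|^2\bigr)-\tfrac12\eta(h)J_h .
\]
The Jacobian term $\eta(h)J_h$ is a null Lagrangian, being the pull-back of the area form. Therefore compactly supported outer variations $h+\epsilon\varphi$ show that $h$ is a weakly harmonic map into $(\D,\eta|dz|^2)$ on every compact subset.

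Continuity, together with the compact containment, puts us in the setting of continuous $W^{1,2}$ harmonic maps into a smooth target of nonpositive curvature. Such maps are smooth, and real analytic since the metric is analytic. The Hopf differential $\eta(h)h_w\overline{h_{\bw}}$ is holomorphic.

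For a degree-one harmonic map that is a monotone surjection with homeomorphic boundary trace, the Schoen--Yau/Jost univalence argument applies. The Jacobian satisfies a Bochner-type equation for $\log|h_w|$, and its zeros are isolated of the wrong sign, which forces $J_h>0$. Hence $h$ is a local diffeomorphism, and Lemma~\ref{local-to-global} makes it a global real-analytic diffeomorphism.

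\emph{Quasiconformality and asymptotic conformality.} Set $\Phi=\eta(h)h_w\overline{h_{\bw}}$ and $|\mu_h|=|h_{\bw}/h_w|$. Wan's Bochner identity for $w=\log(|h_w|^2\eta(h)/\eta)$ is a scalar equation of the form $\Delta w=c\,\eta(e^w-|\Phi|^2\eta^{-2}e^{-w})$. Comparing with constants via the maximum principle at interior maxima gives a pointwise bound on $|\mu_h|$ in terms of $\sup|\Phi|\eta^{-1}$.

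Finite energy shows $\int|\Phi|^2\eta^{-1}\,dA<\infty$. Mean-value estimates on hyperbolic balls of fixed radius then make $|\Phi|\eta^{-1}$ bounded and tending to $0$ at $\Sone$. This yields $\sup|\mu_h|<1$ and $|\mu_h|\to0$ at the boundary, so $h$ is quasiconformal and asymptotically conformal. I expect this step to be the main technical obstacle. The difficulty is obtaining a localised version of Wan's estimate that uses only the finiteness of the energy near the boundary, rather than an a priori global bound.

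\emph{Uniqueness.} Suppose $h_0,h_1$ are two minimisers. Both are diffeomorphisms with the same trace. Join them by the hyperbolic geodesic homotopy $h_t$. The energy $\E_2$ differs from the hyperbolic Dirichlet energy by the Jacobian term, which depends only on boundary data; this needs justification, and I would do it by exhausting $\D$ and controlling boundary fluxes using quasiconformality. Granting this, $t\mapsto\E_2(h_t)$ is convex. It is strictly convex unless the distance $d(h_0,h_1)$ is constant and the maps differ by a parallel translation, which is impossible in a hyperbolic target with fixed boundary values.
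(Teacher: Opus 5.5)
The direct-method existence step is fine. At $p=2$ the density $|b|^2$ is already convex in the gradient, so the argument of Theorem~\ref{monotone-existence} goes through without the Jacobian correction.

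\textbf{Main gap: the Euler--Lagrange step.} The limit $h$ is only a continuous monotone $W^{1,2}$ map with $J_h\geq0$. At this stage nothing excludes collapsed regions, nontrivial fibres or nonzero rank-one gradients (compare the caveat at the end of Theorem~\ref{monotone-existence}). Outer variations $h+\epsilon\varphi$ are then not admissible in $\mathcal M_\beta$:
\begin{itemize}
\item where $Dh=0$, one has $J_{h+\epsilon\varphi}=\epsilon^2J_\varphi$, which is negative for both signs of $\epsilon$ once $J_\varphi<0$;
\item where $Dh$ has rank one, the first-order term of $J_{h+\epsilon\varphi}$ changes sign with $\epsilon$;
\item monotonicity can also be lost.
\end{itemize}
So minimality in $\mathcal M_\beta$ gives no first-order condition from these variations, and the null-Lagrangian remark does not address this. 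The variations that do preserve the class are of two kinds. Changes of domain give only holomorphy of $\Phi=\eta(h)h_w\overline{h_{\bw}}$. Changes of target $(\id+t\varphi)\circ h$ test the harmonic-map equation only against fields $\varphi\circ h$, which are constant on the fibres of $h$.

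Holomorphy of $\Phi$ gives harmonicity only for a $C^2$ map and only where $J_h\neq0$. This is because $\partial_{\bw}\Phi=\eta(h)\bigl(\tau\overline{h_{\bw}}+h_w\overline\tau\bigr)$, with $\tau$ the tension. The logic therefore has to run the other way. You first need a locally injective, regular minimiser; only then are harmonicity, real analyticity and the univalence argument available. In this paper that is the job of the $\lambda$-regularisation of Section~\ref{lambda-programme} for $p<2$: its minimisers are quasiconformal, so target variations become domain variations of the forward map. The case $p=2$ is deferred to Part~I. Your smoothness, univalence and diffeomorphism claims all rest on the unproved weak harmonicity.

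\textbf{Quasiconformality step.} The claim that finite energy gives $\int_\D|\Phi|^2\eta^{-1}\,dA<\infty$ is circular. Since $|\Phi|^2\eta^{-1}=\eta(h)|h_{\bw}|^2\cdot\eta(h)|h_w|^2\eta^{-1}$, you need the hyperbolic Lipschitz bound $\eta(h)|h_w|^2\lesssim\eta$. The paper derives that bound from quasiconformality (Lemma~\ref{qc-stationary-hyperbolic-bilip} and Corollary~\ref{reference-weighted-hopf-bound}), not before it. The localisation you flag as the main obstacle is supplied by Lemma~\ref{escaping-normalisations} and Proposition~\ref{subquadratic-asymptotic}, which includes $p=2$:
\begin{itemize}
\item in escaping M\"obius normalisations the local energy tends to zero;
\item the normalised Hopf differentials then tend to zero by H\"older's inequality against that local energy, using only the Dirichlet bound;
\item the curvature equation $-\operatorname{div}(\mathsf A_2\nabla r)=4|\Phi|(1-r^2)/r$, with $\mathsf A_2=I/(2r)$, bounds $(r-k)_+$ locally by Moser iteration.
\end{itemize}
Hence $|\mu_h|\to0$ at $\Sone$ with no global bound on $|\Phi|/\eta$.

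\textbf{Uniqueness step.} Your geodesic-homotopy argument assumes that every minimiser in $\mathcal M_\beta$ is a diffeomorphism, which again rests on the missing step. It also subtracts two infinite Jacobian integrals. The paper instead uses the weighted Reich--Strebel comparison, Theorem~\ref{absolute-monotone-comparison}, which holds at $p=2$ because $P_2(t)=(t-1)^2/(4t)$ is strictly convex. It gives $\E_2(H)-\E_2(h)\geq\mathcal R_\Phi(h^{-1}\circ H)\geq0$ for every $H\in\mathcal M_\beta$, including rank-one competitors, with equality only for $H=h$.
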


\subsection{Inner variations and the weighted Ahlfors--Hopf differential}\label{inner-variations}
Let $p>1$. Suppose $H\in W^{1,2}(\ID)$ is a homeomorphism of finite distortion with $J_H>0$ almost everywhere, $\mathcal E_p(H)<\infty$, and is stationary under compactly supported smooth changes of domain. Every minimiser in this class is stationary under these variations.

For $\varphi\in C_c^\infty(\ID,\IC)$ set $g_t(w)=w+t\varphi(w)$ and
\[
 H_t=H\circ g_t^{-1}.
\]
For sufficiently small real $t$, $g_t$ is a diffeomorphism with
\[
 J_{g_t}=|1+t\varphi_w|^2-t^2|\varphi_{\bar w}|^2>0.
\]
After changing variables by $g_t$, the factors $\eta(H)$ and $J_H$ in \eqref{inverseenergy} are fixed. The chain rule gives, with $s=|\mu_H|^2$,
\[
 s_t:=|\mu_{H_t}(g_t(w))|^2
 =\frac{|\mu_H(1+t\varphi_w)-t\varphi_{\bar w}|^2}
 {|1+t\varphi_w-\overline{\mu_H}t\varphi_{\bar w}|^2},
\]
and hence
\[
 \dot s_0=-2(1-s)\operatorname{Re}(\overline{\mu_H}\varphi_{\bar w}).
\]
Define
\begin{equation}\label{ap}
 a_p(s)=(1-s)^2R_p'(s)=
 \begin{cases}
 s^{p/2-1}\bigl[\frac p2+(1-\frac p2)s\bigr],&1<p\leq2,\\[2pt]
 \frac p2\bigl(\frac{s}{1-s}\bigr)^{p/2-1},&p\geq2.
 \end{cases}
\end{equation}
Although $a_p$ itself diverges at zero when $1<p<2$, the product below extends by zero there, since its magnitude vanishes like $|\mu_H|^{p-1}$. Differentiation yields
\begin{equation}\label{variation}
 \left.\frac{d}{dt}\mathcal E_p(H_t)\right|_{t=0}
 =-2\operatorname{Re}\int_\ID\Phi_p\varphi_{\bar w}\,dA,
 \qquad
 \Phi_p=\eta(H)a_p(s)H_w\overline{H_{\bar w}}.
\end{equation}
Here the differentiation can be justified locally by an integrable bound. For $p\geq2$, the transformed ratio $|H_{t,\bar w}|^2/J_{H_t}$ and its $t$-derivative are bounded, after the change of variables, by a constant times $1+|b|^2/J$. The differentiated density is therefore controlled on the variation support by $C\eta(H)(J+J R_p(s))$. For $1<p<2$, use \eqref{inversedensity}: its derivative with respect to $(a,b)$ is bounded by $C(|a|+|b|)$ on $|b|\leq|a|$, including $b=0$. Multiplying by the derivative of the transformed gradient gives a bound $C\eta(H)(|a|^2+|b|^2)$. The support is compact, $\eta(H)$ is bounded there, and $H\in W^{1,2}$.

\begin{theorem}[Conditional variational conclusion]\label{hopf}
Under the preceding hypotheses, $\Phi_p\in L^1_{\loc}(\ID)$ is holomorphic. In particular, for $p\geq2$, the coefficient
\begin{equation}\label{Phifinal}
 \eta(H)\left(\frac{|\mu_H|^2}{1-|\mu_H|^2}\right)^{p/2-1}
 H_w\overline{H_{\bar w}}
\end{equation}
is holomorphic.
\end{theorem}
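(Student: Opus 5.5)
The argument is the standard Weyl-lemma route. First I establish $\Phi_p\in L^1_{\loc}$, then show that the first variation \eqref{variation} vanishes for every $\varphi\in C_c^\infty(\D,\IC)$, and finally conclude holomorphy from the weak Cauchy--Riemann equation.

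\emph{Local integrability.} Write $a=H_w$, $b=H_{\bar w}$ and $s=|b|^2/|a|^2$ where $a\neq0$.

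For $1<p<2$, $a_p(s)=s^{p/2-1}[\tfrac p2+(1-\tfrac p2)s]$ gives
\[
|a_p(s)\,a\bar b|\leq C\,s^{(p-1)/2}|a|^2\leq C|a|^2 .
\]
This extends by zero where $b=0$, and also where $Dh=0$. On a compact $C\Subset\D$ the factor $\eta(H)$ is bounded, because $H(C)$ lies in a compact subset of $\D$ (Proposition~\ref{monotone-compactness}(iii), or simply continuity of $H$ with $H(\D)\subset\D$). Since $H\in W^{1,2}$, this gives $\Phi_p\in L^1(C)$.

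For $p\geq2$, use $|a|^2=J/(1-s)$ and $|b|^2=sJ/(1-s)$. Then
\[
|\Phi_p|=\eta(H)\,\tfrac p2\Bigl(\tfrac{s}{1-s}\Bigr)^{p/2-1}\frac{\sqrt s\,J}{1-s}.
\]
Split into two regions:
\begin{itemize}
\item Where $s\leq\tfrac12$, this is bounded by $C\eta(H)J$.
\item Where $s>\tfrac12$, it is bounded by $C\eta(H)J\,(s/(1-s))^{p/2}=C\eta(H)J R_p(s)$.
\end{itemize}
Both $\int_C J$ and $\int\eta(H)JR_p(s)=\E_p(H)$ are finite, so again $\Phi_p\in L^1_{\loc}$.

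\emph{Vanishing of the first variation.} The domain variations $H_t=H\circ g_t^{-1}$ stay in the admissible class by hypothesis, and $t=0$ is a critical point. By the justification given just before the statement, $t\mapsto\E_p(H_t)$ is differentiable at $0$, with derivative given by \eqref{variation}: the difference quotients are dominated by $C\eta(H)(|a|^2+|b|^2)$, respectively by $C\eta(H)(J+JR_p(s))$, on $\operatorname{supp}\varphi$. Hence
\[
\operatorname{Re}\int_\D\Phi_p\varphi_{\bar w}\,dA=0 .
\]
Replacing $\varphi$ by $i\varphi$ kills the imaginary part too, so $\int\Phi_p\varphi_{\bar w}\,dA=0$ for all test functions. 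Thus $\partial_{\bar w}\Phi_p=0$ in the sense of distributions.

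\emph{Holomorphy and the formula for $p\geq2$.} Weyl's lemma for $\bar\partial$ says an $L^1_{\loc}$ distributional solution of $\partial_{\bar w}\Phi=0$ agrees a.e. with a holomorphic function; mollification commutes with $\bar\partial$ and converges in $L^1_{\loc}$. This gives holomorphy of $\Phi_p$. For $p\geq2$, $a_p(s)=\tfrac p2(s/(1-s))^{p/2-1}$ with $s=|\mu_H|^2$, so $\Phi_p$ equals $\tfrac p2$ times \eqref{Phifinal}. That coefficient is therefore holomorphic as well.

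\emph{Main obstacle.} The delicate step is the differentiation under the integral for $p\geq2$ near the locus where $s\to1$. The bound on $\partial_t$ of the transformed ratio must be uniform in small $t$. I expect to get this from the explicit formula for $s_t$: its denominator $|1+t\varphi_w-\bar\mu t\varphi_{\bar w}|^2$ stays bounded below for small $|t|$ since $|\mu|<1$. In addition, $1-s_t$ is comparable to $1-s$, because $\mu\mapsto s_t$ is a Möbius-type map of the disk depending smoothly on $t$. With these two facts, dominated convergence applies.
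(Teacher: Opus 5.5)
Your proposal is correct and follows essentially the same route as the paper: a local $L^1$ bound for $\Phi_p$ from the split $s\le 1/2$ versus $s>1/2$, then stationarity tested with $\varphi$ and $i\varphi$, then Weyl's lemma and the constant factor $p/2$. The only differences are minor. For $1<p<2$ you bound by $C|H_w|^2$ and use $H\in W^{1,2}$, whereas the paper uses the single bound $C_p\eta(H)J_H(1+R_p(s))$ for all $p$. Your remark that $1-s_t$ is comparable to $1-s$, because $\mu_{H_t}\circ g_t$ depends on $\mu_H$ through a disk automorphism, correctly spells out the domination the paper asserts just before the statement.
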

\begin{proof}
On compact subsets,
\[
 |\Phi_p|\leq C_p\eta(H)J_H\bigl(1+R_p(s)\bigr),
\]
by splitting into $s\leq1/2$ and $s>1/2$. Thus it is locally integrable. Stationarity and the replacement of $\varphi$ by $i\varphi$ give $\int\Phi_p\varphi_{\bar w}\,dA=0$. Weyl's lemma implies holomorphy. Formula~\eqref{Phifinal} differs by the nonzero constant $p/2$.
\end{proof}

At $p=2$, \eqref{Phifinal} is the hyperbolic Hopf differential. At $p=1$ the density is not differentiable at $H_{\bar w}=0$ so an endpoint treatment needs a subgradient or variational inequality there. We expect there are no ordinary Euler--Lagrange equations there.

Theorem~\ref{hopf} concerns stationary homeomorphisms. Theorem~\ref{monotone-existence} proves existence and the same Hopf equation for a monotone inverse minimiser when $p_{\mathrm c}\leq p\leq2$. Below $p_{\mathrm c}$, Proposition~\ref{lower-range-boundary-obstruction} prevents an unrestricted diffeomorphic-minimiser conclusion.
\section{Convexity and the threshold \texorpdfstring{$p_{\mathrm c}$}{p-c}}\label{convexity}
The direct method for $p\geq2$ rests on convexity of the inverse density in the derivative entries and the determinant. This section determines exactly when that convexity is available for $p<2$, and shows what fails below the threshold.

\subsection{Polyconvexity of the inverse density}
For fixed target value and $p\geq2$, the function
\[
 (b,J)\longmapsto |b|^p J^{1-p/2},\qquad J>0,
\]
is convex. To see this, set $r=p/2\geq1$: $t^rJ^{1-r}$ is the perspective of the convex function $t^r$, is increasing in $t\geq0$, and may be composed with $t=|b|^2$. This gives the relevant convexity in derivative entries and the determinant. It does not by itself establish compactness in the homeomorphism class or the existence of a minimiser.

For $1\leq p<2$ the inverse density is different. The choice $p_*=2$ does \emph{not} guarantee polyconvexity throughout this range. The distinction can be checked directly in distortion variables. With the norm convention \eqref{db}, the function
\[
 (A,t)\longmapsto\frac{\|A\|^2}{t},\qquad t>0,
\]
is convex, so $\K(A)=\|A\|^2/\det A$ is polyconvex. However, with $r=|\mu|$, the present forward density is
\begin{equation}\label{distortion-composition-test}
 \frac{r^p}{1-r^2}=\Psi_p(\K),\qquad
 \Psi_p(S)=\frac12(S-1)^{p/2}(S+1)^{1-p/2},\qquad
 \frac{\Psi_p''(S)}{\Psi_p(S)}
 =\frac{p(p-2)}{(S^2-1)^2}.
\end{equation}
Thus $\Psi_p$ is increasing but strictly concave for $1<p<2$, and the convex nondecreasing composition rule cannot be applied. This differs from the convex outer function $S\mapsto S^m$, $m>1$, in the $L^m$-distortion problem. For that problem the inverse density $\|A\|^{2m}/t^{m-1}$ is a convex perspective. Here it is instead $t\Psi_p(\|A\|^2/t)$ on $t=\det A$. Concavity of $\Psi_p$ alone does not of course disprove polyconvexity; the rank-one test below decides the lower range.

On diagonal matrices $\mathrm{diag}(\lambda,\nu)$, $\lambda>\nu>0$, the inverse derivative density is
\[
 W_p(\lambda,\nu)=\frac{(\lambda-\nu)^p(\lambda+\nu)^{2-p}}4.
\]
For $p=1$, $W_1=(\lambda^2-\nu^2)/4$, which is concave in the rank-one direction varying $\nu$. Even away from the endpoint,
\begin{equation}\label{rankone}
 \left.\frac{d^2}{d\nu^2}W_{3/2}(3,\nu)\right|_{\nu=1}
 =-\frac{1}{16\,2^{3/2}}<0.
\end{equation}
Thus the direct-method argument for $p\geq2$ cannot simply be carried over to all smaller exponents.

This is a test of polyconvexity, not merely convexity in the matrix entries: along $A(\nu)=\operatorname{diag}(3,\nu)$, both $A(\nu)$ and $\det A(\nu)=3\nu$ depend affinely on $\nu$. A density convex in the minors must therefore have a convex restriction to this path. Equation~\eqref{rankone} violates that necessary condition from Ball's framework \cite{BallPolyconvex}. Multiplying the density by a positive frozen target weight, whether hyperbolic, bounded or constant, preserves the negative sign.

There is nevertheless a positive result in the upper subquadratic range. Set
\begin{equation}\label{convexity-threshold}
 p_{\mathrm c}=1+\frac1{\sqrt2},\qquad
 c_p=\frac{p-1}{2},\qquad \ell_p=2(p-1)^2-1.
\end{equation}
We regard $a,b\in\IC$ as four real gradient variables, with $J(a,b)=|a|^2-|b|^2$.

\begin{lemma}[Convexity after adding the Jacobian]\label{jacobian-convexification}
For $p_{\mathrm c}\leq p\leq2$, define
\begin{equation}\label{convex-extension}
 G_p(a,b)=
 \begin{cases}
 |a|^{2-p}|b|^p+c_p(|a|^2-|b|^2),&|b|\leq|a|,\\[2pt]
 \tfrac12(|a|^2+|b|^2),&|b|\geq|a|,
 \end{cases}
 \qquad G_p(0,0)=0.
\end{equation}
Then $G_p$ is a strictly convex $C^1$ function on $\IC^2$, of quadratic growth. More precisely,
\[
 G_p(a,b)-\frac{\ell_p}{2}(|a|^2+|b|^2)
\]
is convex. Consequently, on $J\geq0$, the inverse density satisfies
\begin{equation}\label{polyconvex-inverse}
 W_p(a,b)=|a|^{2-p}|b|^p=G_p(a,b)-c_pJ(a,b).
\end{equation}
Thus $W_p$ is the restriction of a polyconvex density and quadratic convexity constant is positive for $p>p_{\mathrm c}$; strict convexity also holds at $p=p_{\mathrm c}$.
\end{lemma}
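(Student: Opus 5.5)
The approach is to reduce everything to a one-variable inequality by exploiting that $G_p$ depends only on the moduli $x=|a|$, $y=|b|$. A function $f(a,b)=g(|a|,|b|)$ on $\IC^2\cong\IR^4$ is convex as soon as $g$ is convex on the closed quadrant $[0,\infty)^2$ and nondecreasing in each variable. The reason is that $(a,b)\mapsto(|a|,|b|)$ is componentwise convex, and a convex function that is nondecreasing in each argument preserves convexity under such a composition.

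The same reduction applies to $G_p-\tfrac{\ell_p}2(|a|^2+|b|^2)$. Its profile is $g-\tfrac{\ell_p}2(x^2+y^2)$, and the convexity of the profile will in particular be checked. We also need monotonicity: for $G_p$ itself, monotonicity follows from the homogeneous formulas below. For the shifted function we do not need monotonicity of the shift: its profile is a convex function of $(x,y)$ which is nondecreasing after adding back the convex increasing term $\tfrac{\ell_p}2(x^2+y^2)$. A cleaner route is therefore to verify convexity of the shifted profile together with nonnegativity of its partial derivatives, which I will check with the same $\psi$ below.

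The profile is homogeneous of degree two, so write $g(x,y)=x^2\varphi(t)$ with $t=y/x$. On $t\le1$,
\[
\varphi(t)=t^p+c_p(1-t^2),
\]
and on $t\ge1$,
\[
\varphi(t)=\tfrac12(1+t^2).
\]
The $C^1$ matching at $t=1$ is immediate: $\varphi(1)=1$, and $\varphi'(1)=p-2c_p=1$, which agrees with the derivative of $\tfrac12(1+t^2)$ there. At $t=0$ the term $t^p$ with $p>1$ is $C^1$. For a degree-two homogeneous function one computes
\[
g_{yy}=\varphi'',\quad g_{xy}=\varphi'-t\varphi'',\quad g_{xx}=2\varphi-2t\varphi'+t^2\varphi'',\quad \det D^2g=2\varphi\varphi''-\varphi'^2 .
\]
Hence convexity on each open sector reduces to two conditions: $\varphi''\ge0$ and $2\varphi\varphi''\ge\varphi'^2$. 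Monotonicity reduces to $\varphi'\ge0$ and $2\varphi-t\varphi'\ge0$. A $C^1$ function that is convex on each of the two closed sectors $\{y\le x\}$ and $\{y\ge x\}$ is convex, since along any line its derivative is piecewise nondecreasing and continuous.

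Apply this to $\psi(t)=\varphi(t)-\tfrac{\ell_p}2(1+t^2)$. On $t\ge1$ this is $\tfrac{1-\ell_p}2(1+t^2)$, which is fine because $\ell_p\le1$ for $p\le2$. On $[0,1]$ one has
\[
\psi''(t)=p(p-1)t^{p-2}-(p-1)-\ell_p .
\]
This is decreasing in $t$, and at $t=1$ it equals $1-(p-1)^2>0$, so $\psi''>0$ on $[0,1]$. At $t=1$ we have $\psi=\psi'=1-\ell_p$, and the determinant condition becomes $2(1-\ell_p)(1-(p-1)^2)\ge(1-\ell_p)^2$. This holds with equality, because $1-\ell_p=2-2(p-1)^2$. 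So the whole content is the inequality
\[
D(t):=2\psi\psi''-\psi'^2\ge0\quad\text{on }(0,1],\qquad D(1)=0 .
\]

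This inequality is the main obstacle. I would first examine the behaviour as $t\to0$: there $\psi\to c_p-\ell_p/2>0$ and $\psi''\sim p(p-1)t^{p-2}\to\infty$, while $\psi'$ stays bounded, so $D\to+\infty$. Next I would show that $D$ is decreasing, or at least has no interior zero, on $(0,1)$. Since $D'=2\psi\psi'''$, it suffices that $\psi>0$ and $\psi'''=p(p-1)(p-2)t^{p-3}\le0$. Both hold for $1<p\le2$, and $\psi>0$ follows from $\psi(0)>0$ together with convexity and the values at $1$. This gives $D\ge D(1)=0$, and it is exactly at this point that $p\ge p_{\mathrm c}$ is used, through $\ell_p\ge0$ in $c_p-\ell_p/2>0$ and in the endpoint computations.

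Strict convexity of $G_p$ then follows by adding back the strictly convex term $\tfrac{\ell_p}2(|a|^2+|b|^2)$ when $p>p_{\mathrm c}$. At $p=p_{\mathrm c}$, where $\ell_p=0$, I would argue directly. For $p<2$, $D'<0$ on $(0,1)$ gives $D>0$ on $(0,1)$, so the Hessian is definite off the cone $|a|=|b|$. On $|b|\ge|a|$ the function is $\tfrac12$ times the squared norm. A segment on which a $C^1$ convex function is affine would have to lie inside these degenerate sets, and this forces it to be trivial. Finally, quadratic growth is clear from the bounds $\tfrac{\ell_p}2\le\varphi/(1+t^2)\le1$, and \eqref{polyconvex-inverse} is just the definition of $G_p$ on $J\ge0$.
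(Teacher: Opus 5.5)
Your argument is correct in substance, but you reach the central Hessian estimate by a different route from the paper.

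\textbf{The paper's route.} It works with $g_p(x,y)=x^{2-p}y^p+\tfrac{p-1}{2}(x^2-y^2)$ on $0<y<x$ and records three entrywise bounds: $(g_p)_{xx}\geq(p-1)^2$, $(g_p)_{yy}\geq(p-1)^2$ and $0\leq(g_p)_{xy}\leq p(2-p)$. A Gershgorin-type bound then gives $D^2g_p\geq\bigl((p-1)^2-p(2-p)\bigr)I=\ell_pI$. The formulas $(g_p)_x/x=(2-p)r^p+p-1$ and $(g_p)_y/y=pr^{p-2}-(p-1)$ give monotonicity of the shifted profile from $0\leq\ell_p\leq p-1\leq1$. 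The same two quantities are the tangential eigenvalues used for strict convexity at $p_{\mathrm c}$.

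\textbf{Your route.} You use degree-two homogeneity to reduce to the single function $\psi$, then the identity $D'=2\psi\psi'''$ together with $D(1)=0$. This exposes a bit more structure. The shifted radial Hessian degenerates exactly at $|a|=|b|$, so $\ell_p$ is sharp. Also, $D'<0$ gives definiteness of the radial block at $p_{\mathrm c}$ at once. The price is that you need $\psi>0$, which the entrywise bounds never require.

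\textbf{Points that need tightening.}

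(i) Set $p=2$ aside. There $G_2=\tfrac12(|a|^2+|b|^2)$ and $\psi\equiv0$, so both $\psi''(1)>0$ and $\psi(0)>0$ fail.

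(ii) Convexity and the values at $t=1$ do not by themselves give $\psi>0$. Use instead $\psi'(t)=t\bigl(pt^{p-2}-(p-1)-\ell_p\bigr)\geq(1-\ell_p)t\geq0$ together with $\psi(0)=\tfrac12(2-p)(2p-1)>0$.

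(iii) You announce, but never check, monotonicity of the shifted profile, and the composition with $(|a|,|b|)$ genuinely needs it. On $[0,1]$ one has $\psi'(t)/t\geq1-\ell_p$ and $2\psi-t\psi'=(2-p)t^p+(p-1)-\ell_p$. On $[1,\infty)$ both $\psi'(t)/t$ and $2\psi-t\psi'$ equal $1-\ell_p$. All of these are nonnegative because $\ell_p\leq p-1\leq1$.

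(iv) These two quantities are precisely the tangential eigenvalues of the full Hessian on $\IC^2$. So your claim that the Hessian is definite off the cone at $p_{\mathrm c}$ needs them, and they are strictly positive there. Your segment argument must also treat segments lying inside $\{b=0\}$. There $G_p$ is not $C^2$, but it restricts to the strictly convex $c_p|a|^2$.

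(v) Your $D\geq0$ argument does not use $p\geq p_{\mathrm c}$ at all, since $\psi(0)>0$ for every $1<p<2$. Like the paper's bound, it proves convexity of $G_p-\tfrac{\ell_p}{2}(|a|^2+|b|^2)$ for all $1<p\leq2$. The threshold enters only when you add back $\tfrac{\ell_p}{2}(|a|^2+|b|^2)$, which requires $\ell_p\geq0$. It does not enter through $c_p-\ell_p/2>0$ or the endpoint computations, as you suggest.
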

\begin{proof}
Write $x=|a|$, $y=|b|$, $d=p-1$, $q=2-p$, and first take $0<r=y/x<1$. For
\[
 g_p(x,y)=x^{2-p}y^p+\tfrac d2(x^2-y^2)
\]
the scalar Hessian satisfies
\begin{align*}
 (g_p)_{xx}&=d(1-q r^p)\geq d^2,\\
 (g_p)_{yy}&=d(p r^{p-2}-1)\geq d^2,\\
 0\leq(g_p)_{xy}&=pq r^{p-1}\leq pq.
\end{align*}
Hence $D^2g_p\geq(d^2-pq)I=\ell_p I$. Its first derivatives obey
\[
 \frac{(g_p)_x}{x}=q r^p+d\geq d,
 \qquad
 \frac{(g_p)_y}{y}=p r^{p-2}-d\geq1.
\]
On $y=x$, the value is $x^2$ and the gradient is $(x,x)$. These match the exterior expression $(x^2+y^2)/2$. The derivatives extend continuously at $y=0$ for $p>1$ and at the origin by homogeneity. Restricting to line segments across the interface therefore shows convexity on the whole nonnegative quadrant, also after subtracting $\ell_p(x^2+y^2)/2$.

Since $0\leq\ell_p\leq d\leq1$, this corrected scalar function is nondecreasing in each coordinate. Composing it with the two Euclidean norms preserves convexity, by the triangle inequality followed by coordinatewise monotonicity and scalar convexity. This proves the assertion on all of $\IC^2$, not just on the nonconvex set $|b|\leq|a|$. Quadratic growth and the $C^1$ assertion follow directly from the formulas.

It remains to check strict convexity at the critical exponent. Here $d^2=pq=1/2$, and for $0<r<1$ the three Hessian bounds above are strict: $(g_p)_{xx}>1/2$, $(g_p)_{yy}>1/2$, and $(g_p)_{xy}<1/2$. The radial Hessian is therefore positive definite. The two tangential eigenvalues $(g_p)_x/x$ and $(g_p)_y/y$ are also positive. Hence the full Hessian of $G_p$ is positive definite on $0<|b|<|a|$; on $|b|>|a|$ it is the identity. Along a line segment, each of $|b|^2=0$ and $|a|^2-|b|^2=0$ holds either identically or at only finitely many parameter values. If neither holds identically, every nonconstant segment has a subsegment on which the Hessian is positive definite, so convexity precludes an affine restriction to the whole segment. If one holds identically, the restriction is respectively $c_p|a|^2$ or $(|a|^2+|b|^2)/2$, a strictly convex quadratic on that nonconstant segment. This proves strict convexity at $p=p_{\mathrm c}$ without a positive quadratic lower bound.
\end{proof}

The correction does not make $\Psi_p(\K)$ convex as a function of $\K$ as that function remains concave for $p<2$. It does give the different convexity in gradient variables and the Jacobian needed for the direct method. Section~\ref{direct-existence} handles the transported weight $\eta(h)$ by localisation. No subtraction of two infinite hyperbolic integrals is involved. The closed scalar forward equation in Section~\ref{forward-scalar-equation} confirms the same threshold directly in its principal eigenvalues.

\begin{corollary}[Sharp polyconvexity range for the inverse density]\label{sharp-polyconvexity-range}
For $1<p\leq2$, the density $W_p(a,b)=|a|^{2-p}|b|^p$ on $\det A>0$ is polyconvex if and only if $p\geq1+1/\sqrt2$.
\end{corollary}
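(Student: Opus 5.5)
The plan is to prove the two implications separately. Sufficiency comes from Lemma~\ref{jacobian-convexification}; necessity comes from a one-parameter rank-one test along diagonal matrices, generalising \eqref{rankone}, whose sign turns out to be governed by the polynomial $H_p$ of \eqref{flux-functions}.

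\emph{Sufficiency.} Let $p_{\mathrm c}\leq p\leq2$. By Lemma~\ref{jacobian-convexification}, $G_p$ is convex on $\IC^2\cong\IR^{2\times2}$. By \eqref{polyconvex-inverse}, on $\det A>0$ we have $W_p(A)=G_p(A)-c_p\det A$. The function $(A,t)\mapsto G_p(A)-c_pt$ is convex in $(A,t)$, being convex plus affine. Hence $W_p$ is the restriction of a convex function of $(A,\det A)$, which is polyconvexity. At $p=2$ the density is $|b|^2$, which is convex directly, and the lemma also covers this case.

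\emph{Necessity.} Suppose $W_p=\Gamma(A,\det A)$ on $\det A>0$ with $\Gamma$ convex. Consider any segment $\nu\mapsto A(\nu)=\operatorname{diag}(\lambda,\nu)$ with $\lambda>\nu>0$ fixed in $\lambda$. Both $A(\nu)$ and $\det A(\nu)=\lambda\nu$ are affine in $\nu$, and the segment stays in $\{\det A>0\}$. So $\nu\mapsto W_p(A(\nu))$ must be convex on $(0,\lambda)$.

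Now compute. Put $x=\lambda+\nu$ and $y=\lambda-\nu$, so that $|a|=x/2$, $|b|=y/2$ and $W_p=\tfrac14 y^px^{2-p}$, with $dx/d\nu=1$ and $dy/d\nu=-1$. Differentiating twice and factoring out $x^{2-p}y^{p-2}$ gives
\[
 \frac{d^2}{d\nu^2}W_p(A(\nu))=\tfrac14x^{2-p}y^{p-2}\bigl[p(p-1)-2p(2-p)r-(2-p)(p-1)r^2\bigr]=\tfrac14x^{2-p}y^{p-2}H_p(r),
\]
where $r=y/x=|b|/|a|\in(0,1)$. At $p=3/2$, $\lambda=3$, $\nu=1$ this recovers the negative value in \eqref{rankone}.

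For $1<p<p_{\mathrm c}$, $H_p$ is strictly decreasing on $[0,1]$ with $H_p(1)=2\ell_p<0$. Hence $H_p<0$ on $(r_{\mathrm c},1]$. Choosing $\nu$ close to $\lambda$ makes $r$ small, but the relevant range is $r$ close to $1$. So I choose $\nu$ near $0$: then $r=(\lambda-\nu)/(\lambda+\nu)\to1$ and $r>r_{\mathrm c}$. There the second derivative is strictly negative, which contradicts convexity. Thus $W_p$ is not polyconvex for $p<p_{\mathrm c}$.

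The main point to get right is this identification of the rank-one second derivative with $H_p(r)$. Everything else is bookkeeping. In particular, polyconvexity on the nonconvex cone $\det A>0$ only forces convexity along segments that stay inside the cone, and the diagonal segments used here do.
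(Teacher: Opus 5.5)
Your proposal is correct and follows the paper's argument. Sufficiency comes from Lemma~\ref{jacobian-convexification} via $W_p=G_p-c_pJ$ on $J\geq0$. Necessity comes from the rank-one diagonal path along which both $A$ and $\det A$ are affine; its second derivative $\tfrac14x^{2-p}y^{p-2}H_p(r)$ is negative for $r$ near $1$ when $p<p_{\mathrm c}$. The paper normalises to $A(\nu)=\operatorname{diag}(1+r,\nu)$ at $\nu=1-r$, which is your computation with $x=2$ and gives $\tfrac14r^{p-2}H_p(r)$.
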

\begin{proof}
Sufficiency is Lemma~\ref{jacobian-convexification}. For necessity, fix $0<r<1$ and differentiate along the rank-one path $A(\nu)=\operatorname{diag}(1+r,\nu)$ at $\nu=1-r$. Direct differentiation gives
\[
 \left.\frac{d^2}{d\nu^2}W_p(A(\nu))\right|_{\nu=1-r}
 =\frac14r^{p-2}
 \bigl[p(p-1)-2p(2-p)r-(p-1)(2-p)r^2\bigr].
\]
At $r=1$ the bracket is $2[2(p-1)^2-1]$. For $p<1+1/\sqrt2$ it is negative, and remains negative at some $r<1$, where the matrix has positive determinant. Polyconvexity is therefore impossible in that range.
\end{proof}

\begin{remark}[Equal leading growth and a negative second variation]\label{rankone-growth-versus-stability}
The common linear growth in distortion does not imply the same variational stability. Put $\rho_p(r)=r^p/(1-r^2)$. For $1<p<2$, concavity of $u^{p/2}$ and its tangent at $u=1$ give
\begin{equation}\label{bounded-forward-density-difference}
 0\leq\rho_p(r)-\rho_2(r)\leq\frac{2-p}{2},\qquad
 \lim_{r\uparrow1}\bigl(\rho_p(r)-\rho_2(r)\bigr)=\frac{2-p}{2},
\end{equation}
so $\rho_p(r)\sim\rho_2(r)\sim\K/2$ as $r\uparrow1$, while near $r=0$ the ratio $\rho_p/\rho_2=r^{p-2}$ diverges. On the inverse side, along the rank-one path $A_t=\operatorname{diag}(1,t)$, $r=(1-t)/(1+t)$,
\[
 W_p(A_t)=\tfrac14(1-t)^p(1+t)^{2-p}
 =\frac14-\frac{p-1}{2}t+\frac{\ell_p}{4}t^2+O(t^3):
\]
The quadratic coefficient $\ell_p/4$ is negative exactly for $1<p<p_{\mathrm c}$, despite the common value at the limiting rank-one matrix.

\end{remark}
\subsection{Why compactness does not settle the lower range}\label{lower-range-energy-defect}
The obstruction below $p_{\mathrm c}$ is more specific than absence of an available convexity proof. With $H_p$ as in \eqref{flux-functions},
\[
 \left.\frac{d^2}{d\nu^2}W_p(1+r,\nu)\right|_{\nu=1-r}
 =\frac14r^{p-2}H_p(r),\qquad
 H_p(1)=2\ell_p<0\quad(1<p<p_{\mathrm c}).
\]
Choosing $r<1$ close enough to one therefore gives a negative rank-one second variation. At $p=3/2$ the explicit choice in \eqref{rankone} suffices.

This produces failure of weak lower semicontinuity even for fixed-boundary smooth disk diffeomorphisms with a smooth diffeomorphic limit. To see it, take a smooth disk diffeomorphism $h_0$, equal to the identity near the circle and affine with derivative $A=\operatorname{diag}(1+r,1-r)$ on a small interior square $U$. Such a map is obtained from the time-one flow of a compactly supported vector field equal to $(\log(1+r)x,\log(1-r)y)$ on a sufficiently small neighbourhood. Choose nonzero real $\chi\in C_c^\infty(U)$ and set
\[
 h_n(x,y)=h_0(x,y)+\left(0,\frac{\epsilon}{n}\chi(x,y)\sin(ny)\right).
\]
For sufficiently small fixed $\epsilon>0$, these are uniformly bi-Lipschitz orientation-preserving diffeomorphisms with the same boundary trace. They converge uniformly and weakly in $W^{1,2}$ to $h_0$, while
\[
 Dh_n=A+\epsilon\chi\cos(ny)\,e_2\otimes e_2+O(n^{-1})
 \quad\hbox{on }U.
\]
The density is strictly concave along this rank-one direction near $A$. Its linear oscillation averages to zero and its quadratic oscillation has a strictly negative average on $\{\chi\ne0\}$. As $\eta(h_n)\to\eta(h_0)$ uniformly on $U$, Taylor's formula and oscillatory averaging give
\[
 \limsup_{n\to\infty}\E_p(h_n)<\E_p(h_0).
\]
Thus weak lower semicontinuity fails below $p_{\mathrm c}$ even along smooth diffeomorphisms with a diffeomorphic limit. This does not settle attainment for any specified boundary data.

\begin{proposition}[A restriction on diffeomorphic minimisers below the threshold]\label{lower-range-boundary-obstruction}
Let $1<p<p_{\mathrm c}$, and let $r_{\mathrm c}(p)\in(0,1)$ be the positive zero of $H_p$ from \eqref{flux-functions}.
Every orientation-preserving $C^1$ diffeomorphic local minimiser of $\E_p$, with respect to compactly supported homeomorphic perturbations, satisfies
\begin{equation}\label{necessary-qc-bound}
 |\mu_h|\leq r_{\mathrm c}(p),\qquad
 K_h\leq K_{\mathrm c}(p):=\frac{1+r_{\mathrm c}(p)}{1-r_{\mathrm c}(p)}.
\end{equation}
There are smooth circle diffeomorphisms with finite-energy quasiconformal barriers for $\I_p$ whose boundary-value problem has no such diffeomorphic minimiser. This does not exclude minimisers of lower regularity.
\end{proposition}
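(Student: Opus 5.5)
The proof has two parts: a pointwise second-variation (Legendre–Hadamard) argument for the bound \eqref{necessary-qc-bound}, and a boundary-data construction for the nonexistence assertion.

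\emph{The bound.} Let $h$ be a $C^1$ diffeomorphic local minimiser and fix an interior point $w_0$ with $r_0=|\mu_h(w_0)|>0$. Suppose $r_0>r_{\mathrm c}(p)$. Precompose and postcompose with rotations; the energy is invariant under both by the Möbius invariance recalled in Section~\ref{conventions}. Then $Dh(w_0)$ becomes a positive multiple of $\operatorname{diag}(1+r_0,1-r_0)$ in singular-value coordinates. The weight $\eta(h(w_0))$ is a positive constant that does not affect signs.

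On a small ball $B_\delta(w_0)$, use the oscillating perturbation of Section~\ref{lower-range-energy-defect}:
\[
 h_n=h+\Bigl(0,\tfrac{\epsilon}{n}\chi\sin(ny)\Bigr),
\]
written in the rotated frame, with $\chi\in C_c^\infty(B_\delta)$. For small $\epsilon$ and large $n$ these are $C^1$-close to $h$, so they are compactly supported diffeomorphic, hence homeomorphic, perturbations. They converge uniformly to $h$.

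Expand $W_p$ to second order along the rank-one direction $e_2\otimes e_2$. By Corollary~\ref{sharp-polyconvexity-range} the second derivative is $\tfrac14 r^{p-2}H_p(r)$ up to scaling, and this is negative for $r>r_{\mathrm c}$ because $H_p$ is strictly decreasing. By continuity of $Dh$, it stays negative uniformly on $B_\delta$. The first-order term is linear in $\cos(ny)$ and averages to $O(1/n)$. The quadratic term averages to $\tfrac{\epsilon^2}{2}\int\chi^2\cdot(\text{negative})$, and the cubic remainder is $O(\epsilon^3)$. For small $\epsilon$ and large $n$ this gives $\E_p(h_n)<\E_p(h)$, which contradicts local minimality. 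The main technical point is uniformity of the Taylor remainder: $W_p$ is smooth on $0<|b|<|a|$, and $Dh$ stays in a compact subset of that cone near $w_0$, so this is fine.

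\emph{Nonexistence.} Take a smooth circle diffeomorphism $\beta$. Proposition~\ref{smooth-barrier} gives a smooth quasiconformal extension with $|\mu|=O(1-|z|)$, hence finite $\I_p$, so a finite-energy quasiconformal barrier exists. The task is to choose $\beta$ so that every diffeomorphic extension must have $K>K_{\mathrm c}(p)$ somewhere, which would contradict the bound just proved.

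I would use extremal-length or Teichmüller bounds for this. Pick $\beta$ with conformal-modulus distortion of some quadrilateral, for instance an affine stretch conjugated to the disk and smoothed near the circle. The quadrilateral modulus then changes by a factor exceeding $K_{\mathrm c}(p)$. Any homeomorphic extension has $\|K\|_\infty\ge$ the Teichmüller extremal dilatation, and that exceeds $K_{\mathrm c}(p)$. A cleaner choice is to take $\beta=\phi_t$ smooth with boundary dilatation $K(\beta)>K_{\mathrm c}$, using the fact that smooth maps are dense in the quasisymmetric class in the Teichmüller metric. The main obstacle is keeping $\beta$ smooth while making its Teichmüller distance large. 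Since distance is realised at interior modulus scale, a smooth map whose restriction to a long arc is a strong hyperbolic-type stretch should suffice.
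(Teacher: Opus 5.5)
Your treatment of the bound is the paper's own argument. You rotate $Dh(w_0)$ to a multiple of $\operatorname{diag}(1+r,1-r)$ and test with a small-amplitude, high-frequency oscillation in the rank-one direction $e_2\otimes e_2$; this keeps a $C^1$ diffeomorphism homeomorphic and freezes the weight. You then read off $H_p(r)\geq0$ from the second derivative in Corollary~\ref{sharp-polyconvexity-range}. One minor point: for $h$ merely $C^1$, the first-order term is $o(1)$ as $n\to\infty$ by Riemann--Lebesgue, or $O(\epsilon/n)$ once you use stationarity under target variations. It is not automatically $O(1/n)$, but this is harmless because $\epsilon$ is fixed before $n\to\infty$.

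The gap is in the nonexistence part. Your ``cleaner choice'' rests on a false statement: smooth circle diffeomorphisms are not dense in the universal Teichm\"uller space for the Teichm\"uller metric. They are symmetric, so their closure lies in the little space $T_0$, which is a proper closed subspace. Moreover, in the usual sense every smooth map has boundary dilatation $1$. Your first route, through quadrilateral moduli, is the right mechanism, but you leave the construction vague, and the ``main obstacle'' you describe is not actually an obstacle. Here is the elementary construction. Take smooth orientation-preserving circle diffeomorphisms $\beta_j$ fixing $1,i,-i$ with $\beta_j(-1)\to-i$; any smooth lifts with positive derivative and these values will do. The quadrilateral $(\D;1,i,-1,-i)$ has modulus $1$, while $(\D;1,i,\beta_j(-1),-i)$ degenerates, its modulus tending to $0$ or $\infty$ depending on convention. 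A $K$-quasiconformal extension changes quadrilateral moduli by at most the factor $K$. Hence for large $j$, neither $\beta_j$ nor $\beta_j^{-1}$ has a $K_{\mathrm c}(p)$-quasiconformal extension. The paper says the same thing via compactness: normalised $K$-quasiconformal maps have inverses with a common boundary modulus of continuity, and these $\beta_j^{-1}$ violate it. By your first part, a $C^1$ diffeomorphic minimiser with trace $\beta_j$ would be exactly such an extension, since it is a homeomorphism with $|\mu_h|\leq r_{\mathrm c}(p)$. Meanwhile Proposition~\ref{smooth-barrier} still supplies a finite-energy quasiconformal barrier, which completes the argument.
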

\begin{proof}
At a nonconformal point, rotations and a positive scalar factor put the derivative into the form $\operatorname{diag}(1+r,1-r)$. Homogeneity removes the scalar factor from the sign of the second derivative. The variation is the real rank-one direction $e_2\otimes e_2$ at that diagonal matrix. Its displayed second derivative must be nonnegative at a local minimum. Indeed compactly supported oscillations of small amplitude preserve the homeomorphism property of a $C^1$ diffeomorphism; their high-frequency second variation freezes the positive target weight and tests precisely this derivative. Negative sign on a small neighbourhood would decrease the energy. Thus $H_p(r)\geq0$. Since $H_p$ is strictly decreasing on $[0,1]$, $H_p(0)>0$ and $H_p(1)<0$, this is \eqref{necessary-qc-bound}. Conformal points already satisfy it.

Smooth boundary values need not have any extension with a prescribed fixed quasiconformal bound. To see this, fix three boundary points and choose smooth orientation-preserving circle diffeomorphisms fixing them while the image of a fourth fixed point approaches one of the three. Extensions with a common bound $K_{\mathrm c}(p)$ would contradict compactness and the common boundary modulus of continuity of the inverses of normalised quasiconformal maps \cite{AIM}. Consequently some of these smooth boundary maps, and their inverses, have no $K_{\mathrm c}(p)$-quasiconformal extension.

Each smooth circle diffeomorphism nevertheless has a finite-energy quasiconformal barrier for every $p>1$, by Proposition~\ref{smooth-barrier}. Combining these barriers with boundary values requiring distortion greater than $K_{\mathrm c}(p)$ proves the last assertion.
\end{proof}

\section{Existence in the monotone class for \texorpdfstring{$p_{\mathrm c}\leq p\leq2$}{p-c <= p <= 2}}\label{direct-existence}
The direct method in \cite[Section 5]{MYLp} starts with the inverse Dirichlet bound, obtains compactness in an enlarged class, and then proves lower semicontinuity. There the exponent is a power $m>1$ of the distortion and the inverse density $\|Dh\|^{2m}/J_h^{m-1}$ is convex. Here the exponent $p$ is the power of $|\mu|$ and the inverse density is not convex; Lemma~\ref{jacobian-convexification} gives the replacement, and the transported weight $\eta(h)$ is handled by localisation.

\subsection{The weighted lower-semicontinuity step}
\begin{lemma}\label{weighted-lsc}
Suppose $p_{\mathrm c}\leq p\leq2$ and $h_j,h$ have the convergence, orientation and compact-containment properties in Proposition~\ref{monotone-compactness}. Then
\begin{equation}\label{weighted-lsc-local}
 \int_\D\chi\eta(h)W_p(Dh)\area
 \leq\liminf_j\int_\D\chi\eta(h_j)W_p(Dh_j)\area
\end{equation}
for every nonnegative continuous compactly supported $\chi$. In particular $\E_p(h)\leq\liminf_j\E_p(h_j)$.
\end{lemma}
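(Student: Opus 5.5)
We use the polyconvex splitting of Lemma~\ref{jacobian-convexification}. On $J\geq0$ we have $W_p=G_p-c_pJ$, where $G_p$ is convex, $C^1$, nonnegative and of quadratic growth. The left side of \eqref{weighted-lsc-local} therefore splits as
\[
\int\chi\eta(h)G_p(Dh)\area-c_p\int\chi\eta(h)J_h\area ,
\]
and similarly for each $h_j$. We treat the two terms separately.

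\emph{The convex term.} We show it is weakly lower semicontinuous with the varying weight $\eta(h_j)$. By compact containment, there is a compact $C'\Subset\D$ with $h_j(\operatorname{supp}\chi)\subset C'$ and $h(\operatorname{supp}\chi)\subset C'$. On $C'$ the weight $\eta$ is bounded and uniformly continuous, so $\eta(h_j)\to\eta(h)$ uniformly on $\operatorname{supp}\chi$. Since $\mathcal D(h_j)$ is bounded and $G_p(Dh_j)\leq C(|Dh_j|^2)$, the error satisfies
\[
\int\chi|\eta(h_j)-\eta(h)|\,G_p(Dh_j)\area\to0 .
\]
It then suffices to have $\int\chi\eta(h)G_p(Dh)\leq\liminf\int\chi\eta(h)G_p(Dh_j)$. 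This is the classical weak lower semicontinuity of integrands $\omega(x)G(\xi)$ with $\omega=\chi\eta(h)\geq0$ fixed and bounded measurable, and $G$ convex, continuous and nonnegative, under $Dh_j\rightharpoonup Dh$ weakly in $L^2$. It follows from Mazur's lemma, or from the supporting-hyperplane inequality $G(\xi_j)\geq G(\xi)+\langle\nabla G(\xi),\xi_j-\xi\rangle$. In the latter case we test against $\omega\nabla G_p(Dh)$, which lies in $L^2$ because $\nabla G_p$ has linear growth ($G_p$ is $C^1$ and homogeneous of degree two).

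\emph{The Jacobian term.} We need $\int\chi\eta(h_j)J_{h_j}\to\int\chi\eta(h)J_h$; this is the main point, since Jacobians converge only weakly. First we replace $\eta(h_j)$ by $\eta(h)$ as above, using $|J_{h_j}|\leq|Dh_j|^2$. Next we approximate $\chi\eta(h)$ uniformly by smooth compactly supported $\psi$, with the error controlled by the bounded $L^1$ norms of $J_{h_j}$ and $J_h$. For smooth $\psi$ we use the divergence structure: $J_u=\det Du$ is a null Lagrangian, and
\[
\int\psi J_{u}\area=-\int u^1\,\bigl(\psi_x u^2_y-\psi_y u^2_x\bigr)\area
\]
for $u\in W^{1,2}$, which follows by density of smooth maps. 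Since $h_j\to h$ uniformly and $Dh_j\rightharpoonup Dh$ weakly in $L^2$, the right side converges as a (strongly converging) times (weakly converging) product. This gives $\int\psi J_{h_j}\to\int\psi J_h$.

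Combining the two terms gives \eqref{weighted-lsc-local}. Recall $c_p\geq0$, and note that the Jacobian term converges rather than merely being semicontinuous, so the sign of $c_p$ is harmless. Orientation $J\geq0$ a.e. ensures that $W_p=G_p-c_pJ$ holds a.e. for both $h_j$ and $h$. For the global statement, take $\chi_k\uparrow1$ with $0\leq\chi_k\leq1$. Then for each $k$,
\[
\int\chi_k\eta(h)W_p(Dh)\leq\liminf_j\E_p(h_j),
\]
and monotone convergence gives $\E_p(h)\leq\liminf_j\E_p(h_j)$. We expect the only delicate step to be the uniform control of the weight via compact containment; the rest is standard.
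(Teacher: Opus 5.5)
Your proof is correct and follows essentially the same route as the paper. Both arguments split $W_p=G_p-c_pJ$ via Lemma~\ref{jacobian-convexification} and replace $\eta(h_j)$ by $\eta(h)$ at cost $o(1)$, using compact containment and the uniform $W^{1,2}$ bound. Both then use convexity for the $G_p$ part, show that the weighted Jacobian integrals actually converge, and finish by exhaustion. The only difference is presentational: the paper gets the Jacobian convergence from distributional convergence of the nonnegative measures $J_{h_j}\,dA$ of fixed mass $\pi$, whereas you derive it explicitly from the null-Lagrangian identity together with uniform approximation of $\chi\eta(h)$ and the uniform $L^1$ bound on the Jacobians, which is the same mechanism.
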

\begin{proof}
On the support of $\chi$, $\eta(h_j)\to\eta(h)$ uniformly and these weights are uniformly bounded. Also $G_p(Dh_j)\leq C_p\|Dh_j\|^2$, so the uniform $W^{1,2}$ bound permits replacing $\eta(h_j)$ by $\eta(h)$ in the convex part of the integral, at a cost $o(1)$. Convexity gives
\[
 \int\chi\eta(h)G_p(Dh)\area
 \leq\liminf_j\int\chi\eta(h_j)G_p(Dh_j)\area.
\]

The nonnegative measures $J_{h_j}\area$ have total mass $\pi$ and converge distributionally to $J_h\area$. They consequently converge against continuous compactly supported test functions. Applying this to $\chi\eta(h)$ and again replacing the weight uniformly yields
\begin{equation}\label{weighted-jacobian-convergence}
 \int\chi\eta(h_j)J_{h_j}\area
 \longrightarrow\int\chi\eta(h)J_h\area.
\end{equation}
Subtract $c_p$ times this convergent quantity using \eqref{polyconvex-inverse}. This proves \eqref{weighted-lsc-local}. Exhaustion by compact cutoffs and nonnegativity of $W_p$ give the global assertion. All determinant subtractions take place on compact sets; the two global integrals obtained by formally splitting $G_p-c_pJ$ may each be infinite.
\end{proof}

\subsection{A minimiser and its Hopf differential}
\begin{theorem}[Direct monotone existence]\label{monotone-existence}
Let $p_{\mathrm c}\leq p\leq2$, let $\beta$ be quasisymmetric, and suppose $\mathcal M_\beta$ contains a finite-energy competitor. Then $\E_p$ attains its minimum on $\mathcal M_\beta$. If the competitor is supplied as the inverse of a finite-energy finite-distortion extension of $\beta^{-1}$, quasisymmetry follows automatically from Proposition~\ref{finite-energy-boundary-moduli}. Every minimiser is stationary under compactly supported smooth changes of domain and has a holomorphic differential
\begin{equation}\label{monotone-hopf}
 \Phi_p=\frac12\eta(h)\bigl[p+(2-p)r^2\bigr]
 r^{p-2}h_w\overline{h_{\bw}},
 \qquad r=\frac{|h_{\bw}|}{|h_w|}\in[0,1].
\end{equation}
At $h_{\bw}=0$ the product is interpreted by continuity as zero; set $r=0$ where $Dh=0$. The theorem permits $r=1$ at nonzero rank-one gradients and does not yet assert local injectivity.
\end{theorem}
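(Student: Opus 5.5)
The argument runs by the direct method in $\mathcal M_\beta$, followed by inner variations. Set $m=\inf_{\mathcal M_\beta}\E_p$; it is finite because a barrier exists. Take a minimising sequence $h_j$. By Proposition~\ref{monotone-compactness}(iii), \eqref{trace-jacobian} gives a uniform bound on $\mathcal D(h_j)$. Hence, after passing to a subsequence, $h_j\to h$ uniformly on $\overline\D$ and weakly in $W^{1,2}(\D)$. The limit $h$ is a continuous monotone surjection with trace $\beta$, $J_h\geq0$ and $h^{-1}(\Sone)=\Sone$, and the images of compact sets stay in a common compact subset of $\D$. In particular $h(\D)\subset\D$. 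These are exactly the hypotheses of Lemma~\ref{weighted-lsc}, which gives $\E_p(h)\leq\liminf\E_p(h_j)=m$. So $h\in\mathcal M_\beta$ and $h$ is a minimiser. If the barrier is the inverse of a finite-energy finite-distortion extension, Proposition~\ref{finite-distortion-inverse-calculus}(iv) places it in $\mathcal M_\beta$, and Proposition~\ref{finite-energy-boundary-moduli} supplies quasisymmetry.

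For stationarity, fix $\varphi\in C_c^\infty(\D,\IC)$, put $g_t=\id+t\varphi$ and $h_t=h\circ g_t^{-1}$. For small $t$, $g_t$ is a diffeomorphism of $\overline\D$ equal to the identity near $\Sone$. Then $h_t$ is again a continuous monotone surjection, since its fibres are images of fibres under a homeomorphism. It has trace $\beta$, lies in $W^{1,2}$, and has $J_{h_t}\geq0$. So $h_t\in\mathcal M_\beta$ once its energy is finite. I would change variables as in Section~\ref{inner-variations}, writing
\[
 \E_p(h_t)=\int_\D\eta(h)\,W_p\bigl(Dh\,(Dg_t)^{-1}\bigr)J_{g_t}\area .
\]
Here one should not use $J_h R_p$, since $J_h$ may vanish at rank-one points. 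Instead I would work directly with $W_p(a,b)=|a|^{2-p}|b|^p$ on $|b|\leq|a|$. This function is $C^1$ on the closed cone for $1<p\leq2$, including $b=0$ and the rank-one boundary $|a|=|b|$, because $W_p$ is the restriction of the $C^1$ function $G_p-c_pJ$ from Lemma~\ref{jacobian-convexification}. Using $G_p-c_pJ$ as the integrand avoids any issue at $r=1$. Its gradient is bounded by $C(|a|+|b|)$, so the difference quotients are dominated by $C\eta(h)\|Dh\|^2$ on $\operatorname{supp}\varphi$. This is integrable because $\eta(h)$ is bounded there. Differentiating at $t=0$ then gives finiteness of $\E_p(h_t)$ and the first variation formula \eqref{variation}, with $\Phi_p$ computed from the derivative of $W_p$.

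Minimality gives $\frac{d}{dt}\E_p(h_t)|_{t=0}=0$. Replacing $\varphi$ by $i\varphi$ yields $\int\Phi_p\varphi_{\bw}\area=0$. The explicit computation identifies the integrand with $\tfrac12\eta(h)[p+(2-p)r^2]r^{p-2}h_w\overline{h_{\bw}}$, which is \eqref{monotone-hopf}. This follows by expressing $a_p(s)$ in terms of $r$, or equivalently by differentiating $|a|^{2-p}|b|^p$ in the direction $-(a\varphi_w+b\bar\varphi_{w}\ldots)$ using the chain rule. Near $b=0$ the factor $r^{p-2}|h_w||h_{\bw}|=|h_w|^{2-p}|h_{\bw}|^{p-1}$ tends to zero, which justifies the convention at $h_{\bw}=0$. $\Phi_p$ is locally integrable, since $|\Phi_p|\leq C\eta(h)\|Dh\|^2$. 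Weyl's lemma then gives holomorphy.

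The main obstacle is the inner-variation step at points where $Dh$ has rank one. There the Beltrami quotient is not defined through $J_h>0$, and the earlier justification in Section~\ref{inner-variations} assumed finite distortion. The remedy is the remark above: work with the gradient density $G_p-c_pJ$, which is $C^1$ on all of $\IC^2$, rather than with $R_p(|\mu|^2)J$. Then no division by $J_h$ occurs.
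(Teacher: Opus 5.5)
Your proposal is correct and follows the paper's proof essentially step by step. Existence comes from the compactness in Proposition~\ref{monotone-compactness} together with Lemma~\ref{weighted-lsc}. Stationarity comes from the inner variations $h\circ g_t^{-1}$, with the derivative of $W_p$ bounded by $C(|a|+|b|)$ on the cone, the difference quotients dominated by $C\eta(h)\|Dh\|^2$ on the variation support, and holomorphy from Weyl's lemma.

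Your use of the globally $C^1$ function $G_p-c_pJ$ to handle rank-one gradients is just a more explicit version of the paper's remark that the variation formula extends to $|b|=|a|$ by continuity from $|b|<|a|$.
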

\begin{proof}
Take a minimising sequence in $\mathcal M_\beta$. Proposition~\ref{monotone-compactness} gives a proper monotone limit with all the required properties except possibly finite energy. Lemma~\ref{weighted-lsc} then gives that property and proves minimality.

For $g_t(w)=w+t\varphi(w)$, $\varphi\in C_c^\infty(\D,\IC)$ and small real $t$, the maps $h_t=h\circ g_t^{-1}$ remain monotone surjections with the same trace, remain proper, and have nonnegative Jacobian. Their energy is unchanged off a compact set and remains finite on that set by the quadratic growth of $W_p$ and the boundedness of $\eta(h)$. Thus both signs of $t$ are admissible.

On the orientation-preserving cone, the first derivatives of $W_p$ are bounded by $C_p(|a|+|b|)$, with the continuous interpretation at $b=0$ and at the origin. The differentiated integrand after changing variables by $g_t$ is bounded by $C\eta(h)\|Dh\|^2$ on the variation support. Differentiation under the integral is therefore valid. Formula~\eqref{variation} extends to nonzero rank-one gradients by continuity from $|b|<|a|$, and is zero where $Dh=0$. Hence, 
\[
 0=-2\operatorname{Re}\int_\D\Phi_p\varphi_{\bw}\area.
\]
The bound $|\Phi_p|\leq C_p\eta(h)\|Dh\|^2$ makes $\Phi_p$ locally integrable. Replacing $\varphi$ by $i\varphi$ and using Weyl's lemma proves holomorphy.
\end{proof}
\begin{corollary}[Local higher Sobolev regularity of the weak minimiser]\label{monotone-higher-sobolev}
Every minimiser in Theorem~\ref{monotone-existence} has
\[
 h_{\bar w}\in L^\infty_{\loc}(\D),\qquad
 h\in W^{1,q}_{\loc}(\D)\quad(1<q<\infty).
\]
In particular it has Lusin's $N$ property locally. These conclusions allow rank-one derivatives and do not assert local injectivity.
\end{corollary}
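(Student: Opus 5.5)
The plan is to read off the regularity from the holomorphic differential of Theorem~\ref{monotone-existence}, which controls $h_{\bw}$ pointwise in terms of $h_w$, and then to bootstrap integrability by treating $h$ as a solution of a Beltrami-type system with locally bounded coefficient.

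\textbf{Step 1 (pointwise bound).} Fix $C\Subset\D$. By Proposition~\ref{monotone-compactness}(iii), $h(C)$ lies in a compact subset of $\D$, so $\eta(h)$ is bounded above and below by positive constants on $C$. Since $\Phi_p$ is holomorphic, it is bounded on $C$. On $\{0<r\}$, write $|h_w\overline{h_{\bw}}|=r|h_w|^2$. This gives
\[
 |\Phi_p|=\tfrac12\eta(h)[p+(2-p)r^2]\,r^{p-1}|h_w|^2 ,
\]
and since $r\le1$,
\[
 |h_{\bw}|^{2}=r^2|h_w|^2\le r^{p-1}|h_w|^2\le \frac{2|\Phi_p|}{p\,\eta(h)} .
\]
Where $h_{\bw}=0$ the bound is trivial. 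This yields $h_{\bw}\in L^\infty_{\loc}$, and the argument includes rank-one points, where $r=1$.

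\textbf{Step 2 (higher integrability).} Take a cutoff $\psi\in C_c^\infty(\D)$ and set $u=\psi h$. Then
\[
 u_{\bw}=\psi h_{\bw}+\psi_{\bw}h\in L^\infty(\IC)
\]
with compact support, since $h$ is bounded. Hence $u=\mathcal C(u_{\bw})$ is given by the Cauchy transform, and $u_w=\mathcal S(u_{\bw})$ by the Beurling transform. Because $\mathcal S$ is bounded on every $L^q$ with $1<q<\infty$ and $u_{\bw}\in L^q$ for all such $q$, we get $u_w\in L^q$. Therefore $h\in W^{1,q}_{\loc}$ for all $q<\infty$. Note that this uses no Beltrami coefficient bound away from one, only the $L^\infty$ bound on the $\bw$-derivative.

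\textbf{Step 3 (Lusin $N$).} A continuous $W^{1,q}_{\loc}$ map with $q>2$ has the $N$ property locally, by the Marcus--Mizel theorem. Nothing here assumes injectivity or finite distortion.

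\textbf{Main obstacle.} I expect the only delicate point to be Step~1 at zeros of $h_w$ and on the rank-one set. At points where $Dh=0$, the convention $r=0$ and $\Phi_p=0$ is consistent with the bound, since there $h_{\bw}=0$. Points with $h_w=0$ and $h_{\bw}\neq0$ cannot occur, because $J_h\ge0$. At rank-one points with $r=1$ the inequality $r^2\le r^{p-1}$ still holds. The interpretation of $\Phi_p$ at $h_{\bw}=0$ is the continuous one already fixed in Theorem~\ref{monotone-existence}, so the pointwise identity holds almost everywhere.
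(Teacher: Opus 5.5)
Your proposal is correct and follows essentially the paper's argument: the paper bounds $|h_{\bar w}|^2\leq|\Phi_p|/\eta(h)\leq|\Phi_p|$ using $F_p(r)\geq r^p\geq r^2$ (your route via $r^2\leq r^{p-1}$ gives the same bound with constant $2/p$), then applies the Beurling transform to a cutoff of $h$, and obtains the $N$ property from an exponent $q>2$. One simplification: you do not need compact containment to bound $\eta(h)$ from below, since $\eta\geq1$ on all of $\D$.
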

\begin{proof}
Taking absolute values in \eqref{monotone-hopf} gives $|\Phi_p|=\eta(h)|h_w|^2F_p(r)$ with $F_p$ as in \eqref{flux-functions}, and $F_p(r)\geq r^p\geq r^2$ on $[0,1]$. Hence $|h_{\bar w}|^2\leq|\Phi_p|/\eta(h)\leq|\Phi_p|$, which is locally bounded by holomorphy; the inequality holds also at rank-one derivatives by continuity. For a cutoff $\chi\in C_c^\infty(\D)$, the function $(\chi h)_{\bar w}=\chi h_{\bar w}+\chi_{\bar w}h$ is bounded with compact support, so the $L^q$ boundedness of the Beurling transform gives $(\chi h)_w\in L^q$ for every $1<q<\infty$ \cite[Chapters 4--5]{AIM}. The $N$ property follows from any exponent $q>2$ by the Sobolev oscillation estimate and a covering of a null set.
\end{proof}

\begin{corollary}[Strong convergence of minimising sequences]\label{strong-monotone-minimising}
For $p_{\mathrm c}\leq p\leq2$, any minimising sequence in $\mathcal M_\beta$ has a subsequence converging uniformly on $\overline\D$ and strongly in $W^{1,2}_{\loc}(\D)$ to a minimiser.
\end{corollary}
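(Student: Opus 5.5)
The approach is the standard one: energy convergence plus strict convexity of $G_p$ gives strong convergence. Let $(h_j)$ be minimising in $\mathcal M_\beta$. By Proposition~\ref{monotone-compactness}(iii), a subsequence converges uniformly on $\overline\D$ and weakly in $W^{1,2}(\D)$ to a monotone surjection $h$ with trace $\beta$, and images of compacta stay in a common compact set. By Lemma~\ref{weighted-lsc} and Theorem~\ref{monotone-existence}, $h$ is a minimiser, so $\E_p(h_j)\to\E_p(h)=m$. It remains to show strong $W^{1,2}_{\loc}$ convergence.

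\textbf{Step 1: localising the energy.} Fix $\chi\in C_c(\D)$ with $0\leq\chi\leq1$. Lemma~\ref{weighted-lsc} applies both to $\chi$ and to $1-\chi$ after exhaustion; more precisely, it applies to every compactly supported cutoff $\psi\leq1-\chi$. Combining these with $\E_p(h_j)\to\E_p(h)<\infty$ gives
\[
 \int\chi\eta(h_j)W_p(Dh_j)\to\int\chi\eta(h)W_p(Dh).
\]
By \eqref{weighted-jacobian-convergence}, the weighted Jacobian integrals also converge. Hence, by \eqref{polyconvex-inverse},
\[
 \int\chi\eta(h)G_p(Dh_j)\to\int\chi\eta(h)G_p(Dh),
\]
after replacing $\eta(h_j)$ by $\eta(h)$ at cost $o(1)$. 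This replacement uses the uniform $W^{1,2}$ bound and the quadratic growth of $G_p$.

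\textbf{Step 2: convexity to strong convergence.} For $p>p_{\mathrm c}$, Lemma~\ref{jacobian-convexification} says $G_p-\frac{\ell_p}{2}|\cdot|^2$ is convex with $\ell_p>0$. Apply this to the midpoint $(Dh_j+Dh)/2$. Uniform convexity gives
\[
 \tfrac{\ell_p}{8}\,\bigl\|Dh_j-Dh\bigr\|^2\leq \tfrac12G_p(Dh_j)+\tfrac12G_p(Dh)-G_p\bigl(\tfrac{Dh_j+Dh}2\bigr).
\]
Integrate this against $\chi\eta(h)$. The first two terms converge to $\int\chi\eta(h)G_p(Dh)$ by Step 1. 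The midpoints converge weakly to $Dh$, so weak lower semicontinuity of the convex functional bounds the $\liminf$ of the last term from below by the same quantity. The right side therefore tends to $0$, and since $\eta(h)$ is bounded below, $Dh_j\to Dh$ in $L^2_{\loc}$.

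\textbf{Step 3: the critical exponent.} This is the main obstacle, since at $p=p_{\mathrm c}$ only strict convexity without a quadratic modulus is available. Here I would use the standard strict-convexity argument. From the same midpoint inequality, the nonnegative integrands
\[
 \tfrac12G_p(Dh_j)+\tfrac12G_p(Dh)-G_p\bigl(\tfrac{Dh_j+Dh}2\bigr)
\]
tend to $0$ in $L^1_{\loc}$, so a further subsequence tends to $0$ a.e. Strict convexity, together with the quadratic growth of $G_p$ (which forces $|Dh_j|$ bounded wherever the defect is small), then gives $Dh_j\to Dh$ a.e. Finally, $G_p(Dh_j)$ converges in $L^1_{\loc}$: it converges a.e., and its integrals converge by Step 1, so Brezis--Lieb/Vitali applies. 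Since $G_p\geq c|\cdot|^2$, the squares $|Dh_j|^2$ are locally uniformly integrable, and Vitali's theorem yields strong $L^2_{\loc}$ convergence. Convergence of the whole extracted subsequence, rather than a further one, follows by the usual subsequence argument.
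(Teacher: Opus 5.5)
Your proposal is correct. Step~1 and the reduction to $\limsup_j\int\Delta_p(Dh_j,Dh)\,dm\le0$ are exactly as in the paper. Here $dm=\chi\eta(h)\,dA$ and $\Delta_p$ is the midpoint convexity defect of $G_p$. For $p>p_{\mathrm c}$ the paper also notes that the quadratic modulus $\ell_p>0$ gives your Step~2 shortcut.

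The routes differ only at $p=p_{\mathrm c}$. The paper combines strict convexity, $2$-homogeneity of $G_p$ and compactness of the unit sphere into the uniform inequality $|A-B|^2\le\epsilon(|A|^2+|B|^2)+2\delta_{p,\epsilon}^{-1}\Delta_p(A,B)$. It then integrates this and lets $j\to\infty$, then $\epsilon\downarrow0$. That gives $L^2(dm)$ convergence of the extracted sequence directly, with no further subsequence, no pointwise analysis, and one argument for all $p_{\mathrm c}\le p\le2$.

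Your route is the classical alternative and is equally valid: a.e.\ convergence along a further subsequence, Scheff\'e for $G_p(Dh_j)$, Vitali via $G_p\ge c|\cdot|^2$, and then the subsequence principle using uniqueness of the limit $Dh$. It also covers $p>p_{\mathrm c}$, so your Step~2 is optional.

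The one step to state more carefully is the pointwise implication $\Delta_p(A_j,B)\to0\Rightarrow A_j\to B$. It follows most cleanly from $2$-homogeneity: $\Delta_p(A,B)=|A|^2\Delta_p(A/|A|,B/|A|)$ and $\Delta_p(e,0)=G_p(e)/4$, which is bounded below by a positive constant on the unit sphere. This bounds $|A_j|$, and then continuity and strict convexity identify every limit point with $B$.
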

\begin{proof}
Extract the subsequence above. Its total energies converge to $\E_p(h)$ by minimality and lower semicontinuity. For $0\leq\chi\leq1$ continuous and compactly supported, the local lower-semicontinuity inequality also applies to $1-\chi$ by exhaustion. Subtracting this latter inequality from convergence of the total energies shows convergence of the $\chi$-weighted energies. Equation~\eqref{weighted-jacobian-convergence} then gives
\[
 \int\chi\eta(h)G_p(Dh_j)\area
 \longrightarrow\int\chi\eta(h)G_p(Dh)\area,
\]
where replacing $\eta(h_j)$ by $\eta(h)$ costs $o(1)$.

We use the Radon--Riesz principle of weak convergence together with convergence of the energy, as in \cite{MYRadonRiesz}. The following direct argument includes the critical exponent and requires no positivity assumption on the limiting Jacobian. Regard gradients as pairs $A=(a,b)\in\IC^2$, with $|A|^2=|a|^2+|b|^2$, and define the convexity defect
\[
 \Delta_p(A,B)=\frac{G_p(A)+G_p(B)}2
                -G_p\!\left(\frac{A+B}2\right).
\]
By Lemma~\ref{jacobian-convexification}, this is positive whenever $A\ne B$. For $0<\epsilon<1$, continuity and compactness give a constant $\delta_{p,\epsilon}>0$ such that
\[
 \Delta_p(A,B)\geq\delta_{p,\epsilon}
 \quad\text{if}\quad |A|^2+|B|^2=1,
 \qquad |A-B|^2\geq\epsilon.
\]
The two-homogeneity of $G_p$ and $|A-B|^2\leq2(|A|^2+|B|^2)$ consequently imply, for all $A,B$,
\begin{equation}\label{critical-radon-riesz-defect}
 |A-B|^2\leq\epsilon(|A|^2+|B|^2)
       +\frac{2}{\delta_{p,\epsilon}}\Delta_p(A,B).
\end{equation}
Put $dm=\chi\eta(h)\area$. Weak lower semicontinuity applied to the midpoint gradients, together with the convergence of the $G_p$ integrals already proved, gives
\[
 0\leq\limsup_j\int\Delta_p(Dh_j,Dh)\,dm
 \leq\int G_p(Dh)\,dm
       -\liminf_j\int G_p\!\left(\frac{Dh_j+Dh}2\right)\,dm
 \leq0.
\]
Integrate \eqref{critical-radon-riesz-defect}. The gradients have a uniform $L^2(dm)$ bound, so first letting $j\to\infty$ and then $\epsilon\downarrow0$ proves $Dh_j\to Dh$ in $L^2(dm)$. Taking $\chi=1$ on any prescribed compact subset proves the assertion. When $p>p_{\mathrm c}$, the positive constant $\ell_p$ gives the shorter quadratic-defect argument; the strict-convexity argument above is needed at $p=p_{\mathrm c}$.
\end{proof}

\begin{corollary}[Recovery of the homeomorphic problem under local regularity]\label{monotone-to-homeomorphic-minimum}
Let $h$ be a minimiser from Theorem~\ref{monotone-existence}. If $h$ is a local $C^1$ diffeomorphism in $\D$, then it is a global diffeomorphism there and extends homeomorphically to $\overline\D$. It minimises also over the homeomorphic inverse class, and $f=h^{-1}$ minimises the forward energy in the corresponding class for which the inverse calculus holds. If, in addition, $h$ is $C^2$ on $\{r>0\}$, Proposition~\ref{subquadratic-asymptotic} makes $h$ asymptotically conformal and globally quasiconformal.
\end{corollary}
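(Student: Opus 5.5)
The statement has three parts. First, a local $C^1$ diffeomorphism is a global one with a homeomorphic extension to the closed disk. Second, it minimises over the homeomorphic inverse class, and its inverse minimises the forward problem. Third, under extra $C^2$ regularity it is quasiconformal and asymptotically conformal. I will take these in order, relying mainly on Lemma~\ref{local-to-global}, the inverse calculus of Proposition~\ref{finite-distortion-inverse-calculus}, and the inclusion of classes.

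\emph{Global invertibility.} The minimiser $h$ from Theorem~\ref{monotone-existence} is a continuous monotone surjection of $\overline\D$ with homeomorphic trace $\beta$. By Proposition~\ref{monotone-compactness}(iii), it also satisfies $h^{-1}(\Sone)=\Sone$. If $h$ is a local $C^1$ diffeomorphism on $\D$, it is in particular a local homeomorphism. The first case of Lemma~\ref{local-to-global} then makes $h$ a homeomorphism of $\overline\D$, and the final clause of that lemma makes it a global $C^1$ diffeomorphism of $\D$.

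\emph{Minimality in the smaller classes.} The homeomorphic inverse class is the set of $W^{1,2}$ homeomorphisms $\tilde h$ of $\overline\D$ with trace $\beta$, $J_{\tilde h}\geq0$ and finite $\E_p$. This set is contained in $\mathcal M_\beta$, since homeomorphisms are monotone surjections mapping $\D$ into $\D$. We have just shown that $h$ belongs to it, so $\E_p(h)=\min_{\mathcal M_\beta}\E_p$ is also its minimum there.

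For the forward statement, let $G$ be any finite-distortion homeomorphic extension of $\beta^{-1}$ with $\I_p(G)<\infty$. By Proposition~\ref{finite-distortion-inverse-calculus}(iv), $G^{-1}\in\mathcal M_\beta$ and $\E_p(G^{-1})\leq\I_p(G)$, so $\E_p(h)\leq\I_p(G)$. It remains to check that $f=h^{-1}$ is admissible with $\I_p(f)=\E_p(h)$. Since $h$ is a $C^1$ diffeomorphism with $J_h\geq0$, we must first rule out critical points. On $\{J_h=0\}$ a $C^1$ diffeomorphism still has invertible derivative, so $J_h>0$ everywhere and $f$ is a $C^1$ diffeomorphism. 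Then \eqref{inverseenergy} holds by the classical change of variables, giving $\I_p(f)=\E_p(h)\leq\I_p(G)$. This is the sense of ``the corresponding class for which the inverse calculus holds''.

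\emph{Quasiconformality.} For the final claim I will verify the hypotheses of Proposition~\ref{subquadratic-asymptotic}, which is the $1<p\leq2$ case of Theorem~\ref{thm-C}. These are: quasisymmetric boundary values (given); $h$ a $C^1$ diffeomorphism (shown above); $h$ of class $C^2$ on $\{r>0\}$ (assumed); finite energy; and minimality in a class preserved by compactly supported smooth changes of domain. The last hypothesis holds for $\mathcal M_\beta$, as shown in the proof of Theorem~\ref{monotone-existence}. That proposition then yields asymptotic conformality and global quasiconformality.

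The only delicate point is the bookkeeping of which forward class is meant, together with the positivity of $J_h$ needed for a genuine change of variables. Everything else is a direct citation of earlier results.
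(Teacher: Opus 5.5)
Your proposal is correct and follows the paper's argument: Lemma~\ref{local-to-global} gives the topology, the inclusion of the homeomorphic class in $\mathcal M_\beta$ together with the inverse energy identity gives the two minimality claims, and Proposition~\ref{subquadratic-asymptotic} gives the last assertion. One small correction: that proposition's hypothesis is a holomorphic Hopf differential \eqref{subquadratic-hopf}, not minimality itself; Theorem~\ref{monotone-existence} supplies exactly this for every minimiser, so your argument goes through unchanged.
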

\begin{proof}
Lemma~\ref{local-to-global} gives the global topological assertion; no boundary information beyond the trace is needed. Every homeomorphic competitor with the stated inverse calculus belongs to $\mathcal M_\beta$, and the regular minimiser itself now belongs to the original class. The two infima therefore agree. The inverse energy identity and then Proposition~\ref{subquadratic-asymptotic} give the remaining assertions.
\end{proof}

The proof of Corollary~\ref{strong-monotone-minimising} adapts the weighted Radon--Riesz mechanism of \cite{MYRadonRiesz} to the convexified density $G_p$ and allows $J_h=0$.

\section{Regularity of quasiconformal stationary maps on the elliptic branch}\label{forward-regularity}
We prove diffeomorphic regularity for quasiconformal stationary maps on the elliptic branch, using the potential and difference-quotient method of \cite[Section 2]{MYLp}, and then describe the compatibility required for analyticity at Hopf zeros.

\subsection{The forward stress equation}
\begin{proposition}[Regularity on the elliptic branch]\label{forward-regularity-proposition}
Let $1<p<2$, and let $f$ be a quasiconformal homeomorphism which is stationary under compactly supported smooth inner variations for
\[
 \int_\D\eta(z)e(|\mu_f(z)|)\area,
 \qquad e(r)=\frac{r^p}{1-r^2},
\]
where $\eta$ is smooth and strictly positive. Suppose $|\mu_f|\leq R<1$ almost everywhere and $H_p(r)>0$ on $[0,R]$. Then, for every
\begin{equation}\label{forward-holder-exponent}
 0<\alpha<\min\left\{1,\frac{2-p}{p-1}\right\},
\end{equation}
the coefficient $\mu_f$ and its modulus belong to $C^{1,\alpha}_{\loc}$, and $f$ is a $C^{2,\alpha}_{\loc}$ diffeomorphism. The coefficient and map are smooth on $\{\mu_f\ne0\}$. The condition on $H_p$ is automatic for $p_{\mathrm c}\leq p<2$ and any $R<1$; below $p_{\mathrm c}$ it holds whenever $R<r_{\mathrm c}(p)$.
\end{proposition}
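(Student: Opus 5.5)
We prove this by writing the stationarity condition as a divergence-form system for a stress potential, differentiating that system, and bootstrapping Hölder regularity of $\mu_f$.

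\textbf{Step 1: the stress equation.} For $\varphi\in C_c^\infty(\D,\IC)$, vary the domain by $f_t=f\circ(\id+t\varphi)^{-1}$. Since $e$ is a function of $r=|\mu_f|$ alone, the computation of Section~\ref{inner-variations}, run on the forward side, gives
\[
 \operatorname{Re}\int_\D\bigl[\eta\,S(\mu_f)\varphi_{\bar z}
 +\eta\,e(|\mu_f|)\varphi_z+(\partial\eta\text{-terms})\varphi\bigr]\area=0,
\]
where $S(\mu)$ is an explicit smooth function of $\mu$ off $\mu=0$, of modulus comparable to $e'(r)(1-r^2)$. The $\partial\eta$-terms are lower order because $\eta$ is smooth.

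\textbf{Step 2: a potential and a nonlinear first-order system.} In the weak sense, this identity says that $\partial_{\bar z}(\eta S(\mu_f))+\partial_z(\eta e)$ equals a bounded term. Following \cite[Section 2]{MYLp}, we solve it locally with a potential $\Psi\in W^{1,\infty}_{\loc}$, so that $\eta S(\mu_f)$ and $\eta e(|\mu_f|)$ are recovered from $\Psi_z$ and $\Psi_{\bar z}$ up to smooth lower-order terms. This yields a first-order system $\Psi_{\bar z}=\mathcal A(z,\Psi_z)$, where $\mathcal A$ comes from inverting the map $\mu\mapsto(S(\mu),e(|\mu|))$.

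The ellipticity of this system is where $H_p$ enters. After normalising the rotation, the inverse map has a Jacobian whose eigenvalues are proportional to $H_p(\pm r)$, as in the flux dictionary of Lemma~\ref{flux-dictionary}. Hence the system is uniformly elliptic with Lipschitz structure on $\{|\mu|\le R\}$ exactly when $H_p>0$ on $[0,R]$. The statement about when this condition holds then follows from the facts that $H_p$ is decreasing and $H_p(1)=2\ell_p\ge0$ for $p\ge p_{\mathrm c}$.

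\textbf{Step 3: differentiating the system.} Apply difference quotients to the system. Uniform ellipticity with Lipschitz structure gives $\Psi\in W^{2,2}_{\loc}$. The derivatives $\partial_\nu\Psi$ then solve a linear Beltrami-type system with bounded measurable coefficients, so $\nabla\Psi\in C^{0,\gamma}_{\loc}$ by Bojarski--Morrey theory \cite{AIM}. Pulling back through the inverse of the flux map gives $\mu_f\in C^{0,\gamma}$.

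\textbf{Step 4: upgrading to $C^{1,\alpha}$.} This is the main obstacle, because the structure is degenerate at $\mu=0$. Near $r=0$ one has $S\sim r^{p-1}$, so the inverse map behaves like $s\mapsto s^{1/(p-1)}$ applied to $\Psi$-data. Here $1/(p-1)>1$, so the inverse is $C^{1,\min\{1,1/(p-1)-1\}}$, and $1/(p-1)-1=(2-p)/(p-1)$.

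Once Schauder theory gives $\nabla\Psi\in C^{1,\alpha}$ for every $\alpha<1$ (the coefficients are now Hölder), composing with the inverse map gives $\mu_f\in C^{1,\alpha}$ with $\alpha<\min\{1,(2-p)/(p-1)\}$; the same holds for $|\mu_f|$. I would check this bootstrap carefully at the zero set of $\mu_f$, treating $\mu_f$ as a $C^{1,\alpha}$ function of $\Psi$-data through the explicit power.

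\textbf{Step 5: regularity of $f$.} The map $f$ solves $f_{\bar z}=\mu_ff_z$ with $\mu_f\in C^{1,\alpha}$, so Schauder estimates for the Beltrami equation give $f\in C^{2,\alpha}_{\loc}$ with $J_f>0$. Hence $f$ is a diffeomorphism, since a quasiconformal map with Hölder coefficient is locally bi-Lipschitz and $C^1$ with nonvanishing Jacobian.

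On $\{\mu_f\ne0\}$ the inverse map is real analytic, so the elliptic system has analytic structure, and bootstrapping Schauder estimates gives smoothness of $\mu_f$ and $f$ there.
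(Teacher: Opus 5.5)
Your proposal is correct and follows the paper's proof essentially step for step. The shared route is: the complex stress and weighted stationarity identity; a local potential with a Cauchy correction (which is only $W^{1,s}_{\loc}$, not smooth, but this is harmless); a strict contraction of the radial constitutive map, coming from $T'/e'-1=(1-r)^2H_p(r)/(2r[p+(2-p)r^2])>0$ rather than from a pair of eigenvalues $H_p(\pm r)$; difference quotients giving $W^{2,2}_{\loc}$; a Beltrami system with higher integrability and a two-stage Schauder bootstrap; and reconstruction through $T^{-1}(t)\sim t^{1/(p-1)}$. The point you left to check at the zero set is settled in the paper by noting that $\mathcal B'(t)=O(t^{1/(p-1)})$ and $\mathcal B''(t)=O(t^{1/(p-1)-1})$, so the coefficients, which contain the discontinuous phase $U/|U|$ only multiplied by $\mathcal B'(|U|)$, are Lipschitz functions of $U$ even at $U=0$.
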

\begin{proof}
Write $\mu=\mu_f$, $r=|\mu|$, $L=\log\eta$, and introduce the {\em complex stress} $U$,
\begin{equation}\label{forward-complex-stress}
 T(r)=\frac{1-r^2}{2}e'(r)
     =\frac{r^{p-1}[p+(2-p)r^2]}{2(1-r^2)},
 \qquad U=T(r)\frac{\overline\mu}{r},
\end{equation}
with $U=0$ at $\mu=0$. Both $U$ and $e(r)$ are locally bounded. For $g_t(z)=z+t\varphi(z)$, vary $f$ by $f\circ g_t^{-1}$ and change variables by $g_t$. The density variation contributes $-2\eta\operatorname{Re}(U\varphi_{\bz})$, while the weight and area contribute $2e(r)\operatorname{Re}((\eta\varphi)_z)$. Stationarity, also with $i\varphi$, therefore gives
\begin{equation}\label{forward-weighted-distribution}
 \int\eta U\varphi_{\bz}\area
 =\int e(r)(\eta\varphi)_z\area,
 \qquad\varphi\in C_c^\infty(\D,\IC).
\end{equation}
Differentiation under the integral is legitimate because the quasiconformal bound remains strictly below one under sufficiently small variations, and the density is $C^1$ as a function of $\mu$ for $p>1$. Thus
\begin{equation}\label{forward-stress-distribution}
 \boxed{\quad U_{\bz}=(e(r))_z-L_{\bz}U\quad}
\end{equation}
in distributions, with no derivative of $\mu$ assumed beforehand.

The function $T$ is strictly increasing on $[0,1)$, being a product of the increasing positive functions $r^{p-1}$, $p+(2-p)r^2$ and $(1-r^2)^{-1}$. Define $\mathcal B(t)=e(T^{-1}(t))$. Its radial derivative obeys
\begin{equation}\label{forward-stress-contraction}
 \mathcal B'(T(r))=\frac{e'(r)}{T'(r)},\qquad
 \frac{T'}{e'}-1
 =\frac{(1-r)^2H_p(r)}{2r[p+(2-p)r^2]}.
\end{equation}
At $r=0$ the first ratio has limit zero. Consequently the radial map $a\mapsto\mathcal B(|a|)$ is $k$-Lipschitz on a neighbourhood of $\{|a|\leq T(R)\}$ for some $k<1$. Extend it to a globally $k$-Lipschitz map by continuing the radial profile as a constant past a slightly larger radius.

We first justify one weak derivative of $U$. On a relatively compact disk, take a local Cauchy correction $G$ with
\[
 G_{\bz}=L_{\bz}U,\qquad
 G\in W^{1,s}_{\loc}\quad(1<s<\infty).
\]
Such a correction is obtained from the bounded right-hand side with a cutoff. Equation~\eqref{forward-stress-distribution} gives a local potential $\mathcal F$ satisfying
\[
 \mathcal F_z=U+G,\qquad
 \mathcal F_{\bz}=\mathcal B(|U|)
                =\mathcal B(|\mathcal F_z-G|).
\]
The compatibility of these two derivatives is exactly the distributional equation; this is the potential construction of \cite[Lemma 2.1]{MYLp}. Here $\mathcal F\in W^{1,2}_{\loc}$, in fact locally Lipschitz. Its difference quotients satisfy
\[
 |(\delta_t\mathcal F)_{\bz}|
 \leq k|(\delta_t\mathcal F)_z|+k|\delta_tG|.
\]
Localise with a smooth cutoff and use equality of the two Wirtinger derivative norms for compactly supported Sobolev functions. For disks $V\Subset V'$ this gives
\[
 \|D\delta_t\mathcal F\|_{L^2(V)}
 \leq C\bigl(\|\delta_t\mathcal F\|_{L^2(V')}
                 +\|\delta_tG\|_{L^2(V')}\bigr).
\]
The right-hand side is bounded independently of small $t$. Hence $\mathcal F\in W^{2,2}_{\loc}$ and $U=\mathcal F_z-G\in W^{1,2}_{\loc}$.

We may now expand the weak equation by the Sobolev chain rule. Put
\[
 c=\tfrac12\mathcal B'(|U|),\qquad \omega=U/|U|\quad(U\ne0).
\]
Solving the equation and its conjugate for $U_{\bz}$ yields the inhomogeneous Beltrami equation
\begin{equation}\label{forward-stress-beltrami}
 \boxed{\quad
 U_{\bz}=\mathfrak a\,U_z+\mathfrak b\,\overline{U_z}
                    +\mathfrak f,\quad}
\end{equation}
where
\[
 \mathfrak a=\frac{c\overline\omega}{1-c^2},\qquad
 \mathfrak b=\frac{c^2\omega^2}{1-c^2},\qquad
 \mathfrak f=-\frac{L_{\bz}U+c|U|L_z}{1-c^2}.
\]
All three extend continuously by zero at $U=0$, and
\begin{equation}\label{forward-stress-ellipticity}
 |\mathfrak a|+|\mathfrak b|
 =\frac{\mathcal B'(|U|)}{2-\mathcal B'(|U|)}
 \leq\frac{k}{2-k}<1.
\end{equation}
Local higher integrability for this uniformly elliptic equation, with bounded inhomogeneous term, gives $U\in W^{1,s}_{\loc}$ for some $s>2$, hence local H\"older continuity. The coefficient functions are locally Lipschitz functions of $U$, also at zero: with $\beta=1/(p-1)>1$ one has $\mathcal B'(t)=O(t^\beta)$ and $\mathcal B''(t)=O(t^{\beta-1})$. Schauder estimates first give $U\in C^{1,\gamma}_{\loc}$ for some $\gamma>0$; the resulting local Lipschitz bound makes the coefficients and inhomogeneous term Lipschitz in $z$, so a second application gives $U\in C^{1,\gamma}_{\loc}$ for every $\gamma<1$. These are the standard local linear Beltrami estimates; see \cite[Chapter 15]{AIM}.

Finally recover
\begin{equation}\label{forward-stress-reconstruction}
 \mu=\frac{T^{-1}(|U|)}{|U|}\,\overline U.
\end{equation}
Near zero the scalar inverse has the form
\[
 T^{-1}(t)=(2/p)^\beta t^\beta B(t^{2\beta}),
 \qquad B\ \hbox{real analytic near }0,\quad B(0)=1.
\]
This follows by applying the analytic inverse-function theorem to $T(r)^{2\beta}$ as a function of $r^2$. Thus both reconstruction maps for $\mu$ and $r$ are $C^{1,\alpha}$ for the exponents in \eqref{forward-holder-exponent}. The linear equation $f_{\bz}=\mu f_z$ gives $f\in C^{2,\alpha}_{\loc}$. Its Jacobian is strictly positive: the nonvanishing theorem for a homeomorphic solution with H\"older Beltrami coefficient applies, for example \cite[Theorem 1.1]{ACFJK} with the linear field $\mathcal H(z,\xi)=\mu(z)\xi$. Smooth bootstrap applies wherever $U\ne0$. This proves the proposition.
\end{proof}
\subsection{The implicit Hopf relation and its ellipticity}\label{implicit-hopf}
The same threshold appears when the inverse Hopf equation is written as an implicit nonlinear Beltrami equation, following \cite[Section 3]{MYHigher}. Put $x=|h_w|$, $y=|h_{\bw}|$ and $r=y/x$. On a nonzero Hopf level,
\[
 x^2F_p(r)=c,\qquad c=|\Phi|/\eta(h).
\]
Freeze $w$ and the target value $h$ when differentiating with respect to the gradient, and set
\[
 C_p(r)=p+(2-p)r^2,\qquad
 D_p(r)=p(p-1)+(2-p)(p+1)r^2.
\]
\begin{lemma}\label{implicit-hopf-lemma}
Along a nonzero Hopf level,
\begin{equation}\label{implicit-hopf-ellipticity}
 \frac{dy}{dx}=r-\frac{2F_p(r)}{F_p'(r)}
       =\frac{r[D_p(r)-2C_p(r)]}{D_p(r)}<0,
 \qquad
 1-\left|\frac{dy}{dx}\right|
       =\frac{(1-r)H_p(r)}{D_p(r)}.
\end{equation}
Thus the tangential derivative of the constitutive map $h_w\mapsto h_{\bw}$ has magnitude $r$, its radial derivative has magnitude $|dy/dx|>r$ for $0<r<1$ and $p<2$, and the map is a strict contraction exactly where $H_p(r)>0$.
\end{lemma}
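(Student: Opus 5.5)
The plan is a direct implicit differentiation of the level relation $x^2F_p(r)=c$ with $y=rx$, followed by one polynomial identity. Afterwards, the constitutive map $h_w\mapsto h_{\bw}$ is read off from the Hopf relation.

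First I will record the derivative of $F_p$. From $F_p(r)=\tfrac12 r^{p-1}C_p(r)$,
\[
 F_p'(r)=\tfrac12 r^{p-2}\bigl[(p-1)C_p(r)+2(2-p)r^2\bigr]=\tfrac12 r^{p-2}D_p(r),
\]
and $D_p>0$ on $(0,1)$. Put $\mathcal G(x,y)=x^2F_p(y/x)$. Then $\mathcal G_x=2xF_p-yF_p'$ and $\mathcal G_y=xF_p'>0$. The implicit function theorem gives
\[
 \frac{dy}{dx}=-\frac{\mathcal G_x}{\mathcal G_y}=r-\frac{2F_p}{F_p'},
 \qquad
 \frac{2F_p}{F_p'}=\frac{2rC_p}{D_p},
\]
which is the displayed middle formula. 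For the sign, note that
\[
 D_p-2C_p=p(p-3)+(2-p)(p-1)r^2 .
\]
This is negative because $p(3-p)-(2-p)(p-1)=2>0$.

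Next I will verify the ellipticity identity. Since $dy/dx<0$, we have $|dy/dx|=r(2C_p-D_p)/D_p$. Hence
\[
 1-\left|\frac{dy}{dx}\right|=\frac{(1+r)D_p-2rC_p}{D_p},
\]
and it remains to check the cubic identity $(1+r)D_p(r)-2rC_p(r)=(1-r)H_p(r)$. I will do this coefficient by coefficient. The constant terms are both $p(p-1)$. The $r$ terms are both $p^2-3p$. The $r^2$ terms are both $(2-p)(p+1)$. The $r^3$ terms are both $(2-p)(p-1)$. This coefficient matching is the only step needing care, and it is routine.

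Finally comes the interpretation. On a nonzero Hopf level the argument of $h_w\overline{h_{\bw}}$ is fixed by $\Phi$, so $h_{\bw}=\frac{y(|h_w|)}{|h_w|}e^{i\theta}\,\overline{h_w}$ with $\theta$ locally constant. The differential of $a\mapsto y(|a|)e^{i\theta}\bar a/|a|$ has singular values $y/x=r$ in the tangential direction and $|dy/dx|$ in the radial direction. The inequality $|dy/dx|>r$ is equivalent to $2C_p-D_p>D_p$, that is, to $C_p>D_p$. This reduces to $p(2-p)(1-r^2)>0$, which holds for $0<r<1$ and $p<2$. So the operator norm equals $|dy/dx|$. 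By the identity above and $D_p>0$, it is less than $1$ exactly when $H_p(r)>0$.
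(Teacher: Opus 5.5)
Your proof is correct and follows the paper's own argument: you implicitly differentiate $x^2F_p(y/x)=c$ using $rF_p'/F_p=D_p/C_p$, then apply the factorisation $(1+r)D_p-2rC_p=(1-r)H_p$. Your coefficient checks and your reduction of $|dy/dx|>r$ to $C_p-D_p=p(2-p)(1-r^2)>0$ are also correct.

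One slip in the interpretation: since $\arg(h_w\overline{h_{\bw}})=\arg\Phi=\theta$, the constitutive map is $h_{\bw}=\frac{y(|h_w|)}{|h_w|}e^{-i\theta}h_w$, so that $\mu_h=re^{-i\theta}$, not $\frac{y(|h_w|)}{|h_w|}e^{i\theta}\overline{h_w}$. The two maps differ only by an isometry of the target, so the singular values $r$ and $|dy/dx|$, and all of your conclusions, are unaffected.
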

\begin{proof}
Differentiate $x^2F_p(r)=c$ with $r=y/x$ and use $rF_p'(r)/F_p(r)=D_p(r)/C_p(r)$; the identity $(1+r)D_p-2rC_p=(1-r)H_p(r)$ gives the second formula.
\end{proof}
For $p\geq p_{\mathrm c}$ a local quasiconformal bound $r\leq k<1$ therefore gives a strict contraction. Below $p_{\mathrm c}$ a general Hopf solution needs strict separation from $r_{\mathrm c}(p)$, which local quasiconformality alone does not give; for example $p=3/2$, $r=1/2$ gives $dy/dx=-35/34$. For a minimiser the cutoff $r\leq r_{\mathrm v}(p)<r_{\mathrm c}(p)$ of Section~\ref{variational-cutoff} gives that separation once the rank-one set is null. The endpoint $r=1$ is also degenerate when $\Phi\ne0$: since $F_p(1)=F_p'(1)=1$, the level curve $x^2F_p(y/x)=c$ reaches the rank-one line $x=y=\sqrt c$ with slope $-1$. This is a specific difference from superlinear distortion profiles, such as $\K^m$ or $e^{\K}$, whose nonzero level curves approach the rank-one direction only at the origin, and it is what the regular limiting construction of Section~\ref{lambda-programme} has to exclude.

\subsection{Holomorphic Hopf data and real analyticity at a Hopf zero}\label{analyticity-scope}
Holomorphy of the differential and real analyticity of its solution are different assertions. For a classical local diffeomorphic solution, the implicit Hopf equation is an analytic nonlinear elliptic system on every open set where $\Phi_p\ne0$ and $H_p(r)>0$. Its coefficients are real analytic there, including the hyperbolic metric, and its ellipticity is uniform on sufficiently small compact neighbourhoods. The derivative test is \eqref{implicit-hopf-ellipticity}. Standard analytic elliptic regularity therefore makes the solution real analytic on that set \cite{MorreyAnalytic}. For $p_{\mathrm c}\leq p\leq2$, the strict inequality holds whenever $0<r<1$.

For $p=2$, a local diffeomorphism with holomorphic Hopf differential satisfies the hyperbolic harmonic-map equation
\[
 h_{w\bw}+\frac{2\overline h}{1-|h|^2}h_wh_{\bw}=0.
\]
This is an analytic elliptic equation, also at $h_{\bw}=0$. If initially $h$ is only $C^1$ through the isolated Hopf zeros and $C^2$ elsewhere, the equation extends weakly across those points: the boundary terms on circles of radius $\epsilon$ tend to zero because the first derivatives are bounded. Elliptic regularity then gives real analyticity throughout. If $\Phi_2\equiv0$, the positive Jacobian gives $h_{\bw}\equiv0$, and the map is holomorphic.

For $1<p<2$, the constitutive relation has a different behaviour at a conformal point. The following conclusion follows directly from the Hopf equation and requires no regularity theorem.

\begin{proposition}[The compatibility required at a Hopf zero]\label{analytic-hopf-zeros}
Let $1<p<2$. Suppose $h$ is an orientation-preserving $C^1$ local diffeomorphism near $w_0$ and satisfies \eqref{monotone-hopf} with holomorphic $\Phi_p\not\equiv0$. If $\Phi_p$ has a zero of order $m\geq1$ at $w_0$, then, with $\zeta=w-w_0$,
\begin{equation}\label{hopf-zero-profile}
 h_{\bw}(w)=C\,\overline\zeta^{\,m}
 |\zeta|^{\,m(2-p)/(p-1)}(1+o(1)),\qquad C\ne0.
\end{equation}
A necessary condition for $h$ to be real analytic at $w_0$ is
\begin{equation}\label{analytic-zero-compatibility}
 \frac{m(2-p)}{p-1}\ \in\ \{0,2,4,\ldots\}.
\end{equation}
This is a necessary condition, not a sufficient analytic regularity theorem.
\end{proposition}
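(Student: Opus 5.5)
The plan is to read off the local profile of $h_{\bw}$ directly from the Hopf relation \eqref{monotone-hopf}, first its modulus and then its argument, and afterwards to compare that profile with a Taylor expansion.

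\emph{Modulus.} Since $h$ is an orientation-preserving $C^1$ local diffeomorphism, $J_h(w_0)>0$, so $h_w(w_0)\neq0$ and $r<1$ near $w_0$. Taking absolute values in \eqref{monotone-hopf}, as in the proof of Corollary~\ref{monotone-higher-sobolev}, gives
\[
 |\Phi_p|=\eta(h)|h_w|^2F_p(r).
\]
Because $\Phi_p(w_0)=0$ and $\eta(h)|h_w|^2$ is continuous and positive at $w_0$, we get $F_p(r(w_0))=0$. Hence $r(w_0)=0$, and $r$ is continuous since $h\in C^1$.

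As $r\to0$ we have $F_p(r)=\tfrac p2 r^{p-1}(1+O(r^2))$. Writing $\Phi_p(w)=a\zeta^m(1+o(1))$ with $a\neq0$, we obtain
\[
 r^{p-1}=\frac{2|a|}{p\,\eta(h(w_0))|h_w(w_0)|^2}\,|\zeta|^m(1+o(1)).
\]
Therefore $r\sim c|\zeta|^{m/(p-1)}$, and $|h_{\bw}|=r|h_w|\sim c'|\zeta|^{m/(p-1)}$ with $c'>0$.

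\emph{Argument.} Where $h_{\bw}\neq0$, the factor $\tfrac12\eta(h)C_p(r)r^{p-2}$ is positive. Hence $\arg\overline{h_{\bw}}=\arg\Phi_p-\arg h_w$, and $\arg h_w$ is continuous at $w_0$. This gives
\[
 \overline{h_{\bw}}=\frac{|h_{\bw}|}{|\Phi_p|}\,\Phi_p\,\frac{\overline{h_w}}{|h_w|},
\]
which determines the phase up to $o(1)$ terms. Substituting $\Phi_p=a\zeta^m(1+o(1))$ and the modulus asymptotics yields
\[
 h_{\bw}=C\,\overline\zeta^{\,m}\,|\zeta|^{m/(p-1)-m}(1+o(1)),\qquad C\neq0.
\]
Since $m/(p-1)-m=m(2-p)/(p-1)$, this is \eqref{hopf-zero-profile}.

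\emph{Necessity of \eqref{analytic-zero-compatibility}.} Suppose $h$ is real analytic at $w_0$. Then $h_{\bw}$ is real analytic and not identically zero, so its lowest-order nonzero Taylor term is a homogeneous polynomial
\[
 P_N(\zeta)=\sum_j c_j\zeta^j\overline\zeta^{\,N-j}
\]
with $h_{\bw}=P_N+O(|\zeta|^{N+1})$. Comparing with \eqref{hopf-zero-profile} along rays, we need $N=m/(p-1)$ and
\[
 P_N(e^{i\theta})=C e^{-im\theta}\quad\text{for all }\theta.
\]
The function $P_N(e^{i\theta})$ contains only the Fourier modes $e^{i(2j-N)\theta}$, so only the index $j$ with $2j-N=-m$ can survive. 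This forces
\[
 \frac{m(2-p)}{p-1}=N-m=2j\in\{0,2,4,\ldots\}.
\]
The only delicate point is checking that the $o(1)$ errors suffice for this comparison. They do, because dividing by $|\zeta|^N$ and letting $\zeta\to0$ along each ray identifies $P_N$ on the unit circle.
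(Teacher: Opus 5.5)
Your proposal is correct and follows essentially the paper's argument. The paper does your modulus and phase computations in one step, writing \eqref{monotone-hopf} as $\Phi_p=L|h_{\bar w}|^{p-2}\overline{h_{\bar w}}$ with $L$ continuous and nonzero at $w_0$, and inverting exactly as $h_{\bar w}=\overline X|X|^{(2-p)/(p-1)}$ with $X=\Phi_p/L$; it then uses the same lowest-order homogeneous Taylor polynomial and angular-mode comparison that you give for \eqref{analytic-zero-compatibility}.
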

\begin{proof}
Put $a=h_w$, $b=h_{\bw}$. Since $J_h(w_0)>0$, $a(w_0)\ne0$, and the zero of $\Phi_p$ forces $b(w_0)=0$. Write the equation as
\[
 \Phi_p=L(w)|b|^{p-2}\overline b,\qquad
 L(w)=\tfrac12\eta(h)[p+(2-p)r^2]|a|^{2-p}a.
\]
The coefficient $L$ is continuous and $L(w_0)\ne0$. Setting $X=\Phi_p/L$ gives the exact inversion
\[
 b=\overline X\,|X|^{(2-p)/(p-1)}.
\]
As $\Phi_p=\gamma\zeta^m(1+O(\zeta))$, $\gamma\ne0$, this proves \eqref{hopf-zero-profile}.

If $h$ is real analytic, the nonzero real analytic function $b$ has a first nonzero homogeneous Taylor polynomial of some integer degree $N$. The modulus in \eqref{hopf-zero-profile} forces $N=m/(p-1)$. Its angular factor is $e^{-im\theta}$. A homogeneous polynomial in $\zeta,\overline\zeta$ of degree $N$ can have exactly this angular factor only if it is a multiple of
\[
 \zeta^{(N-m)/2}\overline\zeta^{(N+m)/2}.
\]
Thus $N-m=m(2-p)/(p-1)$ must be a nonnegative even integer, proving \eqref{analytic-zero-compatibility}.
\end{proof}

For example, a simple Hopf zero at $p=3/2$ forces the leading term $C\overline\zeta|\zeta|$, excluding $C^3$ regularity; at $p=9/5$ it forces $C\overline\zeta|\zeta|^{1/4}$. These are conditional consequences of a specified zero. Our regularity conclusions across the conformal set are $C^{2,\alpha}$; if $\Phi_p\equiv0$, the map is conformal and analytic.

\section{Below the threshold}\label{below-threshold}
For $1<p<p_{\mathrm c}$ the inverse density is not polyconvex and the scalar equations of Section~\ref{scalar-equations} are not elliptic for all distortions. Minimality nevertheless gives strong information: finite changes of domain impose a cutoff on the distortion of a minimiser, strictly stronger than the Legendre--Hadamard test, except on a possible rank-one set. All assertions in this section are conditional on the existence of the minimiser.

\subsection{Weak localisation and a stronger variational cutoff}\label{variational-cutoff}
Finite changes of domain give more information than the Legendre--Hadamard test. In this subsection a minimiser may be a global minimiser in $\mathcal M_\beta$, or in any subclass which is preserved by compactly supported smooth orientation-preserving changes of domain. This is enough to have minimality among such competitors which are uniformly close to the given map. This is a condition of strong local minimality; stationarity, or minimality only against small changes of the gradient, is not enough.

\begin{lemma}[Localisation by changes of domain]\label{weak-inner-localisation}
Let $0<p<2$, and let $h\in\mathcal M_\beta$ have the minimality property just stated. At almost every $w_0\in\D$, put $A=Dh(w_0)$. For any bounded square $Q$ and any smooth orientation-preserving diffeomorphism $\psi:Q\to Q$ equal to the identity near $\partial Q$,
\begin{equation}\label{frozen-inner-inequality}
 |Q|W_p(A)\leq\int_Q W_p(A D\psi(x))\,dx.
\end{equation}
In particular, no injectivity or differentiability beyond the stated continuity and Sobolev regularity of $h$ is needed for this necessary condition.
\end{lemma}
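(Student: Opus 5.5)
The plan is a blow-up argument at a point that is simultaneously a Lebesgue point of $Dh$ and of $\eta(h)W_p(Dh)$, combined with rescaled changes of domain. Fix such a $w_0$ in $\D$, and also require that $h$ is approximately differentiable there with derivative $A$. This holds almost everywhere for $W^{1,2}$ maps. Given $Q$ and $\psi$, for small $\rho>0$ set $Q_\rho=w_0+\rho Q$ and $\psi_\rho(w)=w_0+\rho\psi((w-w_0)/\rho)$. Then $\psi_\rho$ is a compactly supported smooth orientation-preserving diffeomorphism of $\D$, equal to the identity off $Q_\rho$. We use the competitor $h_\rho=h\circ\psi_\rho$, a change of domain of the admissible type. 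It is a continuous monotone surjection with the same trace, belongs to $W^{1,2}$, has $J\geq0$, and has finite energy because $\psi_\rho$ is bi-Lipschitz. Minimality then gives
\[
 \int_{Q_\rho}\eta(h)W_p(Dh)\,dA\leq\int_{Q_\rho}\eta(h\circ\psi_\rho)W_p\bigl(Dh(\psi_\rho)D\psi_\rho\bigr)\,dA .
\]
No injectivity of $h$ is used. Only the chain rule for a Sobolev map composed with a smooth diffeomorphism is needed.

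Next, divide by $\rho^2$ and change variables $w=w_0+\rho x$. The left side tends to $|Q|\eta(h(w_0))W_p(A)$ by the Lebesgue point property. On the right, $\eta(h\circ\psi_\rho)\to\eta(h(w_0))$ uniformly on $Q_\rho$ by continuity of $h$ and $h(w_0)\in\D$. It therefore remains to show
\[
 \frac1{\rho^2}\int_{Q_\rho}W_p\bigl(Dh(\psi_\rho(w))D\psi_\rho(w)\bigr)\,dA\;\longrightarrow\;\int_Q W_p(AD\psi(x))\,dx .
\]
Writing $D\psi_\rho(w)=D\psi(x)$, substitute $y=\psi(x)$, using that $\psi$ is a bi-Lipschitz diffeomorphism of $Q$ with Jacobian bounded above and below. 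The difference is then controlled by
\[
 C\rho^{-2}\int_{Q_\rho}\bigl|W_p(Dh(w)M(w))-W_p(AM(w))\bigr|\,dA ,
\]
where $M$ is the bounded matrix field $D\psi\circ\psi^{-1}$ transported. Dividing through by $\eta(h(w_0))>0$ gives \eqref{frozen-inner-inequality}.

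The main step is this last convergence. The function $W_p$ is continuous on the closed cone $\{|b|\leq|a|\}$, including rank one and the origin, for $0<p<2$, and satisfies $|W_p(B)|\leq\|B\|^2$. I would first use the growth bound to get $|W_p(B)-W_p(B')|\leq\omega_R(|B-B'|)$ for $|B|,|B'|\leq R$, with $\omega_R$ a modulus of continuity. For large gradients, use the quadratic bound $C(|Dh|^2+|A|^2)$ on the set where $|Dh-A|>\delta$. The $L^2$ Lebesgue point property, $\rho^{-2}\int_{Q_\rho}|Dh-A|^2\to0$, makes both the small-deviation and large-deviation contributions vanish as $\rho\to0$ and then $\delta\to0$. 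I expect the only delicate point to be that $W_p$ is merely Hölder, with exponent $p$, near $b=0$ when $p<1$. Uniform continuity on bounded sets suffices, so I would handle it exactly as above.
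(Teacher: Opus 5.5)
Your proposal is correct and follows essentially the same route as the paper: a blow-up at an $L^2$-Lebesgue point of $Dh$ using the rescaled changes of domain $h\circ\psi_\rho$, then division by $\rho^2$, uniform convergence of the weight to $\eta(h(w_0))$, and passage to the limit in the densities via continuity and quadratic growth of $W_p$. The only cosmetic difference is that the paper extends $W_p$ to all real matrices through singular values and invokes uniform integrability, whereas you stay on the closed cone $\{\det\geq0\}$ (legitimate, since $J_h\geq0$ and $\det D\psi>0$) and make the convergence explicit with a $\delta$-splitting and Chebyshev's inequality.
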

\begin{proof}
Choose $w_0$ to be an $L^2$-Lebesgue point of $Dh$. For small $\epsilon>0$, define a diffeomorphism of $\D$ by
\[
 \psi_\epsilon(w_0+\epsilon x)=w_0+\epsilon\psi(x)
 \quad(x\in Q),\qquad
 \psi_\epsilon=\id\quad\hbox{off }w_0+\epsilon Q.
\]
The competitor $h_\epsilon=h\circ\psi_\epsilon$ has the same trace, image and monotonicity as $h$, and $J_{h_\epsilon}\geq0$. On the variation support its image lies in a compact subset of $\D$, so the weight is bounded and positive. The quadratic bound for $W_p$ and the Sobolev chain rule imply finite energy there; elsewhere the energy is unchanged. Thus the competitor is admissible. Continuity gives $h_\epsilon\to h$ uniformly, even though $D\psi$ need not be close to $I$.

The $L^2$-Lebesgue property, followed by a change of variables through the fixed bi-Lipschitz map $\psi$, gives
\[
 Dh(w_0+\epsilon\,\cdot)\longrightarrow A,
 \qquad
 Dh(w_0+\epsilon\psi(\cdot))D\psi
       \longrightarrow A D\psi
 \quad\hbox{strongly in }L^2(Q).
\]
Here the inverse-Jacobian bound for this fixed $\psi$ is finite; no bound uniform over all test diffeomorphisms is being asserted. The target values in both integrals converge uniformly to $h(w_0)$.

To justify convergence of the densities, extend $W_p$ continuously to all real matrices by
\[
 \widetilde W_p(M)=\tfrac14(\sigma_1-\sigma_2)^p
                            (\sigma_1+\sigma_2)^{2-p},
 \qquad \sigma_1\geq\sigma_2\geq0.
\]
This agrees with $W_p$ when $\det M\geq0$ and has quadratic growth. Strong $L^2$ convergence, continuity and uniform integrability therefore give strong $L^1$ convergence of the densities. Divide the local minimality inequality by $\epsilon^2$, pass to the limit and cancel $\eta(h(w_0))>0$. This proves \eqref{frozen-inner-inequality}. The set of $L^2$-Lebesgue points used here is independent of $Q$ and $\psi$.
\end{proof}

\begin{lemma}[A rank-one Weierstrass condition]\label{rankone-weierstrass}
Let $1<p<2$. At a point covered by Lemma~\ref{weak-inner-localisation} with $J_h>0$, rotate and scale the derivative to $A=\operatorname{diag}(1,\tau)$, $0<\tau<1$. Set
\[
 \phi_p(\tau)=\tfrac14(1-\tau)^p(1+\tau)^{2-p}.
\]
Then
\begin{equation}\label{weierstrass-endpoint}
 \phi_p(0)\geq\phi_p(\tau)-\tau\phi_p'(\tau).
\end{equation}
The conformal case $\tau=1$ satisfies this inequality directly.
\end{lemma}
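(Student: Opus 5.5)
The plan is to feed Lemma~\ref{weak-inner-localisation} a specific change of domain. It should make $A\,D\psi$ a fine simple laminate of two diagonal matrices, one nearly rank-one and one close to $A$, and then read off \eqref{weierstrass-endpoint} by a first-order expansion. By rotation and scaling, which leave \eqref{frozen-inner-inequality} invariant up to a positive factor ($W_p$ is $2$-homogeneous and depends only on singular values), we may take $A=\operatorname{diag}(1,\tau)$ with $0<\tau<1$. On diagonal matrices $\operatorname{diag}(1,\nu)$ with $\nu>0$, the extension $\widetilde W_p$ equals $\phi_p(\nu)$ for $\nu\le1$ and the analogous symmetric expression for $\nu>1$. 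Near $\nu=\tau<1$ only the formula $\phi_p$ is relevant.

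\textbf{Construction.} Fix $0<\epsilon<1$ and small $\delta>0$. Put $\lambda=\delta/(1+\delta-\epsilon)$, so that $\lambda\epsilon+(1-\lambda)(1+\delta)=1$. Let $s(y)$ be the $1/n$-periodic function equal to $\epsilon$ on a fraction $\lambda$ of each period and to $1+\delta$ on the rest, mollified at scale much smaller than $1/n$. Its primitive $S(y)=\int_0^y s$ then satisfies $|S(y)-y|\le C/n$. On $Q=(0,1)^2$ take
\[
 \psi(x,y)=\bigl(x,\;y+\chi(x,y)(S(y)-y)\bigr),
\]
with $\chi\in C_c^\infty(Q)$ equal to $1$ off a boundary layer of width $\kappa$ and $|\nabla\chi|\le C/\kappa$. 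The second component has $y$-derivative $1+\chi(s-1)+\chi_y(S-y)$, which lies in $[\epsilon/2,2]$ once $n\gg1/\kappa$. Its $x$-derivative is $O(1/(n\kappa))$. So $\psi$ is an orientation-preserving diffeomorphism of $Q$ equal to the identity near $\partial Q$, with $D\psi$ bounded uniformly in $n$.

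\textbf{Passage to the limit.} Away from the layer, $A\,D\psi=\operatorname{diag}(1,\tau s(y))+O(1/(n\kappa))$. Inside the layer the density is bounded by a constant independent of $n,\kappa$, by quadratic growth. Letting $n\to\infty$ and then $\kappa\to0$ in \eqref{frozen-inner-inequality}, and using continuity of $\widetilde W_p$, gives
\[
 \phi_p(\tau)\le\lambda\,\phi_p(\tau\epsilon)+(1-\lambda)\,\phi_p(\tau(1+\delta)).
\]
Expanding $\phi_p(\tau+\tau\delta)=\phi_p(\tau)+\tau\delta\phi_p'(\tau)+o(\delta)$, which is valid because $\tau(1+\delta)<1$ for small $\delta$, yields
\[
 0\le\lambda[\phi_p(\tau\epsilon)-\phi_p(\tau)]+\tau\delta\phi_p'(\tau)+o(\delta).
\]
Divide by $\delta$ and let $\delta\to0$, so that $\lambda/\delta\to1/(1-\epsilon)$. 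This gives
\[
 0\le(1-\epsilon)^{-1}[\phi_p(\tau\epsilon)-\phi_p(\tau)]+\tau\phi_p'(\tau).
\]
Letting $\epsilon\downarrow0$ and using continuity of $\phi_p$ at $0$ gives \eqref{weierstrass-endpoint}.

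The conformal case $\tau=1$ is immediate: $\phi_p(1)=0$, the factor $(1-\tau)^p$ gives $\phi_p'(1)=0$ for $p>1$, and $\phi_p(0)=1/4\ge0$.

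The main obstacle is the boundary-layer and smoothing bookkeeping. We need $D\psi$ to stay uniformly bounded with Jacobian bounded below, so that the layer contributes only $O(\kappa)$. This is why the small phase is taken as $\epsilon>0$ rather than $0$, and the endpoint $\phi_p(0)$ is reached only in the final limit.
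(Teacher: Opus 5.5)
Your proposal is correct and follows essentially the paper's route: a fine stripe change of domain of the form $(x,y)\mapsto\bigl(x,\,y+\chi\cdot(\text{periodic})\bigr)$ realises a simple laminate of two rank-one connected diagonal matrices averaging to $A$, after which the vanishing volume fraction is sent to zero first and the nearly degenerate phase is then sent to zero. Your parameters $(\tau\epsilon,\ \lambda,\ \tau(1+\delta))$ are exactly the paper's $(\delta,\ \theta,\ s_\theta)$ under a change of notation.
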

\begin{proof}
Rotation invariance, degree-two homogeneity and the freedom to rotate the test square justify the normalisation. Fix $0<\delta<\tau$ and a small $\theta>0$ such that
\[
 s_\theta=\frac{\tau-\theta\delta}{1-\theta}<1.
\]
The matrices $B=\operatorname{diag}(1,\delta)$ and $C=\operatorname{diag}(1,s_\theta)$ have positive determinant, are rank-one connected and have average $A$ with proportions $\theta,1-\theta$.

Here is an explicit way to realise the comparison in \eqref{frozen-inner-inequality}. Let $q$ be periodic with mean-zero derivative taking the values $\delta-\tau$ and $s_\theta-\tau$ in those proportions, and let $0\leq\chi\leq1$ be a smooth cutoff on the square, equal to one except near its boundary. Use
\[
 \psi_n(x,y)=\left(x,\ y+\frac{\chi(x,y)}{n\tau}q(ny)\right).
\]
After smoothing $q$, these are smooth diffeomorphisms for sufficiently large $n$: their second component is strictly increasing in $y$, since
\[
 \det D\psi_n=1+\frac{\chi}{\tau}q'(ny)+O(n^{-1})
       \geq\min\{1,\delta/\tau\}+O(n^{-1})>0.
\]
They fix a neighbourhood of the boundary and map the square onto itself. In the region $\chi=1$, $A D\psi_n$ alternates between $B$ and $C$, up to the smoothing. First take $n\to\infty$, then shrink the boundary layer, and then remove the smoothing. All gradients in these passages are bounded for the fixed $\delta,\theta$; the boundary-layer contribution tends to zero. Consequently
\[
 \phi_p(\tau)\leq\theta\phi_p(\delta)
                         +(1-\theta)\phi_p(s_\theta).
\]
Letting $\theta\downarrow0$ gives
\[
 \phi_p(\delta)\geq\phi_p(\tau)
                       +(\delta-\tau)\phi_p'(\tau).
\]
Finally let $\delta\downarrow0$. Every competitor used before this last numerical limit is a diffeomorphism with positive Jacobian; a singular competitor has not been inserted into the admissible class.
\end{proof}

\begin{theorem}[The weak variational cutoff]\label{weak-variational-cutoff}
Let $1<p<p_{\mathrm c}$, and let $h\in\mathcal M_\beta$ have the minimality property of Lemma~\ref{weak-inner-localisation}. With the flux function of \eqref{flux-functions}, the selection condition is
\begin{equation}\label{variational-Q}
 Q_p(r)\leq Q_p(1)=1.
\end{equation}
There is a unique $r_{\mathrm v}(p)\in(0,r_{\mathrm c}(p))$ such that $Q_p(r_{\mathrm v})=1$. Almost everywhere,
\begin{equation}\label{weak-phase-alternative}
 Dh=0,\qquad\hbox{or}\qquad
 J_h>0\ \hbox{ and }\ |\mu_h|\leq r_{\mathrm v}(p),
 \qquad\hbox{or}\qquad
 J_h=0\ \hbox{ and }\ \operatorname{rank}Dh=1.
\end{equation}
Equivalently, with $r=0$ where $Dh=0$, one has
\[
 r\in[0,r_{\mathrm v}(p)]\cup\{1\}\quad\hbox{almost everywhere}.
\]
In particular the entire interval $(r_{\mathrm v}(p),1)$ is excluded, not just the interval where the classical equation changes type.
\end{theorem}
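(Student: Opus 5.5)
The plan is to read the rank-one Weierstrass inequality \eqref{weierstrass-endpoint} of Lemma~\ref{rankone-weierstrass} in the variable $r$ and recognise it as \eqref{variational-Q}. After that, the monotonicity of $Q_p$ from Lemma~\ref{flux-dictionary} gives the unique cutoff $r_{\mathrm v}$, and a case split on the Jacobian gives the trichotomy \eqref{weak-phase-alternative}.

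\emph{Step 1 (rewriting the Weierstrass inequality).} At a point with $J_h>0$, Lemma~\ref{rankone-weierstrass} normalises $A=\operatorname{diag}(1,\tau)$, where $r=|\mu_h|=(1-\tau)/(1+\tau)$, equivalently $\tau=(1-r)/(1+r)$. The right-hand side of \eqref{weierstrass-endpoint} is the intercept at $\tau=0$ of the tangent line to $\phi_p$ at $\tau$. Differentiating gives
\[
 \phi_p'(\tau)=\tfrac14(1-\tau)^{p-1}(1+\tau)^{1-p}\bigl[2-2p-2\tau\bigr].
\]
Hence
\[
 \phi_p(\tau)-\tau\phi_p'(\tau)=\tfrac14\Bigl(\frac{1-\tau}{1+\tau}\Bigr)^{p-1}\bigl[1+2(p-1)\tau+\tau^2\bigr].
\]
Substituting $\tau=(1-r)/(1+r)$ turns the bracket into $\bigl(2p+2(2-p)r^2\bigr)/(1+r)^2=4r^{1-p}F_p(r)/(1+r)^2$. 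The whole expression is therefore $Q_p(r)/4$. Since $\phi_p(0)=1/4$, inequality \eqref{weierstrass-endpoint} is exactly $Q_p(r)\leq1=Q_p(1)$. This routine algebra is the only computation, and I would check it against the value at $p=3/2$ quoted in Figure~\ref{fig-cutoffs}.

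\emph{Step 2 (the unique cutoff).} By \eqref{flux-log-derivative}, $Q_p'$ has the sign of $H_p$. For $1<p<p_{\mathrm c}$, $H_p$ is positive on $[0,r_{\mathrm c})$ and negative on $(r_{\mathrm c},1]$. So $Q_p$ increases strictly from $Q_p(0)=0$ up to $r_{\mathrm c}$, then decreases strictly to $Q_p(1)=1$. Consequently $Q_p>1$ on $[r_{\mathrm c},1)$, and $Q_p=1$ has exactly one solution $r_{\mathrm v}\in(0,r_{\mathrm c})$ by the intermediate value theorem and strict monotonicity. The set $\{r\in(0,1):Q_p(r)\leq1\}$ is then precisely $(0,r_{\mathrm v}]$.

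\emph{Step 3 (the trichotomy).} Lemma~\ref{weak-inner-localisation} and hence Lemma~\ref{rankone-weierstrass} hold at almost every point. Since $J_h\geq0$ almost everywhere, there are three cases. Where $J_h>0$, Steps 1 and 2 give $|\mu_h|\leq r_{\mathrm v}$; conformal points have $r=0$ and are included. Where $J_h=0$, either $Dh=0$, so $r=0$ by convention, or $Dh$ has rank one. In the rank-one case $|h_w|=|h_{\bw}|\neq0$, so $r=1$.

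The main point needing care is Step 1: the normalisation to $\operatorname{diag}(1,\tau)$ must preserve $r$, and it does because rotations and scalings leave $|\mu|$ unchanged. The genuinely hard analytic content, the realisation of the finite rank-one comparison by changes of domain, is already contained in Lemma~\ref{rankone-weierstrass}.
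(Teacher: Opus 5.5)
Your proposal is correct and is essentially the paper's own proof: the identity $4\bigl(\phi_p(\tau)-\tau\phi_p'(\tau)\bigr)=Q_p(r)$ at $\tau=(1-r)/(1+r)$ turns Lemma~\ref{rankone-weierstrass} into $Q_p(r)\leq1$ on $\{J_h>0\}$. The sign of $Q_p'$ from \eqref{flux-log-derivative}, together with $Q_p(0)=0$ and $Q_p(1)=1$, gives the unique $r_{\mathrm v}\in(0,r_{\mathrm c})$, the case split on $J_h$ gives the trichotomy, and your computation of the tangent-line intercept checks out.
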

\begin{proof}
For $\tau=(1-r)/(1+r)$,
\[
 \phi_p'(\tau)=-\frac{2(p-1+\tau)}{1-\tau^2}\phi_p(\tau),
 \qquad
 4\bigl(\phi_p(\tau)-\tau\phi_p'(\tau)\bigr)=Q_p(r).
\]
Thus Lemma~\ref{rankone-weierstrass} gives $Q_p(r)\leq1$ wherever $J_h>0$. The dictionary identity \eqref{flux-log-derivative} reads
\begin{equation}\label{Q-derivative-sign}
 \frac{Q_p'(r)}{Q_p(r)}
  =\frac{(1-r)H_p(r)}{r(1+r)[p+(2-p)r^2]}.
\end{equation}
Since $H_p$ decreases through zero at $r_{\mathrm c}<1$, $Q_p$ increases strictly on $(0,r_{\mathrm c})$ and decreases strictly on $(r_{\mathrm c},1)$. Also $Q_p(0)=0$ and $Q_p(1)=1$, so its maximum is strictly larger than one. This proves the existence and uniqueness of $r_{\mathrm v}$ and the claimed restriction for $r<1$. At $J_h=0$, a nonzero orientation-preserving-cone matrix has rank one and $r=1$; the zero matrix has our convention $r=0$.
\end{proof}

For $p=3/2$, the smaller root is
\[
 r_{\mathrm v}=0.2955977425\ldots<2\sqrt3-3=r_{\mathrm c}.
\]
Indeed $x=\sqrt{r_{\mathrm v}}$ is the positive root of $x^3+x^2+x-1=0$. Hence
\[
 K_{\mathrm v}(p):=\frac{1+r_{\mathrm v}(p)}{1-r_{\mathrm v}(p)},
 \qquad K_{\mathrm v}(3/2)=1.8392867552\ldots.
\]
In fact $K_{\mathrm v}(3/2)=1/x$ is the tribonacci constant: multiplying $(1+x^2)/(1-x^2)=1/x$ by $x(1-x^2)$ gives exactly $x^3+x^2+x=1$.
The cutoff is necessary; no sufficiency or sharpness for the full variational problem is claimed. In the forward scalar equation of Proposition~\ref{forward-scalar-reduction}, the flux is $tA_p(t)=\sqrt{Q_p(r)}/2$. The new restriction is exactly that this flux does not exceed its limit $1/2$ as $r\uparrow1$. The ellipticity cutoff locates the maximum of the flux, whereas minimality selects a smaller range.

\begin{corollary}[Quasiconformality once the rank-one set is null]\label{cutoff-finite-distortion}
Under Theorem~\ref{weak-variational-cutoff}, if
\[
 |\{J_h=0,\ Dh\ne0\}|=0,
\]
then $h$ is a $K_{\mathrm v}(p)$-quasiconformal homeomorphism. On its nonconformal branch $H_p(r)\geq H_p(r_{\mathrm v})>0$, giving a strict separation from the change of type which depends only on $p$. In particular this applies if finite distortion of $h$ is known.
\end{corollary}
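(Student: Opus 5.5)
The plan is to turn the pointwise alternative \eqref{weak-phase-alternative} of Theorem~\ref{weak-variational-cutoff} into a distortion inequality, use the theory of quasiregular maps to get openness and discreteness, and then let monotonicity force injectivity.

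First, the distortion inequality. By hypothesis the third alternative of \eqref{weak-phase-alternative} occurs only on a null set. So almost everywhere either $Dh=0$, or $J_h>0$ and $|h_{\bw}|\leq r_{\mathrm v}(p)|h_w|$. In both cases
\[
 |h_{\bw}|\leq r_{\mathrm v}(p)\,|h_w|,\qquad\text{equivalently}\qquad |Dh|_{\mathrm{op}}^2\leq K_{\mathrm v}(p)\,J_h
\]
holds almost everywhere. Since $h\in W^{1,2}(\D)$ by the definition of $\mathcal M_\beta$, $h$ is a $K_{\mathrm v}(p)$-quasiregular map on $\D$. It is nonconstant, because its trace $\beta$ is a homeomorphism of the circle.

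Second, openness, discreteness and injectivity. By Reshetnyak's theorem, or Stoilow factorisation in the plane \cite{AIM}, $h|_\D$ is open and discrete. Every fibre $h^{-1}(w)$ with $w\in\D$ is connected by monotonicity and discrete, hence a single point. By the definition of $\mathcal M_\beta$, $h(\D)\subset\D$, so a point of $\Sone$ has preimage in $\Sone$ only. That preimage is a singleton, since $\beta$ is a homeomorphism. Thus $h$ is a continuous bijection of the compact space $\overline\D$, hence a homeomorphism. Alternatively, open and discrete with singleton fibres gives a local homeomorphism, and the first case of Lemma~\ref{local-to-global} applies directly.

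Third, upgrading to quasiconformality. A $W^{1,2}$ homeomorphism satisfying the distortion inequality is $K_{\mathrm v}(p)$-quasiconformal. In particular $J_h>0$ almost everywhere, so the set $\{Dh=0\}$ is null, and $K_h\leq K_{\mathrm v}(p)$ with $r\leq r_{\mathrm v}(p)$ almost everywhere. On the nonconformal branch $0<r\leq r_{\mathrm v}<r_{\mathrm c}$, and $H_p$ is strictly decreasing on $[0,1]$. Hence $H_p(r)\geq H_p(r_{\mathrm v})>0$, and this bound depends only on $p$.

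Finally, if $h$ is known to have finite distortion, then $Dh=0$ almost everywhere on $\{J_h=0\}$. This is exactly the statement that the rank-one set $\{J_h=0,\ Dh\ne0\}$ is null, so the preceding argument applies.

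The only step with real content is the second one: it invokes the Stoilow/Reshetnyak openness and discreteness theorem for $W^{1,2}$ maps with bounded distortion, applied to a map known a priori only to be a monotone Sobolev surjection. I would check that the hypotheses of that theorem require nothing beyond $W^{1,2}_{\loc}$, the distortion inequality almost everywhere, and nonconstancy, so that neither finite distortion nor injectivity is assumed in advance. Everything else is bookkeeping with the alternative \eqref{weak-phase-alternative} and the monotonicity of $H_p$.
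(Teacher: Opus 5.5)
Your proposal is correct and follows essentially the same route as the paper: from \eqref{weak-phase-alternative} and the null rank-one set you obtain $|h_{\bw}|\leq r_{\mathrm v}(p)|h_w|$ almost everywhere, then use quasiregularity to get openness and discreteness, then monotonicity to make the fibres singletons, and finally the strict decrease of $H_p$. You also make explicit two points the paper leaves implicit: injectivity on the boundary via $h(\D)\subset\D$, and the $L^1$-free upgrade from a $W^{1,2}$ homeomorphism to $K_{\mathrm v}(p)$-quasiconformality.
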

\begin{proof}
The alternative \eqref{weak-phase-alternative} now gives $|h_{\bw}|\leq r_{\mathrm v}|h_w|$ almost everywhere, including the zero-gradient set. A nonconstant $W^{1,2}_{\loc}$ solution of this bounded-distortion inequality is quasiregular and therefore open and discrete \cite{AIM}. Its fibres are also connected by monotonicity, so they are singletons. The map is consequently a quasiconformal homeomorphism, with the stated operator-dilatation bound. Strict decrease of $H_p$ gives the final assertion. This argument establishes quasiconformality without first assuming differentiable invertibility.
\end{proof}
\begin{corollary}[A quasiconformal minimiser in the range $1<p<p_{\mathrm c}$ is a diffeomorphism]\label{qc-minimiser-regularity}
Let $1<p<p_{\mathrm c}$. Every finite-energy minimiser of $\I_p$ over the quasiconformal homeomorphisms with prescribed boundary values is a $C^{2,\alpha}_{\loc}$ diffeomorphism for the exponents in \eqref{forward-holder-exponent}. Its inverse has the same regularity. Both maps are smooth away from their conformal sets.
\end{corollary}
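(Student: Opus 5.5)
Let $f$ be a finite-energy minimiser of $\I_p$ over quasiconformal homeomorphisms with the given boundary values, and put $h=f^{-1}$. The plan is to transfer the problem to the inverse side, apply the weak variational cutoff there, and then return to the forward side and invoke the regularity proposition on the elliptic branch.

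First, $h$ is quasiconformal, so it lies in $W^{1,2}$ and has finite distortion. By Proposition~\ref{finite-distortion-inverse-calculus}(iv) and the inverse energy identity, $\E_p(h)=\I_p(f)<\infty$. Let $\mathcal Q$ be the class of inverses of quasiconformal extensions of the data. Precomposing $h$ by a compactly supported smooth orientation-preserving diffeomorphism $\psi$ of $\D$ gives $h\circ\psi$, which is again the inverse of a quasiconformal map with the same trace, namely $\psi^{-1}\circ f$. So $\mathcal Q$ is preserved by such changes of domain, and $h$ minimises $\E_p$ on $\mathcal Q$. The proof of Lemma~\ref{weak-inner-localisation} uses only competitors of this form, so it applies to $h$ in $\mathcal Q$. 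Theorem~\ref{weak-variational-cutoff} then gives $r\in[0,r_{\mathrm v}]\cup\{1\}$ almost everywhere. Since $h$ has finite distortion, the rank-one set is null. Corollary~\ref{cutoff-finite-distortion} therefore gives $|\mu_h|\leq r_{\mathrm v}(p)$ almost everywhere. Because $|\mu_f|=|\mu_h|\circ f$ and $f$ preserves null sets, we also get $|\mu_f|\leq r_{\mathrm v}<r_{\mathrm c}(p)$ almost everywhere.

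Second, we need stationarity of $f$ under inner variations for $\int\eta\,e(|\mu_f|)\,dA$. The variation $f\circ g_t^{-1}$ with $g_t=\id+t\varphi$ is quasiconformal with the same trace, so it is admissible in the forward class. Its dilatation stays uniformly below one for small $t$, so the energy is differentiable in $t$, as in the proof of Proposition~\ref{forward-regularity-proposition}. Minimality in $t$ then gives stationarity. Take $R=r_{\mathrm v}(p)<r_{\mathrm c}(p)$. Then $H_p>0$ on $[0,R]$, and Proposition~\ref{forward-regularity-proposition} applies. It gives that $f$ is a $C^{2,\alpha}_{\loc}$ diffeomorphism for every $\alpha$ in \eqref{forward-holder-exponent}, smooth on $\{\mu_f\neq0\}$. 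Since $f$ is a global homeomorphism whose Jacobian is strictly positive, it is a global diffeomorphism.

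Third, regularity of $h$. By the inverse function theorem, $h$ is a $C^{2,\alpha}_{\loc}$ diffeomorphism, and it is smooth on $f(\{\mu_f\neq0\})=\{\mu_h\neq0\}$. I expect the main obstacle to be the first step: checking that the localisation and Weierstrass arguments of Lemmas~\ref{weak-inner-localisation} and \ref{rankone-weierstrass} use only admissible competitors in the quasiconformal class. They do, because every competitor there has the form $h\circ\psi$ with $\psi$ a smooth diffeomorphism that is the identity near the boundary, hence bi-Lipschitz; such a precomposition preserves quasiconformality of the inverse.
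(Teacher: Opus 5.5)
Your proposal is correct and follows essentially the same route as the paper. You show that the changes of domain used in Theorem~\ref{weak-variational-cutoff} stay in the quasiconformal class and that the rank-one set is null, so $|\mu_h|\leq r_{\mathrm v}(p)$; you transfer this bound to $f$, use admissible forward inner variations to invoke Proposition~\ref{forward-regularity-proposition}, and finish with the inverse function theorem. The only difference is cosmetic: you take $R=r_{\mathrm v}$ where the paper takes any $R\in(r_{\mathrm v},r_{\mathrm c})$, and this is equally valid since $H_p(r_{\mathrm v})>0$.
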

\begin{proof}
For the inverse minimiser, the finite changes of domain used in Theorem~\ref{weak-variational-cutoff} remain quasiconformal competitors. Its rank-one set is null, so that theorem gives $|\mu_h|\leq r_{\mathrm v}(p)$. The same bound holds for $f$, and $r_{\mathrm v}<r_{\mathrm c}$. Forward inner variations are also admissible. Proposition~\ref{forward-regularity-proposition} therefore applies with any $r_{\mathrm v}<R<r_{\mathrm c}$, and the inverse-function theorem gives the assertion for $h$.
\end{proof}
The regularity across the conformal set is deliberately stated as $C^{2,\alpha}$ for the map. The fractional power in \eqref{forward-stress-reconstruction} prevents an automatic $C^\infty$ conclusion there, consistently with Proposition~\ref{analytic-hopf-zeros}. Away from that set, the full smooth bootstrap is available. Thus second derivatives needed by the later equations have now been justified for a quasiconformal minimiser.
\begin{corollary}[Dichotomy for an attained minimum below the threshold]\label{attained-minimum-dichotomy}
Let $1<p<p_{\mathrm c}$ and suppose $h\in\mathcal M_\beta$
attains the minimum of $\E_p$ over $\mathcal M_\beta$.
Exactly one of the following holds.
\begin{enumerate}[label=\textup{(\roman*)}]
\item The map $h$ has finite distortion. This is a
$K_{\mathrm v}(p)$-quasiconformal $C^{2,\alpha}_{\loc}$
diffeomorphism, for every
$0<\alpha<\min\{1,(2-p)/(p-1)\}$. Its inverse has the same
regularity, and both maps are smooth off their conformal sets.
\item The set
\[
 S_h=\{w\in\D:J_h(w)=0,\ Dh(w)\ne0\}
\]
has positive area. On this set the derivative has rank one almost
everywhere, and $h$ is not a mapping of finite distortion.
\end{enumerate}
Consequently, if $\beta$ has no $K_{\mathrm v}(p)$-quasiconformal
extension, any attained monotone minimum must satisfy \textup{(ii)}.
Such smooth boundary data exist and admit finite-energy
quasiconformal barriers.
\end{corollary}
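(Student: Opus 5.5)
The plan is to combine Theorem~\ref{weak-variational-cutoff} with the corollaries just proved and split according to whether the rank-one set $S_h$ is null. A global minimiser in $\mathcal M_\beta$ has the minimality property of Lemma~\ref{weak-inner-localisation}: the competitors $h\circ\psi_\epsilon$ lie in $\mathcal M_\beta$ and have no larger energy. So the phase alternative \eqref{weak-phase-alternative} holds almost everywhere. The two cases are exclusive, because a map of finite distortion has $Dh=0$ almost everywhere on $\{J_h=0\}$ and hence $|S_h|=0$. Conversely, if $|S_h|>0$, then $h$ is not of finite distortion, and \eqref{weak-phase-alternative} shows the derivative has rank one almost everywhere on $S_h$. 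This settles alternative (ii) and exclusivity.

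Now suppose $|S_h|=0$, which holds in particular when $h$ has finite distortion. Corollary~\ref{cutoff-finite-distortion} makes $h$ a $K_{\mathrm v}(p)$-quasiconformal homeomorphism of $\D$; the argument is quasiregularity, openness and discreteness, and monotone fibres. By the boundary trace it is a homeomorphism of $\overline\D$ (Lemma~\ref{local-to-global}).

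Next I transfer minimality to the quasiconformal class. Every quasiconformal homeomorphic extension $F$ of $\beta^{-1}$ with finite $\I_p$ has inverse in $\mathcal M_\beta$ with $\E_p(F^{-1})\leq\I_p(F)$, by Proposition~\ref{finite-distortion-inverse-calculus}(iv). For quasiconformal $F$ the inverse calculus gives equality. Hence $f=h^{-1}$ minimises $\I_p$ over quasiconformal homeomorphic extensions, and $\I_p(f)=\E_p(h)<\infty$. Corollary~\ref{qc-minimiser-regularity} then gives that $f$ and $h$ are $C^{2,\alpha}_{\loc}$ diffeomorphisms for $\alpha<\min\{1,(2-p)/(p-1)\}$, smooth off the conformal sets. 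This yields (i).

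The main subtlety is this identification of $\E_p(h)$ with $\I_p(h^{-1})$ for a quasiconformal monotone minimiser. I would invoke the area formula together with Lusin's $N$ property of quasiconformal maps and of their inverses; both hold, so the change of variables in \eqref{inverseenergy} is exact.

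For the final assertion, if $\beta$ has no $K_{\mathrm v}(p)$-quasiconformal extension, alternative (i) is impossible, since $h$ itself would be such an extension. Existence of such data with barriers follows exactly as in the proof of Proposition~\ref{lower-range-boundary-obstruction}, with $K_{\mathrm v}(p)$ in place of $K_{\mathrm c}(p)$. I would normalise smooth circle diffeomorphisms fixing three points while the image of a fourth degenerates. Compactness of normalised $K_{\mathrm v}$-quasiconformal maps then excludes a uniform extension, and Proposition~\ref{smooth-barrier} supplies finite-energy smooth quasiconformal barriers for each such map.
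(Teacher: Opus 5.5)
Your proposal is correct and follows the paper's proof essentially step by step: the split according to whether $S_h$ is null, Corollary~\ref{cutoff-finite-distortion} for quasiconformality, the inverse change of variables to make $h^{-1}$ a minimiser over quasiconformal extensions, and Corollary~\ref{qc-minimiser-regularity} for the regularity. The only cosmetic difference is in the boundary examples. You rerun the construction of Proposition~\ref{lower-range-boundary-obstruction} with $K_{\mathrm v}(p)$, whereas the paper notes that data requiring distortion greater than $K_{\mathrm c}(p)$ already exclude the smaller bound $K_{\mathrm v}(p)$.
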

\begin{proof}
Since $h\in W^{1,2}$ and $J_h\geq0$, finite distortion is
equivalent to $|S_h|=0$. In that case
Corollary~\ref{cutoff-finite-distortion} gives quasiconformality.
The inverse change of variables is then valid, so $h^{-1}$
minimises $\I_p$ among quasiconformal maps with the prescribed
inverse trace. Corollary~\ref{qc-minimiser-regularity} gives the
stated regularity. If $|S_h|>0$, a nonzero two-dimensional matrix
with zero determinant has rank one, giving \textup{(ii)}.
The boundary examples and their barriers are those constructed in
Proposition~\ref{lower-range-boundary-obstruction}; requiring
distortion greater than $K_{\mathrm c}(p)$ also excludes the
smaller bound $K_{\mathrm v}(p)$.
\end{proof}

This dichotomy is conditional on attainment; below $p_{\mathrm c}$, weak lower semicontinuity can fail.

\subsection{The endpoint barrier and sublinear-growth counterexamples}\label{p-one-barrier}
The sublinear-growth theorem of \cite[Theorem 5.3]{AIMOExtremal} does not apply to our density, since in the normalised Hilbert--Schmidt convention
\begin{equation}\label{endpoint-distortion-growth}
 \Psi_p(S)=\tfrac12(S-1)^{p/2}(S+1)^{1-p/2},\qquad
 \lim_{S\to\infty}\Psi_p(S)/S=\tfrac12.
\end{equation}
Instead, weak localisation and a finite comparison exclude nonconformal positive-Jacobian states for $0<p\leq1$.

\begin{theorem}[Conformal or rank one at and below the endpoint]\label{endpoint-phase-alternative}
Let $0<p\leq1$, and let $h\in\mathcal M_\beta$ satisfy the minimality hypothesis of Lemma~\ref{weak-inner-localisation}. Then, almost everywhere,
\begin{equation}\label{endpoint-phases}
 h_{\bw}=0
 \qquad\hbox{or}\qquad
 J_h=0,\quad Dh\ne0.
\end{equation}
In particular $h$ is weakly conformal on $\{J_h>0\}$.
\end{theorem}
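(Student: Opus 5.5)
The plan is to rerun the finite rank-one comparison of Lemma~\ref{rankone-weierstrass}, but testing against the \emph{conformal} endpoint $\tau=1$ rather than the rank-one endpoint $\tau=0$.

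\textbf{Reduction.} Lemma~\ref{weak-inner-localisation} is stated for all $0<p<2$, and its proof only uses continuity and quadratic growth of $\widetilde W_p$, so it applies for $0<p\leq1$. At almost every point either $Dh=0$ (so $h_{\bw}=0$), or $J_h=0$ with $Dh\ne0$ (the second alternative in \eqref{endpoint-phases}), or $J_h>0$. In the last case, rotation invariance and degree-two homogeneity normalise $A=\operatorname{diag}(1,\tau)$ with $0<\tau\leq1$. It then suffices to show that $\tau<1$ contradicts \eqref{frozen-inner-inequality}.

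\textbf{Comparison construction.} Repeat the laminate construction from the proof of Lemma~\ref{rankone-weierstrass}, now with $\tau<\delta<1$ instead of $\delta<\tau$:
\begin{itemize}
\item set $s_\theta=(\tau-\theta\delta)/(1-\theta)$, which lies in $(0,\tau)$ for small $\theta>0$;
\item $B=\operatorname{diag}(1,\delta)$ and $C=\operatorname{diag}(1,s_\theta)$ both have positive determinant, are rank-one connected, and average to $A$ with weights $\theta,1-\theta$;
\item the same diffeomorphisms $\psi_n(x,y)=(x,\,y+\chi q(ny)/(n\tau))$, with $q'$ taking the values $\delta-\tau$ and $s_\theta-\tau$, satisfy $\det D\psi_n\geq\min\{1,s_\theta/\tau\}+O(n^{-1})>0$.
\end{itemize}
The limit passages are then exactly as in that proof and yield
\[
 \phi_p(\tau)\leq\theta\phi_p(\delta)+(1-\theta)\phi_p(s_\theta),
 \qquad \phi_p(\tau)=\tfrac14(1-\tau)^p(1+\tau)^{2-p}.
\]
Expanding $s_\theta=\tau-\theta(\delta-\tau)+O(\theta^2)$ and letting $\theta\downarrow0$ gives the tangent-line inequality $\phi_p(\delta)\geq\phi_p(\tau)+(\delta-\tau)\phi_p'(\tau)$. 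Since $\phi_p$ is continuous on $[0,1]$, we may let $\delta\uparrow1$ and obtain
\[
 0=\phi_p(1)\geq\phi_p(\tau)+(1-\tau)\phi_p'(\tau).
\]

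\textbf{The computation, and the main obstacle.} Use the formula $\phi_p'=-2(p-1+\tau)\phi_p/(1-\tau^2)$ from the proof of Theorem~\ref{weak-variational-cutoff}. The right-hand side becomes
\[
 \phi_p(\tau)\Bigl[1-\frac{2(p-1+\tau)}{1+\tau}\Bigr]
 =\phi_p(\tau)\,\frac{3-2p-\tau}{1+\tau}.
\]
For $p\leq1$ and $\tau<1$ we have $3-2p-\tau>0$ and $\phi_p(\tau)>0$, so this is a contradiction. Hence $\tau=1$, i.e.\ $h_{\bw}=0$, wherever $J_h>0$.

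The step I expect to need care is the choice of endpoint. The endpoint $\delta\downarrow0$ used in Lemma~\ref{rankone-weierstrass} gives the condition $Q_p(r)\leq1$. That does exclude all $r\in(0,1)$ at $p=1$, since then $2(1+r^2)-(1+r)^2=(1-r)^2>0$. But for small $p$ it fails near $r=0$: in the limit $Q_p\approx4r<(1+r)^2$. So the conformal endpoint is essential for the whole range $0<p\leq1$.

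The remaining technical point is to check that the smoothing and boundary-layer limits in the construction only require positive lower Jacobian bounds. Here those bounds hold because $\delta<1$ and $s_\theta>0$, and all gradients stay bounded for fixed $\delta,\theta$.
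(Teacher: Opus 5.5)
Your proof is correct and is essentially the paper's argument. The paper also runs the stripe construction of Lemma~\ref{rankone-weierstrass} against the conformal matrix $I$, but as a finite laminate of $\operatorname{diag}(1,\delta)$, $0<\delta<\tau$, with $I$ in proportions $\theta=(1-\tau)/(1-\delta)$, and concludes from strict increase of $g_p(t)=\phi_p(t)/(1-t)$; your quantity $\phi_p(\tau)+(1-\tau)\phi_p'(\tau)=(1-\tau)^2g_p'(\tau)$ is exactly the infinitesimal form of that monotonicity.

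Your motivating side remark is loosely stated, though it does not affect the proof. For fixed $0<p<1$ one has $Q_p(r)\to\infty$ as $r\downarrow0$, so the rank-one test is lost away from $r=0$, not near it; for $p<1-1/\sqrt2$ it is lost already near $r=1$, since $Q_p(1)=1$, $Q_p'(1)=0$ and $(\log Q_p)''(1)=-\ell_p/2<0$. The limiting formula should also read $Q_p\approx4r/(1+r)^2$.
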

\begin{proof}
The localisation lemma applies for $0<p<2$ and the proof uses continuity and quadratic growth of $W_p$, without differentiating at a conformal matrix. At a positive-Jacobian Lebesgue point with a nonconformal derivative, normalise to $A=\operatorname{diag}(1,\tau)$, $0<\tau<1$. Fix any $0<\delta<\tau$, and set
\[
 B=\operatorname{diag}(1,\delta),\qquad C=I,
 \qquad \theta=\frac{1-\tau}{1-\delta}.
\]
These rank-one connected matrices have positive determinant and satisfy $\theta B+(1-\theta)C=A$. The stripe construction in the proof of Lemma~\ref{rankone-weierstrass}, now with $s_\theta=1$, is still admissible. Hence, 
\begin{equation}\label{endpoint-finite-comparison}
 \phi_p(\tau)\leq\theta\phi_p(\delta)
                      +(1-\theta)\phi_p(1)
       =\frac{1-\tau}{1-\delta}\phi_p(\delta).
\end{equation}
But
\[
 g_p(t):=\frac{\phi_p(t)}{1-t}
       =\tfrac14(1-t)^{p-1}(1+t)^{2-p},
 \qquad
 \frac{g_p'(t)}{g_p(t)}
       =\frac{1-p}{1-t}+\frac{2-p}{1+t}>0
       \quad(0<t<1).
\]
Consequently $\phi_p(\tau)>(1-\tau)\phi_p(\delta)/(1-\delta)$, contradicting \eqref{endpoint-finite-comparison}. Every comparison has strictly positive Jacobian; no passage to a singular competitor is needed for this contradiction. At $p=1$ the energy gap is particularly explicit:
\[
 \phi_1(\tau)-\theta\phi_1(\delta)
       =\frac{(1-\tau)(\tau-\delta)}4>0.
\]
The only remaining derivatives are conformal (including zero) and nonzero rank-one matrices, proving \eqref{endpoint-phases}.
\end{proof}

\begin{corollary}[Nonattainment in the homeomorphic inverse class]\label{endpoint-nonattainment}
Let $0<p\leq1$. If a minimiser as above has finite distortion, or is a homeomorphism with $h^{-1}\in W^{1,1}_{\loc}(\D)$, then it is a M\"obius automorphism. Therefore, for non-M\"obius boundary data admitting a finite-energy competitor, the inverse energy has no minimiser in the homeomorphic class with Sobolev inverse, provided the class is preserved by the stated changes of domain. The same conclusion holds for the corresponding forward problem whenever the inverse energy identity and Sobolev inverse calculus apply.
\end{corollary}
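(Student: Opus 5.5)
The argument starts from Theorem~\ref{endpoint-phase-alternative}. Almost everywhere, either $h_{\bw}=0$ or $J_h=0$ with $Dh\ne0$.

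\emph{First case: $h$ has finite distortion.} Then $Dh=0$ almost everywhere on $\{J_h=0\}$, so the second phase is null. Hence $h_{\bw}=0$ almost everywhere. Since $h\in W^{1,2}(\D)$, Weyl's lemma makes $h$ holomorphic in $\D$. It is monotone with connected fibres, and a nonconstant holomorphic map is open and discrete, so every fibre is a singleton. Lemma~\ref{local-to-global} then shows that $h$ is a homeomorphism of $\overline\D$. A holomorphic homeomorphism of $\D$ onto itself is a M\"obius automorphism, and its trace $\beta$ is M\"obius.

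\emph{Second case: $h$ is a homeomorphism with $h^{-1}\in W^{1,1}_{\loc}$.} Here I reduce to the first case by showing that $h$ has finite distortion, that is, that $S_h=\{J_h=0,\ Dh\ne0\}$ is null.

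\begin{itemize}
\item Write $f=h^{-1}$. This is a $W^{1,1}_{\loc}$ homeomorphism of the disk, hence differentiable almost everywhere by the Gehring--Lehto theorem for Sobolev homeomorphisms.
\item The set $E=\{z: f\text{ differentiable at }z,\ J_f(z)>0\}$ is mapped by $f$ onto a set on which $h$ has a nonsingular derivative. Conversely, at points where $h$ is differentiable with $J_h(w)=0$ but $Dh(w)\neq0$, the chain rule at $f(w)$ fails.
\item I expect to use the standard fact that a $W^{1,1}_{\loc}$ homeomorphism has finite distortion exactly when its inverse satisfies Lusin's condition. The cleanest route combines the area formula for the $W^{1,2}$ map $h$ with the Sobolev inverse. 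Since $h\in W^{1,2}$, $h$ satisfies Lusin's $N$ property (planar $W^{1,2}$ homeomorphisms have $N$). The area formula then gives $|h(S_h)|=\int_{S_h}J_h=0$.
\item The set $h(S_h)$ is null. The homeomorphism $f$ is $W^{1,1}_{\loc}$, so $Df=0$ almost everywhere on the null set $h(S_h)$; this is automatic for a null set, so it yields no information by itself. Instead I need that $f$ maps null sets to null sets. On the null set $h(S_h)$ one has $|f(h(S_h))|=|S_h|$, so I must show that $f$ has property $N$ there.
\item Use the known equivalence, for a Sobolev homeomorphism $f$ with $W^{1,1}_{\loc}$ inverse $h$ of finite distortion class: $f$ satisfies $N$ iff $J_h>0$ almost everywhere. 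By Hencl--Koskela, the inverse $h$ of a planar $W^{1,1}$ homeomorphism $f$ is BV, and $h\in W^{1,1}$ iff $f$ has finite distortion. Applying this with the roles reversed: $h$ is a $W^{1,1}$ homeomorphism whose inverse $f$ lies in $W^{1,1}_{\loc}$, so $h$ has finite distortion.
\end{itemize}

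This last step is the substantive one, and the main obstacle; I would cite Hencl--Koskela for it. Once it is in place, $|S_h|=0$ and the first case applies.

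\emph{Consequences.} For non-M\"obius $\beta$ with a finite-energy competitor, any minimiser in the homeomorphic class with Sobolev inverse would be M\"obius by the above, which is a contradiction; so no such minimiser exists. The stated preservation hypothesis on the class is exactly what lets Lemma~\ref{weak-inner-localisation} apply, since $h\circ\psi_\epsilon$ stays a homeomorphism with Sobolev inverse. For the forward problem, a forward minimiser $F$ would have $F^{-1}$ minimising the inverse energy, by the inverse energy identity \eqref{inverseenergy} and the inverse calculus of Proposition~\ref{finite-distortion-inverse-calculus}(iv). So $F^{-1}$, and hence $F$, would be M\"obius, again a contradiction.
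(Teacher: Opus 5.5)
Your proposal is correct and follows the paper's proof. In the finite-distortion case, Theorem~\ref{endpoint-phase-alternative} gives $h_{\bw}=0$ almost everywhere, and Weyl's lemma then applies. The Sobolev-inverse case is reduced to the finite-distortion case by the planar bi-Sobolev theorem: the paper cites \cite[Theorem 3.4]{PratelliBiSobolev}, and your Hencl--Koskela ``iff'' statement is the same fact. The Lusin-$N$ and area-formula bullets in your second case play no role in the argument and can be dropped, since the cited theorem alone gives $|S_h|=0$.
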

\begin{proof}
Finite distortion excludes a nonzero derivative on $J_h=0$ by definition. For the stated homeomorphic class the same exclusion follows from the planar bi-Sobolev theorem \cite[Theorem 3.4]{PratelliBiSobolev}, applied locally. Theorem~\ref{endpoint-phase-alternative} then gives $h_{\bw}=0$ almost everywhere. Weyl's lemma makes $h$ holomorphic. The degree-one boundary homeomorphism identifies this holomorphic map as a disk automorphism.
\end{proof}

This is a necessary barrier for $p>1$ for nonconformal minimisers in that class and this does not exclude minimisers in the enlarged monotone class. At the endpoint a classical Hopf equation cannot simply be differentiated through the conformal set. Our finite-comparison proof avoids that issue altogether.

The finite-energy hypothesis is not vacuous even below $1$. Choose a smooth radial cutoff $\chi$, zero for $r\leq1/2$ and one for $r\geq3/4$, and set, for sufficiently small nonzero real $\epsilon$,
\[
 h_\epsilon(z)=z\exp\!\left[\frac\epsilon2\chi(|z|)
                              (z^2-z^{-2})\right]\quad(z\ne0),
 \qquad h_\epsilon(0)=0.
\]
It is a small $C^1$ perturbation of the identity and an orientation-preserving smooth disk diffeomorphism. It is conformal near the circle, where its boundary value is
\[
 \beta_\epsilon(e^{i\theta})
        =e^{i(\theta+\epsilon\sin2\theta)}.
\]
This boundary map fixes four points and is not the identity, so it is not M\"obius. The Beltrami coefficient is supported in a compact subset of the disk, and both $h_\epsilon$ and its inverse have finite energy for every $p>0$. Thus Corollary~\ref{endpoint-nonattainment} gives actual nonattainment examples with smooth quasisymmetric data and smooth quasiconformal barriers.

In contrast to the sublinear-growth theorem, the infimum here does not collapse to zero for fixed non-M\"obius data. Let $H_\beta$ be the Euclidean harmonic extension of the inverse boundary map. Since $W_p\geq|h_{\bw}|^2$, $\eta\geq1$, and the integrated Jacobian is fixed by the trace, the Dirichlet principle gives
\begin{equation}\label{endpoint-positive-infimum}
 \E_p(h)\geq\int_\D|h_{\bw}|^2\area
       \geq\int_\D|(H_\beta)_{\bw}|^2\area>0.
\end{equation}
The last inequality holds precisely when $\beta$ is not the trace of a disk automorphism. Quasisymmetry implies that there is a $W^{1,2}$ extension, so the harmonic extension and these integrals are well defined.

\begin{proposition}[The frozen cell value for $0<p\leq1$]\label{endpoint-cell-value}
For a square $Q$ and any real $2\times2$ matrix $A$ with $\det A\geq0$,
\begin{equation}\label{endpoint-cell-formula}
 \inf_\psi\frac1{|Q|}\int_Q W_p(A D\psi)\,dx
       =\overline W(A):=\frac14
          \bigl(\|A\|_{\mathrm{op}}^2-\det A\bigr),
\end{equation}
where the infimum is over the smooth orientation-preserving diffeomorphisms of $Q$ equal to the identity near its boundary.
\end{proposition}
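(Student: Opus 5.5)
The plan is to prove two inequalities: a pointwise lower bound followed by Jensen's inequality, and a matching upper bound from an explicit laminate built with the stripe construction of Lemma~\ref{rankone-weierstrass}. Write $\sigma_1\geq\sigma_2\geq0$ for the singular values of $A$. Then $\overline W(A)=\frac14\sigma_1(\sigma_1-\sigma_2)$ when $\det A\geq0$. Both $W_p$ and $\overline W$ are invariant under left and right multiplication by rotations.

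\emph{Lower bound.} First I would show $W_p\geq\overline W$ pointwise on $\det M\geq0$. On $\operatorname{diag}(\lambda,\nu)$ with $\lambda\geq\nu\geq0$, and using $0<p\leq1$,
\[
 (\lambda-\nu)^p(\lambda+\nu)^{2-p}
 =(\lambda^2-\nu^2)\Bigl(\tfrac{\lambda+\nu}{\lambda-\nu}\Bigr)^{1-p}
 \geq\lambda^2-\nu^2\geq\lambda^2-\lambda\nu .
\]
Next I would note that $\overline W(M)=\frac14(\|M\|_{\mathrm{op}}^2-\det M)$ is quasiconvex with respect to the test maps $\psi$:
\begin{itemize}
\item $M\mapsto\|M\|_{\mathrm{op}}^2$ is convex. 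Since $\frac1{|Q|}\int_Q D\psi\,dx=I$ (because $\psi=\id$ near $\partial Q$), Jensen's inequality gives $\frac1{|Q|}\int_Q\|AD\psi\|_{\mathrm{op}}^2\,dx\geq\|A\|_{\mathrm{op}}^2$.
\item The determinant is a null Lagrangian, so $\frac1{|Q|}\int_Q\det(AD\psi)\,dx=\det A\cdot\frac1{|Q|}\int_Q J_\psi\,dx=\det A$.
\end{itemize}
Combining these gives $\frac1{|Q|}\int_Q W_p(AD\psi)\,dx\geq\overline W(A)$ for every admissible $\psi$.

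\emph{Upper bound.} If $\det A=0$, then $\sigma_2=0$ and $W_p(A)=\sigma_1^2/4=\overline W(A)$, so $\psi=\id$ already attains the bound. If $\det A>0$, I would write $A=R_1\operatorname{diag}(\sigma_1,\sigma_2)R_2$ with rotations $R_1,R_2$. The stripes must then be laid in the direction determined by $R_2$, so the test square should be rotated.

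To reduce to a rotated square, I would cover all but an $\epsilon$-fraction of $Q$ by finitely many disjoint small rotated squares (Vitali). On each small square I place a rescaled copy of the optimal test map, and I set $\psi=\id$ on the remainder. The remainder contributes at most $\epsilon W_p(A)$. Since the cell value is scale and translation invariant, this reduces the problem to $A=\operatorname{diag}(\sigma_1,\sigma_2)$ on an axis-parallel square.

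There I take the two matrices
\[
 B=\operatorname{diag}(\sigma_1,\delta),\qquad C=\operatorname{diag}(\sigma_1,\sigma_1),
\]
with $0<\delta<\sigma_2$ and $\theta=(\sigma_1-\sigma_2)/(\sigma_1-\delta)$. They are rank-one connected, have positive determinant, and satisfy $\theta B+(1-\theta)C=A$. The smoothed stripe maps $\psi_n$ from the proof of Lemma~\ref{rankone-weierstrass} (with $s_\theta$ replaced by $\sigma_1/\sigma_2$) are admissible diffeomorphisms. Passing to the limits as there (first $n\to\infty$, then shrinking the boundary layer, then removing the smoothing) gives the bound
\[
 \inf_\psi\frac1{|Q|}\int_Q W_p(AD\psi)\,dx\leq\theta W_p(B)+(1-\theta)\cdot0 .
\]
Letting $\delta\downarrow0$ sends this to $\frac{\sigma_1-\sigma_2}{\sigma_1}\cdot\frac{\sigma_1^2}{4}=\overline W(A)$. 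The boundary-layer control is uniform because all gradients are bounded for fixed $\delta$, and quadratic growth of $W_p$ handles the cutoff region.

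\emph{Main obstacle.} The only delicate step is this upper bound: keeping every competitor a genuine diffeomorphism with positive Jacobian, which is why $\delta>0$ is used and removed only after the infimum has been estimated, and handling the rotation of the stripes via the covering argument. The lower bound is routine once the pointwise inequality $W_p\geq\overline W$ for $p\leq1$ is observed.
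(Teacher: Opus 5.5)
Your proposal is correct and is essentially the paper's proof: the pointwise bound $W_p\geq W_1\geq\overline W$ on $\det\geq0$, Jensen for the convex $\|\cdot\|_{\mathrm{op}}^2$ together with $\int_Q\det(AD\psi)=|Q|\det A$, and then the stripe laminate between $\operatorname{diag}(\sigma_1,\delta)$ and the conformal matrix, followed by $\delta\downarrow0$. The only difference is how the rotation $R_2$ is handled. The paper simply lays the stripes in the rotated direction on the same square; the cutoff construction works in any direction, because lines in that direction meet the convex square in segments on which $\psi_n$ is an increasing bijection. Your covering by small rotated squares is therefore valid but unnecessary. One small slip: in the lemma's normalisation the conformal endpoint is $s_\theta=1$, whereas $\sigma_1/\sigma_2$ is the stretch factor of $\psi$, not an entry of $AD\psi$.
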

\begin{proof}
If $\lambda\geq\nu\geq0$ are the singular values, then
\[
 W_p(A)\geq W_1(A)=\frac{\lambda^2-\nu^2}4
                  \geq\frac{\lambda^2-\lambda\nu}4
                  =\overline W(A).
\]
The function $\overline W(A)+\det A/4=\|A\|_{\mathrm{op}}^2/4$ is convex. Since $|Q|^{-1}\int_Q A D\psi=A$ and $|Q|^{-1}\int_Q\det(A D\psi)=\det A$, Jensen's inequality gives the lower bound in \eqref{endpoint-cell-formula}. For $A=\operatorname{diag}(1,\tau)$, $0<\tau<1$, the preceding stripes give an upper bound $\theta\phi_p(\delta)$. Sending $\delta\downarrow0$ gives $(1-\tau)/4=\overline W(A)$. Rotation invariance, homogeneity and rotating the stripe direction handle every positive-determinant $A$ on the same square. At a conformal, rank-one or zero matrix, $W_p(A)=\overline W(A)$ and the identity test gives equality.
\end{proof}

In complex derivatives this common relaxed density is
\[
 \overline W(Dh)=\tfrac12|h_{\bar w}|(|h_w|+|h_{\bar w}|)
               =\tfrac14(K_h-1)J_h\quad(J_h>0).
\]
This frozen relaxation is independent of $p$ for $0<p\leq1$, with contact at conformal and rank-one matrices. A global recovery theorem would additionally have to handle the unbounded hyperbolic weight and the prescribed trace.

The previously obtained cutoff also reflects the endpoint barrier. As $p\downarrow1$,
\begin{equation}\label{endpoint-cutoff-asymptotic}
 r_{\mathrm v}(p)\sim(2p)^{-1/(p-1)}
                    \sim e^{-1}2^{-1/(p-1)}.
\end{equation}
Indeed $Q_p(r)\to2(1+r^2)/(1+r)^2>1$ at each fixed $0<r<1$, so $r_{\mathrm v}\to0$. Its defining equation gives
\[
 (p-1)\log r_{\mathrm v}
       =-\log(2p)+2\log(1+r_{\mathrm v})
         -\log\!\left(1+\frac{2-p}{p}r_{\mathrm v}^2\right).
\]
First this shows exponential decay, hence $r_{\mathrm v}/(p-1)\to0$; substitution back into the identity proves \eqref{endpoint-cutoff-asymptotic}. The allowable nonconformal branch therefore shrinks exponentially as the endpoint is approached.

\subsection{The remaining rank-one set and the scope of NoHairs}\label{rankone-audit}
Next we return to $1<p<2$ and put $S_h=\{J_h=0,\ Dh\ne0\}$. The same first-variation calculation as in Theorem~\ref{monotone-existence} gives a holomorphic $\Phi_p$ for any minimiser considered above in this range, also when $p<p_{\mathrm c}$. Our proof of stationarity uses local quadratic growth and admissible changes of domain, independently of the upper-range existence argument.  

At a rank-one matrix the frozen necessary condition itself gives no exclusion. Indeed $A D\psi$ still has rank one, and
\[
 W_p(A D\psi)=\tfrac14|A D\psi|_{\mathrm F}^2,
 \qquad \int_Q A D\psi=|Q|A,
\]
where $|\cdot|_{\mathrm F}$ is the real Frobenius norm. Jensen's inequality therefore proves \eqref{frozen-inner-inequality} automatically at every rank-one $A$. Additional infinitesimal localisation by changes of domain cannot remove this alternative on its own.

\begin{proposition}[What the Hopf equation says on the singular set]\label{rankone-hopf-geometry}
Suppose $h$ satisfies the weighted Hopf equation \eqref{monotone-hopf}. Almost everywhere on $S_h$,
\begin{equation}\label{rankone-energy-hopf}
 |\Phi_p|=\eta(h)|h_w|^2=\eta(h)W_p(Dh),
 \qquad \int_{S_h}|\Phi_p|\area\leq\E_p(h).
\end{equation}
On a natural-coordinate patch $\zeta=x+iy$ where $\Phi_p\,dw^2=d\zeta^2$, write $H=h\circ w(\zeta)$. Then, almost everywhere,
\begin{equation}\label{rankone-natural-form}
 H_y=i\frac{1-r}{1+r}H_x,
 \qquad \eta(H)|H_x|^2=\frac4{Q_p(r)}.
\end{equation}
In particular, on $S_h$ these identities become
\[
 H_y=0,\qquad \sqrt{\eta(H)}|H_x|=2.
\]
If the variational cutoff holds, the other branch instead satisfies
\[
 \sqrt{\eta(H)}|H_y|\geq
 2\frac{1-r_{\mathrm v}}{1+r_{\mathrm v}}>0.
\]
\end{proposition}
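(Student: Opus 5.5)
The plan is a direct pointwise computation from the Hopf equation \eqref{monotone-hopf}, carried out almost everywhere, first in the original coordinate and then in a natural coordinate of $\Phi_p$. No regularity beyond $h\in W^{1,2}_{\loc}$ and holomorphy of $\Phi_p$ should be needed.

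\emph{Step 1: the energy identity on $S_h$.} Write $a=h_w$, $b=h_{\bw}$. At almost every point of $S_h$ we have $J_h=|a|^2-|b|^2=0$ with $Dh\ne0$, so $|a|=|b|\neq0$ and $r=1$. Evaluating \eqref{monotone-hopf} there, and using the continuous extension of $a_p$ from \eqref{ap} to $s=1$, where $\tfrac12[p+(2-p)]=1$, gives
\[
|\Phi_p|=\eta(h)|a||b|=\eta(h)|a|^2 .
\]
On the other hand $W_p(a,b)=|a|^{2-p}|b|^p=|a|^2$ on the rank-one line. Integrating over $S_h$ and using $W_p\geq0$ off $S_h$ gives
\[
\int_{S_h}|\Phi_p|\area\leq\E_p(h).
\]

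\emph{Step 2: the natural coordinate.} On a patch where $\Phi_p\neq0$, take a conformal change $w=w(\zeta)$ with $\Phi_p(w)\,w'(\zeta)^2=1$. Put $H=h\circ w$, $\alpha=H_\zeta$ and $\beta=H_{\bar\zeta}$. Then $\alpha=h_w w'$ and $\beta=h_{\bw}\overline{w'}$ almost everywhere, by the Sobolev chain rule for a smooth conformal change of variable. Moreover $r=|\beta|/|\alpha|$ is unchanged, and the quantity in \eqref{monotone-hopf} transforms as a quadratic differential. The Hopf equation in the $\zeta$ coordinate therefore reads
\[
\tfrac12\eta(H)\bigl[p+(2-p)r^2\bigr]r^{p-2}\alpha\overline\beta=1 .
\]
Since the left side is positive real, $\beta$ is a nonnegative real multiple of $\alpha$ with $|\beta|=r|\alpha|$, so $\beta=r\alpha$. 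Taking moduli gives $\eta(H)F_p(r)|\alpha|^2=1$.

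\emph{Step 3: the identities \eqref{rankone-natural-form}.} From $H_x=\alpha+\beta=(1+r)\alpha$ and $H_y=i(\alpha-\beta)=i(1-r)\alpha$ we get the first identity $H_y=i\frac{1-r}{1+r}H_x$. For the second,
\[
\eta(H)|H_x|^2=\frac{(1+r)^2}{F_p(r)}=\frac{4}{Q_p(r)} .
\]
On $S_h$ we have $r=1$ and $Q_p(1)=1$ by \eqref{flux-log-derivative}, hence $H_y=0$ and $\sqrt{\eta(H)}|H_x|=2$.

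\emph{Step 4: the other branch.} Off $S_h$, Theorem~\ref{weak-variational-cutoff} gives $r\le r_{\mathrm v}$ and $Q_p(r)\le1$. Combining the two identities of Step 3,
\[
\eta(H)|H_y|^2=\frac{4}{Q_p(r)}\Bigl(\frac{1-r}{1+r}\Bigr)^2\geq 4\Bigl(\frac{1-r}{1+r}\Bigr)^2 .
\]
Since $(1-r)/(1+r)$ is decreasing and $r\le r_{\mathrm v}$, this is at least $4\bigl(\frac{1-r_{\mathrm v}}{1+r_{\mathrm v}}\bigr)^2$, which is the stated bound.

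\emph{Main obstacle.} The only delicate point is bookkeeping with null sets. One must interpret \eqref{monotone-hopf} at $r=1$ through the continuous extension of $a_p$, and use that $\{\Phi_p=0\}$ is discrete on a patch, so that $b\neq0$ almost everywhere there and $r^{p-2}$ is harmless. I expect no genuine analytic difficulty beyond this.
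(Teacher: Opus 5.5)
Your proposal is correct and follows essentially the same route as the paper. You evaluate the Hopf prefactor at $r=1$ for the energy identity on $S_h$. In the natural coordinate you use the normalised value $1$ to get $H_{\bar\zeta}=rH_\zeta$ and $\eta(H)|H_\zeta|^2F_p(r)=1$. You finish with $Q_p(r)\le1$ and the monotonicity of $(1-r)/(1+r)$. You also make explicit two points the paper states only tersely: that the nonzero Hopf value forces $r>0$ almost everywhere on the patch, and the identity $\eta(H)|H_y|^2=\frac{4}{Q_p(r)}\bigl(\frac{1-r}{1+r}\bigr)^2$.
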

\begin{proof}
At $r=1$, the Hopf prefactor $\tfrac12[p+(2-p)r^2]r^{p-2}$ equals one. This proves \eqref{rankone-energy-hopf}. In the natural coordinate the Hopf value is $1$, so $H_{\bar\zeta}=rH_\zeta$ and $\eta(H)|H_\zeta|^2F_p(r)=1$. The identities follow from $H_x=(1+r)H_\zeta$ and $H_y=i(1-r)H_\zeta$. There $r>0$ almost everywhere because the Hopf value is nonzero. Finally $Q_p(r)\leq1$ and $(1-r)/(1+r)\geq(1-r_{\mathrm v})/(1+r_{\mathrm v})$ on the nonsingular branch.
\end{proof}

If $\Phi_p\equiv0$, then $h_{\bw}=0$ almost everywhere and $h$ is conformal. Otherwise the discrete zero set of $\Phi_p$ cannot contain $S_h$ up to a null set. The vertical kernel direction does not itself establish a nontrivial fibre; a Sobolev right inverse supplies the needed additional information.

\begin{corollary}[NoHairs with a Sobolev right inverse]\label{sobolev-right-inverse-cutoff}
Let $h$ satisfy Theorem~\ref{weak-variational-cutoff}. If $h$ is a homeomorphism and $h^{-1}\in W^{1,1}_{\loc}(\D)$, then $|S_h|=0$ and $h$ is $K_{\mathrm v}(p)$-quasiconformal. The same conclusion follows if there is a continuous $g\in W^{1,1}_{\loc}(\D,\D)$ with $h\circ g=\id_\D$.
\end{corollary}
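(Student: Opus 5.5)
The plan is to show that a Sobolev right inverse forces $|S_h|=0$. Once that is known, Corollary~\ref{cutoff-finite-distortion} gives $K_{\mathrm v}(p)$-quasiconformality directly. The homeomorphic case reduces to the second case with $g=h^{-1}$, which is continuous and lies in $W^{1,1}_{\loc}$. So we work with a continuous $g\in W^{1,1}_{\loc}(\D,\D)$ satisfying $h\circ g=\id_\D$. Since $h$ is continuous on the compact set $\overline\D$, the second case of Lemma~\ref{local-to-global} (with $h^{-1}(\Sone)=\Sone$ from Proposition~\ref{monotone-compactness}) already makes $h$ a homeomorphism with $g=h^{-1}$. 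In either case, then, $h$ is a $W^{1,2}$ homeomorphism whose inverse lies in $W^{1,1}_{\loc}$.

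The key step is a planar bi-Sobolev result. For a homeomorphism $h\in W^{1,1}_{\loc}$ with $h^{-1}\in W^{1,1}_{\loc}$, the theorem quoted in Corollary~\ref{endpoint-nonattainment} (\cite[Theorem 3.4]{PratelliBiSobolev}, applied locally on compact subdisks) gives $Dh=0$ almost everywhere on $\{J_h=0\}$. That is, $h$ has finite distortion, so $|S_h|=0$. As a heuristic cross-check, suppose instead that $|S_h|>0$. By Proposition~\ref{rankone-hopf-geometry}, in natural coordinates $H_y=0$ on $S_h$ while $|H_x|$ stays bounded below. The image of $S_h$ under $h$ then has measure zero by the area formula, because $J_h=0$ there and $h$ has Lusin's $N$ property locally (Corollary~\ref{monotone-higher-sobolev}, whose proof only uses holomorphy of $\Phi_p$). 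Along vertical segments $h$ collapses length while $h^{-1}$ must expand it, and the ACL property of $g$ on almost every line, combined with the chain rule $Dg(h)Dh=I$ (valid off null sets), rules this out. I would cite the bi-Sobolev theorem rather than redo this argument.

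With $|S_h|=0$, the phase alternative \eqref{weak-phase-alternative} gives $|h_{\bw}|\le r_{\mathrm v}|h_w|$ almost everywhere, and Corollary~\ref{cutoff-finite-distortion} concludes that $h$ is a $K_{\mathrm v}(p)$-quasiconformal homeomorphism.

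The main obstacle I expect is justifying the bi-Sobolev step under our hypotheses. Pratelli's theorem is stated for homeomorphisms between planar domains with both maps in $W^{1,1}_{\loc}$. That requires first establishing that $h$ is a homeomorphism in the right-inverse case, which the Lemma~\ref{local-to-global} step above is meant to supply, and then checking that its local application is legitimate on the open disk where $h(\D)=\D$. If the citation covers only $W^{1,1}$ maps with integrable inverse on bounded sets, localise to disks $D\Subset\D$ and use $h(D)\Subset\D$ from compact containment.
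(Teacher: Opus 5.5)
Your proposal is correct and follows the paper's proof essentially verbatim. You apply the bi-Sobolev theorem \cite[Theorem 3.4]{PratelliBiSobolev} locally to get $Dh=0$ a.e.\ on $\{J_h=0\}$, then Corollary~\ref{cutoff-finite-distortion} for $K_{\mathrm v}(p)$-quasiconformality, and you reduce the right-inverse case to the homeomorphic one via the second case of Lemma~\ref{local-to-global}. The only difference is that the paper reads off $h^{-1}(\Sone)=\Sone$ directly from $h(\D)\subset\D$, which is part of the definition of $\mathcal M_\beta$, together with the homeomorphic trace, so your appeal to Proposition~\ref{monotone-compactness} and your heuristic cross-check are unnecessary.
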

\begin{proof}
A planar homeomorphism whose map and inverse are locally Sobolev has $Dh=0$ almost everywhere on $\{J_h=0\}$; see \cite[Theorem 3.4]{PratelliBiSobolev}, applied locally. Thus the first assertion follows from Corollary~\ref{cutoff-finite-distortion}. For the second, $h(\D)\subset\D$ and the homeomorphic boundary trace give $h^{-1}(\Sone)=\Sone$. Lemma~\ref{hair-homeomorphism} makes $h$ a homeomorphism with inverse $g$, reducing to the first assertion.
\end{proof}

The Sobolev inverse condition is not part of the definition of $\mathcal M_\beta$. Uniform forward $W^{1,1}$ bounds alone do not establish it, since a limit may have a singular distributional derivative.

The same corollary strengthens the boundary obstruction as some smooth quasisymmetric data with finite-energy barriers have no minimiser in the inverse homeomorphic class with Sobolev forward inverse. Indeed any such minimiser would have the universal bound $K_{\mathrm v}(p)$, whereas the boundary maps constructed in Proposition~\ref{lower-range-boundary-obstruction} can require a larger bound. This is a nonattainment conclusion for that specified class and not a nonexistence conclusion in the enlarged monotone class.

\begin{example}[A hair-free flat minimiser with nonzero rank-one gradients]\label{flat-rankone-minimiser}
The Sobolev inverse condition cannot be replaced by injectivity alone, even if the same subquadratic Hopf equation and the variational cutoff both hold. Here take the flat weight $\eta\equiv1$ and the square $Q=(0,1)^2$. Let
\[
 \tau_{\mathrm v}=\frac{1-r_{\mathrm v}}{1+r_{\mathrm v}},\qquad
 E\Subset(0,1)
\]
be a compact nowhere-dense set of positive measure, and define
\[
 g(y)=\tau_{\mathrm v}\int_0^y\boldsymbol1_{(0,1)\setminus E}(t)\,dt,
 \qquad h(x,y)=x+i g(y).
\]
Every interval meets the open complement of $E$, so $g$ is strictly increasing. Thus $h$ is a Lipschitz homeomorphism onto a rectangle, all fibres are singletons, and
\[
 Dh=\operatorname{diag}(1,\tau_{\mathrm v})\quad\hbox{or}\quad
 Dh=\operatorname{diag}(1,0)
\]
almost everywhere. The second alternative occurs on $(0,1)\times E$, which has positive area and empty interior. At both derivatives the weighted Ahlfors--Hopf value is $Q_p(r)/4=1/4$, so the differential is holomorphic.

\medskip

In fact $h$ minimises $\int_Q W_p(Du)$ among all $W^{1,2}$ maps with its trace and $J_u\geq0$. To verify this, put $m_p=\phi_p'(\tau_{\mathrm v})<0$. The common-tangent identity and its supporting inequality are
\begin{equation}\label{common-tangent-support}
 \phi_p(\tau_{\mathrm v})=\tfrac14+m_p\tau_{\mathrm v},
 \qquad \phi_p(t)\geq\tfrac14+m_pt\quad(0\leq t\leq1).
\end{equation}
For completeness, $\phi_p''$ is negative before
$\tau_{\mathrm c}=(1-r_{\mathrm c})/(1+r_{\mathrm c})$ and positive after it. Since $\tau_{\mathrm v}>\tau_{\mathrm c}$, $\phi_p'$ first decreases and then increases. The common-tangent identity implies $m_p<\phi_p'(0)$; otherwise integrating $\phi_p'-m_p$ from $0$ to $\tau_{\mathrm v}$ would give a negative value instead of zero. The function $\phi_p(t)-1/4-m_pt$ therefore rises and then falls to zero at $\tau_{\mathrm v}$, and increases thereafter. This proves the inequality.

For a matrix with singular values $\lambda\geq\nu\geq0$ and nonnegative determinant, \eqref{common-tangent-support} gives
\[
 W_p(A)\geq m_p\det A+\tfrac14\|A\|_{\mathrm{op}}^2
            \geq m_p\det A+\tfrac14|Ae_1|^2.
\]
The integral of the Jacobian is fixed by the $W^{1,2}$ boundary trace. Also $\int_Q u_x=(1,0)|Q|$ for every competitor. Hence
\[
 \int_Q W_p(Du)\geq m_p\int_Q J_h+\tfrac14\int_Q|u_x|^2
             \geq m_p\int_QJ_h+\tfrac14|Q|
             =\int_Q W_p(Dh).
\]
This proves minimality. The inverse is not Sobolev since $g$ maps the positive-measure set $E$ to a null set, so $g^{-1}$ is not absolutely continuous. This flat example establishes the independence of topological injectivity and the rank-one exclusion, even for genuine minimisers.  
\end{example}

\subsection{Proof of Theorem~\ref{thm-A}}
\begin{proof}[Proof of Theorem~\ref{thm-A}]
(i) is Corollary~\ref{sharp-polyconvexity-range}. (ii) is Theorem~\ref{monotone-existence} and Corollary~\ref{strong-monotone-minimising}; the barrier gives the finite-energy competitor, and quasisymmetry of $\beta$ follows from Proposition~\ref{finite-energy-boundary-moduli} when the barrier is the inverse of a finite-distortion extension. (iii): the cutoff is Theorem~\ref{weak-variational-cutoff}; the dichotomy, with the regularity of the finite-distortion alternative, is Corollary~\ref{attained-minimum-dichotomy}, which uses Corollaries~\ref{cutoff-finite-distortion} and \ref{qc-minimiser-regularity}; the boundary data forcing the second alternative are those of Proposition~\ref{lower-range-boundary-obstruction}, since a bound larger than $K_{\mathrm c}(p)$ is larger than $K_{\mathrm v}(p)$. (iv) is Corollary~\ref{endpoint-nonattainment} and the example following it.
\end{proof}

\section{Scalar equations for regular solutions}\label{scalar-equations}
We first treat the functional
\begin{equation}\label{subquadratic-functional}
 \I_p(f)=\int_\D\frac{|\mu_f|^p}{1-|\mu_f|^2}\eta\area,
 \qquad 1<p\leq2,\qquad \eta(z)=(1-|z|^2)^{-2}.
\end{equation}
In particular,
\[
 \int_\D(\K_f-1)\eta\area
 =2\int_\D\frac{|\mu_f|^2}{1-|\mu_f|^2}\eta\area
 \leq2\I_p(f).
\]
Thus the asserted integrability of $\K_f-1$ follows pointwise, including where $|\mu_f|$ is small. For $h=f^{-1}$, write $a=h_w$, $b=h_{\bw}$ and $r=|b/a|$. The inverse formula becomes
\begin{equation}\label{subquadratic-inverse}
 \E_p(h)=\int_\D\eta(h)|a|^2r^p\area=\I_p(f),
 \qquad \mathcal D(h)\leq\pi+2\E_p(h).
\end{equation}

For a regular stationary diffeomorphism, a scalar estimate for $|\mu_h|$ gives global quasiconformality without uniform convergence of first derivatives. The iteration also handles the critical exponent. Section~\ref{above-two} treats $p>2$.

\subsection{Vanishing energy in a divergent M\"obius normalisation}
\begin{lemma}\label{escaping-normalisations}
Let $h:\overline\D\to\overline\D$ be an orientation-preserving homeomorphism, a $C^1$ diffeomorphism in $\D$, with quasisymmetric boundary values and $\E_p(h)<\infty$. Suppose $w_j\to\partial\D$. Choose disk automorphisms $A_j,B_j$ so that
\[
 A_j(0)=w_j,\qquad h_j=B_j\circ h\circ A_j,
\]
and $h_j|_{\Sone}$ fixes $1,i,-i$. Then
\begin{equation}\label{escaping-convergence}
 h_j\longrightarrow\mathrm{id}\quad\hbox{uniformly on }\overline\D
 \quad\hbox{and strongly in }W^{1,2}_{\loc}(\D),
 \qquad r_j:=|\mu_{h_j}|\longrightarrow0\quad\hbox{in }L^2_{\loc}.
\end{equation}
If the differentials $\Phi_p(h_j)$ from \eqref{variation} are holomorphic, then
\begin{equation}\label{escaping-hopf}
 \Phi_p(h_j)\longrightarrow0\quad\hbox{locally uniformly}.
\end{equation}
\end{lemma}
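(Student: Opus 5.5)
We use Möbius invariance together with the fact that the energy of a fixed hyperbolic ball escapes as the centre tends to the boundary, and then apply compactness from Proposition~\ref{monotone-compactness}.

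\emph{Energy in a fixed Euclidean region tends to zero.} Fix $\rho<1$. By invariance of $\E_p$ under pre- and postcomposition,
\[
 \int_{|w|<\rho}\eta(h_j)W_p(Dh_j)\area=\int_{A_j(\{|w|<\rho\})}\eta(h)W_p(Dh)\area .
\]
The set $A_j(\{|w|<\rho\})$ is a hyperbolic ball of fixed radius centred at $w_j$. Since $w_j\to\partial\D$, it leaves every compact subset of $\D$. Because $\E_p(h)<\infty$, dominated convergence makes the right-hand side tend to $0$. The total energies $\E_p(h_j)=\E_p(h)$ are bounded, so \eqref{trace-jacobian} bounds $\mathcal D(h_j)$.

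\emph{Uniform limit.} The traces $h_j|_{\Sone}=B_j\circ\beta\circ A_j$ are uniformly quasisymmetric, with the same constant as $\beta$, and they fix $1,i,-i$. Hence they have $K_0$-quasiconformal extensions fixing these points. Proposition~\ref{monotone-compactness}(i) then gives equicontinuity, and every subsequence has a further subsequence converging uniformly and weakly in $W^{1,2}$ to a monotone $h_\infty$. We claim that $h_\infty=\id$. On compact sets the images $h_j(C)$ stay in a fixed compact set, so $\eta(h_j)\geq1$ there. Since $W_p\geq|h_{\bw}|^2$, the first step gives $\int_{|w|<\rho}|(h_j)_{\bw}|^2\to0$. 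Weak lower semicontinuity yields $(h_\infty)_{\bw}=0$, so $h_\infty$ is holomorphic. It is a monotone surjection of degree one, so it is a disk automorphism. Its trace is the uniform limit of the normalised quasisymmetric traces and fixes $1,i,-i$, so $h_\infty=\id$. As the limit is independent of the subsequence, the full sequence converges.

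\emph{Strong convergence and decay of $\Phi_p$.} Both $\|(h_j)_{\bw}\|_{L^2_{\loc}}$ and $\|(h_j)_{\bw}-(h_\infty)_{\bw}\|_{L^2_{\loc}}$ tend to zero. For $\chi\in C^\infty_c$, we have $\int|(\chi u)_w|^2=\int|(\chi u)_{\bw}|^2$ for compactly supported $u$. Apply this to $u=h_j-\id$, using $h_j-\id\to0$ uniformly. It gives $(h_j)_w\to1$ in $L^2_{\loc}$, hence strong $W^{1,2}_{\loc}$ convergence. Then $r_j=|(h_j)_{\bw}|/|(h_j)_w|$ tends to $0$ in $L^2_{\loc}$. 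To see this, split off the set where $|(h_j)_w|<1/2$, which has vanishing measure, and use $r_j\leq1$ there. Now assume the $\Phi_p(h_j)$ are holomorphic. On compact sets $|\Phi_p|\leq C\eta(h_j)|a|^{2-p}|b|^{p-1}$, by \eqref{variation} with $a_p(s)\lesssim s^{p/2-1}$. Hölder's inequality then gives $\int_{|w|<\rho}|\Phi_p(h_j)|\lesssim\|a\|_{L^2}^{2-p}\|b\|_{L^2}^{p-1}\cdot C\to0$, with the ratio exponents handled via $|a|^{2-p}|b|^{p-1}\leq|a|^{3-p}r^{p-1}$ when $p\leq2$. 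For $p>2$ we use the $J$-form, with the same conclusion since $|b|\to0$ in $L^2$ and $a_p\sim s^{p/2-1}$ is bounded for small $s$. By the mean value property, $L^1_{\loc}$ convergence of holomorphic functions to $0$ gives locally uniform convergence. The main obstacle is the identification $h_\infty=\id$, in particular making sure that the normalised traces stay uniformly quasisymmetric so that compactness applies. This rests on Möbius invariance of the quasisymmetry constant under the three-point normalisation.
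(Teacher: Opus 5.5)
Your proposal is correct and follows the paper's proof essentially step by step. The shared steps are: vanishing energy on escaping hyperbolic balls via M\"obius invariance; equicontinuity from a fixed quasiconformal extension of the trace transported by the same normalisations; identification of the holomorphic limit as the identity; the isometry $\|(\chi u)_w\|_{L^2}=\|(\chi u)_{\bar w}\|_{L^2}$ for strong convergence; and H\"older plus the mean-value property for $\Phi_p$.

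The only real variation is the last H\"older step, together with two slips there:
\begin{itemize}
\item The paper writes $|\Phi_p(h_j)|\le G_jr_j^{p-1}$ with $G_j=\eta(h_j)|(h_j)_w|^2$, and interpolates between the vanishing local energy $\int G_jr_j^p$ and $\int G_j$.
\item Your version also works, but only with the correct bound $|\Phi_p|\le\eta(h_j)|a|^{3-p}|b|^{p-1}$; your $|a|^{2-p}$ has the wrong homogeneity. With that bound, H\"older gives $\int|\Phi_p|\lesssim\|a\|_{L^2}^{3-p}\|b\|_{L^2}^{p-1}\to0$.
\item The $p>2$ aside should be dropped. The lemma is stated and used for $1<p\le2$, and your inequality $W_p\ge|h_{\bar w}|^2$ fails above two.
\end{itemize}
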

\begin{proof}
M\"obius invariance and \eqref{subquadratic-inverse} give a common Dirichlet bound for all $h_j$. A fixed quasiconformal extension of $h|_{\Sone}$, subjected to the same normalisations, gives extensions with a common quasiconformal bound fixing three points. Proposition~\ref{boundary-modulus} gives equicontinuity on the closed disk. Their boundary maps, after extraction, converge to a quasisymmetric homeomorphism fixing those points.

For every $0<R<1$, invariance holds locally as well:
\begin{equation}\label{escaping-energy}
 \E_p(h_j;\D_R)=\E_p(h;A_j(\D_R))\longrightarrow0.
\end{equation}
Indeed $A_j(\D_R)$ is a hyperbolic ball of fixed radius whose centre escapes to infinity; it eventually avoids each compact subset of $\D$. The original inverse-energy density is integrable. Since $r_j^2\leq r_j^p$ and $\eta\geq1$, this gives $(h_j)_{\bw}\to0$ in $L^2_{\loc}$. Any uniform subsequential limit is therefore holomorphic, with a boundary homeomorphism of degree one. It is a disk automorphism, and the three fixed boundary points make it the identity. This proves convergence of the whole sequence uniformly on $\overline\D$.

For a compactly supported cutoff $\chi$, put $u_j=\chi(h_j-\mathrm{id})$, extended by zero to the plane. Then
\[
 \|(u_j)_w\|_{L^2(\mathbb C)}
 =\|(u_j)_{\bw}\|_{L^2(\mathbb C)}\longrightarrow0.
\]
This proves strong local convergence of the gradients. On a fixed compact set,
\[
 \int r_j^2\area
 \leq4\int|(h_j)_{\bw}|^2\area
       +4\int|(h_j)_w-1|^2\area\longrightarrow0;
\]
split the set into $|(h_j)_w|\geq1/2$ and its complement to obtain this estimate.

Put $G_j=\eta(h_j)|(h_j)_w|^2$. Uniform convergence to the identity and the Dirichlet bound give a uniform local $L^1$ bound for $G_j$. In the normalisation \eqref{variation},
\[
 |\Phi_p(h_j)|
 =G_j\frac{r_j^{p-1}\bigl(p+(2-p)r_j^2\bigr)}2
 \leq G_jr_j^{p-1}.
\]
H\"older's inequality and \eqref{escaping-energy} give, for each compact disk $C\Subset\D$,
\[
 \int_C|\Phi_p(h_j)|\area
 \leq\left(\int_CG_jr_j^p\area\right)^{(p-1)/p}
       \left(\int_CG_j\area\right)^{1/p}\longrightarrow0.
\]
The local mean-value estimate for holomorphic functions proves \eqref{escaping-hopf}.
\end{proof}

This lemma uses the boundary compactness mechanism from NoHairs, but does not need a separate no-hairs argument for these particular limits: vanishing local energy makes the limit conformal directly. Nor is stability of global minimality under varying boundary values required.

\subsection{The scalar equation from hyperbolic curvature}
Assume that $h$ is a $C^2$ orientation-preserving diffeomorphism and that
\begin{equation}\label{subquadratic-hopf}
 \Phi=\frac12\eta(h)
       \bigl(p+(2-p)r^2\bigr)r^{p-2}h_w\overline{h_{\bw}}
\end{equation}
is holomorphic, with value zero at $r=0$. It suffices to assume $C^2$ on $\{r>0\}$ and $C^1$ diffeomorphic regularity throughout; all subsequent scalar tests are supported above a positive level of $r$.

Define
\[
 F_p,\ H_p\quad\text{are the functions in }\eqref{flux-functions}.
\]
On a patch where $\Phi\ne0$, take a natural coordinate $\zeta=x+iy$ with $d\zeta^2=\Phi\,dw^2$. In this coordinate the differential equals $1$, hence $\mu_h=r>0$. With $U=h_\zeta$,
\[
 h_x=(1+r)U,\qquad h_y=i(1-r)U,\qquad
 \eta(h)|U|^2F_p(r)=1.
\]
Consequently the pullback of the curvature-$-4$ target metric is
\begin{equation}\label{orthogonal-pullback}
 \eta(h)|dh|^2=L(r)^2\,dx^2+M(r)^2\,dy^2,
 \qquad L(r)=\frac{1+r}{\sqrt{F_p(r)}},\quad
 M(r)=\frac{1-r}{\sqrt{F_p(r)}}.
\end{equation}
For an orthogonal metric $L^2dx^2+M^2dy^2$, its Gaussian curvature is
\[
 -\frac1{LM}\left\{\partial_x\left(\frac{M_x}{L}\right)
                       +\partial_y\left(\frac{L_y}{M}\right)\right\}.
\]
Equating it to $-4$ gives
\begin{equation}\label{scalar-natural}
 \partial_x\bigl(\alpha_p(r)r_x\bigr)
 +\partial_y\bigl(\beta_p(r)r_y\bigr)
 =-4\frac{1-r^2}{F_p(r)},
\end{equation}
where direct differentiation gives
\begin{equation}\label{scalar-coefficients}
 \boxed{\alpha_p(r)=-\frac{M'(r)}{L(r)}
   =\frac{H_p(-r)}{2r\,[p+(2-p)r^2]},\qquad
 \beta_p(r)=-\frac{L'(r)}{M(r)}
   =\frac{H_p(r)}{2r\,[p+(2-p)r^2]}.}
\end{equation}
These identities recover the earlier ellipticity polynomial by a separate geometric calculation.

For $C^2$ maps the curvature computation is a distributional identity and does not require third derivatives. The orthonormal coframe $L\,dx,M\,dy$ has connection form
\[
 \omega=\frac{L_y}{M}\,dx-\frac{M_x}{L}\,dy.
\]
This is the pulled-back target connection, up to the differential of the local angle of $U$. Its exterior derivative is therefore $-4LM\,dx\wedge dy$; the exterior derivative of that angle differential is zero in distributions. This gives \eqref{scalar-natural} in the weak form used below.

To return to the original coordinate, let $\theta=\frac12\arg\Phi$ and let $R_\theta$ denote planar rotation by $\theta$. Set
\[
 \mathsf A_p(w)=R_\theta^{T}
          \begin{pmatrix}\alpha_p(r)&0\\0&\beta_p(r)\end{pmatrix}
          R_\theta.
\]
Changing the square-root branch changes $R_\theta$ by a sign and leaves this tensor unchanged. Conformal change of variables in the divergence equation gives
\begin{equation}\label{scalar-original}
 \boxed{-\operatorname{div}(\mathsf A_p\nabla r)
       =4|\Phi|\frac{1-r^2}{F_p(r)}.}
\end{equation}
No derivative of $\Phi$ is needed for these estimates as its argument only determines the eigendirections of a measurable symmetric coefficient matrix. The equation is used on $\{r>k\}$ for $k>0$, which avoids every zero of $\Phi$.

\subsection{The allowed branch and its possible degeneracy}
Recall that for $1<p<p_{\mathrm c}$, $r_{\mathrm c}(p)\in(0,1)$ is the zero of $H_p$,
\[
 r_{\mathrm c}(p)=\frac{\sqrt{p(2-p)}-p(2-p)}{(p-1)(2-p)},
\]
and put $r_{\mathrm c}(p)=1$ for $p_{\mathrm c}\leq p\leq2$. For $p\geq p_{\mathrm c}$, $H_p(r)\geq0$ on $[0,1]$. For $p<p_{\mathrm c}$, it is nonnegative precisely on $[0,r_{\mathrm c}(p)]$. The rank-one second variation established above shows that any $C^1$ diffeomorphic local minimiser necessarily satisfies
\begin{equation}\label{stable-distortion-branch}
 r\leq r_{\mathrm c}(p)\quad (p<p_{\mathrm c}).
\end{equation}
This use of minimality is essential below the threshold; an arbitrary stationary solution need not satisfy it. The regular barrier family of Theorem~\ref{lambda-uniform-selection} obtains its uniform branch restriction by a separate finite-variation argument.

For every fixed $0<k<r_{\mathrm c}(p)$, on $k\leq r\leq r_{\mathrm c}(p)$ the coefficients satisfy
\begin{equation}\label{scalar-degenerate-bounds}
 c_p\bigl(r_{\mathrm c}(p)-r\bigr)I\ \leq\ \mathsf A_p
 \ \leq\ \Lambda_{p,k}I.
\end{equation}
For $p<2$, one can take $c_p=p(2-p)/2$. Indeed $H_p(r_{\mathrm c})\geq0$,
$-H_p'(r)\geq2p(2-p)$, the denominator in \eqref{scalar-coefficients} is at most $4$, and $\alpha_p\geq\beta_p$. For $p=2$, $\alpha_2=\beta_2=1/(2r)$ and one may take $c_2=1/2$.

Above $p_{\mathrm c}$ the lower bound can be strengthened to a positive constant on $[k,1]$. At $p=p_{\mathrm c}$, and at $r=r_{\mathrm c}(p)$ below that exponent, one eigenvalue vanishes linearly. The following explicit iteration handles this degeneration; it is not an invocation of a uniformly elliptic estimate at those endpoints.

\subsection{A local estimate allowing linear degeneracy at the upper value}
\begin{lemma}\label{ceiling-iteration}
Let $0\leq v\leq a$ belong to $W^{1,2}(B_1)$, where $a>0$ is fixed. Suppose a measurable symmetric matrix $M$ satisfies
\[
 c(a-v)I\leq M\leq\Lambda I,
 \qquad -\operatorname{div}(M\nabla v)\leq\varepsilon
\]
weakly, with $c>0$, $\Lambda<\infty$ and $0\leq\varepsilon\leq1$. Then
\begin{equation}\label{ceiling-bound}
 \|v\|_{L^\infty(B_{1/2})}
 \leq C(a,c,\Lambda)\bigl(\|v\|_{L^2(B_1)}+\varepsilon\bigr).
\end{equation}
\end{lemma}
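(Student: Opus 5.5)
The plan is to reduce to a uniformly elliptic De Giorgi iteration by exploiting two facts: the ceiling $v\leq a$ already gives a trivial bound, and the degeneracy only occurs near the value $a$. The proof has a trivial large-data case and a small-data case.

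\emph{Large data.} Fix a small $\delta_0=\delta_0(a,c,\Lambda)>0$, to be chosen. If $\|v\|_{L^2(B_1)}+\varepsilon\geq\delta_0$, then \eqref{ceiling-bound} holds with $C=a/\delta_0$, since $0\leq v\leq a$.

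\emph{Small data, step one.} Assume $\|v\|_{L^2(B_1)}+\varepsilon<\delta_0$. First derive a Caccioppoli inequality that uses only the upper bound $M\leq\Lambda I$. Test the weak inequality with $(v-k)_+\varphi^2$, where $k\geq0$ and $\varphi$ is a cutoff. Absorbing the cross term by Cauchy--Schwarz in the $M$-inner product gives
\[
 \int_{\{v>k\}}M\nabla v\cdot\nabla v\,\varphi^2
 \leq 4\Lambda\int(v-k)_+^2|\nabla\varphi|^2+2\varepsilon\int(v-k)_+\varphi^2 .
\]
No ellipticity is used here. The degeneracy enters only when we convert $M\nabla v\cdot\nabla v$ into $|\nabla v|^2$.

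\emph{Small data, step two: truncation below the ceiling.} For levels $0\leq k<a/2$, put $T=\min\{v,a/2\}$. Then $\nabla(T-k)_+$ is supported in $\{k<v<a/2\}$, where $M\geq (ca/2)I$. Hence
\[
 \tfrac{ca}{2}\int|\nabla(T-k)_+|^2\varphi^2\leq\int_{\{v>k\}}M\nabla v\cdot\nabla v\,\varphi^2 ,
\]
and the previous display bounds the right-hand side. This is a genuine Caccioppoli inequality for the bounded truncations $(T-k)_+$, with constants depending only on $a,c,\Lambda$. The only wrinkle is that the right-hand side involves $(v-k)_+$ rather than $(T-k)_+$. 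On $\{v\geq a/2\}$ we use $(v-k)_+\leq a\leq 2\,(T-k)_+\cdot a/(a/2-k)$. This is harmless for $k\leq a/4$: there $(v-k)_+\leq 4(T-k)_+$ on $\{v\geq a/2\}$, and the two coincide elsewhere. So for $k\leq a/4$ the right-hand side is bounded by a constant times $\int(T-k)_+^2|\nabla\varphi|^2+\varepsilon\int(T-k)_+\varphi^2$.

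\emph{Small data, step three: De Giorgi iteration.} Run the standard iteration on $(T-k_j)_+$ over shrinking balls $B_{\rho_j}$, $\rho_j\downarrow1/2$, with levels $k_j\uparrow \kappa$ and $\kappa\leq a/4$. The ingredients are the two-dimensional Sobolev embedding (into some $L^q$, $q>2$) applied to $(T-k_j)_+\varphi_j$, and Chebyshev on level sets. This gives the usual recursion $Y_{j+1}\leq C b^j Y_j^{1+\sigma}$. The iteration yields
\[
 \sup_{B_{1/2}}T\leq \kappa_0+C_1\bigl(\|v\|_{L^2(B_1)}+\varepsilon\bigr)
\]
for any starting level $\kappa_0$, where the $\varepsilon$ term is absorbed in the standard way by adding $\varepsilon$ to the levels. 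Take $\kappa_0=0$ and choose $\delta_0$ so that $C_1\delta_0<a/4$. Then $T<a/4$ on $B_{1/2}$, hence $T=v$ there, and this gives \eqref{ceiling-bound}.

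\emph{Main obstacle.} The delicate point is the bookkeeping that keeps all levels below $a/4$. In particular, the mass of $v$ on $\{v\geq a/2\}$, where no lower ellipticity is available, must be controlled only through the $\Lambda$-Caccioppoli bound and the comparison $(v-k)_+\lesssim(T-k)_+$. If this comparison proves awkward in the iteration, the fallback is to run De Giorgi on $G(v)=\int_0^v\sqrt{a-s}\,ds$. This function satisfies $c|\nabla G(v)|^2\leq M\nabla v\cdot\nabla v$ and $\tfrac23(v-k)^{3/2}\leq G(v)-G(k)\leq\sqrt a\,(v-k)$. The resulting mismatch of powers is then handled by taking $q$ large in the Sobolev embedding, so that the recursion remains superlinear.
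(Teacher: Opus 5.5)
Your argument is correct, and it takes a genuinely different route from the paper.

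The paper runs a Moser power iteration straight through the degeneracy. It shifts to $u=v+\varepsilon+\delta$, $A=a+\varepsilon+\delta$, and tests with $\chi^2u^{q-1}$, which gives a Caccioppoli inequality carrying the weight $u^{q-2}(A-u)$. That weight is absorbed into the transform $G_q(t)=\int_0^t s^{q/2-1}\sqrt{A-s}\,ds$. The Beta-function lower bound $B(q/2,3/2)\asymp q^{-3/2}$ costs only a polynomial factor in $q$ per step, so the product of constants over $q=2,4,8,\dots$ stays finite.

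You avoid the degeneracy altogether by using the ceiling $0\le v\le a$ twice:
\begin{itemize}
\item For large data the estimate is trivial.
\item For small data the De Giorgi iteration only visits levels $k\le a/4$. There the truncation $T=\min\{v,a/2\}$ has gradient supported in the uniformly elliptic region $\{v<a/2\}$, with ellipticity constant $ca/2$. The comparison $(v-k)_+\le 4(T-k)_+$ then transfers the $\Lambda$-Caccioppoli bound, which uses no ellipticity, to $T$.
\end{itemize}

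What each route buys:
\begin{itemize}
\item Your route is more elementary: it is textbook De Giorgi with no weighted power transforms. It uses only a positive lower bound for $M$ away from the ceiling, so it works for any degeneracy profile vanishing only at $v=a$, not just the linear one.
\item The paper's route is a single iteration valid at all levels up to the ceiling, with no case split. Its constants are tracked explicitly, whereas yours enter through the threshold $\delta_0$, which is chosen after the De Giorgi constant $C_1$.
\end{itemize}

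Two minor points. Your claim that the iteration works ``for any starting level $\kappa_0$'' should be restricted to $\kappa_0+C_1(\|v\|_{L^2(B_1)}+\varepsilon)\le a/4$, so that every level stays where your Caccioppoli inequality holds; your choice $\kappa_0=0$ with $C_1\delta_0<a/4$ satisfies this. The fallback via $\int_0^v\sqrt{a-s}\,ds$ is not needed.
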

\begin{proof}
The proof modifies the classical power iteration \cite{Moser1960}; the modification is given explicitly because the lower ellipticity bound may vanish. Put $u=v+\varepsilon+\delta$, $A=a+\varepsilon+\delta$, where $0<\delta<1$. Then $M\geq c(A-u)I$ and $\varepsilon\leq u$. Test the subsolution inequality with $\chi^2u^{q-1}$, $q\geq2$. Cauchy--Schwarz in the nonnegative quadratic form $M$ and Young's inequality give
\begin{align}\label{ceiling-caccioppoli}
 \int\chi^2u^{q-2}(A-u)|\nabla u|^2
 &\leq\frac{4\Lambda}{c(q-1)^2}
                  \int u^q|\nabla\chi|^2
       +\frac{2}{c(q-1)}\int\chi^2u^q.
\end{align}
Define
\[
 G_q(t)=\int_0^t s^{q/2-1}\sqrt{A-s}\,ds,\qquad0\leq t\leq A.
\]
There is an absolute constant $c_0>0$ such that
\begin{equation}\label{ceiling-transform}
 c_0\sqrt A\,q^{-3/2}t^{q/2}
 \leq G_q(t)\leq\frac{2\sqrt A}{q}t^{q/2}.
\end{equation}
For the lower bound substitute $s=ty$ and use
$\int_0^1 y^{q/2-1}(1-y)^{1/2}dy=B(q/2,3/2)\asymp q^{-3/2}$.

The left side of \eqref{ceiling-caccioppoli} is $\int\chi^2|\nabla G_q(u)|^2$. Apply the planar Sobolev inequality to $\chi G_q(u)$ and use both bounds in \eqref{ceiling-transform}. For $1/2\leq s<t\leq1$, with an appropriate cutoff,
\[
 \|u\|_{L^{2q}(B_s)}
 \leq\left(\frac{Cq^2}{(t-s)^2}\right)^{1/q}
       \|u\|_{L^q(B_t)}.
\]
Here $C$ depends on $a,c,\Lambda$, but not on $q,\varepsilon,\delta$: $a\leq A\leq a+2$. Iterate with $q=2,4,8,\ldots$ and radii decreasing from $1$ to $1/2$. The product of these constants is finite. Finally let $\delta\downarrow0$. This proves \eqref{ceiling-bound}.
\end{proof}

\begin{corollary}[Local control of large distortion]\label{scalar-distortion-bound}
Suppose $h$ satisfies \eqref{subquadratic-hopf}, with the regularity stated there and $r\leq r_{\mathrm c}(p)$. For every $0<k<r_{\mathrm c}(p)$ and $B_{2\rho}(w_0)\Subset\D$,
\begin{equation}\label{scalar-local-bound}
 \sup_{B_\rho(w_0)}(r-k)_+
 \leq C_{p,k}\left[
       \rho^{-1}\|r\|_{L^2(B_{2\rho}(w_0))}
       +\rho^2\|\Phi\|_{L^\infty(B_{2\rho}(w_0))}\right].
\end{equation}
\end{corollary}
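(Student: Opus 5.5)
The plan is to reduce \eqref{scalar-local-bound} to Lemma~\ref{ceiling-iteration}, applied to the truncation $v=(r-k)_+$ after rescaling $B_{2\rho}(w_0)$ to the unit disk. The ceiling is $a=r_{\mathrm c}(p)-k>0$. The hypothesis $r\leq r_{\mathrm c}(p)$ gives $0\leq v\leq a$.

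\textbf{Regularity and the equation on $\{r>k\}$.} On $\{r>k\}$ we have $r>0$, hence $\Phi\neq0$ there. By the standing assumptions $h$ is $C^2$ on this set, so $r$ is $C^1$ there. Since $r$ is continuous, $v$ is locally Lipschitz and lies in $W^{1,2}(B_{2\rho}(w_0))$. On $\{r>k\}$ the scalar equation \eqref{scalar-original} holds weakly. This is a local statement, proved on natural-coordinate patches; the tensor $\mathsf A_p$ is globally well defined because it does not depend on the choice of square-root branch. The right-hand side satisfies
\[
 4|\Phi|\frac{1-r^2}{F_p(r)}\leq \frac{4}{F_p(k)}\|\Phi\|_{L^\infty(B_{2\rho})}
\]
there, because $F_p$ is increasing.

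\textbf{$v$ is a subsolution.} Next I show that $v$ satisfies $-\operatorname{div}(M\nabla v)\leq C_{p,k}\|\Phi\|_\infty$ weakly for a suitable modified coefficient $M$. On $\{r>k\}$ put $M=\mathsf A_p$. On $\{r\leq k\}$, where $\nabla v=0$ almost everywhere, put $M=c_p a I$; this choice does not affect the weak form. By \eqref{scalar-degenerate-bounds}, on $\{r>k\}$ we have $M\geq c_p(r_{\mathrm c}-r)I=c_p(a-v)I$ and $M\leq\Lambda_{p,k}I$. Hence $c_p(a-v)I\leq M\leq\max\{\Lambda_{p,k},c_pa\}I$ everywhere.

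For the subsolution inequality I test \eqref{scalar-original} with $\varphi\,\theta_\delta(r-k)$, where $\varphi\geq0$ and $\theta_\delta$ is the piecewise-linear ramp from $0$ at $0$ to $1$ at $\delta$. This test function is supported in $\{r>k\}$, where the equation holds. The term $\varphi\,\theta_\delta'(r-k)\langle\mathsf A_p\nabla r,\nabla r\rangle$ is nonnegative, so it can be dropped in the right direction. Letting $\delta\downarrow0$ then gives the inequality.

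I expect this truncation step to be the main point needing care. The equation is only known on the open set $\{r>k\}$, and the coefficients blow up like $1/r$ near $r=0$. Both issues are avoided because the test function never leaves $\{r>k\}$, and there the ellipticity constants depend only on $p$ and $k$.

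\textbf{Rescaling and applying the lemma.} Set $\tilde v(x)=v(w_0+2\rho x)$ and $\tilde M(x)=M(w_0+2\rho x)$. Then $-\operatorname{div}(\tilde M\nabla\tilde v)\leq\varepsilon$ on $B_1$ with
\[
 \varepsilon=\frac{16\rho^2}{F_p(k)}\|\Phi\|_{L^\infty(B_{2\rho})}.
\]
If $\varepsilon>1$, the bound is trivial, since $\sup v\leq a<a\varepsilon$. If $\varepsilon\leq1$, Lemma~\ref{ceiling-iteration} gives
\[
 \sup_{B_{1/2}}\tilde v\leq C\bigl(\|\tilde v\|_{L^2(B_1)}+\varepsilon\bigr).
\]
Moreover
\[
 \|\tilde v\|_{L^2(B_1)}=(2\rho)^{-1}\|v\|_{L^2(B_{2\rho})}\leq\rho^{-1}\|r\|_{L^2(B_{2\rho})}.
\]
Since $\tilde v$ on $B_{1/2}$ corresponds to $v$ on $B_\rho(w_0)$, this yields \eqref{scalar-local-bound} with $C_{p,k}$ depending on $a$, $c_p$, $\Lambda_{p,k}$ and $F_p(k)$, hence only on $p$ and $k$.
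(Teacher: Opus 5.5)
Your proposal is correct and follows essentially the same route as the paper: truncate to $v=(r-k)_+$ with ceiling $a=r_{\mathrm c}(p)-k$, extend $\mathsf A_p$ to $\{r\leq k\}$ by a constant scalar matrix, and test \eqref{scalar-original} with a ramp approximation of $\mathbf 1_{\{r>k\}}$ times $\varphi\geq0$ so that the nonnegative quadratic term can be dropped. You then rescale, apply Lemma~\ref{ceiling-iteration}, and treat a rescaled source larger than one directly from $0\leq v\leq a$. Your explicit choice $c_p a I$ for the extension is exactly what is needed for the lower bound $c_p(a-v)I\leq M$, which the paper leaves implicit.
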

\begin{proof}
On $\{r>k\}$, the right side of \eqref{scalar-original} is nonnegative and bounded above by $C_{p,k}|\Phi|$. Put $v=(r-k)_+$ and $a=r_{\mathrm c}(p)-k$. Extend $\mathsf A_p$ measurably to $\{r\leq k\}$ as a fixed positive scalar matrix. Then \eqref{scalar-degenerate-bounds} becomes $c_p(a-v)I\leq\mathsf A_p\leq\Lambda I$ everywhere. Positive-part truncation gives
\[
 -\operatorname{div}(\mathsf A_p\nabla v)
 \leq C_{p,k}\|\Phi\|_\infty
\]
weakly on the ball. To justify the sign, test \eqref{scalar-original} with an increasing smooth approximation to $\mathbf1_{\{r>k\}}$ times a nonnegative test function; the extra term is a nonnegative quadratic form and is subtracted from the right side. No control of the eigendirections or their derivatives is required. Rescaling and Lemma~\ref{ceiling-iteration} prove \eqref{scalar-local-bound}. If the rescaled source exceeds $1$, the assertion follows directly from $0\leq v\leq a$ after increasing the constant.
\end{proof}

\subsection{Consequence for smooth minimisers}
\begin{proposition}[Conditional asymptotic conformality for $1<p\leq2$]\label{subquadratic-asymptotic}
Let $h:\overline\D\to\overline\D$ be an orientation-preserving homeomorphism with quasisymmetric boundary values and $\E_p(h)<\infty$. Assume that $h$ is a $C^1$ diffeomorphism in $\D$, is $C^2$ on $\{|\mu_h|>0\}$, and has holomorphic differential \eqref{subquadratic-hopf}. If $p<p_{\mathrm c}$, assume additionally $|\mu_h|\leq r_{\mathrm c}(p)$; this condition is necessary for a diffeomorphic local minimiser. Then
\begin{equation}\label{subquadratic-asymptotic-conclusion}
 \lim_{|w|\to1}|\mu_h(w)|=0.
\end{equation}
In particular $h$ and $f=h^{-1}$ are globally quasiconformal and asymptotically conformal. Thus every smooth diffeomorphic minimiser of \eqref{subquadratic-functional} with quasisymmetric boundary values has these properties, provided the established inverse/variation calculus applies.
\end{proposition}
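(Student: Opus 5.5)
We argue by contradiction using the escaping normalisations of Lemma~\ref{escaping-normalisations}. Suppose there are $w_j\to\partial\D$ with $|\mu_h(w_j)|\geq k_0>0$; shrinking $k_0$ we may assume $k_0<r_{\mathrm c}(p)$ (for $p\geq p_{\mathrm c}$ we have $r_{\mathrm c}=1$). Choose $A_j,B_j$ as in that lemma and set $h_j=B_j\circ h\circ A_j$. Then $|\mu_{h_j}(0)|=|\mu_h(w_j)|\geq k_0$. Each $h_j$ has the same regularity as $h$. Its differential $\Phi_p(h_j)$ is the pullback of $\Phi_p(h)$ by $A_j$, by the M\"obius invariance of $\eta$ and of $|\mu|$, and so it is holomorphic. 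The bound $r\leq r_{\mathrm c}(p)$ is also preserved. Lemma~\ref{escaping-normalisations} then gives $r_j\to0$ in $L^2_{\loc}$ and $\Phi_p(h_j)\to0$ locally uniformly.

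Next apply Corollary~\ref{scalar-distortion-bound} to $h_j$ on $B_{2\rho}(0)$ with a fixed $\rho$, say $\rho=1/4$, and with $k=k_0/2$. This gives
\[
 (r_j(0)-k_0/2)_+\leq C_{p,k_0}\bigl[\rho^{-1}\|r_j\|_{L^2(B_{1/2})}+\rho^2\|\Phi_p(h_j)\|_{L^\infty(B_{1/2})}\bigr]\longrightarrow0,
\]
which contradicts $r_j(0)\geq k_0$. Hence $|\mu_h(w)|\to0$ as $|w|\to1$, and \eqref{subquadratic-asymptotic-conclusion} holds. For global quasiconformality, $|\mu_h|$ is continuous on $\D$: $h$ is a $C^1$ diffeomorphism, so $h_w\neq0$. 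It tends to $0$ at the boundary, so it attains its maximum $<1$ on a compact set. Thus $\|\mu_h\|_\infty<1$. Then $f=h^{-1}$ is quasiconformal with $|\mu_f(z)|=|\mu_h(f(z))|$. Since $h$ is a homeomorphism of $\overline\D$, $|z|\to1$ forces $|f(z)|\to1$, which gives asymptotic conformality of $f$. The final sentence about smooth minimisers follows because such minimisers are stationary (Section~\ref{inner-variations}), so Theorem~\ref{hopf} gives holomorphy of $\Phi$. Below $p_{\mathrm c}$, the branch restriction is the necessary condition of Proposition~\ref{lower-range-boundary-obstruction}.

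The main obstacle is justifying the use of Corollary~\ref{scalar-distortion-bound}, which rests on the weak scalar equation \eqref{scalar-original}, uniformly in $j$. Two points need care. First, the equation is only used on $\{r_j>k\}$, where $\Phi_p(h_j)\neq0$, because $r>0$ forces $h_{\bw}\neq0$. So the natural-coordinate derivation applies there, and only $C^2$ regularity on $\{r>0\}$ is needed. Second, the constant $C_{p,k}$ must depend only on $p$, $k$ and the ellipticity bounds \eqref{scalar-degenerate-bounds}, not on $j$. This holds because these bounds depend only on $p,k$, and at $p=p_{\mathrm c}$ or near $r_{\mathrm c}$ the degenerate iteration of Lemma~\ref{ceiling-iteration} supplies the estimate. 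I would also check that pullback by the normalisations commutes with the definition of $\Phi_p$: $\Phi_p(h\circ A)=\Phi_p(h)\circ A\,(A')^2$ and postcomposition by $B$ leaves $\Phi_p$ unchanged. This makes the holomorphy hypothesis of Lemma~\ref{escaping-normalisations} automatic.
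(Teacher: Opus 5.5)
Your proposal is correct and follows the paper's own proof: you normalise by the escaping M\"obius maps of Lemma~\ref{escaping-normalisations} to obtain $r_j\to0$ in $L^2_{\loc}$ and $\Phi_p(h_j)\to0$ locally uniformly, and then apply the local bound of Corollary~\ref{scalar-distortion-bound} at the origin with a fixed level $k$. You phrase this as a contradiction with $k=k_0/2$, whereas the paper shows $\limsup_j r_j(0)\leq k$ directly and then lets $k\downarrow0$; the two are equivalent. Your passage to $\|\mu_h\|_\infty<1$ and the transfer to $f$ via $|\mu_f|=|\mu_h|\circ f$ and properness also match the paper. Your extra check that pre- and postcomposition act on $\Phi_p$ as a pullback, so that holomorphy is preserved, makes explicit what the paper only asserts.
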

\begin{proof}
Take any sequence $w_j\to\partial\D$ and the normalisations in Lemma~\ref{escaping-normalisations}. They preserve the energy, the stationarity equation and the branch bound. That lemma gives $r_j\to0$ in $L^2_{\loc}$ and $\Phi_p(h_j)\to0$ locally uniformly. For each fixed $0<k<r_{\mathrm c}(p)$, apply \eqref{scalar-local-bound} on two fixed concentric disks about $0$. It yields
\[
 \limsup_{j\to\infty}r_j(0)\leq k.
\]
Let $k\downarrow0$. Since $r_j(0)=|\mu_h(w_j)|$, this proves \eqref{subquadratic-asymptotic-conclusion}. On each compact subset of $\D$, continuity and the positive Jacobian give a strict bound $|\mu_h|<1$. Combine this with the boundary limit to obtain $\|\mu_h\|_\infty<1$. Finally,
$|\mu_f(z)|=|\mu_h(f(z))|$ and properness of $f$ transfer both conclusions to $f$.
\end{proof}
\subsection{A closed scalar equation on the forward side}\label{forward-scalar-equation}
The same threshold is visible in a scalar equation for a natural-coordinate component of the forward map. This reduction retains the hyperbolic weight and identifies the principal eigenvalues exactly. Here we use the unregularised differential $\Phi_p$ of \eqref{monotone-hopf}, rather than the half-normalised $q_\lambda$ in Appendix~\ref{other-formulations}.

\begin{proposition}[Forward scalar reduction]\label{forward-scalar-reduction}
Let $1<p\leq2$, and let $h=f^{-1}$ be a $C^2$ orientation-preserving diffeomorphic solution of the Hopf equation. On a simply connected patch where $\Phi_p\ne0$, choose a natural coordinate $\zeta$ with $\zeta'(w)^2=\Phi_p(w)$. Write
\[
 g=\zeta\circ f=u+iv,\qquad r=|\mu_f|,
 \qquad F_p\text{ as in }\eqref{flux-functions}.
\]
After restricting the patch so that these maps are one-to-one, define the strictly increasing function
\begin{equation}\label{forward-scalar-parameter}
 \Theta_p(r)=\frac{\sqrt{F_p(r)}}{1-r},\qquad 0<r<1,
 \qquad
 A_p\bigl(\Theta_p(r)\bigr)=\frac{1-r}{1+r}.
\end{equation}
Then
\begin{equation}\label{forward-closed-scalar}
 t=\frac{|\nabla v|}{\sqrt{\eta(z)}}=\Theta_p(r),
 \qquad
 \boxed{\operatorname{div}\!\left(
 A_p\!\left(\frac{|\nabla v|}{\sqrt{\eta(z)}}\right)\nabla v
 \right)=0.}
\end{equation}
The eigenvalues of its principal matrix, perpendicular and parallel to $\nabla v$, are $A_p(t)$ and $A_p(t)+tA_p'(t)$, respectively, and
\begin{equation}\label{forward-scalar-eigenvalues}
 \boxed{\frac{A_p(t)+tA_p'(t)}{A_p(t)}
 =\left(\frac{1-r}{1+r}\right)^2\frac{H_p(r)}{H_p(-r)}.}
\end{equation}
In particular the scalar equation is elliptic exactly where $H_p(r)>0$.
\end{proposition}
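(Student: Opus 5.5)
The plan is to compute everything in the natural coordinate, where the inverse map has an explicit orthogonal derivative, and then invert that derivative. As in the proof of Proposition~\ref{rankone-hopf-geometry}, set $H=h\circ w(\zeta)$ with $\zeta=x+iy$ and $U=H_\zeta$. The Hopf value is $1$, so
\[
 H_x=(1+r)U,\qquad H_y=i(1-r)U,\qquad \eta(H)|U|^2F_p(r)=1.
\]
As a real matrix, $DH=|U|\,R_{\arg U}\operatorname{diag}(1+r,1-r)$, where $R_{\arg U}$ is rotation by $\arg U$. The patch is restricted so that $g=\zeta\circ f$ is the inverse of $H$. At $z=H(\zeta)$ we then get
\[
 Dg=|U|^{-1}\operatorname{diag}\bigl((1+r)^{-1},(1-r)^{-1}\bigr)R_{-\arg U}.
\]
The two rows of this matrix are $\nabla u$ and $\nabla v$. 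They are orthogonal, with
\[
 |\nabla u|=\frac{1}{(1+r)|U|},\qquad |\nabla v|=\frac{1}{(1-r)|U|}.
\]
Here $r=|\mu_h|\circ g=|\mu_f|$, because $|\mu_{f}|=|\mu_h|\circ f$ and the conformal change $\zeta$ does not alter the modulus. Using $|U|^{-1}=\sqrt{\eta F_p(r)}$ with $\eta=\eta(z)$ gives $|\nabla v|/\sqrt{\eta}=\sqrt{F_p(r)}/(1-r)=\Theta_p(r)$, which is the first identity in \eqref{forward-closed-scalar}.

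For the divergence equation, $A_p(t)\nabla v=\frac{1-r}{1+r}\nabla v$ has length $1/((1+r)|U|)=|\nabla u|$ and is perpendicular to $\nabla u$. The orientation of $Dg$, which has positive determinant, fixes it as $\pm$ the $90^\circ$ rotation of $\nabla u$; one checks the sign once from the explicit rows. Hence $A_p(t)\nabla v=\pm(-u_y,u_x)$, which is divergence free because $u\in C^2$: $f$ is a $C^2$ diffeomorphism, being the inverse of one. This gives the equation classically, and in particular weakly.

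Before this, $\Theta_p$ must be shown strictly increasing, so that $A_p$ is well defined. Its logarithmic derivative is
\[
 \frac{\Theta_p'}{\Theta_p}=\frac{F_p'}{2F_p}+\frac1{1-r}
 =\frac{(1-r)D_p(r)+2rC_p(r)}{2r(1-r)C_p(r)},
\]
using $rF_p'/F_p=D_p/C_p$ from Lemma~\ref{implicit-hopf-lemma}. Since $D_p$ and $C_p$ are even in $r$, the identity $(1+r)D_p-2rC_p=(1-r)H_p(r)$ with $r\mapsto-r$ gives $(1-r)D_p+2rC_p=(1+r)H_p(-r)$. Every coefficient of $H_p(-r)$ is nonnegative and $H_p(0)>0$, so $\Theta_p'>0$.

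For the eigenvalues, the principal part is the map $\xi\mapsto A_p(|\xi|/\sqrt\eta)\xi$ at frozen $z$. Its differential is $A_p$ on directions perpendicular to $\xi$ and $A_p+tA_p'$ along $\xi$. The ratio of the two is $d\log(tA_p)/d\log t$. We have $(2tA_p)^2=Q_p$, so
\[
 \frac{d\log(tA_p)}{dr}=\frac{Q_p'}{2Q_p}=\frac{(1-r)H_p(r)}{2r(1+r)C_p}
\]
by \eqref{flux-log-derivative}. Dividing this by the expression for $\Theta_p'/\Theta_p$ gives exactly $\bigl(\frac{1-r}{1+r}\bigr)^2H_p(r)/H_p(-r)$. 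As $H_p(-r)>0$ and $A_p>0$, ellipticity holds exactly where $H_p(r)>0$.

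The computations are routine. The step needing most care is the bookkeeping in the inversion: composing evaluations correctly at $z$ versus $\zeta$ so that $\eta(H)$ becomes $\eta(z)$, and fixing the rotation sign so that $A_p\nabla v$ is the rotated gradient of $u$ rather than of $-u$. That sign is harmless for the divergence, but it must be consistent.
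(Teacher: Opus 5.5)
Your argument is correct in structure and is essentially the paper's proof. The paper obtains $v_y=ku_x$, $v_x=-ku_y$ with $k=(1+r)/(1-r)$ from the complex inverse formula $g_{\bar z}=-r\overline{g_z}$. That is exactly your identity $A_p(t)\nabla v=(-u_y,u_x)$, which you get by inverting $DH$ in the natural coordinate. Your eigenvalue ratio via $(2tA_p)^2=Q_p$ and $d\log(tA_p)/d\log t$ is the same computation as the paper's use of $\Theta_p'/\Theta_p$ together with $\frac{d}{dr}\log\frac{1-r}{1+r}=-\frac{2}{1-r^2}$.

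One justification is false, however. The expansion $H_p(-r)=p(p-1)+2p(2-p)r-(p-1)(2-p)r^2$ has a negative $r^2$ coefficient when $1<p<2$, so ``every coefficient of $H_p(-r)$ is nonnegative'' does not hold. The fact you actually need is still true. For example, $H_p(-r)\geq p(p-1)-(p-1)(2-p)=2(p-1)^2>0$ for $0\leq r\leq1$. Equivalently, $H_p(-r)$ is concave in $r$ with endpoint values $p(p-1)$ and $H_p(-1)=2$.

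You use this positivity twice: for the monotonicity of $\Theta_p$, and for the final claim that ellipticity holds exactly where $H_p(r)>0$, so the repair is needed in both places. For the monotonicity there is also a shorter route, which the paper takes. Since $F_p'>0$, $\Theta_p=\sqrt{F_p}\,(1-r)^{-1}$ is a product of positive increasing functions, so no appeal to $H_p(-r)$ is needed there.
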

\begin{proof}
The inverse of $g$ has Hopf coefficient $1$. The inverse derivative formula therefore gives
\[
 g_{\bar z}=-r\,\overline{g_z},\qquad
 |g_z|^2=\frac{\eta(z)F_p(r)}{(1-r^2)^2}.
\]
Writing $k=(1+r)/(1-r)$, the real and imaginary parts of the first identity are
\[
 v_y=k u_x,\qquad v_x=-k u_y.
\]
Consequently $|\nabla v|=(1+r)|g_z|$ and
$\operatorname{div}(k^{-1}\nabla v)=0$. These are \eqref{forward-closed-scalar}. Since $F_p'>0$, the parameter $\Theta_p$ increases from $0$ to $\infty$, so this is a closed equation in $v$ and the prescribed weight.

For the principal matrix, differentiating the flux with respect to $\nabla v$ gives
\[
 A_p(t)I+tA_p'(t)\,e\otimes e,
 \qquad e=\frac{\nabla v}{|\nabla v|}.
\]
Derivatives of $\eta(z)$ contribute lower-order terms. With
$C_p=p+(2-p)r^2$ and $D_p=p(p-1)+(2-p)(p+1)r^2$, use
\[
 \frac{\Theta_p'}{\Theta_p}
 =\frac{D_p}{2rC_p}+\frac1{1-r},\qquad
 \frac{d}{dr}\log\frac{1-r}{1+r}=-\frac2{1-r^2}.
\]
The factorisations
\[
 (1+r)D_p-2rC_p=(1-r)H_p(r),\qquad
 (1-r)D_p+2rC_p=(1+r)H_p(-r)
\]
prove \eqref{forward-scalar-eigenvalues}. Here $H_p(-r)>0$ for $1<p\leq2$ and $0<r<1$.
\end{proof}

The radial flux is particularly transparent:
\[
 tA_p(t)=\frac{\sqrt{F_p(r)}}{1+r}.
\]
Its derivative with respect to $t$ is the second principal eigenvalue. Positivity of $A_p$ alone is therefore insufficient for nonlinear ellipticity. At $p=3/2$, $r=1/2$, the eigenvalues are $1/3$ and $-1/621$. In coordinates with $\nabla v$ parallel to the $y$-axis, the frozen principal part is proportional to $v_{xx}-v_{yy}/207$.

A prescribed smooth nonvanishing multiplier of $f$ cannot remove such characteristic directions: multiplication changes the principal symbol by a nonsingular factor, and its derivatives enter at lower order. The same holds for invertible point transformations of the dependent variables. A derivative-dependent substitution would require a separate equivalence argument. Freezing $A_p$ along a known locally quasiconformal solution does give a linear elliptic conductivity equation, but its coefficients contain the unknown gradient; the nonlinear regularity test remains \eqref{forward-scalar-eigenvalues}.

\paragraph{The quadratic minimal-graph equation.}
For $p=2$, $F_2(r)=r$ and
\[
 A_2(t)=(1+4t^2)^{-1/2}.
\]
Equation~\eqref{forward-closed-scalar} is then the minimal-graph equation for height $2v$ over the metric $\eta(z)|dz|^2$. More generally it is the Euler equation for the local scalar integral
\[
 \int\eta(z)\,\mathcal V_p\!\left(\frac{|\nabla v|}{\sqrt{\eta(z)}}\right)\area,
 \qquad \mathcal V_p'(t)=tA_p(t),\quad\mathcal V_p(0)=0.
\]
At $p=2$, $\mathcal V_2(t)=(\sqrt{1+4t^2}-1)/4$, one quarter of the graph-area excess density. This recovers the harmonic-map/minimal-graph correspondence \cite{WolfMinimal}. This scalar variational representation is only local.

\begin{remark}[The large-gradient structure of the forward equation]\label{forward-large-gradient-structure}
For each fixed $p_{\mathrm c}<p\leq2$, set $\ell_p=2(p-1)^2-1>0$. The two principal eigenvalues have the asymptotics
\begin{equation}\label{forward-large-gradient-eigenvalues}
 A_p(t)\sim\frac1{2t},\qquad
 A_p(t)+tA_p'(t)\sim\frac{\ell_p}{8t^3}
 \quad(t\to\infty).
\end{equation}
These are the same large-gradient orders as for the minimal-graph equation, with a positive radial coefficient precisely above $p_{\mathrm c}$. At the critical exponent the leading radial term vanishes, and instead
\begin{equation}\label{forward-critical-large-gradient}
 A_{p_{\mathrm c}}(t)+tA_{p_{\mathrm c}}'(t)
 \sim\frac1{8\sqrt2\,t^4}.
\end{equation}
To verify the constants, put $\delta=1-r$. Then $t\delta\to1$, $A_p(t)\sim\delta/2$, $H_p(-1)=2$, and $H_p(1)=2\ell_p$. At $p=p_{\mathrm c}$, one has $H_p(1-\delta)\sim\sqrt2\,\delta$. Substitution into \eqref{forward-scalar-eigenvalues} proves both assertions.

Proposition~\ref{lambda-uniform-interior-gradient} gives the corresponding height estimate uniformly for $\lambda_0\leq\lambda\leq1$.

\end{remark}
\subsection{Uniform scalar structure for the $\lambda$ approximation}\label{lambda-scalar-structure}
We record the scalar coefficients for the regular approximants constructed in Theorem~\ref{selected-rational-regularity}.

For $0<\lambda\leq1$, put $C_{p,\lambda}(r)=p\lambda+(2-p)r^2$ and use the $\lambda$-Hopf equations.
\[
 F_{p,\lambda}(r)
 =\frac12r^{p-1}\frac{(1-r^2)^2C_{p,\lambda}(r)}{(\lambda-r^2)^2},
 \qquad 0<r<\sqrt\lambda.
\]
At $\lambda=1$ this is $F_p$. Define
\begin{equation}\label{lambda-scalar-parameter}
 \Theta_{p,\lambda}(r)=\frac{\sqrt{F_{p,\lambda}(r)}}{1-r},
 \qquad
 A_{p,\lambda}\bigl(\Theta_{p,\lambda}(r)\bigr)
       =\frac{1-r}{1+r}.
\end{equation}
Each $\Theta_{p,\lambda}$ increases from zero to infinity. On a regular patch without Hopf zeros, take $\zeta_\lambda'^2=\Phi_{p,\lambda}$ and write $\zeta_\lambda\circ f_\lambda=u_\lambda+i v_\lambda$. The proof of Proposition~\ref{forward-scalar-reduction} gives
\begin{equation}\label{lambda-forward-scalar-equation}
 \operatorname{div}\!\left(A_{p,\lambda}(t_\lambda)
                              \nabla v_\lambda\right)=0,
 \qquad t_\lambda=\frac{|\nabla v_\lambda|}{\sqrt\eta}
                  =\Theta_{p,\lambda}(r_\lambda).
\end{equation}

For every $T<\infty$ and fixed $p_{\mathrm c}<p\leq2$,
\begin{equation}\label{lambda-scalar-operator-limit}
 A_{p,\lambda}\longrightarrow A_p
 \quad\hbox{in }C^1([0,T])\qquad(\lambda\uparrow1).
\end{equation}
Away from $t=0$, this follows by smoothly inverting the strictly increasing profile $\Theta_{p,\lambda}$. At zero, use
\[
 \Theta_{p,\lambda}^{-1}(t)
 =(2\lambda/p)^{1/(p-1)}t^{2/(p-1)}
   \bigl(1+O_p(t^{2/(p-1)})\bigr),
\]
with the corresponding differentiated expansion uniform for $\lambda$ near one. Thus the second-order operators converge locally uniformly in the point, gradient and Hessian variables. The bounds below give a positive ellipticity constant independent of $\lambda$ on every such bounded gradient range. The limiting equation is \eqref{forward-closed-scalar}; at $p=2$ it is explicitly
\begin{equation}\label{quadratic-forward-scalar-limit}
 \operatorname{div}\left(
 \frac{\nabla v}{\sqrt{1+4|\nabla v|^2/\eta}}\right)=0.
\end{equation}
This scalar equation for a Hopf-coordinate component of the forward map is consistent with hyperbolic harmonicity of its inverse.

\begin{proposition}[Scalar ellipticity bounds independent of the barrier parameter]\label{lambda-uniform-scalar-structure}
Let $p_{\mathrm c}<p\leq2$, $\ell_p=2(p-1)^2-1$, and $0<\lambda\leq1$. For every $t\geq0$, set
\[
 B_{p,\lambda}(t)=A_{p,\lambda}(t)+tA_{p,\lambda}'(t),
 \qquad \vartheta_{p,\lambda}(t)=\frac{B_{p,\lambda}(t)}{A_{p,\lambda}(t)}.
\]
Then
\begin{align}\label{lambda-uniform-scalar-bounds}
 \frac1{\sqrt{1+4t^2}}&\leq A_{p,\lambda}(t)\leq1,\notag\\
 \frac{\ell_p}{1+4t^2}&\leq\vartheta_{p,\lambda}(t)\leq1,\\
 \frac{\ell_p}{(1+4t^2)^{3/2}}&\leq B_{p,\lambda}(t)\leq A_{p,\lambda}(t).
 \notag
\end{align}
All constants are independent of $\lambda$. At $p=p_{\mathrm c}$, the first line still holds, and with $c_*=p_{\mathrm c}(p_{\mathrm c}-1)/2$ the other lower bounds can be replaced by
\begin{equation}\label{lambda-critical-scalar-bounds}
 \vartheta_{p_{\mathrm c},\lambda}(t)
       \geq\frac{c_*}{(1+4t^2)^{3/2}},\qquad
 B_{p_{\mathrm c},\lambda}(t)
       \geq\frac{c_*}{(1+4t^2)^2}.
\end{equation}
\end{proposition}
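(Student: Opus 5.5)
The plan is to work throughout in the parameter $r\in(0,\sqrt\lambda)$ rather than in $t$. Set $t=\Theta_{p,\lambda}(r)$ and $A_{p,\lambda}(t)=(1-r)/(1+r)$. All three quantities are then explicit functions of $r$:
\[
 \vartheta_{p,\lambda}
 =1+\frac{t A_{p,\lambda}'}{A_{p,\lambda}}
 =1-\frac{2}{(1-r^2)\,(\log\Theta_{p,\lambda})'(r)}
 =\frac{N_\lambda}{N_\lambda+2},
 \qquad
 N_\lambda=(1-r^2)(\log\Theta_{p,\lambda})'-2 .
\]
The upper bounds are immediate. First, $A_{p,\lambda}\leq1$ because $r\geq0$. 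Next, $\vartheta_{p,\lambda}\leq1$ because $\Theta_{p,\lambda}$ increases while $(1-r)/(1+r)$ decreases. Finally $B_{p,\lambda}=\vartheta_{p,\lambda}A_{p,\lambda}\leq A_{p,\lambda}$.

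\emph{Lower bound for $A$.} The inequality $(1-r)/(1+r)\geq(1+4t^2)^{-1/2}$ is equivalent to $4F_{p,\lambda}(r)\geq(1+r)^2-(1-r)^2=4r$, that is, $F_{p,\lambda}(r)\geq r$. I would prove this in two steps.
\begin{itemize}
\item $F_{p,\lambda}\geq F_p$ on $(0,\sqrt\lambda)$. Indeed $\partial_\lambda\bigl[C_{p,\lambda}/(\lambda-r^2)^2\bigr]$ has the sign of $p(\lambda-r^2)-2C_{p,\lambda}<0$, so the expression decreases in $\lambda$ and is smallest at $\lambda=1$.
\item $F_p(r)\geq r$. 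This is the weighted AM--GM inequality $\tfrac p2\cdot1+\tfrac{2-p}2\,r^2\geq r^{2-p}$.
\end{itemize}
The same two steps show that $t_\lambda(r)\geq t_1(r)$ at each fixed $r$.

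\emph{Lower bound for $\vartheta$.} I would compare with $\lambda=1$ at the same $r$. We have
\[
 (\log F_{p,\lambda})'=\frac{p-1}r-\frac{4r}{1-r^2}+\frac{2(2-p)r}{C_{p,\lambda}}+\frac{4r}{\lambda-r^2}.
\]
Compared with $\lambda=1$, the last two terms can only be larger, since $C_{p,\lambda}\leq C_p$ and $\lambda-r^2\leq1-r^2$. Hence $N_\lambda\geq N_1$. Because $x\mapsto x/(x+2)$ is increasing, this gives $\vartheta_{p,\lambda}(r)\geq\vartheta_{p,1}(r)$.

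At $\lambda=1$, \eqref{forward-scalar-eigenvalues} gives $\vartheta_{p,1}=((1-r)/(1+r))^2H_p(r)/H_p(-r)$. Two facts control this ratio.
\begin{itemize}
\item $H_p$ is decreasing, so $H_p(r)\geq H_p(1)=2\ell_p$.
\item $r\mapsto H_p(-r)$ is increasing on $[0,1]$, since its derivative is at least $2(2-p)>0$. Hence $H_p(-r)\leq H_p(-1)=2$.
\end{itemize}
Therefore $\vartheta_{p,\lambda}(r)\geq\ell_p((1-r)/(1+r))^2$. The $A$-bound then gives $\ell_p/(1+4t_\lambda^2)$, with $t=t_\lambda(r)$ directly. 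The bound on $B$ follows by multiplying by $A_{p,\lambda}$.

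\emph{Critical exponent.} At $p=p_{\mathrm c}$ we have $H_p(1)=0$, so $H_p$ factors as
\[
 H_p(r)=(p-1)(2-p)(1-r)\Bigl(r+\tfrac{p}{2-p}\Bigr)\geq p(p-1)(1-r).
\]
Using $H_p(-r)\leq2$ and $1-r\geq(1-r)/(1+r)$, this gives $\vartheta\geq c_*((1-r)/(1+r))^3\geq c_*(1+4t^2)^{-3/2}$. Multiplying by $A$ gives the bound on $B$ in \eqref{lambda-critical-scalar-bounds}.

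I expect the main obstacle to be the bookkeeping that makes all constants independent of $\lambda$. Specifically, one must check that every $\lambda$-dependent term in $(\log\Theta_{p,\lambda})'$ has a favourable sign, and that passing from $t_1$ to $t_\lambda$ is harmless. The latter holds because the right-hand sides decrease in $t$.
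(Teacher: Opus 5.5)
Your proposal is correct and follows the paper's proof essentially step for step. Like the paper, you compare with $\lambda=1$ at the same $r$ using $F_{p,\lambda}\ge F_p$ and $(\log F_{p,\lambda})'\ge(\log F_p)'$, convert $F_p(r)\ge r$ into $1+4t^2\ge A^{-2}$, invoke $H_p(r)\ge 2\ell_p$ and $H_p(-r)\le 2$, and at $p=p_{\mathrm c}$ use the factorisation of $H_{p_{\mathrm c}}$ together with $1-r\ge A$. The only variations are minor. You prove $F_p(r)\ge r$ by weighted AM--GM rather than by monotonicity of $F_p(r)/r$. At $p=2$ the derivative of $H_p(-r)$ is $0$ rather than positive, which is harmless since $H_2\equiv2$. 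The value $t=0$ follows by continuity, as in the paper.
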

\begin{proof}
For fixed $r$ in the allowed branch,
\[
 \partial_\lambda\log F_{p,\lambda}(r)
 =\frac p{C_{p,\lambda}(r)}-\frac2{\lambda-r^2}<0.
\]
Hence $F_{p,\lambda}\geq F_p$. Moreover,
\begin{align*}
 \frac{F_{p,\lambda}'}{F_{p,\lambda}}-\frac{F_p'}{F_p}
 &=2(2-p)r\left(\frac1{C_{p,\lambda}}-\frac1{C_{p,1}}\right)
   +\frac{4r(1-\lambda)}{(\lambda-r^2)(1-r^2)}\geq0.
\end{align*}
In particular $F_{p,\lambda}$ and $\Theta_{p,\lambda}$ are strictly increasing. Direct differentiation of \eqref{lambda-scalar-parameter} gives
\begin{equation}\label{lambda-scalar-ratio-comparison}
 \vartheta_{p,\lambda}(\Theta_{p,\lambda}(r))
 =1-\frac4{(1-r^2)F_{p,\lambda}'(r)/F_{p,\lambda}(r)+2(1+r)}
 \geq\left(\frac{1-r}{1+r}\right)^2\frac{H_p(r)}{H_p(-r)}.
\end{equation}
Thus the regularisation improves the normalised radial ellipticity at a given $r$.

For $p_{\mathrm c}\leq p\leq2$, $F_p(r)\geq r$. For example,
\[
 \frac d{dr}\left(\frac{F_p(r)}r\right)
 =-\frac{p(2-p)}2r^{p-3}(1-r^2)\leq0,
 \qquad F_p(1)=1.
\]
Consequently, at $t=\Theta_{p,\lambda}(r)$ and $A=(1-r)/(1+r)$,
\[
 1+4t^2\geq1+\frac{4r}{(1-r)^2}=A^{-2}.
\]
Also $H_p(r)\geq H_p(1)=2\ell_p$ and $H_p(-r)\leq H_p(-1)=2$. Equation~\eqref{lambda-scalar-ratio-comparison} therefore gives
$\vartheta_{p,\lambda}\geq\ell_p A^2$. These inequalities prove all the lower bounds in \eqref{lambda-uniform-scalar-bounds}. The upper bounds follow because $A$ decreases with $t$ and our radial ratio is at most one.

At the critical exponent,
\[
 H_{p_{\mathrm c}}(r)
 =(1-r)\left[p_{\mathrm c}(p_{\mathrm c}-1)
       +(p_{\mathrm c}-1)(2-p_{\mathrm c})r\right].
\]
Using $1-r=A(1+r)\geq A$ in the same comparison gives
$\vartheta_{p_{\mathrm c},\lambda}\geq c_*A^3$ and
$B_{p_{\mathrm c},\lambda}\geq c_*A^4$, which proves
\eqref{lambda-critical-scalar-bounds}. The values at $t=0$ follow by continuity.
\end{proof}

The weight terms also have a uniform form. Put $e=\nabla v_\lambda/|\nabla v_\lambda|$, $L=\log\eta$ and $\vartheta=\vartheta_{p,\lambda}(t_\lambda)$. Dividing \eqref{lambda-forward-scalar-equation} by $A_{p,\lambda}$ gives
\begin{equation}\label{lambda-normalised-scalar-equation}
 \bigl[I-(1-\vartheta)e\otimes e\bigr]:D^2v_\lambda
 +\frac{1-\vartheta}{2}\nabla L\cdot\nabla v_\lambda=0.
\end{equation}
The principal eigenvalues are $1$ and $\vartheta$, and the drift coefficient is bounded by $|\nabla L|/2$, independently of $\lambda$, on a fixed compact source patch. At zero gradient the expression has its continuous interpretation. Thus the coefficients retain uniform minimal-graph-type lower bounds above $p_{\mathrm c}$, even though the allowed distortion approaches one.

The uniform structural bounds are now proved. Proposition~\ref{lambda-uniform-interior-gradient} obtains $t_\lambda\leq T$ on smaller common patches from bounded scalar height oscillations. This makes the scalar equations uniformly elliptic there, independently of $\lambda$, and gives us the explicit quasiconformal conversion below. We also obtain the local gradient bounds used in our subsequent limiting argument.

\begin{proposition}[An exact separation of the scalar quadratic regularisation]\label{lambda-scalar-viscosity}
Fix $p_{\mathrm c}<p\leq2$ and $0<\lambda_0\leq\lambda\leq1$. Put
\[
 s=\sqrt\lambda,\qquad x=r/s,\qquad
 \varepsilon_\lambda=\frac{1-s}{1+s},\qquad
 \tau=\Theta_p(x),\qquad
 \mathfrak a_{p,\lambda}(t)=tA_{p,\lambda}(t).
\]
Then the exact identities
\begin{align}\label{lambda-scalar-flux-splitting}
 t&=s^{(p-3)/2}\frac{1+sx}{1+x}\,\tau,\notag\\
 \mathfrak a_{p,\lambda}(t)
   &=\varepsilon_\lambda t+\widetilde{\mathfrak a}_{p,\lambda}(t),
 &\widetilde{\mathfrak a}_{p,\lambda}(t)
   &=\frac{2s^{(p-1)/2}}{1+s}\,\mathfrak a_{p,1}(\tau)
     =\frac{2s^{(p-1)/2}}{1+s}\frac{\sqrt{F_p(x)}}{1+x}
\end{align}
hold. The residual flux is increasing and bounded, with
\[
 \widetilde{\mathfrak a}_{p,\lambda}(0)=0,\qquad
 \lim_{t\to\infty}\widetilde{\mathfrak a}_{p,\lambda}(t)
       =\frac{s^{(p-1)/2}}{1+s}.
\]
Let
\[
 \widetilde\varphi_{p,\lambda}(t)
      =\int_0^t\widetilde{\mathfrak a}_{p,\lambda}(q)\,dq,
 \qquad
 \widetilde{\mathcal F}_{p,\lambda}(Q)
      =\widetilde\varphi_{p,\lambda}(|Q|),\quad Q\in\IR^2.
\]
There are $c,C>0$, depending only on $p,\lambda_0$, such that, as quadratic forms,
\begin{equation}\label{lambda-residual-area-comparison}
 cD^2\sqrt{1+|Q|^2}
 \ \leq D^2\widetilde{\mathcal F}_{p,\lambda}(Q)
 \ \leq CD^2\sqrt{1+|Q|^2}.
\end{equation}
In particular the residual scalar energy has linear growth and uniform
minimal-graph Hessian bounds. The full scalar energy is exactly
\begin{equation}\label{lambda-scalar-energy-splitting}
 \int_\Omega\left\{
 \frac{\varepsilon_\lambda}{2}|\nabla_\eta v|^2+
 \widetilde\varphi_{p,\lambda}(|\nabla_\eta v|)
 \right\}\,dA_\eta,
 \qquad \varepsilon_\lambda\sim(1-\lambda)/4.
\end{equation}
\end{proposition}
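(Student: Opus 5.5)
The plan is to verify the identities by direct algebra in the variable $x=r/s$, then read off the monotonicity and limits, then compare Hessians of radial functions using their two eigenvalues, and finally integrate to get the energy splitting. Throughout I rely on the factorisations $\lambda-r^2=s^2(1-x^2)$ and $C_{p,\lambda}(r)=s^2C_p(x)$ with $C_p(x)=p+(2-p)x^2$. These give
\[
 F_{p,\lambda}(r)=s^{p-3}\frac{(1-r^2)^2}{(1-x^2)^2}F_p(x).
\]

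\textbf{Step 1: the two exact identities.} Insert the expression for $F_{p,\lambda}$ into $t=\sqrt{F_{p,\lambda}(r)}/(1-r)$ and use $1-r^2=(1-sx)(1+sx)$. This gives
\[
 t=s^{(p-3)/2}\,\frac{1+sx}{1-x^2}\sqrt{F_p(x)}=s^{(p-3)/2}\,\frac{1+sx}{1+x}\,\tau,
\]
since $\tau=\sqrt{F_p(x)}/(1-x)$. This is the first identity. Next, $\mathfrak a_{p,\lambda}(t)=tA_{p,\lambda}(t)$ with $A_{p,\lambda}=(1-r)/(1+r)$, so
\[
 \mathfrak a_{p,\lambda}=s^{(p-3)/2}\sqrt{F_p(x)}\,\frac{(1-sx)(1+sx)}{(1+sx)(1-x^2)}.
\]
I then subtract $\varepsilon_\lambda t$ and simplify, which reduces to the scalar identity
\[
 \frac{1-sx}{1-x^2}-\frac{1-s}{1+s}\,\frac{1+sx}{1-x^2}=\frac{2s}{(1+s)(1+x)}.
\]
Its numerator is $2s(1-x)/(1+s)$, so the identity holds. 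Multiplying by $s^{(p-3)/2}\sqrt{F_p(x)}$ gives $\frac{2s^{(p-1)/2}}{1+s}\frac{\sqrt{F_p(x)}}{1+x}=\frac{2s^{(p-1)/2}}{1+s}\,\mathfrak a_{p,1}(\tau)$.

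\textbf{Step 2: monotonicity and limits.} The map $t\mapsto x$ is increasing, because $\Theta_{p,\lambda}$ is increasing. The function $\sqrt{F_p(x)}/(1+x)=\sqrt{Q_p(x)}/2$ is increasing for $p>p_{\mathrm c}$ by Lemma~\ref{flux-dictionary}. It equals $0$ at $x=0$ and $1/2$ at $x=1$, which gives the stated value $0$ at $t=0$ and the limit $s^{(p-1)/2}/(1+s)$.

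\textbf{Step 3: the Hessian comparison.} For a radial function $\varphi(|Q|)$ the Hessian has eigenvalues $\varphi'(t)/t$ tangentially and $\varphi''(t)$ radially. For $\sqrt{1+t^2}$ these are $(1+t^2)^{-1/2}$ and $(1+t^2)^{-3/2}$. So it suffices to show
\[
 \widetilde{\mathfrak a}(t)/t\asymp(1+t^2)^{-1/2},\qquad \widetilde{\mathfrak a}'(t)\asymp(1+t^2)^{-3/2},
\]
uniformly for $\lambda\in[\lambda_0,1]$.

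The tangential bound comes from boundedness of $\widetilde{\mathfrak a}$ together with its small-$t$ behaviour. Near zero, $F_p(x)\sim\frac p2x^{p-1}$ and $t\sim c\,x^{(p-1)/2}$, so $\widetilde{\mathfrak a}\asymp t$ there. I note that $\widetilde{\mathfrak a}=\mathfrak a_{p,\lambda}-\varepsilon t$ is linear at zero. Since $\widetilde{\mathfrak a}$ is increasing, positive and bounded above and below for $t\geq1$, the bound holds for all $t$.

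The radial bound is the main obstacle. I would compute $d\widetilde{\mathfrak a}/dt=(d\widetilde{\mathfrak a}/dx)(dx/dt)$ and analyse the two regimes separately.

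Near $x=1$, write $\delta=1-x$. Then $d(\sqrt{Q_p}/2)/dx\asymp\ell_p\delta$ by \eqref{flux-log-derivative}, using $(\log Q_p)''(1)=-\ell_p/2<0$ strictly above $p_{\mathrm c}$. Also $\tau\sim1/\delta$, so $t\asymp1/\delta$ and $dt/dx\asymp\delta^{-2}$. Hence $\widetilde{\mathfrak a}'\asymp\delta^3\asymp t^{-3}$, as required.

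For small $x$ both factors are explicit power laws, giving $\widetilde{\mathfrak a}'\to$ a positive constant.

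In between, everything is a positive continuous function of $(x,s)$ on a compact set with $s\in[\sqrt{\lambda_0},1]$. Here I need $\partial_x(1+sx)\tau/(1+x)>0$, which follows from Step 2, and positivity of $H_p$ on $[0,1]$.

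Uniformity in $s$ is what I expect to be delicate: the factor $(1+sx)/(1+x)$ stays in $[\sqrt{\lambda_0},1]$ and its derivative is bounded, so the asymptotic constants remain uniform.

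\textbf{Step 4: the energy splitting.} The scalar equation \eqref{lambda-forward-scalar-equation} is the Euler equation of $\int\eta\,\mathcal V(|\nabla_\eta v|)$ with $\mathcal V'=\mathfrak a_{p,\lambda}$ and $\nabla_\eta v=\nabla v/\sqrt\eta$, as in the $p=2$ discussion. Integrating the splitting of Step 1 gives $\mathcal V=\varepsilon t^2/2+\widetilde\varphi$, which is \eqref{lambda-scalar-energy-splitting}. Finally, $\varepsilon_\lambda=(1-s)/(1+s)\sim(1-\lambda)/4$ as $\lambda\uparrow1$.
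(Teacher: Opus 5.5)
Your proposal is correct, and Steps 1, 2 and 4 are the paper's own algebra. The paper subtracts $\varepsilon_\lambda t$ while still in the variable $r$, cancelling the factor $s-r$, and substitutes $r=sx$ afterwards; this is the same computation.

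The real difference is in the Hessian comparison.

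\emph{The paper's route.} It writes $t=d_s(x)\tau$ with $d_s(x)=s^{(p-3)/2}(1+sx)/(1+x)$. From $d_s'\le0$ and $0\le\tau/\tau'\le1-x$ it gets the explicit bi-Lipschitz bounds $s^{(p-1)/2}\le dt/d\tau\le s^{(p-3)/2}$ and $s^{(p-1)/2}\le t/\tau\le s^{(p-3)/2}$. Both eigenvalues then follow from
$\widetilde{\mathfrak a}_{p,\lambda}(t)/t=\gamma_sA_p(\tau)\,\tau/t$ and $\widetilde{\mathfrak a}_{p,\lambda}'(t)=\gamma_sB_p(\tau)\,d\tau/dt$, where $\gamma_s=2s^{(p-1)/2}/(1+s)$. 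It combines these with the unregularised bounds $A_p(\tau)\asymp(1+\tau^2)^{-1/2}$ and $B_p(\tau)\asymp(1+\tau^2)^{-3/2}$ from Proposition~\ref{lambda-uniform-scalar-structure}. This gives constants explicit in $s$, with no compactness argument and no endpoint expansions.

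\emph{Your route.} You redo the analysis directly in $(x,s)$: power laws as $x\to0$, the expansions $(\sqrt{Q_p})'\asymp\ell_p(1-x)$ and $dt/dx\asymp(1-x)^{-2}$ as $x\to1$, and continuity on the compact middle region. This is self-contained and does not need the earlier $A_p,B_p$ bounds. The cost is that your constants are not explicit, and you must check separately that each endpoint expansion is uniform in $s\in[\sqrt{\lambda_0},1]$. You rightly flag that uniformity as the delicate point; the paper's factorisation settles it in one line.

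Two small simplifications are available.
\begin{itemize}
\item In the middle region you need strict positivity of $dt/dx$, not just monotonicity, since strict monotonicity does not exclude a vanishing derivative. This is immediate from $(\log\Theta_{p,\lambda})'\ge1/(1-r)$.
\item Your Step 1 already gives the exact identity $\widetilde{\mathfrak a}_{p,\lambda}(t)/t=\frac{2s}{1+s}\,\frac{1-x}{1+sx}$. Together with $t(1-x)=s^{(p-3)/2}\frac{1+sx}{1+x}\sqrt{F_p(x)}\le s^{(p-3)/2}$, this makes the tangential comparison essentially immediate.
\end{itemize}
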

\begin{proof}
The parameter formula gives
\[
 t=\frac{(1+r)\sqrt{r^{p-1}C_{p,\lambda}(r)/2}}{s^2-r^2}.
\]
Subtracting $\varepsilon_\lambda t$ from $t(1-r)/(1+r)$ cancels the factor
$s-r$ and gives
\[
 t\left(\frac{1-r}{1+r}-\frac{1-s}{1+s}\right)
 =\frac{2\sqrt{r^{p-1}C_{p,\lambda}(r)/2}}{(1+s)(s+r)}.
\]
Substitution of $r=sx$ proves \eqref{lambda-scalar-flux-splitting}.
To check the bounds, write $t=d_s(x)\tau$, with
$d_s(x)=s^{(p-3)/2}(1+sx)/(1+x)$. Since $F_p'>0$,
$0\leq\tau/\tau'\leq1-x$. Therefore
\[
 s^{(p-1)/2}\leq
 \frac{dt}{d\tau}=d_s+\frac{\tau}{\tau'}d_s'
 \leq s^{(p-3)/2},
 \qquad
 s^{(p-1)/2}\leq\frac t\tau\leq s^{(p-3)/2}.
\]
The values at zero are interpreted by limits. The unregularised tangential
and radial eigenvalues satisfy
\[
 A_p(\tau)\asymp_p(1+\tau^2)^{-1/2},\qquad
 B_p(\tau)\asymp_p(1+\tau^2)^{-3/2}.
\]
Indeed the lower bounds were proved above, while
$B_p\leq A_p^3$ follows from $H_p(x)\leq H_p(-x)$;
also $(1+4\tau^2)A_p(\tau)^2\leq5$, since $F_p(x)\leq1$.
Differentiating the residual flux in
\eqref{lambda-scalar-flux-splitting} proves the same comparisons uniformly
for its tangential and radial eigenvalues. These are precisely
\eqref{lambda-residual-area-comparison}. Integrating the flux proves
\eqref{lambda-scalar-energy-splitting}; its Euler equation is
\eqref{lambda-forward-scalar-equation}.
\end{proof}

This identity separates the positive quadratic term from the
linear-growth part. Lemma~\ref{lambda-uniform-gradient-tail} uses it to
verify the tail condition for an adapted maximum-principle test.
Proposition~\ref{lambda-uniform-interior-gradient} then gives a uniform
interior gradient estimate on the fixed hyperbolic metric. Common scalar
domains and height bounds remain inputs when this estimate is applied to
the mappings. At $p=p_{\mathrm c}$ the flux splitting still holds,
but the residual radial eigenvalue has order $t^{-4}$, so the positive
lower comparison in \eqref{lambda-residual-area-comparison} fails.
\subsection{Local gradient estimates and quasiconformal bounds}\label{scalar-gradient-qc}
The relation between the scalar gradient and dilatation is particularly direct: the scalar conductivity is the reciprocal of the operator-norm distortion. Throughout this subsection gradients with a subscript $\eta$ are taken in the fixed metric $\eta|dz|^2$ of curvature $-4$.

\begin{proposition}[Converting a scalar gradient bound into dilatation]\label{scalar-gradient-to-qc}
On a regular natural-coordinate patch, let $1<p\leq2$ and $0<\lambda\leq1$. Put $t_\lambda=|\nabla_\eta v_\lambda|$ and $K_\lambda=(1+r_\lambda)/(1-r_\lambda)$. Then
\begin{equation}\label{scalar-exact-qc-conversion}
 K_\lambda=\frac1{A_{p,\lambda}(t_\lambda)}
 \leq\frac{1+\Theta_p^{-1}(t_\lambda)}{1-\Theta_p^{-1}(t_\lambda)}
 \leq\sqrt{1+4t_\lambda^2}.
\end{equation}
In particular $t_\lambda\leq T$ gives a quasiconformal bound independent of $\lambda$. For the unregularised equation the exact relation is
\begin{equation}\label{scalar-p-dependent-qc}
 4t^2=\bigl[K^2+2(p-1)K+1\bigr]
            \left(\frac{K-1}{K+1}\right)^{p-1}.
\end{equation}
At $p=2$ this reduces to $K=\sqrt{1+4t^2}$.
\end{proposition}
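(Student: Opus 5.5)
The first equality $K_\lambda=1/A_{p,\lambda}(t_\lambda)$ is a restatement of definitions. By \eqref{lambda-scalar-parameter} we have $A_{p,\lambda}(\Theta_{p,\lambda}(r))=(1-r)/(1+r)$. The proof of Proposition~\ref{forward-scalar-reduction}, run with the $\lambda$-Hopf data, gives $t_\lambda=\Theta_{p,\lambda}(r_\lambda)$ on a regular patch. Taking reciprocals yields $K_\lambda$. The plan for the remaining inequalities is to compare the two profiles $\Theta_{p,\lambda}$ and $\Theta_p$, and then to compare $\Theta_p$ with the quadratic profile.

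For the middle inequality I would use the monotonicity in $\lambda$ established in the proof of Proposition~\ref{lambda-uniform-scalar-structure}. There $\partial_\lambda\log F_{p,\lambda}<0$, so $F_{p,\lambda}\geq F_p$ and hence $\Theta_{p,\lambda}(r)\geq\Theta_p(r)$ for all admissible $r$. Both profiles are strictly increasing. It follows that $r_\lambda=\Theta_{p,\lambda}^{-1}(t_\lambda)\leq\Theta_p^{-1}(t_\lambda)$, and the map $r\mapsto(1+r)/(1-r)$ is increasing, which gives the middle inequality.

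For the last inequality, set $x=\Theta_p^{-1}(t)$, so that $t^2=F_p(x)/(1-x)^2$. The target $(1+x)/(1-x)\leq\sqrt{1+4t^2}$ is equivalent to $(1+x)^2-(1-x)^2\leq4F_p(x)$, that is, $x\leq F_p(x)$. This is the main check. It holds for $p_{\mathrm c}\leq p\leq2$ by the computation $F_p(r)\geq r$ in the proof of Proposition~\ref{lambda-uniform-scalar-structure}, but I would verify it for all $1<p\leq2$. Since $F_p(x)/x=\tfrac12x^{p-2}[p+(2-p)x^2]$, the derivative of this quotient is $-\tfrac{p(2-p)}2x^{p-3}(1-x^2)\leq0$. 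That derivative formula does not use $p\geq p_{\mathrm c}$, and $F_p(1)=1$, so $F_p(x)\geq x$ on $(0,1]$. This gives the inequality. A bound $t_\lambda\leq T$ then gives $K_\lambda\leq\sqrt{1+4T^2}$, independently of $\lambda$.

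Finally, for \eqref{scalar-p-dependent-qc} I would use $r=(K-1)/(K+1)$ with $1-r=2/(K+1)$ and $1+r=2K/(K+1)$. Substituting into $4t^2=4F_p(r)/(1-r)^2=2r^{p-1}[p+(2-p)r^2](K+1)^2/4$ gives
\[
\tfrac12 r^{p-1}\bigl[p(K+1)^2+(2-p)(K-1)^2\bigr]=r^{p-1}\bigl[K^2+2(p-1)K+1\bigr].
\]
At $p=2$ this becomes $4t^2=K^2-1$. I expect no real obstacle. The only care point is that the inequality $F_p\geq x$ is needed below $p_{\mathrm c}$, and the derivative computation above supplies it.
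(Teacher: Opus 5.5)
Your proof is correct and follows the paper's argument. The equality is the definition of $A_{p,\lambda}$ together with $t_\lambda=\Theta_{p,\lambda}(r_\lambda)$, and both inequalities rest on $F_{p,\lambda}\geq F_p\geq r$. The paper chains these directly into $t_\lambda^2\geq r_\lambda/(1-r_\lambda)^2=(K_\lambda^2-1)/4$, instead of evaluating at $x=\Theta_p^{-1}(t_\lambda)$ as you do, and then makes the same substitution $r=(K-1)/(K+1)$.

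You cite the monotonicity in $\lambda$ from a proposition stated only for $p\geq p_{\mathrm c}$. This is harmless: like $F_p\geq r$, the sign computation $\partial_\lambda\log F_{p,\lambda}=p/C_{p,\lambda}-2/(\lambda-r^2)<0$ does not use that restriction, so it holds for all $1<p\leq2$.
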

\begin{proof}
The first identity is the definition of $A_{p,\lambda}$. Since $F_{p,\lambda}\geq F_p$, one has $r_\lambda\leq\Theta_p^{-1}(t_\lambda)$. Also $F_p(r)\geq r$ for $1<p\leq2$, so
\[
 t_\lambda^2\geq\frac{r_\lambda}{(1-r_\lambda)^2}
                 =\frac{K_\lambda^2-1}{4}.
\]
Substitution of $r=(K-1)/(K+1)$ into $t^2=F_p(r)/(1-r)^2$ proves \eqref{scalar-p-dependent-qc}.
\end{proof}

The height parameter on a forward patch $\Omega$ is
\[
 M=\operatorname{osc}_{\Omega}\operatorname{Im}(\zeta\circ f),
 \qquad \zeta(w)=\int^w\sqrt{\Phi_p(\xi)}\,d\xi.
\]
It is the vertical width of $\zeta(f(\Omega))$, unchanged by the sign or additive constant of the natural coordinate. At $p=2$, $s=2\operatorname{Im}(\zeta\circ f)$ defines a local minimal graph in $\mathbb H^2\times\mathbb R$ and
$K_f=\sqrt{1+|\nabla_\eta s|^2}$ \cite{WolfMinimal}.

\subsubsection{A gradient test adapted to the positive quadratic term}
Write $\mathfrak a(t)=tA_{p,\lambda}(t)$ and
$\vartheta(t)=t\mathfrak a'(t)/\mathfrak a(t)$. The gradient test uses

\begin{equation}\label{lambda-gradient-tail-condition}
 \mathcal P(t):=\frac{t\vartheta'(t)}{\vartheta(t)}
                    +3-\vartheta(t)
     =4-2\frac{t\mathfrak a'(t)}{\mathfrak a(t)}
                    +\frac{t\mathfrak a''(t)}{\mathfrak a'(t)}.
\end{equation}

\begin{lemma}[A uniform tail condition for the $\lambda$ scalar flux]
\label{lambda-uniform-gradient-tail}
Fix $p_{\mathrm c}<p\leq2$ and $0<\lambda_0<1$. There is
$T=T(p,\lambda_0)\geq1$ such that
$\mathcal P_{p,\lambda}(t)\geq0$ for all
$\lambda_0\leq\lambda\leq1$ and $t\geq T$.
\end{lemma}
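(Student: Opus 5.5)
The plan is to use the exact flux splitting of Proposition~\ref{lambda-scalar-viscosity},
\[
 \mathfrak a(t)=\varepsilon t+\widetilde{\mathfrak a}(t),\qquad
 \varepsilon=\varepsilon_\lambda\in[0,\varepsilon_{\lambda_0}],
\]
and to write $\mathcal P$ directly in terms of $\widetilde{\mathfrak a}$. Since $\mathfrak a'=\varepsilon+\widetilde{\mathfrak a}'$ and $\mathfrak a''=\widetilde{\mathfrak a}''$, introduce the two weights
\[
 u=\frac{\varepsilon t}{\varepsilon t+\widetilde{\mathfrak a}},\qquad
 w=\frac{\varepsilon}{\varepsilon+\widetilde{\mathfrak a}'},
\]
both in $[0,1]$. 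Then
\[
 \mathcal P=4-2u-2(1-u)\frac{t\widetilde{\mathfrak a}'}{\widetilde{\mathfrak a}}
 +(1-w)\frac{t\widetilde{\mathfrak a}''}{\widetilde{\mathfrak a}'}.
\]
The idea is that the residual flux behaves, uniformly in $\lambda$, like the unregularised minimal-graph-type flux.

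\textbf{Step 1 (uniform tail asymptotics of $\widetilde{\mathfrak a}$).} I would show that there are constants $0<L_-\leq L_+$ and $0<\kappa_-\leq\kappa_+$, depending only on $p$ and $\lambda_0$, such that, uniformly for $\lambda_0\leq\lambda\leq1$ as $t\to\infty$,
\[
 L_-\leq\widetilde{\mathfrak a}\leq L_+,\qquad
 \kappa_-t^{-3}\leq\widetilde{\mathfrak a}'\leq\kappa_+t^{-3},\qquad
 \frac{t\widetilde{\mathfrak a}''}{\widetilde{\mathfrak a}'}\longrightarrow-3 .
\]
The argument uses the explicit parametrisation in \eqref{lambda-scalar-flux-splitting}. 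There $\widetilde{\mathfrak a}=\tfrac{2s^{(p-1)/2}}{1+s}\sqrt{F_p(x)}/(1+x)$ and $t=d_s(x)\Theta_p(x)$. As $t\to\infty$ we have $x\to1$, and $\delta=1-x$ serves as a parameter. All quantities are real-analytic functions of $(s,\delta)$ on $[\sqrt{\lambda_0},1]\times[0,\delta_0]$ after factoring out the explicit powers of $\delta$. Concretely, $\Theta_p\sim1/\delta$, $d_s(1)=s^{(p-3)/2}(1+s)/2$, and $\widetilde{\mathfrak a}$ is analytic in $\delta$ with $\partial_\delta$ of order $\delta^2$ at $\delta=0$. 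The coefficient of that order is a positive multiple of $\ell_p$, coming from $H_p(1)=2\ell_p>0$, exactly as in Remark~\ref{forward-large-gradient-structure}. Converting $\delta$-derivatives to $t$-derivatives through $dt/d\delta\asymp-\delta^{-2}$, again analytic after factoring, gives all three statements with errors $O(\delta)$ uniform in $s$. In particular $t\widetilde{\mathfrak a}'/\widetilde{\mathfrak a}=O(t^{-2})$.

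\textbf{Step 2 (sign).} By Step 1,
\[
 \mathcal P\geq4-2u-3(1-w)-o(1)=1-2u+3w-o(1),
\]
with $o(1)$ uniform in $\lambda$. It remains to compare $w$ with $u$:
\[
 \frac{w}{1-w}=\frac{\varepsilon}{\widetilde{\mathfrak a}'}\geq\frac{\varepsilon t^3}{\kappa_+},\qquad
 \frac{u}{1-u}=\frac{\varepsilon t}{\widetilde{\mathfrak a}}\leq\frac{\varepsilon t}{L_-}.
\]
Hence $w\geq u$ once $t^2\geq\kappa_+/L_-$, and then $\mathcal P\geq1+u-o(1)\geq\tfrac12$ for $t\geq T$. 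This covers $\varepsilon=0$ (where $u=w=0$ and $\mathcal P\to1$) and the case of $\varepsilon t$ large (where $\mathcal P\to2$) in a single estimate.

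\textbf{Main obstacle.} The difficulty is the uniformity in Step 1, especially the second-derivative ratio $t\widetilde{\mathfrak a}''/\widetilde{\mathfrak a}'\to-3$, which must hold uniformly as $s\uparrow1$. I would handle it by writing $\widetilde{\mathfrak a}'=(\partial_\delta\widetilde{\mathfrak a})/(\partial_\delta t)$ as $\delta^{4}$ times a positive function analytic in $(s,\delta)$, and $t$ as $\delta^{-1}$ times a positive analytic function. Compactness of the $s$-interval $[\sqrt{\lambda_0},1]$ then gives the uniform expansion. Note that $p>p_{\mathrm c}$ enters only through $\ell_p>0$, which is what makes the leading coefficient $\kappa$ nonzero.
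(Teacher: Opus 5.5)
Your proposal is correct in outline. Step~1 supplies the same input as the paper, and Step~2 combines it differently. You parametrise by $\delta=1-x$, whereas the paper writes $t=\alpha_s\tau+\beta_s\mathfrak a_{p,1}(\tau)$ and uses the expansion $\mathfrak a_{p,1}(\tau)=\tfrac12-\ell_p/(16\tau^2)+O(\tau^{-3})$; the two are equivalent.

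For the sign, the paper does not use weights $u,w$. It uses the exact identity
\[
 (\varepsilon t+b)(\varepsilon+b')\,\mathcal P_{p,\lambda}
 =2\varepsilon^2t+\varepsilon\bigl(4b+t^2b''\bigr)+bb'\,\mathcal P_b,
 \qquad b=\widetilde{\mathfrak a}_{p,\lambda},\quad
 \mathcal P_b=4-\frac{2tb'}{b}+\frac{tb''}{b'},
\]
and checks that each term is nonnegative for large $t$, since $\mathcal P_b\to1$ and $4b+t^2b''\to4L_s>0$ uniformly. That route needs no comparison between $\varepsilon t$ and $b$, or between $\varepsilon$ and $b'$. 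Your convex-weight decomposition needs the extra comparison $w\geq u$ once $t^2\geq\kappa_+/L_-$. In exchange it gives the quantitative bound $\mathcal P\geq1+u-o(1)\geq\tfrac12$. It also shows the regularising term interpolating between the residual value $\mathcal P_b\to1$ and the value $2$ of a purely quadratic flux.

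You do need to correct the exponent bookkeeping in Step~1, because the exact power decides the sign. Since $(\log Q_p)'(1)=0$ and $(\log Q_p)''(1)=-\ell_p/2$, you get $\mathfrak a_{p,1}=\tfrac12-\ell_p\delta^2/16+O(\delta^3)$. So $\widetilde{\mathfrak a}$ minus its limit $s^{(p-1)/2}/(1+s)$ is $O(\delta^2)$, and
\[
 \partial_\delta\widetilde{\mathfrak a}\sim-\frac{s^{(p-1)/2}\ell_p}{4(1+s)}\,\delta .
\]
This is of order $\delta$, not $\delta^2$. Hence $\widetilde{\mathfrak a}'=(\partial_\delta\widetilde{\mathfrak a})/(\partial_\delta t)$ is $\delta^{3}$, not $\delta^{4}$, times a positive function analytic in $(s,\delta)$.

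Your factor $\delta^4$, with $t=\delta^{-1}k(s,\delta)$, would give $\widetilde{\mathfrak a}'\asymp t^{-4}$ and $t\widetilde{\mathfrak a}''/\widetilde{\mathfrak a}'\to-4$. Then at $\varepsilon=0$ you would only have $\mathcal P\to0$, and Step~2 would give no sign. That bookkeeping also contradicts your own stated bound $\kappa_\pm t^{-3}$.

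With the factor $\delta^3$ you recover exactly the conclusions you wrote down, uniformly for $s\in[\sqrt{\lambda_0},1]$:
\[
 \widetilde{\mathfrak a}'\asymp t^{-3},\qquad
 \frac{t\widetilde{\mathfrak a}'}{\widetilde{\mathfrak a}}=O(t^{-2}),\qquad
 \frac{t\widetilde{\mathfrak a}''}{\widetilde{\mathfrak a}'}=-3+O(t^{-1}).
\]
With that correction, your Step~2 goes through as written.
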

\begin{proof}
Use the splitting in Proposition~\ref{lambda-scalar-viscosity} and write
$\mathfrak a(t)=\varepsilon t+b(t)$, where
$b=\widetilde{\mathfrak a}_{p,\lambda}$ and
$\varepsilon=\varepsilon_\lambda\geq0$. Put $s=\sqrt\lambda$ and
\[
 \alpha_s=s^{(p-3)/2}\frac{1+s}{2},\qquad
 \beta_s=s^{(p-3)/2}\frac{1-s}{2},\qquad
 \gamma_s=\frac{2s^{(p-1)/2}}{1+s}.
\]
The exact change of parameter in \eqref{lambda-scalar-flux-splitting}
can also be written as 
\[
 t=\alpha_s\tau+\beta_s\mathfrak a_{p,1}(\tau),
 \qquad b(t)=\gamma_s\mathfrak a_{p,1}(\tau).
\]
The unregularised profile has the differentiated expansion
\[
 \mathfrak a_{p,1}(\tau)
       =\frac12-\frac{\ell_p}{16\tau^2}+O_p(\tau^{-3}).
\]
It follows that, uniformly for $s\in[\sqrt{\lambda_0},1]$,
\begin{equation}\label{lambda-residual-flux-tail}
 \begin{split}
 b(t)&=L_s-\frac{d_s}{t^2}+O_{p,\lambda_0}(t^{-3}),\\
 b'(t)&=\frac{2d_s}{t^3}+O_{p,\lambda_0}(t^{-4}),\qquad
 b''(t)=-\frac{6d_s}{t^4}+O_{p,\lambda_0}(t^{-5}),\\
 L_s&=\frac{\gamma_s}{2},\qquad
 d_s=\frac{\gamma_s\alpha_s^2\ell_p}{16}>0 .
 \end{split}
\end{equation}
The remainders may be differentiated to these orders as the profiles are
analytic in $1-r$ near $r=1$, and the inverse parameter changes are
analytic in $1/\tau$ and $1/t$ near zero, uniformly on this compact
parameter interval. Both $L_s$ and $d_s$ are bounded away from zero.
Consequently
\[
 \mathcal P_b:=4-\frac{2tb'}b+\frac{tb''}{b'}\longrightarrow1,
 \qquad 4b+t^2b''\longrightarrow4L_s>0
\]
uniformly. Choose $T$ so both expressions are nonnegative for $t\geq T$.
Finally, the exact identity
\begin{equation}\label{lambda-tail-viscosity-identity}
 (\varepsilon t+b)(\varepsilon+b')\mathcal P_{p,\lambda}
 =2\varepsilon^2t+\varepsilon(4b+t^2b'')+bb'\mathcal P_b
\end{equation}
proves the assertion, since $b,b'>0$. No division by $\varepsilon$ is
used.
\end{proof}

\begin{proposition}[Uniform interior gradients for the $\lambda$ scalar family]
\label{lambda-uniform-interior-gradient}
Fix $p_{\mathrm c}<p\leq2$ and $0<\lambda_0<1$. Let
$\lambda_0\leq\lambda\leq1$, and let $v_\lambda$ be a classical solution of
\eqref{lambda-forward-scalar-equation} on a neighbourhood of the closed
hyperbolic ball $B_R^\eta(z_0)$. Put
$M=\operatorname{osc}_{B_R^\eta(z_0)}v_\lambda$ and
$t_\lambda=|\nabla_\eta v_\lambda|$. Then
\begin{equation}\label{lambda-uniform-height-gradient-bound}
 \boxed{\quad
 \log\bigl(1+4t_\lambda(z_0)^2\bigr)
       \leq C_{p,\lambda_0}(1+M)^{9/4}(1+R^{-2}).
 \quad}
\end{equation}
The constant is independent of $\lambda$ and of any a priori gradient
or forward Dirichlet bound. If $v_\lambda$ is the Hopf-coordinate
component of $f^\lambda$, then
\begin{equation}\label{lambda-uniform-height-qc-bound}
 \log K_{f^\lambda}(z_0)
       \leq \tfrac12 C_{p,\lambda_0}(1+M)^{9/4}(1+R^{-2}).
\end{equation}
The power $9/4$ and the constants are not best possible.
\end{proposition}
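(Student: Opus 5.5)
The plan is a Bernstein-type maximum-principle argument for a function of the $\eta$-gradient, combined with a height cutoff, in the spirit of the classical interior gradient estimate for the minimal surface equation. I would work with the normalised form \eqref{lambda-normalised-scalar-equation}, whose principal eigenvalues are $1$ and $\vartheta$ and whose drift is bounded by $|\nabla L|/2$. Because the estimate should be independent of $\lambda$, I will use only the structural facts already established uniformly in $\lambda\in[\lambda_0,1]$: the bounds of Proposition~\ref{lambda-uniform-scalar-structure}, namely $\vartheta\geq\ell_p(1+4t^2)^{-1}$, and the tail condition $\mathcal P_{p,\lambda}(t)\geq0$ for $t\geq T$ from Lemma~\ref{lambda-uniform-gradient-tail}. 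After a M\"obius normalisation, which is permitted since the equation and $t_\lambda$ are expressed in the invariant metric, I take $z_0=0$. Then $\eta$ and its derivatives are bounded on $B_R^\eta(0)$, with constants depending only on $R$, and the hyperbolic and Euclidean balls are comparable.

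The first step is the Bochner/Bernstein computation. Differentiating the equation in the direction of $\nabla v$ yields an elliptic inequality for $w=\log(1+4t^2)$ or for a function $g(t)$ of the gradient. For the flux $\mathfrak a(t)=tA_{p,\lambda}(t)$, the combination $\mathcal P$ in \eqref{lambda-gradient-tail-condition} is precisely the coefficient that appears in front of the squared radial second derivative when one computes $\mathcal L g(t)$, where $\mathcal L$ is the linearised operator $[I-(1-\vartheta)e\otimes e]:D^2$. The tail condition $\mathcal P\geq0$ for $t\geq T$ therefore makes $g(t)$ a subsolution of $\mathcal L$ up to lower-order terms. Those lower-order terms are the curvature of $\eta$, which is constant, together with the drift and derivatives of $L$; each is bounded by $C(1+|\nabla g|)$ on the set $\{t\geq T\}$.

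The second step introduces the height cutoff. I set $\psi=(R^2-|z|_\eta^2)_+\cdot\exp(\kappa(v-\min v)/M')$ with $M'=1+M$, or more simply the Korevaar-style product $\Psi=\zeta(z)\,h(v)\,g(t)$, where $\zeta$ is a spatial cutoff and $h(v)=e^{Kv}$ with $K\sim M'^{-1}$ or $M'$. At an interior maximum of $\Psi$ where $t\geq T$, I use $\nabla\Psi=0$ to eliminate $\nabla g$. Applying $\mathcal L\Psi\leq0$, the good term arising from $h''$ is $K^2 h\,\vartheta|\nabla v|^2\gtrsim K^2\ell_p t^2/(1+4t^2)$, which is bounded below. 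The bad terms come from $\zeta$ (of size $R^{-2}$), from the drift, and from the degenerate eigenvalue $\vartheta$ multiplying $\zeta$-derivatives in the $e$ direction. Balancing these gives a bound of the form $g(t)\leq\exp(CM')\cdot\ldots$. Taking $g=\log(1+4t^2)$ instead of $t$, and choosing $K$ as a power of $M'$, is what should produce the polynomial bound $(1+M)^{9/4}(1+R^{-2})$ on $\log(1+4t^2)$. The exponent $9/4$ comes from optimising $K$ against the $\vartheta\sim t^{-2}$ degeneracy, and I would not try to sharpen it. Where $t<T$, the bound is trivial.

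Finally, \eqref{lambda-uniform-height-qc-bound} follows immediately from \eqref{scalar-exact-qc-conversion}, since $K_\lambda\leq\sqrt{1+4t_\lambda^2}$ and therefore $\log K\leq\frac12\log(1+4t^2)$. The main obstacle is the first step: checking that the Bernstein computation for the anisotropic, $z$-dependent operator closes with exactly $\mathcal P$ as the sign-determining coefficient, and controlling the cross terms between the degenerate radial direction and the spatial cutoff uniformly as $\vartheta\to0$. If the product test fails, my first fallback would be to replace $g$ by a function adapted to $\int\mathfrak a'$, mimicking the minimal-graph choice $\sqrt{1+|\nabla v|^2}$ that is suggested by the residual comparison \eqref{lambda-residual-area-comparison}.
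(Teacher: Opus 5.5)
Your architecture (a maximum principle for cutoff $\times$ height factor $\times$ gradient function, with the tail condition of Lemma~\ref{lambda-uniform-gradient-tail}) is the paper's. However, the specific choices you make do not close uniformly in $\lambda$, and the mechanism you describe is not the one that operates.

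Work intrinsically in $\eta|dz|^2$. There the equation is $\vartheta v_{11}+v_{22}=0$ with no drift, and the Bochner identity contributes only $t\operatorname{Ric}(E_1,E_1)=-4t$. Take $\log\Psi=\log\chi^2+\log f(v)+\log g(t)$ and put $j=g'/g$, $q=f'/f$. The first-derivative test gives $jv_{11}=-(qt+q_1)$ and $jv_{12}=-q_2$. The coefficients of $(qt+q_1)^2$ and $q_2^2$ at the maximum are then $U=-(\vartheta/j)'$ and $V=(2-\vartheta)/(tj)-(1/j)'$.

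So $\mathcal P$ is not the coefficient of the squared radial derivative. It appears in $V$, and only when $g$ is built from $\vartheta$. For your $g=\log(1+4t^2)$ the obstruction is $U$ instead. The splitting $\mathfrak a=\varepsilon_\lambda t+b$ makes $\vartheta$ climb back to $1$ near $t\sim1/\varepsilon_\lambda$. There $U\approx-\tfrac12\log(1/\varepsilon_\lambda)$, even after restoring the discarded $v_{22}^2$ term, and no fixed height factor absorbs this as $\lambda\uparrow1$.

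Your height factor makes matters worse. With $h=e^{Kv}$ one has $\mathcal L\log h=K\mathcal Lv=0$. At a maximum, where $\nabla\log\Psi=0$, one has $\mathcal L\Psi/\Psi=\mathcal L\log\Psi$, so the term $K^2h\vartheta|\nabla v|^2$ you count as good is cancelled exactly by the cross terms. A good term that is merely bounded below could not bound $g$ anyway.

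The missing idea is the adapted factor $S(t)=1+\int_0^ts\vartheta(s)\,ds$ of \eqref{lambda-adapted-gradient-factor}, that is, $\vartheta/j=S/t$. With this choice the $\vartheta'$ terms cancel and $U=S/t^2-\vartheta$. Also $V=\frac{S}{\vartheta t^2}\mathcal P-1\geq-1$ for $t\geq T$; this is exactly where the tail lemma enters. By \eqref{lambda-adapted-factor-growth}, $S$ grows like $\log(1+4t^2)$ in the minimal-graph regime and like $t^2$ in the quadratic regime.

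$U$ can still be negative (close to $-\tfrac12$ when $\varepsilon_\lambda t\gg1$), so the height factor must have $q'>0$. The paper uses $f=(M+1-v)^{-1/4}$, for which $q'=4q^2$. Then $\vartheta q't^2$ absorbs both $-\vartheta(qt+q_1)^2$ and the curvature term $-4\vartheta t^2/S$ once $S\geq4/q_0^2$. The genuine good term is $\tfrac14Sq^2$, proportional to $S$.

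This bounds $\max\chi^2fS$ by $C(1+R^{-2})q_0^{-2}$, with $q_0^{-2}=16(M+1)^2$. The factor $f(v(z_0))\geq(M+1)^{-1/4}$ supplies the remaining $1/4$ in the exponent $9/4$, which does not come from optimising $K$. Your final step, $\log K\leq\tfrac12\log(1+4t^2)$ via \eqref{scalar-exact-qc-conversion}, is correct.
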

\begin{proof}
All the differential operators in this proof are for the fixed metric
$\eta|dz|^2$. We suppress the subscripts $p,\lambda$, and define
\begin{equation}\label{lambda-adapted-gradient-factor}
 S(t)=1+\int_0^t s\vartheta(s)\,ds,\qquad
 j(t)=\frac{S'(t)}{S(t)}=\frac{t\vartheta(t)}{S(t)} .
\end{equation}
Proposition~\ref{lambda-uniform-scalar-structure} gives
\begin{equation}\label{lambda-adapted-factor-growth}
 1+\frac{\ell_p}{8}\log(1+4t^2)
          \leq S(t)\leq1+\frac{t^2}{2}.
\end{equation}
In particular $S(t)$ tends to infinity uniformly in the parameter.

Subtract a constant from $v=v_\lambda$ so that $0\leq v\leq M$, and set
\[
 f(v)=(M+1-v)^{-1/4},\qquad
 q(v)=\frac{f'(v)}{f(v)}=\frac1{4(M+1-v)},\qquad
 q_0=\frac1{4(M+1)}.
\]
Thus $q\geq q_0$, $q'=4q^2$, and
$(M+1)^{-1/4}\leq f\leq1$. Let $\rho$ denote distance from $z_0$,
and put
\[
 \chi=1-\rho^2/R^2,\qquad g=\chi^2,\qquad
 G=g f(v) S(|\nabla v|).
\]
This function vanishes on the boundary so we may work at an interior maximum
with $t=|\nabla v|>T$, since otherwise $G\leq1+T^2/2$.
Choose an orthonormal frame with $E_1=\nabla v/t$, and put
$v_{ij}=\nabla^2v(E_i,E_j)$,
$q_i=g_i/g$. The scalar equation is
$\vartheta v_{11}+v_{22}=0$. The first derivative test gives
\[
 jv_{11}=-(qt+q_1),\qquad jv_{12}=-q_2.
\]

We now have the following second derivative calculation, with its curvature term
retained. Write
$\mathcal L=\vartheta\nabla^2(\,\cdot\,)(E_1,E_1)
                    +\nabla^2(\,\cdot\,)(E_2,E_2)$.
Differentiation of the scalar equation and the Bochner identity give
\[
 \mathcal L t=-\vartheta'v_{11}^2
       +\frac{2-\vartheta}{t}v_{12}^2
       +\frac1t v_{22}^2
       +t\operatorname{Ric}(E_1,E_1).
\]
This is the calculation underlying \cite[Lemma 7]{RipollTomi}.
Apply $\mathcal L$ to $\log G$, discard the nonnegative term
$(j/t)v_{22}^2$, and substitute the first derivative identities.
At the maximum,
\begin{align}\label{lambda-adapted-maximum-inequality}
 0\geq{}&
 U(qt+q_1)^2+Vq_2^2+\vartheta q't^2
       +\frac{\vartheta t^2}{S}\operatorname{Ric}(E_1,E_1)\notag\\
 &+\frac{\vartheta g_{11}+g_{22}}g
       -\vartheta q_1^2-q_2^2,
\end{align}
where the two cancellations are exact:
\begin{equation}\label{lambda-adapted-cancellations}
 \begin{split}
 U&=-\frac{\vartheta'}j+\vartheta\frac{j'}{j^2}
        =\frac{S}{t^2}-\vartheta,\\
 V&=\frac{j'}{j^2}+\frac{2-\vartheta}{tj}
        =\frac{S}{\vartheta t^2}
              \left(\frac{t\vartheta'}{\vartheta}
                         +3-\vartheta\right)-1\geq-1.
 \end{split}
\end{equation}
The last inequality is Lemma~\ref{lambda-uniform-gradient-tail}.
It is this choice of $S$, rather than $\log t$, which accommodates
the positive quadratic term.

Put $D=\sup|\nabla\chi|$ and $H=\sup|\nabla^2\chi|$ on the ball.
If $qt<2|q_1|$, then $|q_1|\leq2D/\chi$ gives
\[
 G\leq\chi^2S(t)\leq1+\frac{8D^2}{q_0^2}.
\]
Otherwise $(qt+q_1)^2\geq q^2t^2/4$. Using also
$(qt+q_1)^2\leq2q^2t^2+2q_1^2$,
$q'=4q^2$, $V\geq-1$, and $\operatorname{Ric}=-4$, inequality
\eqref{lambda-adapted-maximum-inequality} implies
\[
 0\geq \frac14 S q^2
       +\vartheta t^2\left(2q^2-\frac4S\right)
       -\frac{12D^2}{\chi^2}-\frac{4H}{\chi}.
\]
Indeed $q_i=2\chi_i/\chi$, and
$(\vartheta g_{11}+g_{22})/g\geq-4H/\chi$.
If $S<4/q_0^2$, then $G<4/q_0^2$. In the remaining case the
$\vartheta t^2$ term is nonnegative, so
\[
 G\leq\chi^2S\leq \frac{48D^2+16H}{q_0^2}.
\]
We have therefore bounded the maximum of $G$ by
\[
 \max\left\{1+\frac{T^2}{2},\
              1+\frac{8D^2}{q_0^2},\
              \frac4{q_0^2},\
              \frac{48D^2+16H}{q_0^2}\right\}.
\]
At $z_0$, $g=1$ and $f\geq(M+1)^{-1/4}$.
In curvature $-4$, $D\leq2/R$ and
$H\leq C(1+R^{-2})$, by the Hessian formula for squared distance.
Consequently
\[
 S(t(z_0))\leq
 C_{p,\lambda_0}(1+M)^{9/4}(1+R^{-2}).
\]
Use \eqref{lambda-adapted-factor-growth} to obtain
\eqref{lambda-uniform-height-gradient-bound}, and
Proposition~\ref{scalar-gradient-to-qc} to obtain
\eqref{lambda-uniform-height-qc-bound}.
\end{proof}

\begin{remark}[Consequences of the uniform estimate]\label{lambda-gradient-scope}
On common patches, bounded height oscillations give uniform interior gradients and a common uniformly elliptic range. For the maps,
\[
 |Df^\lambda(z)|_{\rm op}
 =\frac{\sqrt{\eta(z)}\,t_\lambda(z)}
 {\sqrt{|\Phi_{p,\lambda}(f^\lambda(z))|}}.
\]
Thus a positive lower bound for the Hopf coefficient also bounds $Df^\lambda$. The quasiconformal estimate itself does not require this lower bound; Section~\ref{common-hopf-patches} handles the exceptional image values. At $p=p_{\mathrm c}$, $\vartheta_{p,1}(t)\sim1/(4\sqrt2\,t^3)$ and $1+\int_0^\infty t\vartheta_{p,1}(t)\,dt<\infty$, so the unbounded gradient factor used here is unavailable.
\end{remark}

\section{The \texorpdfstring{$\lambda$}{lambda} approximation and the strict upper range}\label{lambda-programme}
We construct a regular quasiconformal minimiser in the strict upper range from a finite-energy quasiconformal barrier. Fixed-parameter existence, polynomial truncation and two parameter selections produce regular approximants with convergent Hopf differentials. The scalar estimates then give a homeomorphic limit and identify its equation. Section~\ref{monotone-qc-comparison} proves absolute monotone minimality.

\subsection{The barrier and the uniform energy bound}\label{lambda-barrier}
Write
\begin{equation}\label{lambda-density}
 R_\lambda(s)=\frac{s^{p/2}}{\lambda-s},\qquad
 e_\lambda(r)=\frac{r^p}{\lambda-r^2},\qquad
 0\leq s=r^2<\lambda<1,
\end{equation}
with $p$ fixed, so that $\I_{p,1}=\I_p$. Set the density to $+\infty$ when $s\geq\lambda$. Finite energy then gives
\[
 \|\mu_{f_\lambda}\|_\infty\leq\sqrt\lambda<1,
 \qquad K_{\mathrm{op}}(f_\lambda)
 \leq\frac{1+\sqrt\lambda}{1-\sqrt\lambda}.
\]
Fix a finite-energy quasiconformal competitor $f_*$, put $k_*=\|\mu_{f_*}\|_\infty$, and choose $k_*^2<\lambda_0<1$. Such a competitor exists precisely for boundary data in $T^p$, by Proposition~\ref{finite-distortion-boundary-class}. Then
\begin{equation}\label{lambda-smooth-barrier}
 \I_{p,\lambda_0}(f_*)\leq
 \frac{1-k_*^2}{\lambda_0-k_*^2}\I_p(f_*)<\infty,
\end{equation}
and every minimiser with $\lambda\geq\lambda_0$ satisfies
\[
 \I_p(f_\lambda)\leq\I_{p,\lambda}(f_\lambda)
 \leq\I_{p,\lambda_0}(f_*)=:M.
\]

For $h_\lambda=f_\lambda^{-1}$, the inverse density is particularly simple:
\begin{equation}\label{barrier-excess}
 J_h R_\lambda(|\mu_h|^2)
 =|h_w|^2r^p\left(1+\frac{1-\lambda}{\lambda-r^2}\right),
 \qquad r=|\mu_h|.
\end{equation}
The inverse Dirichlet bound is therefore uniform in $\lambda$, and Proposition~\ref{compact-containment} supplies compactness and excludes boundary collapse.

There is also a useful identification at the level of infima. Let $m_{\rm QC}$ be the infimum of $\I_p$ over quasiconformal competitors with these boundary values and finite energy. Comparison with each fixed competitor, followed by dominated convergence, gives
\[
 m(\lambda)\downarrow m_{\rm QC},\qquad
 \I_p(f_\lambda)\longrightarrow m_{\rm QC}.
\]
Here $m(\lambda)=\I_{p,\lambda}(f_\lambda)$. Consequently the integral of the extra term in \eqref{barrier-excess} tends to zero along the whole family. This shows that the regularised minimisers form a minimising family in the quasiconformal class. In the strict upper range, Theorem~\ref{absolute-monotone-comparison} identifies this infimum with the finite-distortion homeomorphic and absolute monotone minima.
\subsection{Fixed-parameter existence in the enlarged inverse class}\label{lambda-existence}
The $\lambda$-regularised barrier is a convex increasing modification of the original energy density. This gives us direct use of the enlarged-class strategy of \cite[Section 5]{MYLp} before requiring regularity of a forward pseudo-inverse.

\begin{proposition}[Quasiconformal attainment for the $\lambda$-regularised barrier]\label{rational-lambda-existence}
Let $p_{\mathrm c}\leq p\leq2$ and $0<\lambda<1$. Suppose the quasisymmetric trace $\beta$ has a quasiconformal extension $h_*$ with finite $\lambda$-inverse energy. Define
\[
 W_{p,\lambda}(A)=
 \begin{cases}
 (\det A)e_\lambda(r),&\det A>0,\quad r=|b|/|a|<\sqrt\lambda,\\
 0,&A=0,\\
 +\infty,&\text{otherwise},
 \end{cases}
 \qquad A=(a,b).
\]
Then $W_{p,\lambda}$ is polyconvex, and
\[
 \E_{p,\lambda}(h)=\int_\D\eta(h)W_{p,\lambda}(Dh)\area
\]
attains its minimum on $\mathcal M_\beta$. Every finite-energy member of this class, in particular every minimiser, is a homeomorphism and satisfies
\[
 K_h=K_{h^{-1}}\leq\frac{1+\sqrt\lambda}{1-\sqrt\lambda}
 \quad\hbox{almost everywhere},
\]
where equality of the two dilatations is understood at corresponding points. Its inverse is a minimiser of the corresponding forward problem with the prescribed trace.
\end{proposition}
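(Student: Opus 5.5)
The plan is to run the direct method in $\mathcal M_\beta$ exactly as in Theorem~\ref{monotone-existence}, replacing Lemma~\ref{jacobian-convexification} by a convexity statement for $W_{p,\lambda}$. The homeomorphism property then comes from a uniform distortion bound combined with monotonicity.

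\textbf{Step 1: polyconvexity.} On $\det A>0$, $r<\sqrt\lambda$, write
\[
 W_{p,\lambda}=|a|^{2-p}|b|^p\cdot\frac{1-r^2}{\lambda-r^2}
\]
as $|a|^{2-p}|b|^p$ plus the excess in \eqref{barrier-excess}, namely $|b|^p|a|^{2-p}(1-\lambda)/(\lambda-r^2)$. I would instead use the representation $W_{p,\lambda}=J\,e_\lambda(r)$ with $e_\lambda(r)=\Psi(\K)$, where
\[
 \K=\frac{|a|^2+|b|^2}{J},\qquad r^2=\frac{\K-1}{\K+1}.
\]
Then
\[
 W_{p,\lambda}=J\,\Psi_\lambda\bigl(\|A\|^2/J\bigr),\qquad
 \Psi_\lambda(S)=\Bigl(\tfrac{S-1}{S+1}\Bigr)^{p/2}\frac{1}{\lambda-\frac{S-1}{S+1}},
\]
extended by $+\infty$ beyond $S_\lambda=(1+\lambda)/(1-\lambda)$.

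If $\Psi_\lambda$ fails to be convex near $S=1$ (it behaves like $(S-1)^{p/2}$, which is concave), I would instead add $c_pJ$ as in Lemma~\ref{jacobian-convexification}. The function $G_p(a,b)+\bigl[W_{p,\lambda}-W_p\bigr]$ is then convex, since the excess $|a|^2 r^p(1-\lambda)/(\lambda-r^2)$ is a nondecreasing convex function of $r$ times $|a|^2$ on the cone. I would check the excess by the same scalar-Hessian computation in $(x,y)=(|a|,|b|)$ used in that lemma: it is $x^{2}\,g(y/x)$ with $g$ convex and increasing, which is a perspective-type expression and is convex for $g(0)=0$ with the appropriate monotonicity. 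The blow-up at $r=\sqrt\lambda$ and the value $+\infty$ outside the cone keep the function lower semicontinuous and convex. This gives polyconvexity as $G-c_pJ$ plus a convex function of $(A,\det A)$.

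\textbf{Step 2: existence.} The barrier $h_*$ gives finite energy. Because $W_{p,\lambda}\ge W_p$, the Dirichlet bound \eqref{trace-jacobian} holds, and Proposition~\ref{monotone-compactness} gives a uniformly convergent, weakly $W^{1,2}$ convergent limit with compact containment. Lower semicontinuity follows as in Lemma~\ref{weighted-lsc}: localise with $\chi$, freeze $\eta(h)$, and subtract the weighted Jacobian convergence \eqref{weighted-jacobian-convergence}. The convex part is lower semicontinuous by Ball's theorem for polyconvex integrands that are convex and lower semicontinuous in $(A,\det A)$, using weak convergence of determinants in the plane.

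\textbf{Step 3: homeomorphism, distortion bound and forward minimality.} Finite energy forces $W_{p,\lambda}(Dh)<\infty$ almost everywhere. Hence either $A=0$, or $\det A>0$ with $|h_{\bw}|\le\sqrt\lambda|h_w|$; rank-one gradients are excluded. So $h$ is a $W^{1,2}$ solution of a distortion inequality with $k=\sqrt\lambda$, hence quasiregular, open and discrete, exactly as in Corollary~\ref{cutoff-finite-distortion}. Monotone fibres then make it a homeomorphism with $K_h\le(1+\sqrt\lambda)/(1-\sqrt\lambda)$. The inverse is quasiconformal with the same dilatation at corresponding points. By the inverse energy identity \eqref{inverseenergy}, the inverse minimises the forward problem, since every forward quasiconformal competitor inverts into $\mathcal M_\beta$ by Proposition~\ref{finite-distortion-inverse-calculus}.

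\textbf{Main obstacle.} The main obstacle is Step 1: verifying convexity of the excess term, and handling the $+\infty$ extension across the boundary of the cone so that the relevant extension to $\IC^2$ stays convex.
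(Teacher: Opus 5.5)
Your Step~3 matches the paper: the constraint $|h_{\bar w}|\leq\sqrt\lambda|h_w|$ gives quasiregularity, then monotone fibres, $h(\D)\subset\D$ and the injective trace give a homeomorphism, and the inverse formulas give forward minimality. The gap is in Step~1, and the check you propose there cannot succeed.

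First, $x^2g(y/x)$ is not a perspective (that would be $xg(y/x)$), and it need not be convex when $g$ is convex and increasing with $g(0)=0$: the choice $g(r)=r$ gives $xy$. For the actual excess, $g(r)\sim\frac{1-\lambda}{\lambda}r^p$ near $0$. Since $\partial_{xx}[x^2g(y/x)]=2g-2rg'+r^2g''$, this gives $\partial_{xx}\sim\frac{1-\lambda}{\lambda}(p-1)(p-2)r^p<0$ for $p<2$.

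Second, and decisively, no function of $A$ alone can be convex if it equals $W_{p,\lambda}+cJ$ on the cone and $+\infty$ elsewhere, for any constant $c$. Its effective domain is $\{|b|<\sqrt\lambda|a|\}\cup\{0\}$, and this set is not convex in $\IC^2$. Take $0<|b_0|<\sqrt\lambda$: the points $(1,b_0)$ and $(-1,b_0)$ lie in it, but their midpoint $(0,b_0)$ does not. So the hard constraint at the pole must be encoded with the determinant as an independent variable. Your scalar Hessian computation in $(|a|,|b|)$ is aimed at a false statement.

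The missing idea is to keep your first, perspective, construction but change the inner variable. Do not use $\|A\|^2/J$, in which the profile is concave. Use instead
\[u=\frac{W_p(A)}{J}=\frac{G_p(A)}{J}-c_p=e(r),\qquad e(r)=\frac{r^p}{1-r^2}.\]
Then $W_{p,\lambda}=J\chi_\lambda(u)$ with $\chi_\lambda=e_\lambda\circ e^{-1}$. Writing $C_\lambda=p\lambda+(2-p)r^2$, one has
\[ \chi_\lambda'(e(r))=\frac{e_\lambda'(r)}{e'(r)}=\frac{C_\lambda}{C_1}\Bigl(\frac{1-r^2}{\lambda-r^2}\Bigr)^2 . \]
This tends to $1/\lambda$ as $r\to0$ and is strictly increasing, since its logarithmic derivative is $2(2-p)r(C_\lambda^{-1}-C_1^{-1})+4r(1-\lambda)/((\lambda-r^2)(1-r^2))>0$.

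Hence $\chi_\lambda$ is convex, nondecreasing and lower semicontinuous once extended by $u/\lambda$ for $u<0$ and by $+\infty$ for $u\geq e(\sqrt\lambda)$. Define $\mathcal P(A,d)=d\,\chi_\lambda(G_p(A)/d-c_p)$. It is jointly convex in $(A,d)$: it is a nondecreasing convex perspective composed with the convex $G_p$ of Lemma~\ref{jacobian-convexification}. Its closed extension at $d=0$ is $0$ at $A=0$ and $+\infty$ otherwise, and $\mathcal P(A,\det A)=W_{p,\lambda}(A)$.

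Your decomposition ``$G_p-c_pJ$ plus a convex function of $(A,\det A)$'' can be rescued the same way, using $\chi_\lambda(u)-u$ in place of $\chi_\lambda$. It cannot be rescued through convexity in $A$.

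In Step~2, Ball's theorem needs $J_{h_j}\rightharpoonup J_h$ weakly in $L^1_{\loc}$. For $W^{1,2}$ maps this holds only because $J_{h_j}\geq0$, via M\"uller's $L\log L$ estimate, which you should invoke explicitly. The paper avoids this by writing $W_{p,\lambda}(A)=\sup_{t\geq1/\lambda}\{tG_p(A)-[c_pt+\chi_\lambda^*(t)]J\}$. That representation needs only the convexity of $G_p$ and convergence of the Jacobian measures against continuous test functions.
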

\begin{proof}
Write $e(r)=r^p/(1-r^2)$ and $u_\lambda=e(\sqrt\lambda)$. On $0\leq u<u_\lambda$, define
\[
 \chi_\lambda(u)=e_\lambda(e^{-1}(u)).
\]
Extend it by $u/\lambda$ for $u<0$, and by $+\infty$ for $u\geq u_\lambda$. Put $C_\lambda(r)=p\lambda+(2-p)r^2$ and $C_1(r)=p+(2-p)r^2$. Then
\[
 \chi_\lambda'(e(r))
 =\frac{e_\lambda'(r)}{e'(r)}
 =\frac{C_\lambda(r)}{C_1(r)}
       \left(\frac{1-r^2}{\lambda-r^2}\right)^2.
\]
This derivative has limit $1/\lambda$ at zero, and
\[
 \frac{d}{dr}\log\frac{e_\lambda'}{e'}
 =2(2-p)r\left(\frac1{C_\lambda}-\frac1{C_1}\right)
   +\frac{4r(1-\lambda)}{(\lambda-r^2)(1-r^2)}>0
 \quad(0<r<\sqrt\lambda).
\]
Thus $\chi_\lambda$ is convex and increasing. With $G_p$ from Lemma~\ref{jacobian-convexification} and $c_p=(p-1)/2$, the function of independent minors
\[
 \mathcal P_{p,\lambda}(A,d)
 =d\,\chi_\lambda\!\left(\frac{G_p(A)}d-c_p\right),
 \qquad d>0,
\]
is convex: This is an increasing convex perspective composed with the convex $G_p$. At $d=\det A>0$, one has $G_p(A)/d-c_p=W_p(A)/d=e(r)$, giving $W_{p,\lambda}$. The closed extension at $d=0$ is infinite for $A\ne0$ and zero at $A=0$. This proves polyconvexity and excludes every nonzero rank-one matrix at finite energy.

For completeness, weighted lower semicontinuity follows by the same local argument as Lemma~\ref{weighted-lsc}. The supporting-line representation
\[
 \chi_\lambda(u)=\sup_{t\geq1/\lambda}\{tu-\chi_\lambda^*(t)\}
\]
gives, for $J\geq0$,
\[
 W_{p,\lambda}(A)=\sup_{t\geq1/\lambda}
 \left\{tG_p(A)-[c_pt+\chi_\lambda^*(t)]J\right\}.
\]
First restrict $t$ to a bounded interval and use bounded continuous choices $t=t(w)$. The convex-gradient term is lower semicontinuous, and the weighted Jacobian term converges by \eqref{weighted-jacobian-convergence}. Measurable maximisers can be approximated by continuous choices for the finite measure with density $G_p(Dh)+J_h$ on a compact set. Letting the interval increase recovers the density. The baseline $t=1/\lambda$ gives $W_p/\lambda\geq0$ on physical gradients, so exhaustion gives global lower semicontinuity. Since $W_{p,\lambda}\geq W_p$, Proposition~\ref{monotone-compactness} gives us compactness and our barrier implies a finite minimum.

A finite-energy map satisfies $|h_{\bar w}|\leq\sqrt\lambda\,|h_w|$ almost everywhere and lies in $W^{1,2}$. It is nonconstant, hence quasiregular and therefore open and discrete. Its fibres are also connected, so every interior fibre is a singleton. The definition of $\mathcal M_\beta$ excludes interior preimages of the circle, and the boundary trace is injective. Thus $h$ is a homeomorphism of the closed disk and is quasiconformal in the disk. The usual inverse and area formulas for quasiconformal maps give the forward energy identity and minimality.
\end{proof}

This proposition settles fixed-parameter attainment and topology in the upper range. Near $r=\sqrt\lambda$, however, the energy has a simple pole and its first-variation stress a double pole. Quasiconformality alone therefore does not justify arbitrary two-sided variations through the constraint. The truncation and selection of Section~\ref{rational-truncation-selection} establish the equation for selected minimisers; Lemma~\ref{rational-finite-stress-regularity}, proved first, then obtains local pole separation as a consequence of the weak equation.
\subsection{Regularity at a fixed parameter}\label{lambda-regularity-transfer}
The corollary following Theorem~1.1 of \cite{MYLp} proves that a locally quasiconformal minimiser of $\int\K^m$, $m>1$, is a smooth diffeomorphism. The proof uses a strict contraction for the forward potential: the potential satisfies $\mathcal F_{\bz}=\mathcal A_m(|\mathcal F_z|)$ with $|\mathcal A_m'|\leq k_m<1$. Quasiconformality gives us the required integrability, but the contraction is a property of the density and must be verified for \eqref{lambda-density}. For a locally constant weight the forward inner-variation potential has the schematic relation
\[
 |\mathcal F_z|=T_\lambda(r),\qquad
 \mathcal F_{\bz}=e_\lambda(r),\qquad
 T_\lambda(r)=\frac{1-r^2}{2}e_\lambda'(r),
\]
and for a varying weight a Cauchy correction must be retained. At $\lambda=1$ one has $e_1/T_1=2r/(p+(2-p)r^2)<1$, which controls the size of the coefficient of a \emph{given} first-order equation; differentiating the nonlinear constitutive relation requires instead $e_1'/T_1'<1$, and
\begin{equation}\label{aux-derivative}
 \frac{T_1'}{e_1'}-1=
 \frac{(1-r)^2H_p(r)}{2r\,[p+(2-p)r^2]},
\end{equation}
which is \eqref{forward-stress-contraction}. Thus the threshold reappears, and even above it $e_1'/T_1'\to1$ as $r\uparrow1$: there is no contraction uniform over all distortions. At a fixed parameter $\lambda<1$ the pole restores a global contraction.

\begin{lemma}[Strict potential contraction for each upper-range barrier]\label{rational-fixed-potential-contraction}
Let $p_{\mathrm c}\leq p\leq2$ and $0<\lambda<1$. Put $T_\lambda=(1-r^2)e_\lambda'/2$. The increasing inverse relation
\[
 \mathcal B_\lambda=e_\lambda\circ T_\lambda^{-1}
\]
has a global Lipschitz constant $k_{p,\lambda}<1$. More precisely, writing
\[
 \Delta_p(r)=\frac{(1-r)^2H_p(r)}{2rC_1(r)},
\]
one has
\begin{equation}\label{rational-potential-gap}
 \frac{T_\lambda'}{e_\lambda'}-1
 =\Delta_p(r)
  +\frac{p(2-p)(1-\lambda)r(1-r^2)}
         {C_\lambda(r)C_1(r)}
  +\frac{2r(1-\lambda)}{\lambda-r^2}
 =:D_{p,\lambda}(r)>0.
\end{equation}
Thus $k_{p,\lambda}=(1+d_{p,\lambda})^{-1}$, where
$d_{p,\lambda}=\inf_{0<r<\sqrt\lambda}D_{p,\lambda}(r)>0$.
For every fixed $p$ in this range, $k_{p,\lambda}\to1$ as $\lambda\uparrow1$.
\end{lemma}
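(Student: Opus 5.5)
The plan is to compute the logarithmic derivative ratio $T_\lambda'/e_\lambda'$ exactly, reduce it to the $\lambda=1$ identity \eqref{forward-stress-contraction} plus two explicit correction terms, and then read off the Lipschitz bound for $\mathcal B_\lambda$ from $\mathcal B_\lambda'(T_\lambda(r))=e_\lambda'(r)/T_\lambda'(r)$.

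First, differentiate $e_\lambda(r)=r^p/(\lambda-r^2)$. This gives
\[
 e_\lambda'(r)=\frac{r^{p-1}C_\lambda(r)}{(\lambda-r^2)^2},\qquad C_\lambda(r)=p\lambda+(2-p)r^2 .
\]
Hence
\[
 T_\lambda(r)=\frac{(1-r^2)r^{p-1}C_\lambda(r)}{2(\lambda-r^2)^2}.
\]
Write $T_\lambda=\tfrac12(1-r^2)e_\lambda'$, so that
\[
 T_\lambda'/e_\lambda'=-r+\tfrac12(1-r^2)(\log e_\lambda')' ,
\]
with
\[
 (\log e_\lambda')'=\frac{p-1}{r}+\frac{2(2-p)r}{C_\lambda}+\frac{4r}{\lambda-r^2}.
\]
At $\lambda=1$ this expression minus one is $\Delta_p$, by \eqref{forward-stress-contraction}. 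Subtracting the $\lambda=1$ expression leaves two pieces. The first is
\[
 \tfrac12(1-r^2)\cdot 2(2-p)r\,\bigl(C_\lambda^{-1}-C_1^{-1}\bigr)=\frac{(1-r^2)(2-p)r\,p(1-\lambda)}{C_\lambda C_1}.
\]
The second is
\[
 \tfrac12(1-r^2)\cdot 4r\Bigl(\frac1{\lambda-r^2}-\frac1{1-r^2}\Bigr)=\frac{2r(1-\lambda)}{\lambda-r^2}.
\]
Together these give \eqref{rational-potential-gap}. Every term is nonnegative for $p_{\mathrm c}\le p\le2$, since $H_p\ge0$ on $[0,1]$ there. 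The last term is strictly positive for $r>0$, so $D_{p,\lambda}>0$ on $(0,\sqrt\lambda)$. Note also that $T_\lambda$ is strictly increasing from $0$ to $\infty$, because $T_\lambda'=(1+D)e_\lambda'>0$ and there is a double pole at $\sqrt\lambda$; so $\mathcal B_\lambda$ is defined on $[0,\infty)$.

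The main step is the uniform positive lower bound $d_{p,\lambda}=\inf D_{p,\lambda}>0$, which requires checking both endpoints of $(0,\sqrt\lambda)$.
\begin{itemize}
\item As $r\downarrow0$, $\Delta_p(r)\sim H_p(0)/(2rp)=(p-1)/(2r)\to\infty$.
\item As $r\uparrow\sqrt\lambda$, the last term $2r(1-\lambda)/(\lambda-r^2)\to\infty$.
\item On any compact subinterval, $D_{p,\lambda}$ is continuous and positive because of the last term.
\end{itemize}
Hence the infimum is attained and positive; this holds even at $p=p_{\mathrm c}$, where $\Delta_p$ vanishes only at $r=1>\sqrt\lambda$. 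Then $\mathcal B_\lambda'=1/(1+D_{p,\lambda})\le(1+d_{p,\lambda})^{-1}=k_{p,\lambda}$, and the mean value theorem gives the global Lipschitz bound. The radial extension to complex arguments, as used in Proposition~\ref{forward-regularity-proposition}, keeps the constant, since a radial profile with $|\mathcal B'|\le k$ and $\mathcal B(0)=0$, $\mathcal B(t)\le kt$ is $k$-Lipschitz on $\IC$.

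It remains to show $k_{p,\lambda}\to1$ as $\lambda\uparrow1$, that is, $d_{p,\lambda}\to0$. Evaluate at $r=\sqrt\lambda-(1-\lambda)$, or more simply at $r$ with $\lambda-r^2=\sqrt{1-\lambda}$. There the last term is $O(\sqrt{1-\lambda})$ and the middle term is $O(1-\lambda)$. Also $r\to1$, so $\Delta_p(r)=O((1-r)^2)\to0$. Thus $D_{p,\lambda}$ at this point tends to zero. This last check is routine; the only step needing care is the endpoint analysis in the infimum.
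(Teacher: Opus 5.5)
Your proposal is correct and follows the paper's proof essentially step for step. You use the logarithmic derivative of $e_\lambda'$ split against the $\lambda=1$ identity, the blow-up of $\Delta_p$ at $0$ and of the pole term at $\sqrt\lambda$ to get a positive minimum, and $\mathcal B_\lambda'=1/(1+D_{p,\lambda})$; for $d_{p,\lambda}\to0$ the paper takes iterated limits ($\lambda\uparrow1$ at fixed $r$, then $r\uparrow1$), and your choice $\lambda-r^2=\sqrt{1-\lambda}$ realises the same thing diagonally. Drop your first suggested point $r=\sqrt\lambda-(1-\lambda)$, however: there $\lambda-r^2\sim2(1-\lambda)$, so the last term of $D_{p,\lambda}$ tends to $1$ rather than $0$.
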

\begin{proof}
Logarithmic differentiation of $e_\lambda'$ gives \eqref{rational-potential-gap}. All three terms are nonnegative and the last is positive. The first tends to infinity as $r\downarrow0$, and the last tends to infinity as $r\uparrow\sqrt\lambda$. Continuity consequently gives a positive minimum. Also $T_\lambda$ increases from zero to infinity and
\[
 \mathcal B_\lambda'(T_\lambda(r))
 =\frac{e_\lambda'}{T_\lambda'}
 =\frac1{1+D_{p,\lambda}(r)}.
\]
This proves the contraction, including its continuous interpretation at zero. For fixed $r<1$, $D_{p,\lambda}(r)\to\Delta_p(r)$, whereas $\Delta_p(r)\to0$ as $r\uparrow1$. Taking these limits in this order proves $d_{p,\lambda}\to0$.
\end{proof}

The integral identity below is a hypothesis of the lemma.
For the selected minimisers of
Theorem~\ref{selected-rational-regularity}, it is supplied by
\eqref{selected-forward-stationarity}, obtained by passing
the target variations of the truncated inverse minimisers
to the limit before establishing separation from the pole.
Thus the lemma is applied after weak stationarity has been proved;
local pole separation is a conclusion, not a prior admissibility
assumption.

\begin{lemma}[Regularity from finite $\lambda$ stress]
\label{rational-finite-stress-regularity}
Let $p_{\mathrm c}<p<2$, $0<\lambda<1$, and let $\eta$ be smooth
and positive. Suppose $f$ is a quasiconformal homeomorphism with
$r=|\mu_f|<\sqrt\lambda$ almost everywhere. Assume that
\[
 U=T_\lambda(r)\frac{\overline{\mu_f}}r\in L^1_{\loc},
 \qquad U=0\quad\hbox{when }r=0,
\]
and that the weak identity
\[
 \int\eta U\varphi_{\bz}\area
       =\int e_\lambda(r)(\eta\varphi)_z\area
 \qquad\bigl(\varphi\in C_c^\infty(\D,\IC)\bigr)
\]
holds. Then $U$ is locally bounded, $r$ is locally bounded strictly below
$\sqrt\lambda$, and $f$ is a $C^{2,\alpha}_{\loc}$ diffeomorphism
for $0<\alpha<\min\{1,(2-p)/(p-1)\}$. It is smooth where
$\mu_f\ne0$.
\end{lemma}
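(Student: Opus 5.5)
The plan is to repeat the potential argument of Proposition~\ref{forward-regularity-proposition}. The uniform-ellipticity input there came from the a~priori bound $r\leq R<1$. Here it will instead come from the global contraction of Lemma~\ref{rational-fixed-potential-contraction}: $\mathcal B_\lambda=e_\lambda\circ T_\lambda^{-1}$ is $k$-Lipschitz on all of $[0,\infty)$ with $k=k_{p,\lambda}<1$, and $\mathcal B_\lambda(0)=0$. The essential difference is that $U$ is only assumed to be in $L^1_{\loc}$. Since $T_\lambda(r)\to\infty$ as $r\uparrow\sqrt\lambda$, quasiconformality of $f$ gives no bound on $U$. The first task is therefore to raise the integrability of $U$.

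First I would form the potential. On a disk $V\Subset\D$ put $L=\log\eta$. The weak identity says $U_{\bz}=(e_\lambda(r))_z-L_{\bz}U$ in distributions, exactly as in \eqref{forward-stress-distribution}. The function $e_\lambda(r)=\mathcal B_\lambda(|U|)$ satisfies $0\leq e_\lambda(r)\leq k|U|$, so it also lies in $L^1_{\loc}$. Let $G$ be the Cauchy transform of $\chi L_{\bz}U$, with $\chi$ a cutoff. Then $G$ gains one derivative: $G\in L^{s}_{\loc}$ for all $s<2$ when $U\in L^1$, and $G\in L^\infty_{\loc}$ once $U\in L^{q}$ with $q>2$. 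Next let $\mathcal F$ be the local potential with $\mathcal F_z=U+G$ and $\mathcal F_{\bz}=\mathcal B_\lambda(|\mathcal F_z-G|)$. This $\mathcal F$ lies in $W^{1,1}_{\loc}$ and satisfies
\[
 |\mathcal F_{\bz}|\leq k|\mathcal F_z|+k|G|.
\]
This is the main obstacle. It is a very weak quasiregular-type inequality, with $W^{1,1}$ regularity possibly below the Iwaniec critical exponent $1+k$.

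I would try two routes to get past it, in this order. The first is to exploit structure rather than the general theory. The constitutive map $\mathcal F_z\mapsto\mathcal F_{\bz}$ is radial and monotone, $\mathcal F_{\bz}=\mathcal B_\lambda(|U|)$, so the real part of $\overline{U}\,\mathcal F_{\bz}$ is nonnegative. A truncation test of the form $\varphi=\psi\,\Pi_M(U)$, with $\Pi_M$ radial truncation at level $M$, should then give a Caccioppoli-type bound in which $\int\psi|U|^{q}$ is controlled by lower powers. I expect this to produce $U\in L^{q}_{\loc}$ for every $q<\infty$ directly. The second route, to be used if the first fails, is to mollify $\mathcal F$ and invoke the Astala--Iwaniec--Martin theory of very weak solutions for the nonlinear Beltrami inequality (see \cite{AIM}), checking that the integrability of $f$ itself (quasiconformality) rules out the non-regular examples of Iwaniec type. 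Once $U\in L^{q}_{\loc}$ for some $q>2$, we get $G\in W^{1,q}_{\loc}\cap L^\infty_{\loc}$. The difference-quotient estimate
\[
 |(\delta_t\mathcal F)_{\bz}|\leq k|(\delta_t\mathcal F)_z|+k|\delta_tG|,
\]
together with the localised $L^2$ identity for Wirtinger derivatives, then yields $\mathcal F\in W^{2,2}_{\loc}$ and hence $U\in W^{1,2}_{\loc}$.

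From here the argument is that of Proposition~\ref{forward-regularity-proposition}. Expanding the equation gives the inhomogeneous Beltrami equation \eqref{forward-stress-beltrami} with $c=\tfrac12\mathcal B_\lambda'(|U|)$ and $|\mathfrak a|+|\mathfrak b|\leq k/(2-k)<1$. Higher integrability gives $U\in W^{1,s}_{\loc}$ for some $s>2$, hence $U$ is continuous and locally bounded. Because $T_\lambda$ is increasing with $T_\lambda(r)\to\infty$ as $r\uparrow\sqrt\lambda$, a local bound $|U|\leq C$ forces $r\leq T_\lambda^{-1}(C)<\sqrt\lambda$ locally. On that range the coefficients are Lipschitz in $U$; near $U=0$ the behaviour is $\mathcal B_\lambda'(t)=O(t^{1/(p-1)})$, as in the $\lambda=1$ case, since $C_\lambda(0)=p\lambda>0$. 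Two Schauder steps give $U\in C^{1,\gamma}_{\loc}$ for every $\gamma<1$. Reconstruct $\mu_f$ by $\mu=\overline U\,T_\lambda^{-1}(|U|)/|U|$, using an expansion
\[
 T_\lambda^{-1}(t)=c_\lambda t^{1/(p-1)}B_\lambda(t^{2/(p-1)}),\qquad B_\lambda \text{ real analytic},\ B_\lambda(0)=1,
\]
which yields $\mu\in C^{1,\alpha}_{\loc}$ for $\alpha<\min\{1,(2-p)/(p-1)\}$. Then $f\in C^{2,\alpha}_{\loc}$, and it has positive Jacobian by \cite[Theorem 1.1]{ACFJK}. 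Finally, a smooth bootstrap applies wherever $\mu_f\neq0$.
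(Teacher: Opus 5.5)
Your argument from $U\in W^{1,2}_{\loc}$ onwards matches the paper's. The step you flag as the main obstacle, however, is left open, and neither of your routes closes it.

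In the first route, $\varphi=\psi\,\Pi_M(U)$ is not an admissible test function. The weak identity involves $\varphi_{\bar z}$, and at this stage $U$ is only in $L^1_{\loc}$, with no derivatives. Also, $\mathcal F_{\bar z}=e_\lambda(r)$ is real and nonnegative, so $\operatorname{Re}(\overline U\mathcal F_{\bar z})=e_\lambda(r)\operatorname{Re}U$ has no sign.

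In the second route you only have $\mathcal F\in W^{1,1}_{\loc}$. That is below the critical exponent $1+k$ for very weak solutions of $|\mathcal F_{\bar z}|\leq k|\mathcal F_z|+k|G|$, where the general theory gives nothing (Iwaniec's examples show the exponent is sharp). Quasiconformality of $f$ cannot rescue this: $|U|=T_\lambda(r)$ is unbounded as $r\uparrow\sqrt\lambda$ even when $|\mu_f|<\sqrt\lambda$.

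The missing idea is that the pole gives a far better bound than $e_\lambda(r)\leq k|U|$. $T_\lambda$ has a double pole at $\sqrt\lambda$ and $e_\lambda$ only a simple one, so
\[
 \frac{e_\lambda(r)^2}{T_\lambda(r)}=\frac{2r^{p+1}}{(1-r^2)[p\lambda+(2-p)r^2]}\leq\frac{2\lambda^{(p-1)/2}}{p(1-\lambda)}.
\]
Equivalently, $\mathcal B_\lambda(t)=O(\sqrt t)$ as $t\to\infty$: the effective distortion tends to zero at large gradients. Hence $e_\lambda(r)^2\leq C_{p,\lambda}|U|$, and $e_\lambda(r)\in L^2_{\loc}$ follows directly from $U\in L^1_{\loc}$.

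Now write the weak identity as $(\eta U)_{\bar z}=(\eta e_\lambda(r))_z-\eta_ze_\lambda(r)$. The Beurling transform of the cut-off $\eta e_\lambda(r)$, minus the Cauchy transform of the cut-off $\eta_ze_\lambda(r)$, solves the same $\bar\partial$-equation on a smaller disk and lies in $L^2_{\loc}$. Its difference from $\eta U$ is therefore holomorphic, hence smooth, so $U\in L^2_{\loc}$.

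That is already enough. Your $G$ then lies in $W^{1,2}_{\loc}$, so $\mathcal F\in W^{1,2}_{\loc}$, and the difference-quotient estimate gives $U\in W^{1,2}_{\loc}$. No prior $L^q$ bound with $q>2$ is needed.
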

\begin{proof}
The first gain of integrability comes from the pole itself:
\begin{equation}\label{rational-square-energy-stress}
 \frac{e_\lambda(r)^2}{T_\lambda(r)}
 =\frac{2r^{p+1}}{(1-r^2)[p\lambda+(2-p)r^2]}
 \leq\frac{2\lambda^{(p-1)/2}}{p(1-\lambda)}.
\end{equation}
The zero values are interpreted continuously. Since $U\in L^1_{\loc}$ and $|U|=T_\lambda(r)$,
estimate \eqref{rational-square-energy-stress} implies
$e_\lambda(r)\in L^2_{\loc}$.
It is useful to retain the weight in the differentiated stress:
\begin{equation}\label{rational-weighted-stress-distribution}
 (\eta U)_{\bz}
   =(\eta e_\lambda(r))_z-\eta_z e_\lambda(r).
\end{equation}
Choose a cutoff equal to one on a smaller disk. A Beurling transform
of the cut-off function $\eta e_\lambda(r)$, minus a Cauchy
transform of the cut-off function $\eta_z e_\lambda(r)$, solves
the same $\bar\partial$ equation there and belongs locally to
$L^2$. Its difference from $\eta U$ is a holomorphic distribution,
and hence a smooth function. Therefore $U\in L^2_{\loc}$.
No strong $L^1$ estimate for a singular integral is used.

Put $L=\log\eta$ and take a local correction $G$ satisfying
$G_{\bz}=L_{\bz}U$, with $G\in W^{1,2}_{\loc}$. The compatible
derivatives
\[
 \mathcal F_z=U+G,\qquad
 \mathcal F_{\bz}=e_\lambda(r)
      =\mathcal B_\lambda(|\mathcal F_z-G|)
\]
define a $W^{1,2}_{\loc}$ potential. By
Lemma~\ref{rational-fixed-potential-contraction}, the radial
constitutive map has a global Lipschitz constant $k<1$.
Localised difference quotients, as in
Proposition~\ref{forward-regularity-proposition}, give
$\mathcal F\in W^{2,2}_{\loc}$ and $U\in W^{1,2}_{\loc}$.

The Sobolev chain rule now gives
\eqref{forward-stress-beltrami}, with $\mathcal B$ replaced by
$\mathcal B_\lambda$, and coefficient norm at most $k/(2-k)<1$.
Since $U\in W^{1,2}_{\loc}$ in two dimensions, its lower-order
term belongs to every finite local $L^q$. Local higher integrability
therefore gives $U\in W^{1,s}_{\loc}$ for some $s>2$.
In particular $U$ is H\"older continuous and locally bounded.
Because $T_\lambda(r)\to\infty$ at the pole, this already proves
the strict local gap from $\sqrt\lambda$.  

Schauder bootstrapping and reconstruction proceed exactly as in
\eqref{forward-stress-reconstruction}. To include the conformal
set, put $\gamma=1/(p-1)>1$. Near zero,
\[
 T_\lambda^{-1}(t)
   =(2\lambda/p)^\gamma t^\gamma B(t^{2\gamma}),
 \qquad B(0)=1,
\]
where $B$ is real analytic near zero. The coefficient functions in
the stress equation are locally Lipschitz functions of $U$, including
at zero, since $\mathcal B_\lambda'(t)=O(t^\gamma)$.
Thus $U\in C^{1,\alpha}_{\loc}$ for every $\alpha<1$, and both
\[
 \mu_f=\frac{T_\lambda^{-1}(|U|)}{|U|}\overline U,
 \qquad r=T_\lambda^{-1}(|U|)
\]
belong to $C^{1,\alpha}_{\loc}$ for the exponents in the statement.
The linear Beltrami equation gives $f\in C^{2,\alpha}_{\loc}$;
the nonvanishing-Jacobian theorem for a homeomorphic solution with
H\"older coefficient, as used in
Proposition~\ref{forward-regularity-proposition}, gives $J_f>0$.
Smooth bootstrap applies where $U\ne0$.
\end{proof}

\subsection{Truncation and selection of regular $\lambda$ minimisers}
\label{rational-truncation-selection}
The pole in the $\lambda$-density prevents a direct appeal to the
inner-variation equation for a weak minimiser. We remove this obstacle
by an increasing approximation with polynomial growth in distortion.
The selection is made in the parameter before taking the truncation
limit. This process produces at least one regular minimiser for almost every
admissible parameter; this is enough for us to obtain a sequence tending to one.

\begin{theorem}[Regular $\lambda$-minimisers at almost every parameter]
\label{selected-rational-regularity}
Let $p_{\mathrm c}<p<2$. Fix a quasisymmetric inverse trace $\beta$
and a quasiconformal competitor $h_*$ with
$\E_{p,\lambda_0}(h_*)<\infty$ for some $0<\lambda_0<1$.
There is a full-measure set $\Lambda\subset(\lambda_0,1)$ such that,
for every $\lambda\in\Lambda$, the inverse energy has a
minimiser $h_\lambda$ on $\mathcal M_\beta$ with the following
properties. Its Hopf differential is holomorphic, its inverse
$f_\lambda$ satisfies the distributional forward variation equation,
and both maps are $C^{2,\alpha}_{\loc}$ diffeomorphisms for
\[
 0<\alpha<\min\left\{1,\frac{2-p}{p-1}\right\}.
\]
They are smooth away from their conformal sets. For each compact
$V\Subset\D$ there is a strict local gap
$\sup_V|\mu_{f_\lambda}|<\sqrt\lambda$. 
\end{theorem}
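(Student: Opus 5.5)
The plan follows the outline given just before the theorem. We replace the pole of $e_\lambda$ by a density of polynomial growth in the distortion. We choose the parameter $\lambda$ before letting the truncation go, so that truncated minimisers converge strongly. The weak forward stress identity of Lemma~\ref{rational-finite-stress-regularity} is then obtained in the limit, and that lemma supplies all the regularity.

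\emph{Truncation.} For $N\geq1$ let $\chi_{\lambda,N}$ be convex, increasing and finite everywhere, with $\chi_{\lambda,N}\uparrow\chi_\lambda$ as $N\to\infty$. We take it to agree with $\chi_\lambda$ on $u\leq u_\lambda-1/N$, continued affinely, or by a high power, beyond that level. Set
\[
 W_{p,\lambda,N}(A)=\det A\,\chi_{\lambda,N}\bigl(W_p(A)/\det A\bigr),
\]
extended by the closed perspective at $\det A=0$. As in the proof of Proposition~\ref{rational-lambda-existence}, this density is polyconvex and lies between $W_p/\lambda$ and $W_{p,\lambda}$. If the continuation beyond $u_\lambda-1/N$ is a high power, the density also excludes rank one at finite energy. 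Lemma~\ref{weighted-lsc} and Proposition~\ref{monotone-compactness} then give a minimiser $h_{\lambda,N}\in\mathcal M_\beta$. Because the density has finite stress growth, compactly supported changes of domain and forward target variations are admissible in both directions. This yields a holomorphic truncated Hopf differential and the truncated forward stress identity. The latter is the weak equation of Lemma~\ref{rational-finite-stress-regularity}, with $T_{\lambda,N}$ and $e_{\lambda,N}$ in place of $T_\lambda$ and $e_\lambda$. For fixed $N$, the finite stress growth together with the argument of Proposition~\ref{forward-regularity-proposition} should already make $h_{\lambda,N}$ quasiconformal and regular.

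\emph{Selection.} Let $m_N(\lambda)$ be the truncated minimum and $m(\lambda)=\lim_N m_N(\lambda)$. Both are monotone in $\lambda$, so $m$ is differentiable at almost every $\lambda\in(\lambda_0,1)$. Call the resulting full-measure set of differentiability points $\Lambda$. Fix $\lambda\in\Lambda$. The derivative of the density in $\lambda$ is
\[
 \partial_\lambda e_\lambda=-\frac{r^p}{(\lambda-r^2)^2},
\]
and its integral is exactly the excess stress term. A finite-difference comparison in $\lambda$ at a differentiability point gives a bound on $\int\eta\,T_{\lambda,N}(r)\,J$ that is uniform in $N$. More precisely, comparing $m_N(\lambda\pm\delta)$ against $h_{\lambda,N}$ and letting $N\to\infty$ before $\delta\to0$ gives a uniform local $L^1$ bound on the stresses $U_N$. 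This control of the stress near the pole is what bare quasiconformality fails to provide.

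\emph{Passage to the limit and regularity.} By Corollary~\ref{strong-monotone-minimising}, the maps $h_{\lambda,N}$ converge, uniformly and strongly in $W^{1,2}_{\loc}$ along a subsequence, to a minimiser $h_\lambda$ of $\E_{p,\lambda}$. That argument applies with $G_p$ and the convex outer profile, and the minimiser is quasiconformal by Proposition~\ref{rational-lambda-existence}. The $L^1$ stress bound gives equi-integrability of $U_N$ on compact sets. Together with almost-everywhere convergence of the Beltrami coefficients, this lets us pass to the limit in the truncated stress identity to obtain the weak identity for $f_\lambda=h_\lambda^{-1}$. Lemma~\ref{rational-finite-stress-regularity} then gives the $C^{2,\alpha}_{\loc}$ diffeomorphism property, smoothness off the conformal set, and the strict local gap $\sup_V|\mu_{f_\lambda}|<\sqrt\lambda$. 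Holomorphy of the inverse Hopf differential follows by transporting the equation, or by applying Theorem~\ref{hopf} to the now-regular $h_\lambda$. The inverse-function theorem transfers the regularity to $h_\lambda$.

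The main obstacle is the selection step. We need uniform control of the stress near the pole for the truncated minimisers, and equi-integrability strong enough to pass to the limit in the nonlinear term $e_{\lambda,N}(r_N)$. The first thing to try is the envelope (Danskin-type) derivative of $m_N$ in $\lambda$, combined with Fatou's lemma; this is why the result is stated only for almost every $\lambda$.
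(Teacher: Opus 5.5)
Your architecture (truncate, select $\lambda$, pass to the limit, finish with Lemma~\ref{rational-finite-stress-regularity}) is the paper's. However, the two central steps do not work as you wrote them.

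\textbf{The selection step.} Taking $\Lambda$ to be the differentiability set of $m=\lim_N m_N$, and letting $N\to\infty$ before $\delta\to0$, yields nothing.
\begin{itemize}
\item For fixed $\delta$, the quotient $\delta^{-1}\int\eta(h_N)\,[W_{\lambda,N}-W_{\lambda+\delta,N}](Dh_N)\,dA$ is already bounded by $\delta^{-1}m_N(\lambda)\leq M/\delta$, because the density at $\lambda+\delta$ is nonnegative. So it carries no information near the pole.
\item The two limits cannot be exchanged, since $m_N'$ need not converge to $m'$.
\item The comparison at $\lambda-\delta$ gives an inequality in the wrong direction.
\end{itemize}
The pole information appears only as $\delta\to0$ for a fixed map, so you need differentiability of each $m_N$. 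At such points $\int\eta(h_N)J_{h_N}D_{\lambda,N}(r_N)\,dA\leq-m_N'(\lambda)$. Fatou in $\lambda$, in the form $\int_a^b\liminf_N(-m_N')\,d\lambda\leq M$, then gives for a.e.\ $\lambda$ a $\lambda$-dependent subsequence $N_j$ with $\sup_j(-m_{N_j}'(\lambda))<\infty$. This is the argument you list only as ``the first thing to try''; it must replace your selection paragraph.

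To turn this into a bound on the Hopf differentials you also need a pointwise majorant $T_{\lambda,N}\leq C(1+e_{\lambda,N})+(1-\lambda)D_{\lambda,N}$, uniform in $N$. The paper's geometric truncation $e_{\lambda,N}=\sum_{n\leq N}(1-\lambda)^n r^p/(1-r^2)^{n+1}$ is chosen so that the exact identity \eqref{truncated-stress-identity} supplies this majorant and each $m_N$ is monotone in $\lambda$. Your $\lambda$-dependent cutoff at $u_\lambda-1/N$ with a power continuation has neither property checked.

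\textbf{The limit step.} Here you work with forward stresses $U_N$ and a truncated forward identity, asserting that $h_{\lambda,N}$ is quasiconformal and regular. Nothing gives this. A minimiser in $\mathcal M_\beta$ of a polynomial-growth density is a priori only monotone; the paper notes these maps are usually not homeomorphisms. Proposition~\ref{forward-regularity-proposition} assumes quasiconformality. In addition, a uniform $L^1$ bound does not give equi-integrability, so it cannot pass the stress term to the limit.

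The missing idea is to stay on the source side and use holomorphy.
\begin{itemize}
\item The target variation $h_N\mapsto(\id+t\varphi)\circ h_N$ gives \eqref{truncated-target-variation}. This involves only the holomorphic $\Phi_N$, the unimodular phase $\overline{(h_N)_w}/(h_N)_w$, and $J_{h_N}e_{\lambda,N}(r_N)$.
\item A uniform $L^1_{\loc}$ bound on the holomorphic $\Phi_{N_j}$ upgrades, by the mean-value inequality and Montel, to locally uniform convergence.
\item The phase term then passes by dominated convergence, and the energy term by Scheff\'e's lemma using $m_N(\lambda)\uparrow m(\lambda)=\E_{p,\lambda}(h)$.
\end{itemize}
Only after the limit, when $h$ is known to be a quasiconformal homeomorphism, does the change of variables give $U\in L^1_{\loc}$ and the identity \eqref{selected-forward-stationarity} needed for Lemma~\ref{rational-finite-stress-regularity}.
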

\begin{proof}
\emph{The finite truncations.}
Put $\delta=1-\lambda$ and, for $N\geq1$, define
\begin{equation}\label{rational-geometric-truncation}
 e_{\lambda,N}(r)
 =\sum_{n=0}^N\delta^n\frac{r^p}{(1-r^2)^{n+1}},
 \qquad 0\leq r<1.
\end{equation}
As $N\to\infty$, these densities increase to $e_\lambda(r)$
for $r^2<\lambda$, and to $+\infty$ for $r^2\geq\lambda$.
For $q_n(r)=r^p/(1-r^2)^{n+1}$ and $e=q_0$, the quotient
\begin{equation}\label{truncation-convexity-quotient}
 \frac{q_n'(r)}{e'(r)}
 =(1-r^2)^{-n}
   \frac{p+(2n+2-p)r^2}{p+(2-p)r^2}
 =(1-r^2)^{-n}\left(1+\frac{2nr^2}{p+(2-p)r^2}\right)
\end{equation}
is nondecreasing. Thus $q_n\circ e^{-1}$ is increasing and convex
on $[0,\infty)$; extend it linearly, with slope one, to negative
arguments. The perspective construction in
Proposition~\ref{rational-lambda-existence}, using $G_p-c_pJ$,
proves polyconvexity of each inverse summand $Jq_n(r)$.
For $n\geq1$ its closed extension is infinite at nonzero rank-one
matrices and zero at the zero matrix.

Let $\E_{\lambda,N}$ denote the corresponding weighted inverse
functional and $m_N(\lambda)$ its minimum on $\mathcal M_\beta$.
The direct method used above applies and $\E_{\lambda,N}\geq\E_p$
gives compactness, and the competitor gives
\begin{equation}\label{truncated-minimum-bound}
 0\leq m_N(\lambda)\leq\E_{p,\lambda_0}(h_*)=:M
 \qquad(\lambda\geq\lambda_0).
\end{equation}
Write $h_N$ for any such minimiser at the parameter under consideration.
These intermediate maps will not usually be homeomorphic.

At a fixed $N$, the density has polynomial growth in $\K$.
Small smooth changes of domain preserve finite energy. On their
compact support the differentiated integrand is bounded by
$C_{N,\lambda}\eta(h_N)[J_{h_N}e_{\lambda,N}(r_N)+J_{h_N}]$,
where $r_N=|(h_N)_{\bw}|/|(h_N)_w|$ and $r_N=0$ when $Dh_N=0$.
The bound near $r_N=0$ follows from $p>1$; near $r_N=1$ it follows
from the finite polynomial growth. Compact containment controls the
weight. Both signs of the variation are therefore admissible and
differentiation is justified. The Hopf expression
\begin{equation}\label{truncated-hopf-expression}
 \Phi_N=\eta(h_N)J_{h_N}T_{\lambda,N}(r_N)
                  \frac{\overline{\mu_{h_N}}}{r_N},
 \qquad T_{\lambda,N}=\frac{1-r^2}{2}e_{\lambda,N}'
\end{equation}
is holomorphic. Products at $r_N=0$ and $Dh_N=0$ are zero.
The same growth estimate justifies compactly supported smooth
changes of target.

\emph{Selection with bounded stress.}
Set $D_{\lambda,N}=-\partial_\lambda e_{\lambda,N}\geq0$.
Differentiating the finite geometric sum gives the exact identity
\begin{equation}\label{truncated-stress-identity}
 T_{\lambda,N}(r)
 =\frac{p+(2-p)r^2}{2r}\,e_{\lambda,N}(r)
       +r(1-\lambda)D_{\lambda,N}(r).
\end{equation}
On every compact parameter interval $[a,b]\subset(\lambda_0,1)$,
\begin{equation}\label{truncated-stress-majorant}
 T_{\lambda,N}(r)
 \leq C_{p,a}\bigl(1+e_{\lambda,N}(r)\bigr)
           +(1-\lambda)D_{\lambda,N}(r),
\end{equation}
uniformly in $N$. Indeed, for $r\geq\sqrt a/2$ the first
coefficient in \eqref{truncated-stress-identity} is bounded. For
$r\leq\sqrt a/2$, the ratio $(1-\lambda)/(1-r^2)$ is bounded
strictly below one, so the first term is bounded by $Cr^{p-1}$.

The functions $m_N$ are decreasing in $\lambda$. At a differentiability
point, comparison with the same minimiser at $\lambda+t$ gives
\begin{equation}\label{truncated-envelope-estimate}
 \int_\D\eta(h_N)J_{h_N}D_{\lambda,N}(r_N)\area
 \leq -m_N'(\lambda).
\end{equation}
The differentiation under this particular integral is legitimate as
for fixed $N$ and $\lambda<1$, the parameter derivative is bounded
by $N e_{\lambda,N}/(1-\lambda)$.
By \eqref{truncated-minimum-bound},
$\int_a^b(-m_N')\,d\lambda\leq M$. Fatou's lemma implies
\[
 \int_a^b\liminf_{N\to\infty}(-m_N'(\lambda))\,d\lambda\leq M.
\]
After discarding the null set where some $m_N$ is not differentiable,
and exhausting the parameter interval, we obtain a full-measure
set $\Lambda$ such that each $\lambda\in\Lambda$ admits a subsequence
$N_j\to\infty$ with $\sup_j[-m_{N_j}'(\lambda)]<\infty$.

\medskip
Now fix such a parameter and subsequence.

For $V\Subset\D$, the sets $h_N(V)$ lie in a common compact subset
of $\D$, by Proposition~\ref{monotone-compactness}. Since
$\int_\D J_{h_N}\area=\pi$, equations
\eqref{truncated-stress-majorant}--\eqref{truncated-envelope-estimate}
give
\begin{equation}\label{selected-truncated-hopf-bound}
 \sup_j\int_V|\Phi_{N_j}|\area<\infty.
\end{equation}
Holomorphic compactness yields a locally uniformly convergent
subsequence, with holomorphic limit $\Phi_\lambda$.

\emph{Energy convergence and strong convergence.}
Let $m(\lambda)$ be the $\lambda$-regularised minimum from
Proposition~\ref{rational-lambda-existence}. The numbers
$m_N(\lambda)$ increase and are at most $m(\lambda)$.
Compactness gives $h_N\to h$ uniformly on the closed disk and
weakly in $W^{1,2}$. For each fixed $L$, lower semicontinuity gives
\[
 \E_{\lambda,L}(h)\leq\liminf_{N\to\infty}
                         \E_{\lambda,L}(h_N)
                  \leq\lim_{N\to\infty}m_N(\lambda).
\]
Letting $L\to\infty$ and comparing with the $\lambda$ minimum shows
\begin{equation}\label{truncated-energy-identification}
 m_N(\lambda)\uparrow m(\lambda)=\E_{p,\lambda}(h).
\end{equation}
In particular $h$ is quasiconformal and homeomorphic.

Write $\E_{\lambda,N}=\E_p+\mathcal R_N$, where $\mathcal R_N$
is the sum of the terms with $n\geq1$. These are nonnegative
polyconvex functionals increasing to
$\mathcal R=\E_{p,\lambda}-\E_p$ on the finite-energy class.
Applying lower semicontinuity separately to $\E_p$ and to each
fixed partial sum of $\mathcal R_N$, and using
\eqref{truncated-energy-identification}, gives
\begin{equation}\label{truncated-base-energy-convergence}
 \E_p(h_N)\longrightarrow\E_p(h).
\end{equation}
The weighted Jacobian convergence and the convexity-defect argument
in Corollary~\ref{strong-monotone-minimising} now imply
\begin{equation}\label{truncated-strong-convergence}
 Dh_N\longrightarrow Dh\quad\hbox{in }L^2_{\loc}.
\end{equation}
This is the Radon--Riesz mechanism of \cite{MYRadonRiesz}; the
explicit convexification verifies it here without imposing a
different power-growth hypothesis on the density.

Pass to a subsequence with almost-everywhere derivative convergence.
Since $h$ is quasiconformal, $J_h>0$ almost everywhere, and
$r_h^2<\lambda$ almost everywhere by its finite energy.
The profiles $e_{\lambda,N}$ and their first derivatives converge
uniformly on every closed subinterval below the pole. Consequently
\eqref{truncated-hopf-expression} converges almost everywhere to
the $\lambda$ Hopf expression of $h$, identifying it with
$\Phi_\lambda$. Moreover, the nonnegative densities
\begin{equation}\label{truncated-density-Lone}
 \eta(h_N)J_{h_N}e_{\lambda,N}(r_N)
 \longrightarrow \eta(h)J_he_\lambda(r_h)
 \quad\hbox{in }L^1(\D).
\end{equation}
Indeed they converge almost everywhere, and their integrals
converge by \eqref{truncated-energy-identification} if we once again apply
Scheff\'e's lemma.

\emph{Passing the forward variation without differentiating a weak inverse.}
For $\varphi\in C_c^\infty(\D,\IC)$, the target variation
$h_N\mapsto(\id+t\varphi)\circ h_N$ gives
\begin{equation}\label{truncated-target-variation}
 \int_\D\Phi_N\frac{\overline{(h_N)_w}}{(h_N)_w}
                 \varphi_{\bz}(h_N)\area
 +\int_\D J_{h_N}e_{\lambda,N}(r_N)
                 (\eta\varphi)_z(h_N)\area=0.
\end{equation}
Set the phase to zero when $(h_N)_w=0$. To check the sign, if
$a=(h_N)_w$ and $b=(h_N)_{\bw}$, then target composition gives
$\dot a=\varphi_z a+\varphi_{\bz}\bar b$ and
$\dot b=\varphi_z b+\varphi_{\bz}\bar a$.
The distortion contribution is
$2T_{\lambda,N}\operatorname{Re}(\varphi_{\bz}\bar b/a)/r_N$.

The supports in \eqref{truncated-target-variation} lie in one
compact source set for all sufficiently large $N$: this follows
from uniform convergence to the proper homeomorphism $h$.
On this compact set, the first integral passes to the limit
by dominated convergence and $\Phi_N$ and $h_N$ converge locally
uniformly, while
\[
 \frac{\overline{(h_N)_w}}{(h_N)_w}
 \longrightarrow \frac{\overline{h_w}}{h_w}
 \quad\hbox{almost everywhere},
 \qquad
 \left|\frac{\overline{(h_N)_w}}{(h_N)_w}\right|\leq1,
\]
with the quotient defined as zero when $(h_N)_w=0$.
The second passes by \eqref{truncated-density-Lone}.
Write $f=h^{-1}$, $r=|\mu_f|$, and
\[
 U=T_\lambda(r)\frac{\overline{\mu_f}}r,\qquad U=0\text{ at }r=0.
\]
The quasiconformal change of variables and inverse derivative
formula give
\[
 (\eta U)(h)J_h=-\Phi_\lambda\frac{\overline{h_w}}{h_w},
 \qquad
 \int_A\eta|U|\area=\int_{h^{-1}(A)}|\Phi_\lambda|\area
 \quad(A\Subset\D).
\]
Thus $U\in L^1_{\loc}$ and the limiting identity is
\begin{equation}\label{selected-forward-stationarity}
 \int_\D\eta U\varphi_{\bz}\area
 =\int_\D e_\lambda(r)(\eta\varphi)_z\area.
\end{equation}
Lemma~\ref{rational-finite-stress-regularity} proves the
claimed regularity and local pole separation, including at conformal
points. Applying the inverse-function theorem gives the assertions
for $h$ as well.
\end{proof}
\subsection{Selecting the parameters as $\lambda\uparrow1$}\label{parameter-selection-section}
The moving pole suggests measuring its stress contribution directly. The following observation requires minimisers but does \emph{not} require a differentiable family of minimising maps or convexity of the energy.

\begin{proposition}[Parameter selection]\label{parameter-selection}
Assume that for each $\lambda\in(\lambda_0,1)$ the functional
\[
 \I_{p,\lambda}(f)=\int_\D\eta(z)
 \frac{|\mu_f|^p}{\lambda-|\mu_f|^2}\area
\]
has a homeomorphic minimiser $f_\lambda$ in the prescribed boundary class, and that a common finite-energy competitor as above exists. Put $m(\lambda)=\I_{p,\lambda}(f_\lambda)$. There is a sequence $\lambda_j\uparrow1$ such that
\begin{equation}\label{stress-small}
 (1-\lambda_j)D_{\lambda_j}\longrightarrow0,\qquad
 D_\lambda=\int_\D\eta(z)
 \frac{|\mu_{f_\lambda}|^p}{(\lambda-|\mu_{f_\lambda}|^2)^2}\area.
\end{equation}
\end{proposition}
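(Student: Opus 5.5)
The approach is an envelope argument for the monotone function $m(\lambda)$, in the spirit of the selection step in Theorem~\ref{selected-rational-regularity}. We never differentiate the family $f_\lambda$. Each $f_\lambda$ is only used as a fixed competitor at a nearby parameter.

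First, $m$ is nonincreasing and bounded. For $\lambda<\lambda'$ the density $r^p/(\lambda-r^2)$ decreases pointwise in $\lambda$, so $m(\lambda')\leq\I_{p,\lambda'}(f_\lambda)\leq\I_{p,\lambda}(f_\lambda)=m(\lambda)$. Also $0\leq m(\lambda)\leq\I_{p,\lambda_0}(f_*)=M$ for $\lambda\geq\lambda_0$, by \eqref{lambda-smooth-barrier}. Being monotone, $m$ is differentiable almost everywhere on $(\lambda_0,1)$, and Lebesgue's inequality for monotone functions gives
\[
 \int_{\lambda_0}^1(-m'(\lambda))\,d\lambda\leq m(\lambda_0)-\lim_{\lambda\uparrow1}m(\lambda)\leq M.
\]

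Second, at each differentiability point $\lambda$ we compare $f_\lambda$ with itself at $\lambda+t$, $t>0$:
\[
 m(\lambda+t)\leq\I_{p,\lambda+t}(f_\lambda)
 =m(\lambda)-t\int_\D\eta\,\frac{|\mu_{f_\lambda}|^p}{(\lambda+t-|\mu_{f_\lambda}|^2)(\lambda-|\mu_{f_\lambda}|^2)}\area .
\]
This identity is exact, because $(\lambda-s)^{-1}-(\lambda+t-s)^{-1}=t/[(\lambda-s)(\lambda+t-s)]$. Divide by $t$ and let $t\downarrow0$. The integrand increases monotonically to the $D_\lambda$ integrand, so monotone convergence gives $D_\lambda\leq-m'(\lambda)$. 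This is the one point needing care. We need a one-sided limit and monotone convergence rather than dominated convergence, since $D_\lambda$ could a priori be infinite; the inequality shows it is finite at differentiability points. Note that $f_\lambda$ has finite $\lambda$-energy, so $|\mu_{f_\lambda}|^2<\lambda$ a.e. and no division by zero occurs.

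Third, choose the sequence. Set $g(\lambda)=(1-\lambda)D_\lambda\leq(1-\lambda)(-m'(\lambda))$ almost everywhere. Suppose there were $c>0$ and $\lambda_1<1$ with $g\geq c$ for almost every $\lambda\in(\lambda_1,1)$. Then $-m'\geq c/(1-\lambda)$ there, and its integral over $(\lambda_1,1)$ would diverge, contradicting the bound $M$. Hence $\operatorname*{ess\,liminf}_{\lambda\uparrow1}g(\lambda)=0$. We can then pick differentiability points $\lambda_j\uparrow1$ with $g(\lambda_j)<1/j$, which gives \eqref{stress-small}. If desired, the $\lambda_j$ can also be taken in the full-measure set $\Lambda$ of Theorem~\ref{selected-rational-regularity}, because the set where $g<1/j$ has positive measure near $1$. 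The argument is essentially routine; its only subtle step is the one-sided envelope inequality, which deliberately avoids any regularity of $\lambda\mapsto f_\lambda$.
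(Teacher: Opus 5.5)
Your proof is correct and follows the paper's argument essentially step for step: monotonicity and boundedness of $m$, comparison of the fixed minimiser $f_\lambda$ at the parameter $\lambda+t$ with monotone convergence to get $D_\lambda\leq-m'(\lambda)$, and then integrability of $-m'$ against the divergence of $\int d\lambda/(1-\lambda)$. One small refinement, as in the paper: run the final selection (and the optional intersection with $\Lambda$) on the measurable, minimiser-independent quantity $(1-\lambda)(-m'(\lambda))$ rather than on $g(\lambda)=(1-\lambda)D_\lambda$. The map $\lambda\mapsto D_\lambda$ depends on an arbitrary choice of minimisers and need not be measurable; your ``ess liminf'' and ``positive measure'' phrasing tacitly assumes measurability, although your contradiction argument itself does not need it.
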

\begin{proof}
The admissible classes increase and the integrands decrease with $\lambda$, so $m$ is nonnegative, bounded and decreasing. For $t>0$, comparison with the fixed minimiser $f_\lambda$ gives
\[
 \frac{m(\lambda)-m(\lambda+t)}t
 \geq\int_\D\eta
 \frac{|\mu_{f_\lambda}|^p}
 {(\lambda-|\mu_{f_\lambda}|^2)(\lambda+t-|\mu_{f_\lambda}|^2)}\area.
\]
At almost every differentiability point, monotone convergence as $t\downarrow0$ yields $D_\lambda\leq-m'(\lambda)$. Since $-m'$ is integrable on $(\lambda_0,1)$, a sequence of these points satisfies $(1-\lambda_j)(-m'(\lambda_j))\to0$; otherwise its integral would dominate a positive multiple of $\int d\lambda/(1-\lambda)$. This proves \eqref{stress-small}.
\end{proof}

Here is its precise connection to the Hopf differential. Use the normalisation of Theorem~\ref{hopf}
\[
 \Phi_{p,\lambda}[h]=\eta(h)(1-r^2)^2R_\lambda'(r^2)
                  h_w\overline{h_{\bw}}.
\]
Set $B=\eta(h)|h_w|^2$, $d=\lambda-r^2$, $\delta=1-\lambda$, and $c_r=p+(2-p)r^2$. Then
\begin{align*}
 |\Phi_{p,\lambda}[h]|&=\tfrac12 Br^{p-1}
           (1+\delta/d)^2(c_r-p\delta),\\
 |\Phi_p[h]|&=\tfrac12 Br^{p-1}c_r,\qquad
 \mathfrak d_\lambda(h)=Br^p\frac{1-r^2}{(\lambda-r^2)^2}.
\end{align*}
The two differentials evaluated on the \emph{same map} have the same phase. Since $c_r\leq2$ and $\Psi_\lambda'\geq\Psi_1'$, we obtain, for $r>0$,
\begin{equation}\label{stress-difference}
 |\Phi_{p,\lambda}[h]-\Phi_p[h]|
 \leq\frac2r\,\delta\,\mathfrak d_\lambda(h).
\end{equation}
The change of variables gives $\int\mathfrak d_\lambda(h_\lambda)\area=D_\lambda$. Thus \eqref{stress-small} eliminates the barrier contribution to the Hopf differential on $r\geq r_0>0$, in $L^1$.

On $r<r_0<1$, the coefficient difference is bounded by $C_{p,r_0}\delta Br^{p-1}$ for $\lambda$ sufficiently close to one. The common quasiconformal competitor has fixed quasisymmetric boundary values, and Proposition~\ref{compact-containment} ensures that $h_{\lambda_j}$ send each fixed compact set into a common compact subset of $\D$. Thus $\eta(h_{\lambda_j})$ is uniformly bounded there. The inverse $W^{1,2}$ bound controls $\int B$ on those sets, and gives
\begin{equation}\label{hopf-defect-zero}
 \Phi_{p,\lambda_j}[h_{\lambda_j}]-\Phi_p[h_{\lambda_j}]
       \longrightarrow0\quad\hbox{in }L^1_{\loc}.
\end{equation}
Also $|\Phi_p[h]|\leq B$. If the regularised Hopf differentials are holomorphic, they consequently have a locally uniformly convergent subsequence to a holomorphic differential. This provides a concrete possible form of the desired equation convergence without a uniform quasiconformal constant.

For a general weak barrier family the remaining gap is identification: weak $W^{1,2}$ convergence of $h_{\lambda_j}$ does not by itself give $\Phi_p[h_{\lambda_j}]\to \Phi_p[h]$. The parameter-selection lemma controls the extra barrier stress, but one still needs enough derivative convergence to identify its limit. For the regular family of Theorem~\ref{selected-rational-regularity} this is supplied by the scalar estimates of Section~\ref{scalar-equations}, as follows.
\subsection{A forward Dirichlet bound suffices for the homeomorphic limit}
A local forward $W^{1,2}$ bound gives the modulus of continuity needed for the topological limit; compare \cite[Theorem 6.2]{MYLp}.

\begin{proposition}[Homeomorphic limit from local forward Dirichlet bounds]\label{forward-Dirichlet-homeomorphic-limit}
Let $f_j,h_j$ be inverse quasiconformal homeomorphisms of the closed disk, with $h_j|_{\Sone}=\beta$. Suppose $h_j\to h$ uniformly on the closed disk, where $h$ is a continuous monotone surjection with trace $\beta$ and $h^{-1}(\Sone)=\Sone$. The inverse compactness and our NoHairs argument shows these hypotheses hold for the bounded-energy $\lambda$ family. If
\begin{equation}\label{forward-local-Dirichlet-target}
 \sup_j\int_U\|Df_j\|^2\area<\infty
 \qquad\hbox{for every }U\Subset\D,
\end{equation}
then $h$ is a homeomorphism of the closed disk,
$f_j\to f=h^{-1}$ uniformly there, and $f\in W^{1,2}_{\loc}(\D)$.
No uniform bound for the quasiconformal constants is required here.

A sufficient inverse-side condition for \eqref{forward-local-Dirichlet-target} is
\begin{equation}\label{inverse-distortion-Lone-target}
 \sup_j\int_V\K_{h_j}\area<\infty
 \qquad\hbox{for every }V\Subset\D.
\end{equation}
\end{proposition}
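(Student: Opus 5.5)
The plan is to get equicontinuity of $f_j$ on compact subsets from the forward Dirichlet bound, extract a limit $f$, and prove $h\circ f=\id_\D$. Lemma~\ref{local-to-global} then makes $h$ a homeomorphism with inverse $f$.

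\emph{Step 1: equicontinuity.} Each $f_j$ is a homeomorphism. For a $W^{1,2}$ homeomorphism (indeed any monotone Sobolev map) on a disk $B_{2\rho}(z_0)\Subset\D$, the Courant--Lebesgue/Lebesgue oscillation lemma gives, for each $\rho>0$ and $\delta<\rho$, some radius $s\in(\delta,\rho)$ with
\[
 \operatorname{osc}_{\partial B_s}f_j\le C\Bigl(\int_{B_{\rho}}\|Df_j\|^2\Bigr)^{1/2}\big/\sqrt{\log(\rho/\delta)}.
\]
Because $f_j$ is a homeomorphism, the maximum principle for monotone maps bounds the oscillation on $B_\delta$ by the oscillation on $\partial B_s$. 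Together with \eqref{forward-local-Dirichlet-target}, this gives a modulus of continuity of the form \eqref{no-hairs-log-modulus} on every compact set, uniform in $j$.

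\emph{Step 2: no escape to the boundary, and the identity.} By Arzel\`a--Ascoli and a diagonal argument, a subsequence $f_j\to f$ locally uniformly on $\D$. We must show that $f$ maps into $\D$. Suppose instead $f_j(z_j)\to\zeta\in\Sone$ with $z_j\to z\in\D$. Then $z_j=h_j(f_j(z_j))$. Since $h_j\to h$ uniformly on $\overline\D$ and $h$ is continuous, $z_j\to h(\zeta)=\beta(\zeta)\in\Sone$, a contradiction. Hence $f(\D)\subset\D$, and passing to the limit in $h_j\circ f_j=\id$ gives $h\circ f=\id_\D$. Lemma~\ref{local-to-global} (second case, using $h^{-1}(\Sone)=\Sone$ and continuity of $f$) now makes $h$ a homeomorphism of $\overline\D$ with $f=h^{-1}$ on $\D$.

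\emph{Step 3: convergence and regularity.} The limit $f$ is unique, so the whole sequence converges. Uniform convergence on $\overline\D$ follows from uniform convergence of inverses of homeomorphisms: if $h_j\to h$ uniformly on $\overline\D$ and $h$ is a homeomorphism of the compact space $\overline\D$, then $h_j^{-1}\to h^{-1}$ uniformly, by a standard compactness argument. The bound $f\in W^{1,2}_{\loc}$ follows from weak compactness and lower semicontinuity of the Dirichlet integral.

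\emph{The sufficient condition.} For quasiconformal maps, the inverse change of variables gives
\[
 \int_U\|Df_j\|^2\area=\int_U\K_{f_j}J_{f_j}\area=\int_{f_j(U)}\K_{h_j}\area,
\]
since $\K_{f_j}\circ h_j=\K_{h_j}$. Step~2's argument, applied with $h_j$ uniformly convergent and $h^{-1}(\Sone)=\Sone$, shows that the sets $f_j(U)$ lie in a common compact set $V$. This part needs care because equicontinuity of $f_j$ is not yet available. Suppose points $z_j\in U$ had $f_j(z_j)\to\Sone$. Then $z_j=h_j(f_j(z_j))$ would approach $h(\Sone)=\Sone$, which is impossible. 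So \eqref{inverse-distortion-Lone-target} implies \eqref{forward-local-Dirichlet-target}.

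I expect the main obstacle to be Step~1, specifically making the oscillation lemma uniform. That lemma needs the compact-containment statement above, so that $f_j(B_\rho)$ stays inside a fixed compact set, together with the monotonicity argument for homeomorphisms.
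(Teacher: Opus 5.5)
Your proposal is correct and follows essentially the same route as the paper. The paper also uses Courant--Lebesgue equicontinuity for the homeomorphisms $f_j$, then passes to the limit in $h_j\circ f_j=\id$ with $h(\Sone)=\Sone$ to get $f(\D)\subset\D$, and applies Lemma~\ref{local-to-global} to make $h$ a homeomorphism. It then uses the modulus-of-continuity bound for uniform convergence of the inverses, and the change of variables $\int_U\|Df_j\|^2=\int_{f_j(U)}\K_{h_j}$ with compact containment of $f_j(U)$ for the sufficient condition.

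One correction to the obstacle you anticipated: the oscillation estimate in Step~1 does not need compact containment. It is a source-side Dirichlet bound for disk-valued maps, and compact containment is used only for the inverse-side sufficient condition.
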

\begin{proof}
For $B_r(z_0)\subset B_R(z_0)\Subset\D$, the Courant--Lebesgue circle argument for a Sobolev homeomorphism gives
\begin{equation}\label{forward-logarithmic-modulus}
 \bigl(\operatorname{diam}f_j(B_r(z_0))\bigr)^2
 \leq\frac{C}{\log(R/r)}
       \int_{B_R(z_0)}\|Df_j\|^2\area ,
 \qquad 0<r<R,
\end{equation}
with an absolute constant $C$. Indeed, for almost every $s\in(r,R)$ the image of $\partial B_s$ is a rectifiable Jordan curve enclosing $f_j(B_r)$. Its length controls the latter diameter; Cauchy--Schwarz bounds its squared length by $2\pi s$ times the tangential Dirichlet integral on $\partial B_s$. Integrating $ds/s$ proves \eqref{forward-logarithmic-modulus}, with the harmless factor for our normalised Hilbert--Schmidt norm.

Condition~\eqref{forward-local-Dirichlet-target} therefore gives local equicontinuity. Since the maps are disk-valued, a subsequence converges locally uniformly and weakly in $W^{1,2}_{\loc}$ to a continuous $f:\D\to\overline\D$. Uniform convergence of $h_j$ and continuity of $h$ give
\[
 h(f(z))=\lim_jh_j(f_j(z))=z\qquad(z\in\D).
\]
Because $h(\Sone)=\Sone$, this identity also implies $f(\D)\subset\D$. Lemma~\ref{hair-homeomorphism} now excludes interior hairs, and the assumed NoHairs conclusion excludes the remaining boundary hairs. Thus $h$ is a homeomorphism and $f=h^{-1}$ throughout the disk.

The inverse extends continuously to the closed disk. If $\omega_f$ denotes its modulus of continuity there, then
\[
 \|f_j-f\|_{L^\infty(\overline\D)}
 \leq\omega_f\!\left(\|h_j-h\|_{L^\infty(\overline\D)}\right).
\]
This follows by setting $w=f_j(z)$ and comparing $f(h(w))$ with $f(z)$. This establishes convergence of the whole sequence on the closed disk once $h_j\to h$ has been fixed. In particular, no global forward Dirichlet bound or separate boundary equicontinuity estimate for $f_j$ is needed.

Finally, the quasiconformal inverse formula and change of variables give
\begin{equation}\label{forward-energy-inverse-distortion}
 \int_U\|Df_j(z)\|^2\,dA(z)
 =\int_{f_j(U)}\K_{h_j}(w)\,dA(w).
\end{equation}
For fixed $U\Subset\D$, the sets $f_j(U)$ lie in a common relatively compact subset of $\D$, even before homeomorphism of the limit is known. Otherwise there would be $z_j\in\overline U$ and $w_j=f_j(z_j)\to w\in\Sone$. After a subsequence $z_j\to z\in\overline U$, and uniform convergence would give $h(w)=z$, contradicting $h(\Sone)=\Sone$. Thus \eqref{inverse-distortion-Lone-target} implies \eqref{forward-local-Dirichlet-target}. For the whole disk, \eqref{forward-energy-inverse-distortion} reduces to $\mathcal D(f_j)=\int_\D\K_{h_j}\area$.
\end{proof}

Theorem~\ref{regular-rational-no-interior-hairs} below obtains
\eqref{forward-local-Dirichlet-target} for the regular $\lambda$ family
with convergent Hopf data in the strict upper range. It first obtains
local quasiconformal bounds from the scalar equations and then uses
\eqref{fibre-patch-forward-dirichlet}. Thus this compactness proposition
provides the continuous inverse needed in the subsequent regularity
argument. For other approximation families the forward Dirichlet
bound, or its sufficient inverse form
\eqref{inverse-distortion-Lone-target}, remains a separate analytic
input. The compactness proposition itself is independent of the
ellipticity threshold and uses no variational equation.

\subsection{Common Hopf patches and removing compact interior fibres of monotone mappings} 
\label{common-hopf-patches}
In this subsection we assume $p_{\mathrm c}<p\leq2$, the critical exponent will be
considered later. Consider regular $\lambda$ solutions
$h_j=f_j^{-1}$ with $\lambda_j\uparrow1$ and one fixed quasisymmetric
inverse boundary trace $\beta$, and suppose
\begin{equation}\label{fibre-patch-convergence}
 h_j\longrightarrow h\quad\hbox{uniformly on }\overline\D,\qquad
 \Phi_j:=\Phi_{p,\lambda_j}\longrightarrow\Phi
       \quad\hbox{locally uniformly in }\D .
\end{equation}
The limit $h$ is a continuous monotone surjection. We assume the
NoHairs conclusion $h^{-1}(\Sone)=\Sone$ and a common inverse
$W^{1,2}$ bound, as supplied by Proposition~\ref{compact-containment}
for the bounded-energy family. The holomorphic limit $\Phi$ need
not yet be identified with the Hopf expression of $h$.

Here \emph{regular} means that the maps are diffeomorphisms on $|\mu|<\sqrt{\lambda_j}$ in the disk,
their $\lambda$ Hopf differentials are holomorphic, and the classical
scalar equation is valid on every patch where that differential is
nonzero. It suffices that the maps be $C^1$ throughout and have the
regularity required for the scalar calculations off the Hopf zeros.

\begin{lemma}[Trapping the inverse images of a whole fibre]
\label{whole-fibre-trapping}
Let $a\in\D$ and $E=h^{-1}(a)$. If $V\Subset\D$ is any open
neighbourhood of $E$, there are $\delta>0$ and $j_0$ such that
\[
 f_j(\overline{B_\delta(a)})\subset V\qquad(j\geq j_0),
\]
where this ball is Euclidean. There are also bounded simply connected
smooth Jordan domains
$E\subset U_0\Subset U\Subset V$.
\end{lemma}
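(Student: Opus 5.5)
The claim is topological. We use only uniform convergence $h_j\to h$ on $\overline\D$, continuity of $h$, and the NoHairs identity $h^{-1}(\Sone)=\Sone$; neither the Hopf data nor regularity enter. The argument proceeds by contradiction, followed by an elementary construction of the Jordan domains.

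\textbf{Trapping.} First, $E=h^{-1}(a)$ is compact, since $h$ is continuous. It is contained in $\D$, because $h^{-1}(\Sone)=\Sone$ and $a\notin\Sone$. Hence any open $V\supset E$ with $V\Subset\D$ makes sense. Suppose the trapping fails. Then for some subsequence there are $w_j\in\overline{B_{1/j}(a)}$, for $j\geq j_0$, with $z_j=f_j(w_j)\notin V$. After extraction, $z_j\to z\in\overline\D\setminus V$. Since $h_j(z_j)=w_j$, we have
\[
 |h(z)-a|\leq|h(z)-h(z_j)|+\|h-h_j\|_{L^\infty(\overline\D)}+|w_j-a|\longrightarrow0,
\]
by continuity of $h$ on the closed disk and uniform convergence. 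Thus $h(z)=a$, so $z\in E\subset V$, which contradicts $z\notin V$. Notice that $z$ could a priori lie on $\Sone$; this is excluded only because $h(z)=a\in\D$ together with $h(\Sone)=\Sone$. Hence there are $\delta>0$ and $j_0$ with $f_j(\overline{B_\delta(a)})\subset V$ for all $j\geq j_0$. If the quantifier is needed uniformly, we argue by contradiction over pairs $\delta=1/k$, $j\geq k$, exactly as above.

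\textbf{Jordan domains.} The fibre $E$ is connected, by monotonicity, and compact. Take $\epsilon<\operatorname{dist}(E,\partial V)/4$ and let $N$ be the component of the $\epsilon$-neighbourhood of $E$ that contains $E$. Fill in the holes of $N$: adding the bounded complementary components gives a simply connected open set, and it stays inside $V$ only if those holes lie in $V$. This is the delicate point. Since $V$ need not be simply connected, we first shrink $V$ if necessary. For this we use that $\D\setminus E$ is connected. To see it, note that every complementary component of $E$ in $\overline\D$ meets $\Sone$, since otherwise the connected closed fibre would separate, contradicting monotonicity. Indeed, if $h(\Omega)\ni b\neq a$ for a bounded component $\Omega$, then, because $h$ maps $\partial\Omega\subset E$ to $a$, an argument via the maximum principle or degree on the monotone map forces hairs. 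We therefore replace $V$ by the union of $V$ with the bounded complementary components of $E$ relative to $V$, which is still compactly contained in $\D$ provided $E$ has no holes. A cleaner route is to use the standard fact that a nonseparating planar continuum has arbitrarily small simply connected Jordan-domain neighbourhoods. We apply it twice: first choose $U$ with $E\subset U\Subset V$, then $U_0$ with $E\subset U_0\Subset U$. Each domain is obtained by taking a grid of squares of mesh $\epsilon$ covering $E$, the component containing $E$, filling its holes, and smoothing corners.

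The main obstacle is this nonseparation of $E$, needed so that filling holes does not leave $V$. I would prove it as follows. If a bounded component $\Omega$ of $\IC\setminus E$ existed, then $\Omega\subset\D$, and $h$ restricted to $\overline\Omega$ equals $a$ on $\partial\Omega$. Monotonicity, together with $h$ being a uniform limit of homeomorphisms, then forces $h\equiv a$ on $\Omega$: a point mapped to $b\neq a$ would have a fibre separated from $\Sone$ by $E$, while the degree of $h_j$ near $b$ is one, and the preimages $f_j(b)$ must converge into $\Omega$. Such a contradiction can be arranged, but the cheap conclusion is that $\Omega\subset E$ anyway. So $E$ does not separate the plane.
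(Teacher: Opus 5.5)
Your trapping argument is correct. It is the sequential form of the paper's one-line proof: the compact set $h(\overline\D\setminus V)$ has positive distance $d$ from $a$, so $\|h_j-h\|_\infty<d/2$ and $\delta<d/2$ give $f_j(\overline{B_\delta(a)})\subset V$.

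The gap is in the second half. Your construction of the Jordan domains rests on $E$ being nonseparating, and you assert this rather than prove it. The appeal to ``the maximum principle or degree'', the sentence ``such a contradiction can be arranged'', and the ``cheap conclusion that $\Omega\subset E$ anyway'' supply no argument. The facts you do list do not by themselves contradict one another: the fibre over $b\neq a$ lies in $\Omega$, $h_j$ has degree one, and $f_j(b)$ eventually lies in $\Omega$.

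The missing observation is this. Since $h(\partial\Omega)=\{a\}$ and $\IC\setminus\{a\}$ is connected, $\deg(h,\Omega,\cdot)$ is constant off $a$, and it vanishes for $|c|>1$. Once $\|h_j-h\|_\infty<|b-a|$, homotopy invariance gives $\deg(h_j,\Omega,b)=0$. But $f_j(b)\in\Omega$ for large $j$, because its limit points lie in $h^{-1}(b)\subset\Omega$, so this degree is $1$. Alternatively, as the paper notes, extend $h$ across $\Sone$ by a homeomorphism of the exterior to obtain a monotone sphere map, and pull back the connected set $S^2\setminus\{a\}$.

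With nonseparation in hand, citing the cellularity of nonseparating plane continua does complete your route. You must drop the intermediate step of enlarging $V$, though, since $U\Subset V$ is required for the given $V$.

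The idea you miss, which makes all of this planar topology unnecessary, is to push round disks forward by one fixed diffeomorphism. Choose $k\geq j_0$ with $\|h_k-h\|_\infty<\delta/4$, so that $h_k(E)\subset B_{\delta/4}(a)$. Set $U_0=f_k(B_{\delta/2}(a))$ and $U=f_k(B_{3\delta/4}(a))$. These are Jordan domains with $C^1$ boundary curves, which can be smoothed inside disjoint collars if needed. The trapping then gives at once $E\subset f_k(B_{\delta/4}(a))\subset U_0$, $\overline{U_0}\subset U$, and $\overline U\subset f_k(\overline{B_\delta(a)})\subset V$. This is the paper's proof. It also produces domains fixed once and for all, independent of the subsequent sequence, which is how they are used in the common-patch argument that follows.
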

\begin{proof}
The fibre $E$ is nonseparating: extend the disk map across the circle by a homeomorphism of the exterior, obtaining a monotone sphere map. The preimage of the connected complement of $\{a\}$ is connected, so the complement of $E$ is connected.
The compact set $\overline\D\setminus V$ contains no preimage of $a$,
so its image under $h$ has positive distance from $a$. Uniform
convergence proves the first assertion after decreasing $\delta$.
To construct the domains, fix one sufficiently large index $k$.
The trapping assertion puts
$f_k(\overline{B_\delta(a)})$ in $V$, while uniform convergence makes
$h_k(E)\subset B_{\delta/4}(a)$. Thus one may take, for example,
$U_0=f_k(B_{\delta/2}(a))$ and $U=f_k(B_{3\delta/4}(a))$.
If only $C^1$ regularity is assumed, smooth the two Jordan boundaries
inside their disjoint collars. The inclusions persist. The index $k$
is now fixed; these domains do not vary with the subsequent sequence.
\end{proof}

\begin{proposition}[Common patches and bounded heights away from Hopf zeros]
\label{zero-free-fibre-patches}
Suppose $\Phi\not\equiv0$ and that the compact fibre $E=h^{-1}(a)$
contains no zero of $\Phi$. Then a fixed neighbourhood of $a$
supports single-valued scalar functions
$v_j=\operatorname{Im}(\zeta_j\circ f_j)$ with uniformly bounded
height oscillations. On a smaller neighbourhood, $t_j$, $K_{f_j}$
and $|Df_j|_{\mathrm{op}}$ are uniformly bounded for all large $j$.
In particular $E$ is a singleton.
\end{proposition}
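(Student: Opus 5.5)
Fix the domains $E\subset U_0\Subset U\Subset V$ of Lemma~\ref{whole-fibre-trapping}, with $V$ chosen to contain no zero of $\Phi$. This is possible because $E$ is compact and the zeros of $\Phi\not\equiv0$ are discrete. Locally uniform convergence $\Phi_j\to\Phi$ and Hurwitz's theorem then give $|\Phi_j|\geq c>0$ on $\overline U$ for large $j$. Since $U$ is simply connected, it carries single-valued natural coordinates
\[
 \zeta_j=\int_{w_*}^w\sqrt{\Phi_j},
\]
with branches chosen to converge to a fixed branch $\zeta$ of $\int\sqrt\Phi$. These are uniformly bounded on $\overline U$, and in fact uniformly bi-Lipschitz there, so they are injective on small subdisks. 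By the trapping lemma, $f_j(\overline{B_\delta(a)})\subset U$ for $j\geq j_0$. Hence $v_j=\operatorname{Im}(\zeta_j\circ f_j)$ is defined and single-valued on $B_\delta(a)$, and its oscillation is at most $\sup_j\operatorname{osc}_U\operatorname{Im}\zeta_j\leq C$. This gives the bounded heights.

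Next apply Proposition~\ref{lambda-uniform-interior-gradient} with $\lambda_0\leq\lambda_j\leq1$ on a hyperbolic ball $B_R^\eta(z_0)\subset B_\delta(a)$, for every $z_0\in B_{\delta/2}(a)$ with a fixed $R$. One point needs care. The scalar equation \eqref{lambda-forward-scalar-equation} was derived on patches where $\zeta_j$ is injective, but it is local. Ellipticity and the divergence form only need a local branch of $\zeta_j$ near each image point, and $v_j$ is globally single-valued, so the classical equation holds on all of $B_\delta(a)$. The regularity of Theorem~\ref{selected-rational-regularity} gives $C^2$ off Hopf zeros, and there are none here because $f_j(B_\delta)\subset U$. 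The estimate \eqref{lambda-uniform-height-gradient-bound} then bounds $t_j$ uniformly on $B_{\delta/2}(a)$. Proposition~\ref{scalar-gradient-to-qc} converts this into a uniform bound on $K_{f_j}$ there. The formula in Remark~\ref{lambda-gradient-scope} together with $|\Phi_j(f_j(z))|\geq c$ gives a uniform bound on $|Df_j|_{\mathrm{op}}$ on $B_{\delta/2}(a)$.

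For the singleton assertion, the bound on $|Df_j|_{\mathrm{op}}$ makes the $f_j$ uniformly Lipschitz on $B_{\delta/2}(a)$. By Arzelà--Ascoli a subsequence converges uniformly there to some $g$, and passing to the limit in $h_j\circ f_j=\id$ gives $h\circ g=\id$ near $a$.

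To see that $E=\{g(a)\}$, argue by contradiction. Suppose $w\in E$ with $w\neq g(a)$. Consider the points $z_j=h_j(w)\to h(w)=a$, so that $w=f_j(z_j)$. Equicontinuity of the $f_j$ then gives
\[
 w=\lim_j f_j(z_j)=g(a),
\]
which is a contradiction. Hence $E$ is a singleton.

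The main obstacle is the uniform positivity and consistent branch choice of $\Phi_j$ on a fixed simply connected domain containing all $f_j(B_\delta(a))$. This is exactly what the trapping lemma and Hurwitz supply. A second point to check is that Proposition~\ref{lambda-uniform-interior-gradient} needs only a classical solution, not injectivity of $\zeta_j$.
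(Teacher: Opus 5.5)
Your proposal is correct and follows the paper's proof essentially step for step. It uses a zero-free simply connected common patch from the trapping lemma, normalised primitives $\zeta_j$ of convergent square roots to bound the heights, and the locality of the scalar reduction so that Proposition~\ref{lambda-uniform-interior-gradient} applies without global injectivity of $\zeta_j$. It then converts to bounds on $K_{f_j}$ and $|Df_j|_{\mathrm{op}}$, and combines the uniform Lipschitz bound with $h_j(w)\to a$ to get the singleton.

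The differences are cosmetic:
\begin{itemize}
\item The paper traps into $U_0\Subset U$ to get a clean path-length bound for $\zeta_j$. Your bound on $\overline U$ needs finite inner diameter of $U$, which does hold for these smooth Jordan domains.
\item The paper compares two fibre points directly rather than extracting a limit $g$.
\item Your ``uniformly bi-Lipschitz'' should read ``locally bi-Lipschitz''.
\end{itemize}
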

\begin{proof}
Choose $V\Subset\{\Phi\ne0\}$ containing $E$, and choose
$E\subset U_0\Subset U\Subset V$ by
Lemma~\ref{whole-fibre-trapping}. For large $j$, $\Phi_j$ has no
zeros on $\overline U$. Choose compatible holomorphic square roots
on the simply connected domain $U$, and normalise their primitives
by
\[
 \zeta_j'=\sqrt{\Phi_j},\qquad \zeta_j(w_*)=0
\]
at one fixed $w_*\in U$. The primitives converge locally uniformly
on $U$. More explicitly, there is a finite length $L$ such that
every point of $\overline{U_0}$ can be joined to $w_*$ by a path in
$U$ of length at most $L$. If
$P=\sup_{j\geq j_0}\sup_U|\Phi_j|<\infty$, then
\begin{equation}\label{whole-fibre-height-bound}
 \sup_{U_0}|\zeta_j|\leq L\sqrt P,\qquad
 \operatorname{osc}v_j\leq2L\sqrt P
\end{equation}
wherever $f_j$ takes values in $U_0$.
The finite path-length bound follows by covering $\overline{U_0}$
with finitely many disks compactly contained in $U$ and choosing
one path to each disk.

Now apply our trapping lemma again, with $U_0$ in place of $V$.
That lemma yields a fixed hyperbolic ball $B_{2R}^\eta(a)$ whose closure
is mapped into $U_0$ by $f_j$ for every sufficiently large index $j$.
Although $\zeta_j$ need not be globally one-to-one on $U$, it is
locally conformal there. The scalar reduction is a local
calculation, so the single-valued $v_j$ satisfies
\eqref{lambda-forward-scalar-equation} on this entire fixed ball.
Equation~\eqref{whole-fibre-height-bound} and
Proposition~\ref{lambda-uniform-interior-gradient} give uniform
bounds for $t_j$ on $B_R^\eta(a)$.
The quasiconformal bound follows from
\eqref{scalar-exact-qc-conversion}. Since $|\Phi_j|$ is bounded
away from zero on $\overline{U_0}$, the derivative formula in
Remark~\ref{lambda-gradient-scope} also bounds $Df_j$ there.

If $w_1,w_2\in E$, then $h_j(w_i)\to a$. A fixed smaller Euclidean
ball has a common Lipschitz constant for $f_j$, and hence
\[
 |w_1-w_2|
 =|f_j(h_j(w_1))-f_j(h_j(w_2))|
 \leq C|h_j(w_1)-h_j(w_2)|\longrightarrow0.
\]
Thus $w_1=w_2$.  
\end{proof}

For a nontrivial $\Phi$, the only fibres not yet covered contain a
Hopf zero. Their images form the set
\[
 \Sigma=h(\{w\in\D:\Phi(w)=0\}).
\]
This set is locally finite: if $C\Subset\D$, then $h^{-1}(C)$ is
a compact subset of the disk and contains only finitely many zeros
of $\Phi$. The following estimate allows us to pass across each
point of $\Sigma$ without extending a square-root branch through
the corresponding fibre.

\begin{lemma}[A uniform bound for the curvature term]
\label{lambda-curvature-ratio-bound}
With $p_{\mathrm c}<p\leq2$ and $\lambda\in[\lambda_0,1]$, there is
$D_0=D_0(p,\lambda_0)<\infty$ such that
\begin{equation}\label{lambda-curvature-ratio}
 \frac{t^2\vartheta_{p,\lambda}(t)}{S_{p,\lambda}(t)}
 \leq D_0\qquad(t\geq0).
\end{equation}
\end{lemma}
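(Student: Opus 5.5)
Since $t^2\vartheta_{p,\lambda}(t)\geq0$ and $S_{p,\lambda}(t)=1+\int_0^t s\vartheta_{p,\lambda}(s)\,ds\geq1$ by Proposition~\ref{lambda-uniform-scalar-structure}, the ratio in \eqref{lambda-curvature-ratio} is a continuous nonnegative function of $(t,\lambda)$. I would split into a bounded range $0\leq t\leq T$ and a tail $t\geq T$, with $T=T(p,\lambda_0)$ chosen below.

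On the bounded range I would use only the upper bound $\vartheta_{p,\lambda}\leq1$ from \eqref{lambda-uniform-scalar-bounds} and $S_{p,\lambda}\geq1$. This gives $t^2\vartheta/S\leq T^2$, uniformly in $\lambda$.

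The tail is the substantive step. There I would use the splitting of Proposition~\ref{lambda-scalar-viscosity}, $\mathfrak a(t)=\varepsilon_\lambda t+b(t)$, so that
\[
 \vartheta(t)=\frac{t\mathfrak a'(t)}{\mathfrak a(t)}=\frac{\varepsilon_\lambda t+tb'(t)}{\varepsilon_\lambda t+b(t)},
\]
together with the tail expansions \eqref{lambda-residual-flux-tail}, which hold uniformly for $s=\sqrt\lambda\in[\sqrt{\lambda_0},1]$. These give $tb'(t)\leq 4d_s t^{-2}$ and $b(t)\geq L_s/2$ for $t\geq T$. Two cases then follow.

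\begin{enumerate}[label=\textup{(\alph*)},leftmargin=*]
\item If $\varepsilon_\lambda t\geq 1$, then $\vartheta\leq 1$ and $S(t)\geq 1+\int_{T}^{t}s\vartheta(s)\,ds$. For $s$ in the part of this range with $\varepsilon_\lambda s\geq b(s)$, one has $\vartheta(s)\geq\tfrac12$, so $S(t)\gtrsim t^2-T_\lambda^2$. Comparing with $t^2\vartheta\leq t^2$ bounds the ratio.
\item If $\varepsilon_\lambda t$ is small, the numerator is $t^2\vartheta\approx\varepsilon_\lambda t^2/(L_s)+O(1)$, because $t^2\cdot tb'/b=O(1)$. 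The denominator is at least $1+c\,\varepsilon_\lambda t^2$, from integrating $s\vartheta(s)\geq s\varepsilon_\lambda s/(\varepsilon_\lambda s+b)\geq c\varepsilon_\lambda s$ for $s\in[T,t]$. The ratio is therefore bounded by $C(1+\varepsilon_\lambda t^2)/(1+c\varepsilon_\lambda t^2)$.
\end{enumerate}

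The two cases can be unified by the single estimate
\[
 \vartheta(t)\leq \frac{C_1}{t^2}+\frac{\varepsilon_\lambda t}{\varepsilon_\lambda t+b(t)}.
\]
Monotonicity of $u\mapsto u/(u+b)$ then shows that $\int_0^t s\vartheta(s)\,ds$ dominates a fixed multiple of $t^2$ times the last term, up to $O(\log t)$ errors that are absorbed by the constant $1$ once the $t^{-2}$ term is handled separately.

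I expect the main obstacle to be the uniformity of this last comparison as $\varepsilon_\lambda\to0$, i.e.\ as $\lambda\uparrow1$. The $C_1t^{-2}$ contribution gives $t^2\vartheta=O(1)$ only if $C_1$ is uniform. That requires the differentiated remainders in \eqref{lambda-residual-flux-tail} to be uniform on the compact parameter interval, which was asserted in the proof of Lemma~\ref{lambda-uniform-gradient-tail} and which I would take from there. At $\lambda=1$ the bound reduces to $t^2\vartheta_{p,1}\sim\ell_p/4$ divided by $S\to\infty$, which is consistent.
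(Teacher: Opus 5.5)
Your proposal is essentially the paper's proof. It splits off the bounded range using $\vartheta\le1$ and $S\ge1$, and on the tail it uses the splitting $\mathfrak a=\varepsilon t+b$ with the uniform expansions \eqref{lambda-residual-flux-tail}.

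Your final ``unified'' estimate is exactly what the paper does, in one stroke and without a case split.
\begin{itemize}
\item The numerator satisfies $t^2\vartheta(t)\le(\varepsilon t^3+C_1)/(\varepsilon t+b_*)$, from $t^3b'\le C_1$ and $b\ge b_*$.
\item The denominator satisfies $S(t)\ge 1+\int_{t/2}^t\varepsilon q^2/(\varepsilon q+b(q))\,dq\ge 1+7\varepsilon t^3/(24(\varepsilon t+B))$. This comes from dropping the nonnegative term $tb'$ and using $b\le B$.
\item The two denominators $\varepsilon t+b_*$ and $\varepsilon t+B$ are comparable uniformly in $\varepsilon\ge0$.
\end{itemize}
No $O(\log t)$ errors arise anywhere.

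Your case (b) has a power-of-$t$ slip. The leading part of the numerator is $t^2\cdot\varepsilon t/(\varepsilon t+b(t))\approx\varepsilon t^3/b(t)$, not $\varepsilon t^2/L_s$. So you must keep the lower bound $q\vartheta(q)\ge\varepsilon q^2/(\varepsilon q+b(q))\ge c\,\varepsilon q^2$, rather than weakening it to $c\,\varepsilon q$. With your weakened denominator $1+c\varepsilon t^2$, the true numerator gives a ratio of order $t$ once $\varepsilon t^2\gtrsim1$. That is of order $\varepsilon^{-1}$ on $\{\varepsilon t<1\}$, which is unbounded as $\lambda\uparrow1$. With the correct power, numerator and denominator are both of order $\varepsilon t^3$ there, and the bound follows.

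Your case (a) is fine but uses an unstated fact. The claim $S\gtrsim t^2-T_\lambda^2$ needs $T_\lambda\le ct$ with $c<1$. This holds because $b$ is increasing with limit $L_s=s^{(p-1)/2}/(1+s)<1$, and $t\ge1/\varepsilon$. Alternatively, put the threshold at $\varepsilon t\ge 2B$ instead of $\varepsilon t\ge1$.
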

\begin{proof}
Write the flux as $\mathfrak a=\varepsilon t+b(t)$, as in
Proposition~\ref{lambda-scalar-viscosity}.
The uniform expansions \eqref{lambda-residual-flux-tail} give
constants $T_1,b_*,B,C_1>0$ such that
\[
 b_*\leq b(t)\leq B,\qquad 0<b'(t)\leq C_1t^{-3}
       \quad(t\geq T_1),
\]
independently of the parameter; also $b(t)\leq B$ for all $t$.
For $t\geq2T_1$,
\[
 t^2\vartheta(t)
   =\frac{t^3(\varepsilon+b'(t))}{\varepsilon t+b(t)}
   \leq\frac{\varepsilon t^3+C_1}{\varepsilon t+b_*}.
\]
On the other hand, positivity of $b'$ gives
\[
 S(t)\geq1+\int_{t/2}^t
             \frac{\varepsilon q^2}{\varepsilon q+b(q)}\,dq
 \geq1+\frac{7\varepsilon t^3}{24(\varepsilon t+B)}.
\]
The denominators $\varepsilon t+b_*$ and $\varepsilon t+B$ are
uniformly comparable, proving the bound on this range, including
$\varepsilon=0$. On $0\leq t\leq2T_1$, use
$\vartheta\leq1$ and $S\geq1$.
\end{proof}

\begin{proposition}[Propagation of a boundary bound without a Hopf branch]
\label{lambda-branch-free-propagation}
Fix $p_{\mathrm c}<p\leq2$ and $0<\lambda_0<1$. There are
$R_0,c,T>0$, depending only on $p,\lambda_0$, with the following
property. For any regular $\lambda$ map with $\lambda\in[\lambda_0,1]$
and any hyperbolic ball
$\overline{B_R^\eta(a)}\Subset\D$, $0<R\leq R_0$, set globally
\[
 r_\lambda=|\mu_{f^\lambda}|,\qquad
 t_\lambda=\Theta_{p,\lambda}(r_\lambda),\qquad
 S_\lambda(t)=1+\int_0^t s\vartheta_{p,\lambda}(s)\,ds .
\]
If $t_\lambda\leq L$ on $\partial B_R^\eta(a)$, then
\begin{equation}\label{lambda-boundary-propagation}
 \sup_{B_R^\eta(a)}S_\lambda(t_\lambda)
       \leq e^{cR^2}
                  \left(1+\tfrac12\max\{T^2,L^2\}\right).
\end{equation}
In particular the interior bounds for $t_\lambda$ and
$K_{f^\lambda}$ are independent of $\lambda$. No single-valued
Hopf coordinate on the whole ball is required.
\end{proposition}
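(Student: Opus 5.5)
The plan is a maximum-principle argument for the function $S_\lambda(t_\lambda)$ itself, with no cutoff and no height factor. The goal is a subsolution inequality
\[
 \mathcal L\bigl(\log S_\lambda(t_\lambda)\bigr)\geq -c'
\]
for a linear operator $\mathcal L$ whose principal part has eigenvalues $1$ and $\vartheta$. We then compare with an explicit radial supersolution on a small hyperbolic ball. The key observation is that all quantities involved ($r_\lambda$, $t_\lambda$, $\vartheta(t_\lambda)$, $S_\lambda$) are defined globally, because they depend only on $|\mu_{f^\lambda}|$. The Hopf branch enters only through the local frame $E_1=\nabla v/|\nabla v|$, and any sign ambiguity there disappears in the symmetric tensor $e\otimes e$. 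So the inequality is computed on small natural-coordinate patches, where it is a local statement, and it glues automatically. At Hopf zeros $t_\lambda=0$, since $r=0$ there. The inequality is only needed where $t_\lambda>T$, and that set avoids the Hopf zeros. This is exactly the way the paper handles the zero set in \eqref{scalar-original}.

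The computation follows the proof of Proposition~\ref{lambda-uniform-interior-gradient}, with $g\equiv1$ and $f\equiv1$. Work at points where $t>T$, with $T=T(p,\lambda_0)$ taken from Lemma~\ref{lambda-uniform-gradient-tail}. Using the Bochner identity for $\mathcal L t$ together with $j=S'/S$, one gets
\[
 \mathcal L\log S
 =U v_{11}^2 j^2\cdot(\ldots)+V j^2 v_{12}^2+\tfrac{j}{t}v_{22}^2+\frac{\vartheta t^2}{S}\operatorname{Ric}(E_1,E_1).
\]
There are no first-derivative identities available here, because we are not at a maximum. So instead I would rewrite $\mathcal L\log S$ as a gradient term plus nonnegative quantities, using the cancellation structure \eqref{lambda-adapted-cancellations}. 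In that structure, $V\geq-1$ comes from the tail condition, and the $-1$ and $-\vartheta$ pieces are exactly $-|\nabla\log S|^2$-type terms. This yields
\[
 \mathcal L\log S+\mathsf b\cdot\nabla\log S\geq -4\frac{t^2\vartheta}{S}\geq-4D_0,
\]
where $\mathsf b$ is a locally bounded drift coming from \eqref{lambda-normalised-scalar-equation}. The final inequality is Lemma~\ref{lambda-curvature-ratio-bound}, applied with $\operatorname{Ric}=-4$. I expect this step to be the main obstacle. The point to check is that the squared-Hessian terms recombine into $|\nabla\log S|^2_{\mathcal L}$ with a nonnegative remainder, without the help of a cutoff. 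If that recombination fails, I would fall back to working with $S^{1/2}$ or $\log S$ after adding a small multiple of the gradient term.

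For the comparison, set $w=\log S_\lambda(t_\lambda)$ on the open set $\Omega=\{t_\lambda>T\}\cap B_R^\eta(a)$. On $\partial\Omega$, either $t\leq T$ or $t\leq L$ holds, so $w\leq \log(1+\tfrac12\max\{T^2,L^2\})$, by the upper bound in \eqref{lambda-adapted-factor-growth}. Take the comparison function $\psi=\kappa(R^2-\rho^2)$, where $\rho$ is the hyperbolic distance to $a$. For $R\leq R_0$, the Hessian comparison in curvature $-4$ gives $\nabla^2\rho^2\geq I$, so $\mathcal L\psi\leq-\kappa\min\{1,\vartheta\}\cdot$const. The difficulty is that $\vartheta$ may be small. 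To avoid this, use instead that $\mathcal L\rho^2\geq\operatorname{tr}$ along $E_2$, which is at least $1$ because the $E_2$ eigenvalue equals $1$. The drift term is handled by taking $R_0$ small. With $\kappa\simeq D_0$, the weak maximum principle applied to $w+\psi$ gives
\[
 w\leq\sup_{\partial\Omega}w+cR^2,
\]
which is \eqref{lambda-boundary-propagation}. Finally, the bounds on $t_\lambda$ and on $K_{f^\lambda}$ follow from \eqref{lambda-adapted-factor-growth} and Proposition~\ref{scalar-gradient-to-qc}, with constants independent of $\lambda$.
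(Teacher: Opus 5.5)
Your skeleton matches the paper's proof: a maximum principle for the globally defined $w=\log S_\lambda(t_\lambda)$ on $\{t_\lambda>T\}$, the curvature term controlled by Lemma~\ref{lambda-curvature-ratio-bound}, and a comparison with a multiple of $\rho^2$ on a small ball.

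The recombination you flag as the main obstacle is immediate. With $w_i=jv_{1i}$, the computation of Proposition~\ref{lambda-uniform-interior-gradient} gives exactly
\[
 \mathcal L w=Uw_1^2+Vw_2^2+\frac jt\,v_{22}^2+\frac{\vartheta t^2}{S}\operatorname{Ric}(E_1,E_1)
 \geq-|\nabla w|^2-4D_0\qquad(t_\lambda\geq T),
\]
since $U=S/t^2-\vartheta\geq-1$ and $V\geq-1$.

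The gap is in how you treat the first-order term. It is not a locally bounded drift coming from \eqref{lambda-normalised-scalar-equation}. In the metric $\eta|dz|^2$ the scalar equation is drift-free, $\vartheta v_{11}+v_{22}=0$; the $\nabla L$ term there is an artefact of Euclidean coordinates. The first-order term is the quadratic $-|\nabla w|^2$ in the unknown itself. Suppose you treat it as a linear drift $\mathsf b=\nabla w$ and apply a weak maximum principle to $\mathcal L+\mathsf b\cdot\nabla$. You would then need $\mathcal L(\kappa\rho^2)+\mathsf b\cdot\nabla(\kappa\rho^2)\geq4D_0$, which requires a bound on $|\nabla w|$ depending only on $p,\lambda_0$. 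No such bound is available; it is essentially what is being proved. So ``taking $R_0$ small'' does not give constants independent of $\lambda$.

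The missing step is the first-derivative test you explicitly set aside. Maximise the continuous function $w+\kappa\rho^2$ over $\overline{B_R^\eta(a)}$; this is the paper's $G=e^{c\rho^2}S_\lambda(t_\lambda)$. Consider an interior maximum with $t_\lambda>T$, which lies away from Hopf zeros. There:
\[
 \nabla w=-\kappa\nabla\rho^2,\qquad |\nabla w|^2\leq4\kappa^2R^2,\qquad \mathcal L(w+\kappa\rho^2)\leq0.
\]
Also $\mathcal L(\kappa\rho^2)\geq2\kappa(1+\vartheta)\geq2\kappa$, from $\nabla^2\rho^2\geq2\eta|dz|^2$. Hence $0\geq2\kappa-4\kappa^2R^2-4D_0$, which is false for $\kappa=4D_0+1$ and $R^2\leq1/(4\kappa)$. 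The maximum therefore lies on $\partial B_R^\eta(a)$ or where $t_\lambda\leq T$, and this gives \eqref{lambda-boundary-propagation} with $c=\kappa$.

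There is also a sign slip in your comparison. You must use $w+\kappa\rho^2$, equivalently $w-\kappa(R^2-\rho^2)$. Since $\mathcal L$ of $\kappa(R^2-\rho^2)$ is nonpositive, $w+\kappa(R^2-\rho^2)$ is not a subsolution.

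With these corrections, your bookkeeping on $\partial\Omega$ goes through as you describe. So does the final conversion through \eqref{lambda-adapted-factor-growth} and Proposition~\ref{scalar-gradient-to-qc}.
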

\begin{proof}
The quantity $t_\lambda$ is continuous, and this is zero wherever
$\Phi_{p,\lambda}(f^\lambda)=0$. At any point with $t_\lambda>0$
there is a local natural coordinate and a local scalar function
$v$ with $|\nabla_\eta v|=t_\lambda$. Changes of square-root branch
replace $v$ by $\pm v+$ a constant and leave $t_\lambda$ unchanged.

Let $T$ be the uniform threshold in
Lemma~\ref{lambda-uniform-gradient-tail}. Put $\rho=d_\eta(a,\cdot)$,
$\psi=c\rho^2$, and maximise the globally defined function
\[
 G=e^\psi S_\lambda(t_\lambda)
       \quad\hbox{on }\overline{B_R^\eta(a)}.
\]
Suppose this now has an interior maximum where $t_\lambda>T$.
Choose a local scalar coordinate just at this maximum, and use
\eqref{lambda-adapted-maximum-inequality} with the height factor
equal to one and $g=e^\psi$. The terms from differentiating $g$
combine into $\mathcal L\psi$, giving
\[
 0\geq U\psi_1^2+V\psi_2^2
       -4\frac{\vartheta t^2}{S}+\mathcal L\psi
 \geq-|\nabla\psi|^2-4D_0+\mathcal L\psi .
\]
Here $U=S/t^2-\vartheta\geq-1$, $V\geq-1$, and
Lemma~\ref{lambda-curvature-ratio-bound} controls the curvature term.
The hyperbolic Hessian comparison gives
$\nabla^2\rho^2\geq2\,\eta|dz|^2$, so
\[
 \mathcal L\psi\geq2c(1+\vartheta)\geq2c,\qquad
 |\nabla\psi|^2\leq4c^2R^2.
\]
Take $c=4D_0+1$ and $R_0=(4c)^{-1/2}$. The right-hand side is then
at least $2c-c-4D_0=1$, a contradiction.
Thus the maximum is on the boundary or at a point with
$t_\lambda\leq T$. The upper bound in
\eqref{lambda-adapted-factor-growth} gives
\eqref{lambda-boundary-propagation}. Its lower bound converts
this into a bound for $t_\lambda$; then
\eqref{scalar-exact-qc-conversion} bounds the dilatation.
Since \(t_\lambda=0\) at every Hopf zero, any interior maximum point of \(G\) satisfying \(t_\lambda>T\) lies away from the Hopf zeros. A local natural coordinate is therefore available at that point.
\end{proof}

\begin{theorem}[Homeomorphic limits of regular $\lambda$ solutions]
\label{regular-rational-no-interior-hairs}
Under \eqref{fibre-patch-convergence} and the regularity, inverse
Sobolev and NoHairs hypotheses stated above, the limit $h$ is a
homeomorphism of the closed disk and $f_j\to h^{-1}$ uniformly there.
If $\Phi\not\equiv0$, the family $f_j$ has a common quasiconformal
bound on each compact subset of $\D$, and in particular has
uniform local forward $W^{1,2}$ bounds. If $\Phi\equiv0$, the
limit $h$ is a disk automorphism.
\end{theorem}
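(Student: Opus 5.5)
The case $\Phi\equiv0$ is handled first, because it is simpler. There $|\Phi_{p,\lambda_j}[h_j]|=\tfrac12\eta(h_j)|(h_j)_w|^2r_j^{p-1}(1+\delta_j/d_j)^2(c_{r_j}-p\delta_j)\to0$ locally uniformly. On compact sets the weight is bounded below by compact containment, and the inverse Dirichlet bound holds. Combining these with H\"older's inequality, exactly as in Lemma~\ref{escaping-normalisations}, I expect to show that $(h_j)_{\bw}\to0$ in $L^2_{\loc}$. To do this I write $|h_{\bw}|^2=B r^2\leq B r^{p-1}\cdot r^{3-p}$ and split into $r$ small and $r$ not small. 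The weak limit $h$ then satisfies $h_{\bw}=0$, so $h$ is holomorphic. Its trace $\beta$ is a homeomorphism of degree one and $h^{-1}(\Sone)=\Sone$, so $h$ is a disk automorphism. Uniform convergence $f_j\to h^{-1}$ then follows from the modulus-of-continuity argument in Proposition~\ref{forward-Dirichlet-homeomorphic-limit}, once homeomorphy of $h$ is known.

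Now suppose $\Phi\not\equiv0$. The goal is a common local quasiconformal bound for $f_j$ near every point $a\in\D$. I split into two cases according to whether $a\notin\Sigma$ or $a\in\Sigma$.

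\textbf{Case $a\notin\Sigma$.} The fibre $h^{-1}(a)$ contains no zero of $\Phi$. Proposition~\ref{zero-free-fibre-patches} then gives a neighbourhood of $a$ on which $t_j$ and $K_{f_j}$ are uniformly bounded.

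\textbf{Case $a\in\Sigma$.} Here $\Sigma$ is locally finite, so I choose a small hyperbolic ball $B_R^\eta(a)$ with $R\leq R_0$ from Proposition~\ref{lambda-branch-free-propagation}. I want this ball to satisfy $\overline{B_R^\eta(a)}\cap\Sigma=\{a\}$. Every point of the boundary sphere $\partial B_R^\eta(a)$ then lies outside $\Sigma$. By compactness, finitely many neighbourhoods from the first case cover this sphere, so $t_j\leq L$ there uniformly for large $j$. Proposition~\ref{lambda-branch-free-propagation} now propagates the bound into the whole ball, without needing a Hopf branch across the fibre over $a$. Hence $t_j$, and by \eqref{scalar-exact-qc-conversion} also $K_{f_j}$, are uniformly bounded on $B_R^\eta(a)$.

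Covering an arbitrary compact set by finitely many such neighbourhoods gives common local quasiconformal bounds on each compact subset of $\D$.

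Next I convert these bounds into local forward Dirichlet bounds, via the sufficient condition \eqref{inverse-distortion-Lone-target} with the roles transposed. For $U\Subset\D$, the sets $f_j(U)$ stay in a fixed compact set, by the argument in the proof of Proposition~\ref{forward-Dirichlet-homeomorphic-limit}. Therefore
\[
\int_U\|Df_j\|^2\area=\int_U\K_{f_j}J_{f_j}\area\leq C_U|f_j(U)|\leq C_U\pi .
\]
This is exactly \eqref{forward-local-Dirichlet-target}. Proposition~\ref{forward-Dirichlet-homeomorphic-limit} then gives that $h$ is a homeomorphism of the closed disk and that $f_j\to h^{-1}$ uniformly.

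The main obstacle is the case $a\in\Sigma$. Three points need care there. First, the boundary sphere must avoid $\Sigma$ while the radius stays below the universal $R_0$; local finiteness of $\Sigma$ should make this possible, though this still has to be checked. Second, the first-case neighbourhoods must cover the compact sphere uniformly in $j$, so that a single $j_0$ works. Third, the convergence hypothesis \eqref{fibre-patch-convergence} is used with the global quantity $t_j=\Theta_{p,\lambda_j}(r_j)$. For this I must check that $t_j$ is well defined and continuous across Hopf zeros of $\Phi_j$; these zeros may lie near, but not at, the zeros of $\Phi$. Proposition~\ref{lambda-branch-free-propagation} was built precisely to tolerate such zeros, so I expect that part to go through, but it should be verified.
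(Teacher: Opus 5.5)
Your proposal is correct and follows the paper's proof essentially step for step. Off $\Sigma$ you use the zero-free fibre patches, around each point of $\Sigma$ you take a small hyperbolic ball whose boundary is covered by those patches and filled in by the branch-free propagation estimate, and you finish with the area-formula Dirichlet bound and Proposition~\ref{forward-Dirichlet-homeomorphic-limit}.

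The only simplification available is in the case $\Phi\equiv0$. Your own factorisation already gives the pointwise bound $|(h_j)_{\bar w}|^2\leq|\Phi_{p,\lambda_j}|/\eta(h_j)\leq|\Phi_{p,\lambda_j}|$, because $F_{p,\lambda}\geq F_p\geq r^2$ and $\eta\geq1$. So no H\"older splitting or compact-containment lower bound on the weight is needed, and the convergence is even locally uniform.
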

\begin{proof}
First suppose $\Phi\not\equiv0$.
Proposition~\ref{zero-free-fibre-patches} gives uniform bounds
for $t_j$ near every point outside the locally finite set $\Sigma$.
For $a\in\Sigma$, choose a hyperbolic ball with radius $R\leq R_0$,
compactly contained in the disk, whose closed ball meets $\Sigma$
only at $a$. A finite cover of its boundary by the preceding
patches gives a common bound for $t_j$ on that boundary.
Proposition~\ref{lambda-branch-free-propagation} extends the bound
through the ball, including the images of every approximating
Hopf zero inside it. Covering a compact set by finitely many such
balls and regular patches proves a uniform local bound for $K_{f_j}$.

Since $f_j$ is injective and disk-valued, the area formula now gives,
for every $C\Subset\D$,
\begin{equation}\label{fibre-patch-forward-dirichlet}
 \int_C\|Df_j\|^2\area
   =\int_C\K_{f_j}J_{f_j}\area
   \leq K_C\int_CJ_{f_j}\area\leq \pi K_C .
\end{equation}
Here $\K_{f_j}\leq K_{f_j}\leq K_C$, and $\|\cdot\|$ is the
normalised Hilbert--Schmidt norm used throughout.
Proposition~\ref{forward-Dirichlet-homeomorphic-limit} applies.
Now this gives a continuous right inverse, excludes every interior hair,
and, together with NoHairs, proves the asserted homeomorphism and
uniform convergence of the inverses.

If $\Phi\equiv0$, use the $\lambda$ Hopf relation and
$\omega_{p,\lambda}\geq\omega_p\geq1$ to obtain
\[
 |(h_j)_{\bar w}|^2
      \leq\frac{|\Phi_j|}{\eta(h_j)}\leq|\Phi_j|
      \longrightarrow0
       \quad\hbox{locally uniformly}.
\]
The inverse Sobolev bound identifies the weak limit and $h$,
and hence $h_{\bar w}=0$ distributionally. Weyl's lemma makes
$h$ holomorphic. Its boundary homeomorphism has degree one, so
$h$ is a disk automorphism. Uniform convergence of the inverses
then follows as in Proposition~\ref{forward-Dirichlet-homeomorphic-limit}.
\end{proof}

This argument uses compactness of the whole fibre and the branch-independent estimate, without identifying the Hopf expression of the limit. We make that identification next.

\subsection{The scalar limit and strong convergence of forward derivatives}\label{scalar-limiting-argument}
\begin{proposition}[Regular scalar limits and strong gradient convergence]\label{scalar-uniform-limit}
Let $p_{\mathrm c}<p\leq2$, $\lambda_j\uparrow1$, and let $v_j$ be classical solutions of \eqref{lambda-forward-scalar-equation} on a fixed open set $\Omega\Subset\D$. If $v_j\to v$ locally uniformly, then $v\in C^{2,\alpha}_{\loc}(\Omega)$ for some $\alpha>0$, solves the unregularised scalar equation, and satisfies \eqref{lambda-uniform-height-gradient-bound} at $\lambda=1$. Moreover,
\begin{equation}\label{scalar-strong-Lone-limit}
 \begin{split}
 \nabla v_j&\longrightarrow\nabla v,\\
 A_{p,\lambda_j}(|\nabla v_j|/\sqrt\eta)\nabla v_j
 &\longrightarrow A_p(|\nabla v|/\sqrt\eta)\nabla v
 \end{split}
 \qquad\hbox{in }L^2_{\loc}(\Omega).
\end{equation}
Bounded height oscillations on common patches suffice, after additive normalisation, to extract such a subsequence.
\end{proposition}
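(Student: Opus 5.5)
The plan is to first obtain uniform local gradient bounds from Proposition~\ref{lambda-uniform-interior-gradient}, then uniform $C^{1,\gamma}$ and $C^{2,\gamma}$ estimates, and finally pass to the limit in the equation. Fix $\Omega'\Subset\Omega$ and a hyperbolic radius $R$ with $B^\eta_R(z)\subset\Omega$ for all $z\in\Omega'$. Locally uniform convergence bounds $\operatorname{osc}_{B_R^\eta(z)}v_j$ uniformly in $j$. Since $\lambda_j\geq\lambda_0$ eventually, \eqref{lambda-uniform-height-gradient-bound} gives $t_j=|\nabla_\eta v_j|\leq T$ on $\Omega'$, uniformly in $j$.

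Proposition~\ref{lambda-uniform-scalar-structure} then makes the normalised nondivergence form \eqref{lambda-normalised-scalar-equation} uniformly elliptic on $\Omega'$. Its principal eigenvalues are $1$ and $\vartheta\geq\ell_p/(1+4T^2)$, and the drift $\tfrac12(1-\vartheta)\nabla L\cdot\nabla v_j$ is uniformly bounded.

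Next I would obtain $C^{1,\gamma}$ bounds. Differentiating the divergence-form equation $\operatorname{div}(A_{p,\lambda_j}(t_j)\nabla v_j)=0$ shows that each $\partial_kv_j$ solves a linear divergence-form equation. Its measurable coefficients are uniformly elliptic, and the explicit $z$-dependence through $\eta$ contributes only bounded lower-order terms. De Giorgi--Nash--Moser, or in two dimensions the Bernstein--Morrey argument, then gives uniform $C^{1,\gamma}_{\loc}$ bounds on $v_j$.

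With Hölder continuous gradients, the coefficients of \eqref{lambda-normalised-scalar-equation} are uniformly Hölder. Here I use that $A_{p,\lambda}$ and $\vartheta_{p,\lambda}$ are $C^1$ on $[0,T]$ uniformly in $\lambda$, which is \eqref{lambda-scalar-operator-limit} together with the uniform convergence of the profiles. Schauder estimates then give uniform $C^{2,\alpha}_{\loc}$ bounds.

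By Arzelà--Ascoli, $v_j\to v$ in $C^2_{\loc}$. The $C^1$ convergence $A_{p,\lambda_j}\to A_p$ on $[0,T]$ passes the equation to the limit, so $v$ solves the unregularised scalar equation classically and $v\in C^{2,\alpha}_{\loc}$.

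The same $C^1$ convergence, with $\nabla v_j\to\nabla v$ locally uniformly, gives the convergence of the fluxes in \eqref{scalar-strong-Lone-limit}, indeed locally uniformly and hence in $L^2_{\loc}$. For the bound \eqref{lambda-uniform-height-gradient-bound} at $\lambda=1$ there are two routes. One can pass to the limit pointwise in the $\lambda_j$ estimates, using convergence of oscillations on slightly smaller balls and continuity of the right side in $M$ and $R$. Alternatively, one can apply Proposition~\ref{lambda-uniform-interior-gradient} directly with $\lambda=1$, since $v$ is now classical.

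For the final sentence, on a common patch with bounded height oscillations I subtract $v_j(z_*)$ at a fixed point. The gradient bounds above give local equicontinuity and uniform boundedness on compact subsets. Arzelà--Ascoli then yields a locally uniformly convergent subsequence, to which the preceding argument applies.

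The main obstacle I expect is the uniform $C^{1,\gamma}$ step. The constitutive map depends on $\lambda$, and near $t=0$ the profile $\Theta^{-1}_{p,\lambda}$ has the fractional expansion with exponent $2/(p-1)$. Uniform regularity of $A_{p,\lambda}$ at zero gradient therefore needs care. I would use the differentiated expansion recorded after \eqref{lambda-scalar-operator-limit}, which gives $C^1$ control uniform for $\lambda$ near one. If the Hölder exponent degenerates, I would fall back on the $C^{1,\gamma}$ conclusion alone, which already suffices for \eqref{scalar-strong-Lone-limit}, and obtain $C^{2,\alpha}$ for the limit only.
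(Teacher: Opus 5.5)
Your argument is correct, but it takes a different route from the paper's.

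\textbf{The paper's route.} After the same gradient bound $|\nabla v_j|\leq T$ from Proposition~\ref{lambda-uniform-interior-gradient}, the paper proves no uniform higher regularity for the $v_j$. It uses only three properties of the fluxes $\mathbf a_j(z,P)=A_{p,\lambda_j}(|P|/\sqrt{\eta(z)})P$ on $|P|\leq T$: they are uniformly Lipschitz, they are uniformly strongly monotone (both from Proposition~\ref{lambda-uniform-scalar-structure}), and they converge uniformly. It then tests the equation for $v_j$ with $\chi(v_j-v)$. Combined with weak convergence of $\nabla v_j$, this Minty-type monotonicity argument gives strong $L^2_{\loc}$ convergence of gradients and fluxes directly. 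Difference quotients and Schauder theory are then applied to the limit equation alone.

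\textbf{Your route.} You apply De Giorgi--Nash--Moser to the differentiated equations for $\partial_k v_j$. This needs only bounded measurable, uniformly elliptic coefficients, so it gives uniform $C^{1,\gamma}$ bounds. Hence you get locally uniform gradient convergence, which is stronger than the stated $L^2_{\loc}$ conclusion. It also makes passing \eqref{lambda-uniform-height-gradient-bound} to $\lambda=1$ immediate.

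\textbf{The extra cost of your uniform Schauder step.} \eqref{lambda-scalar-operator-limit} controls $A_{p,\lambda}$ in $C^1$, so it controls $\vartheta_{p,\lambda}$ only in $C^0$, not in $C^1$ as you assert. Uniform $C^{2,\alpha}$ bounds for the $v_j$ would therefore need a uniform H\"older modulus for $D_P\mathbf a_j$, including at zero gradient. That amounts to a second-order version of the uniform expansion of $\Theta_{p,\lambda}^{-1}$. Your stated fallback avoids this: use the uniform $C^{1,\gamma}$ bounds alone, and prove regularity only for the limit. That fallback is enough for every assertion of the proposition, and its last step coincides with the paper's final step.
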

\begin{proof}
Fix $U\Subset V\Subset\Omega$. Local uniform convergence bounds the height oscillations, so Proposition~\ref{lambda-uniform-interior-gradient}, applied on slightly larger balls, gives $|\nabla v_j|\leq T$ on $V$. Hence $v\in W^{1,\infty}(V)$ and $\nabla v_j\rightharpoonup\nabla v$ in $L^2(V)$.

Set $\mathbf a_j(z,P)=A_{p,\lambda_j}(|P|/\sqrt{\eta(z)})P$. By \eqref{lambda-scalar-operator-limit} and Proposition~\ref{lambda-uniform-scalar-structure}, these fluxes converge uniformly on bounded gradient ranges, have a common Lipschitz bound there, and satisfy
\[
 [\mathbf a_j(z,P)-\mathbf a_j(z,Q)]\cdot(P-Q)
 \geq c|P-Q|^2\qquad(|P|,|Q|\leq T),
\]
with $c>0$ independent of $j$. Choose $0\leq\chi\in C_c^\infty(V)$ equal to one on $U$. Testing the equation for $v_j$ with $\chi(v_j-v)$ gives
\[
 \int\chi\,\mathbf a_j(z,\nabla v_j)\cdot(\nabla v_j-\nabla v)\,dA
 =-\int(v_j-v)\mathbf a_j(z,\nabla v_j)\cdot\nabla\chi\,dA
 \longrightarrow0.
\]
Weak gradient convergence and uniform convergence of the bounded comparison flux give
\[
 \int\chi\,\mathbf a_j(z,\nabla v)\cdot(\nabla v_j-\nabla v)\,dA
 \longrightarrow0.
\]
Subtracting and using strong monotonicity proves $\nabla v_j\to\nabla v$ in $L^2(U)$. The common Lipschitz bound proves flux convergence and permits passage to the weak equation.

On this bounded gradient range the limiting equation is uniformly elliptic. Difference quotients give a linear uniformly elliptic equation with bounded inhomogeneous terms for the first derivatives; local H\"older estimates and Schauder regularity then give $v\in C^{2,\alpha}_{\loc}$. The coefficient regularity at zero gradient follows from the expansion following \eqref{lambda-scalar-operator-limit}. The height estimate applies with $\lambda=1$. Finally, bounded height oscillations give local gradient bounds and hence compactness after additive normalisation.
\end{proof}

\begin{corollary}[Quasiconformality of a homeomorphic barrier limit away from Hopf zeros]\label{homeomorphic-barrier-scalar-limit}
Let $p_{\mathrm c}<p\leq2$. Suppose $h_j=f_j^{-1}$ are regular disk diffeomorphisms for the $\lambda$ problem, $\lambda_j\uparrow1$, their forward energies are uniformly bounded, and
\[
 h_j\longrightarrow h\quad\hbox{uniformly on }\overline\D,
 \qquad \Phi_{p,\lambda_j}\longrightarrow\Phi
                      \quad\hbox{locally uniformly in }\D.
\]
Assume that $h$ is a homeomorphism. Then on every sufficiently small natural-coordinate patch where $\Phi\ne0$, the inverse $f=h^{-1}$ is locally quasiconformal, with the bound \eqref{lambda-uniform-height-qc-bound}, and $f_j\to f$ strongly in $W^{1,1}_{\loc}$ on its corresponding forward patch. Both limit maps are smooth local diffeomorphisms there and satisfy the unregularised Hopf relation with differential $\Phi$.
\end{corollary}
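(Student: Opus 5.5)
Fix a point $z_0$ of the forward disk with $\Phi(f(z_0))\ne0$, where $f=h^{-1}$. Choose a small simply connected disk $W\ni f(z_0)$ with $\overline W\subset\{\Phi\ne0\}$. Because $\Phi_{p,\lambda_j}\to\Phi$ locally uniformly, for large $j$ the differentials $\Phi_j$ have no zeros on $\overline W$ and satisfy $|\Phi_j|\geq c>0$ there. Fix a base point $w_*\in W$ and choose square-root branches compatibly with the limit, so that $\zeta_j'=\sqrt{\Phi_j}$, $\zeta_j(w_*)=0$, and $\zeta_j\to\zeta$ locally uniformly, where $\zeta'=\sqrt\Phi$. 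Since $h$ is a homeomorphism, $f_j\to f$ uniformly, as in Proposition~\ref{forward-Dirichlet-homeomorphic-limit} via the modulus of $f$. Hence a fixed forward hyperbolic ball $\Omega=B_{2R}^\eta(z_0)$ is mapped into $W$ by every $f_j$ with $j$ large. Put $v_j=\operatorname{Im}(\zeta_j\circ f_j)$ on $\Omega$. These are single-valued classical solutions of \eqref{lambda-forward-scalar-equation}, and they converge uniformly to $v=\operatorname{Im}(\zeta\circ f)$. Their height oscillations are bounded by \eqref{whole-fibre-height-bound}.

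Proposition~\ref{lambda-uniform-interior-gradient} then bounds $t_j$ on $B_R^\eta(z_0)$ uniformly in $j$, and \eqref{scalar-exact-qc-conversion} turns this into a common bound $K_{f_j}\leq K$. The derivative formula in Remark~\ref{lambda-gradient-scope}, together with $|\Phi_j|\geq c$, bounds $Df_j$ uniformly. So $f_j$ are uniformly Lipschitz on this patch, and $f$ is Lipschitz and $K$-quasiconformal there, with a weak limit of $W^{1,2}$ maps having dilatation at most $K$. The quantitative bound \eqref{lambda-uniform-height-qc-bound} passes to the limit because the constant is independent of $\lambda$ and the height oscillations converge.

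The key step is strong convergence. Proposition~\ref{scalar-uniform-limit} gives $\nabla v_j\to\nabla v$ in $L^2_{\loc}$ and shows that $v$ is $C^{2,\alpha}$ and solves the unregularised scalar equation. The obstacle is recovering the full forward derivative from one real component. For this I would use the first-order system in the proof of Proposition~\ref{forward-scalar-reduction}. Writing $g_j=\zeta_j\circ f_j=u_j+iv_j$, that proof gives
\[
 v_{j,y}=k_ju_{j,x},\qquad v_{j,x}=-k_ju_{j,y},\qquad k_j=\frac{1}{A_{p,\lambda_j}(t_j)},
\]
with $t_j=|\nabla v_j|/\sqrt\eta$. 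Thus $\nabla u_j$ is the rotated flux $A_{p,\lambda_j}(t_j)\nabla v_j$, up to sign. The flux convergence in \eqref{scalar-strong-Lone-limit} then gives $\nabla u_j\to\nabla u$ in $L^2_{\loc}$, so $Dg_j\to Dg$ in $L^2_{\loc}$. On the images, $\zeta_j$ are conformal with $\zeta_j^{-1}\to\zeta^{-1}$ in $C^1$ on compact sets, after shrinking $W$ so that $\zeta$ is injective there. Composing yields $Df_j\to Df$ in $L^2_{\loc}$, hence strongly in $W^{1,1}_{\loc}$. The same relations in the limit give $g_{\bar z}=-r\overline{g_z}$ with $t=\Theta_p(r)$, where $r=\Theta_p^{-1}(|\nabla v|/\sqrt\eta)$. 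This is exactly the inverse Hopf relation with coefficient $1$ in the $\zeta$ coordinate, so $h$ satisfies \eqref{monotone-hopf} with differential $\Phi$.

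For regularity, $v\in C^{2,\alpha}$ and the system above give $g\in C^{1,\alpha}$ with $J_g>0$, because $|g_z|^2=\eta F_p(r)/(1-r^2)^2>0$ where $r>0$. The subtle point is where $t=0$, that is $r=0$. There I would instead use the reconstruction of $\mu_f$ from the flux as in Proposition~\ref{forward-regularity-proposition}. Since $K$ is bounded and the patch satisfies $r\leq R<1$ with $H_p>0$ above $p_{\mathrm c}$, that proposition applies to the limit: it is a quasiconformal, locally stationary map and gives $C^{2,\alpha}$ diffeomorphic regularity. Stationarity follows from the weak Hopf equation just identified, and smoothness where $\mu\ne0$ follows by elliptic bootstrap. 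The inverse function theorem transfers all of this to $h$ on the corresponding source patch.
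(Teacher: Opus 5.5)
Your argument follows the paper through the strong convergence. You use the same natural-coordinate patch and the same appeal to Proposition~\ref{scalar-uniform-limit}. You also make the same observation that the conjugacy relations turn $\nabla u_j$ into a rotated copy of the convergent flux. The gap is in the last step, exactly at the point you flag as subtle.

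\textbf{The gap.} You invoke Proposition~\ref{forward-regularity-proposition}. Its hypothesis is stationarity of $f$ under \emph{forward} inner variations, i.e.\ the identity \eqref{forward-weighted-distribution}. You justify this by ``the weak Hopf equation just identified'', which does not supply it where it matters.
\begin{itemize}
\item That Hopf relation was obtained by inverting $g=\zeta\circ f$, so it is only available at images of points where $\nabla v\neq0$. On $Z=\{\nabla v=0\}$ you have $Dg=0$ and $h$ is not differentiable, so nothing has been established there.
\item Holomorphy of $\Phi$ encodes inner variations of $h$, i.e.\ target variations of $f$. The forward identity instead comes from the variations $f\circ g_t^{-1}$.
\item The two families are interchangeable only where the map is already a $C^2$ diffeomorphism, which is precisely what is unknown on $Z$.
\end{itemize}
So the input to Proposition~\ref{forward-regularity-proposition} is assumed at exactly the points whose regularity it is meant to settle. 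Separately, that proposition is stated only for $p<2$, so $p=2$ would need its own treatment.

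\textbf{The paper's route.} The paper needs no stationarity at this step. From your own conjugacy relations, $g_z=\tfrac12(1+A_p(t))(v_y+iv_x)$ and $\mu_g=-\Theta_p^{-1}(t)\,\overline{g_z}/g_z$. Near $t=0$ one has $\Theta_p^{-1}(t)\sim(2/p)^{1/(p-1)}t^{2/(p-1)}$ with $2/(p-1)\geq2$. Since $\nabla v\in C^{1,\alpha}$, this vanishing power absorbs the angular factor, so $\mu_g$ is H\"older even across $Z$. A homeomorphic solution of a linear Beltrami equation with H\"older coefficient is a $C^{1,\alpha}$ diffeomorphism \cite[Theorem 1.1]{ACFJK}, hence $Z=\emptyset$. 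Then $t>0$, and so $r>0$, on the whole patch; this uses $\Phi\neq0$ there. That is what gives smoothness on the whole patch, not merely where $\mu\neq0$ as you state, and the argument works for $p=2$ as well.

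\textbf{Repairing your route.} You could instead keep Proposition~\ref{forward-regularity-proposition} by obtaining \eqref{forward-weighted-distribution} for $f$ as a limit.
\begin{itemize}
\item The diffeomorphisms $f_j$ satisfy the $\lambda_j$ forward identity classically on the patch.
\item Your uniform bound gives $|\mu_{f_j}|\le R_0<1$ there, on which the $\lambda_j$ stress and energy profiles converge uniformly as $\lambda_j\to1$.
\item Your $L^2$ convergence of $Df_j$ gives $\mu_{f_j}\to\mu_f$ a.e., so dominated convergence passes the identity to the limit.
\end{itemize}
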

\begin{proof}
Uniform convergence to a homeomorphism gives $f_j\to f$ uniformly on the closed disk. Choose a small Euclidean disk $U\Subset\{\Phi\ne0\}$ on which a natural coordinate $\zeta$ is one-to-one. For all large $j$, choose compatible square roots and primitives $\zeta_j'^2=\Phi_{p,\lambda_j}$, normalised at the same point. They converge with all derivatives to $\zeta$ on smaller disks. A forward patch $\Omega\Subset h(U)$ therefore has $f_j(\overline\Omega)\Subset U$ for large $j$, and
\[
 u_j+iv_j=\zeta_j\circ f_j\longrightarrow\zeta\circ f=u+iv
 \quad\hbox{locally uniformly on }\Omega.
\]
Proposition~\ref{scalar-uniform-limit} applies. The conjugacy relations

\[
 (u_j)_x=A_{p,\lambda_j}(t_j)(v_j)_y,\qquad
 (u_j)_y=-A_{p,\lambda_j}(t_j)(v_j)_x
\]
pass strongly in $L^1$ to their unregularised versions. Thus $u+iv$ is $C^{2,\alpha}$ locally and has
\[
 |\mu_{u+iv}|=\frac{1-A_p(|\nabla_\eta v|)}{1+A_p(|\nabla_\eta v|)}
            =\Theta_p^{-1}(|\nabla_\eta v|)<1
\]
where its derivative is nonzero, and its full derivative vanishes wherever $\nabla v=0$. Our local gradient estimate provides a uniform bound strictly below one on compact subsets. Since $\zeta$ is conformal, this proves local quasiconformality of $f$. The strong convergence of both scalar components and the locally bounded derivatives of $\zeta_j^{-1}$ give the asserted strong $W^{1,1}$ convergence of $f_j$.

There is no remaining branch point on this patch. To check this directly, set $g=u+iv$. Its coefficient is
\[
 \mu_g=-\Theta_p^{-1}(|\nabla_\eta v|)\,
                 \frac{\overline{g_z}}{g_z}
\]
where $g_z\ne0$, and zero otherwise. The small-gradient expansion following \eqref{lambda-scalar-operator-limit}, with exponent $2/(p-1)\geq2$, shows that this coefficient is locally H\"older even at $Dg=0$: multiplication by the vanishing power removes the angular quotient's singularity. The local linear Beltrami theorem for H\"older coefficients makes a homeomorphic solution a $C^{1,\alpha}$ diffeomorphism; see \cite[Theorem 1.1]{ACFJK}. Thus $Dg$ cannot vanish. The relation $t=\Theta_p(r)$ and the conjugacy identities give the unregularised Hopf equation after inversion. Since $\Phi\ne0$, the distortion is nonzero there, and elliptic bootstrapping gives smoothness.
\end{proof}

The height in this corollary is controlled by the Hopf chart: if $U=B_\rho(w_0)$ and $\sup_U|\Phi|\leq P$, then $\operatorname{osc}_\Omega v\leq2\rho\sqrt P$. A hyperbolic radius $R$ with $B_R^\eta(z_0)\Subset h(U)$ gives the other input to \eqref{lambda-uniform-height-qc-bound}. Note that these are genuine geometric quantities and neither is an assumed dilatation bound.

\begin{corollary}[Local quasiconformality of the full homeomorphic limit]\label{homeomorphic-barrier-full-qc}
Under the hypotheses of Corollary~\ref{homeomorphic-barrier-scalar-limit}, $h$ and $f$ are locally quasiconformal on the whole disk, and $f_j\to f$ strongly in $W^{1,1}_{\loc}(\D)$. For $p_{\mathrm c}<p<2$ they are $C^{2,\alpha}_{\loc}$ diffeomorphisms for the exponents in \eqref{forward-holder-exponent}, and are smooth off the conformal set. If the boundary values are quasisymmetric, they are also globally quasiconformal and asymptotically conformal.
\end{corollary}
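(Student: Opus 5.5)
Away from the Hopf zeros of $\Phi$, Corollary~\ref{homeomorphic-barrier-scalar-limit} already gives local quasiconformality, smoothness and strong $W^{1,1}_{\loc}$ convergence on small natural-coordinate patches. So the plan has two parts: first handle the preimages of the locally finite zero set of $\Phi$, then upgrade to global statements.

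\emph{The trivial case.} If $\Phi\equiv0$, the last part of Theorem~\ref{regular-rational-no-interior-hairs} makes $h$ a disk automorphism, and there is nothing to prove.

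\emph{Bounds across Hopf zeros.} Otherwise the zeros of $\Phi$ are discrete, and their images $\Sigma=h(\{\Phi=0\})$ form a locally finite set. At a point $a\in\Sigma$ I would use the branch-free propagation, Proposition~\ref{lambda-branch-free-propagation}.
\begin{enumerate}[label=\textup{(\arabic*)},leftmargin=*]
\item Choose a small hyperbolic ball $B_R^\eta(a)$, with $R\le R_0$, whose closure meets $\Sigma$ only at $a$.
\item Cover $\partial B_R^\eta(a)$ by finitely many patches from Proposition~\ref{zero-free-fibre-patches}. This bounds $t_j$ on the boundary uniformly in $j$.
\item Propagation then bounds $S_{\lambda_j}(t_j)$, and hence $K_{f_j}$, uniformly on the whole ball. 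This is exactly the argument of Theorem~\ref{regular-rational-no-interior-hairs}.
\end{enumerate}
Consequently the $f_j$ are uniformly $K_C$-quasiconformal on each compact $C\Subset\D$, and \eqref{fibre-patch-forward-dirichlet} gives uniform local $W^{1,2}$ bounds. The uniform limit $f$ of uniformly $K_C$-quasiconformal homeomorphisms is then $K_C$-quasiconformal on $C$. Hence $f$, and therefore $h$, is locally quasiconformal.

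\emph{Strong convergence.} I would obtain $f_j\to f$ in $W^{1,1}_{\loc}$ as follows. Strong convergence holds on the zero-free patches. Near the finitely many exceptional points in a compact set, the uniform $L^2$ bound on $Df_j$ makes the contribution of a small neighbourhood small, uniformly in $j$, by Cauchy--Schwarz.

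\emph{Regularity.} For $p_{\mathrm c}<p<2$ I would pass to the limit in the weak forward stress identity.
\begin{enumerate}[label=\textup{(\arabic*)},leftmargin=*]
\item Each $f_j$ satisfies the identity of Lemma~\ref{rational-finite-stress-regularity} with $\lambda_j$.
\item Away from the zeros, the stresses $U_j$ converge strongly by the scalar limit. Near a zero they are bounded, because the local dilatation bound keeps $r_j\le k<1$ and $T_{\lambda_j}$ is uniformly bounded on $[0,k]$.
\item Dominated convergence, together with a.e.\ convergence of $\mu_{f_j}$ from strong $W^{1,1}$ convergence, gives the unregularised identity \eqref{forward-weighted-distribution} for $f$.
\item Since $f$ is locally quasiconformal with $|\mu_f|\le k$ and $H_p>0$ on $[0,1)$, Proposition~\ref{forward-regularity-proposition} applies on each compact set. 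It gives the $C^{2,\alpha}_{\loc}$ diffeomorphism property and smoothness off the conformal set. The inverse function theorem transfers this to $h$.
\end{enumerate}

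\emph{Global quasiconformality.} With quasisymmetric boundary values, I would verify the hypotheses of Proposition~\ref{subquadratic-asymptotic}.
\begin{enumerate}[label=\textup{(\arabic*)},leftmargin=*]
\item The Hopf equation for $h$ with differential $\Phi$ holds off the zeros by Corollary~\ref{homeomorphic-barrier-scalar-limit}, and extends across them by continuity.
\item Finite energy follows from lower semicontinuity (Lemma~\ref{weighted-lsc}) and the uniform bound on the $f_j$ energies.
\end{enumerate}
Proposition~\ref{subquadratic-asymptotic} then yields asymptotic conformality and global quasiconformality.

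\emph{Main obstacle.} The step I expect to be hardest is passing the stress identity through the Hopf zeros. The branch-free dilatation bound is what makes it work, so that is where I would check the uniformity most carefully.
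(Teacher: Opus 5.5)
Your main case ($\Phi\not\equiv0$, $p<2$) is sound and follows the paper's argument: uniform local dilatation bounds from zero-free patches and branch-free propagation, Cauchy--Schwarz near the finitely many exceptional points, forward regularity, then lower semicontinuity and Proposition~\ref{subquadratic-asymptotic}. Passing the $\lambda_j$ stress identities to the limit by dominated convergence is a valid alternative to the paper's route. The paper instead derives the identity from the limiting Hopf relation on patches and removes the isolated points with cutoffs. There are, however, two genuine omissions.

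First, when $\Phi\equiv0$ it is not true that ``there is nothing to prove''. The statement still asserts $f_j\to f$ strongly in $W^{1,1}_{\loc}$, and none of your arguments reaches it. Without zero-free patches you have no uniform dilatation bound for the $f_j$. The Hopf relation only gives $\eta(h_j)|(h_j)_w|^2F_{p,\lambda_j}(r_j)\to0$, which does not keep $r_j$ away from one where $|(h_j)_w|$ is small. Hence there is no uniform forward $W^{1,2}$ bound, and your Cauchy--Schwarz step is unavailable. The missing idea is to prove strong convergence on the inverse side and transfer it through the area formula. Since $|(h_j)_{\bar w}|^2\le|\Phi_j|\to0$ locally uniformly, the local Cauchy decomposition and the $L^q$ bounds for the Beurling transform give $Dh_j\to Dh$ in $L^2_{\loc}$. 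Then
\[
 \int_C|Df_j-Df|\,dA=\int_{f_j(C)}\bigl|\operatorname{adj}(Dh_j)-J_{h_j}\,Df(h_j)\bigr|\,dA\longrightarrow0 .
\]
This is exactly why the conclusion is stated in $W^{1,1}$ rather than $W^{1,2}$.

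Second, the global assertion is made for the whole range $p_{\mathrm c}<p\le2$ of Corollary~\ref{homeomorphic-barrier-scalar-limit}. Proposition~\ref{subquadratic-asymptotic} requires $h$ to be a $C^1$ diffeomorphism that is $C^2$ off its conformal set. Your regularity step supplies this only for $p<2$, since Proposition~\ref{forward-regularity-proposition} is stated for $1<p<2$. At $p=2$ you still have to show that the isolated images of Hopf zeros are removable and that $h$ is a local diffeomorphism there. The paper does this with the hyperbolic harmonic-map equation off those points, removability from finite local Dirichlet energy, and the curved Lewy theorem. Alternatively, you could check that your stress argument survives at $p=2$. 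There $e'/T'=2r/(1+r^2)$ is still a strict contraction on $[0,k]$, and $\mu_f$ is recovered smoothly from $U=\overline{\mu_f}/(1-|\mu_f|^2)$.
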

\begin{proof}
The energy bound supplies a uniform inverse $W^{1,2}$ bound. If $\Phi\not\equiv0$, the common-fibre and branch-independent estimates in the proof of Theorem~\ref{regular-rational-no-interior-hairs} give a common quasiconformal bound for $f_j$ on every compact subset. Their local hypotheses hold here because $h$ is a homeomorphism: its fibres are singletons and $h^{-1}(\Sone)=\Sone$. Local quasiconformal compactness makes $f$ and $h$ locally quasiconformal throughout the disk. If $\Phi\equiv0$, the estimate $|(h_j)_{\bar w}|^2\leq|\Phi_{p,\lambda_j}|/\eta(h_j)$ makes $h$ conformal directly.

For nontrivial $\Phi$, the strong forward convergence also extends across the images of its isolated zeros. For a small forward disk $B$ around any one of them,
\[
 \int_B\|Df_j\|\area
 \leq\left(\int_B\K_{f_j}\area\right)^{1/2}|f_j(B)|^{1/2}.
\]
The first factor is uniformly bounded, and uniform convergence to the continuous $f$ makes the second uniformly small as $B$ shrinks, for all sufficiently large $j$. Cover the finitely many exceptional points on a compact set by such disks and use the already proved strong convergence on their complement. In the conformal case, $|(h_j)_{\bar w}|\to0$ locally uniformly, while the local Cauchy decomposition and uniform convergence imply $Dh_j\to Dh$ strongly in every finite local $L^q$. For a compact forward set $C$, change variables by the smooth $h_j$ to obtain, in any fixed matrix norm,
\[
 \int_C|Df_j-Df|\area
 =\int_{f_j(C)}
    |\operatorname{adj}(Dh_j)-J_{h_j}Df(h_j)|\area\longrightarrow0.
\]
Indeed the domains lie in a common compact subset, the adjugates converge strongly in $L^2$, the Jacobians in $L^1$, and $Df(h_j)\to Df(h)$ uniformly there. The limiting integrand is zero since $f=h^{-1}$ is conformal.

For $p<2$, on every diffeomorphic patch the classical Hopf relation implies the forward stationarity identity \eqref{forward-stress-distribution}. Near a remaining isolated point, local quasiconformality makes the fields $U$ and $e(r)$ in that identity bounded. Cutoffs deleting a disk of radius $\epsilon$, with gradient $L^1$ norm $O(\epsilon)$, extend the identity distributionally across the point. The proof of Proposition~\ref{forward-regularity-proposition} is local and now applies. Thus the asserted diffeomorphic regularity holds also at these points. This step does not assert real analyticity at conformal points. At $p=2$, the harmonic equation holds off the isolated points; finite local Dirichlet energy makes those points removable, and the harmonic homeomorphism is a real analytic local diffeomorphism by the curved Lewy theorem \cite{Martin}.

Finally, lower semicontinuity of the original inverse energy in the upper range gives $\E_p(h)<\infty$, since $\E_p(h_j)\leq\E_{p,\lambda_j}(h_j)$ are uniformly bounded. With the stated quasisymmetric trace, Proposition~\ref{subquadratic-asymptotic} proves the global conclusions.
\end{proof}

The selected regular family now has a homeomorphic, regular limit; the following corollary identifies its energy.

\subsection{The attained quasiconformal minimum}\label{attained-qc-minimum}
\begin{corollary}[An attained quasiconformal minimum in the strict upper range]
\label{selected-rational-family}
Under the hypotheses of Theorem~\ref{selected-rational-regularity},
there are regular absolute $\lambda$-regularised minimisers with
$\lambda_j\uparrow1$ whose inverses $h_j$ converge, after extraction,
uniformly on $\overline\D$ and strongly in $W^{1,2}_{\loc}$ to a
globally quasiconformal diffeomorphism $h$. Its inverse minimises
$\I_p$ over the finite-energy quasiconformal class with trace
$\beta^{-1}$. The limit is asymptotically conformal, is
$C^{2,\alpha}_{\loc}$ for the exponents above, and is smooth off
its conformal set. In particular,
\begin{equation}\label{selected-family-energy-convergence}
 \E_p(h_j)\longrightarrow\E_p(h)=m_{\rm QC}.
\end{equation}
Theorem~\ref{absolute-monotone-comparison} below identifies this
map with the unique absolute monotone minimiser; the construction
and convergence in this proof do not rely on that comparison.
\end{corollary}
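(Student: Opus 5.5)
We assemble the statement from the pieces already proved, in three steps: build the selected family and its limit, show the limit is a homeomorphic regular map, then identify its energy with $m_{\rm QC}$.

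\emph{Step 1: the family and its limit.} For each $\lambda$ in the full-measure set $\Lambda$ of Theorem~\ref{selected-rational-regularity} we have a regular minimiser $h_\lambda$. Intersecting $\Lambda$ with the differentiability points used in Proposition~\ref{parameter-selection}, we select $\lambda_j\uparrow1$ with $(1-\lambda_j)D_{\lambda_j}\to0$. This is possible because the selection only needs $(1-\lambda)(-m'(\lambda))$ to be small on a set of parameters of positive measure near $1$, and removing a null set does not affect that. Since $\E_p(h_j)\le\E_{p,\lambda_j}(h_j)\le M$, Proposition~\ref{monotone-compactness} gives the following after extraction:
\begin{itemize}
\item $h_j\to h$ uniformly on $\overline\D$ and weakly in $W^{1,2}$;
\item compact containment of the images;
\item the NoHairs conclusion $h^{-1}(\Sone)=\Sone$.
\end{itemize}
The holomorphic $\Phi_{p,\lambda_j}[h_j]$ are locally bounded in $L^1$ by the estimate $|\Phi_p[h]|\le B$ together with \eqref{hopf-defect-zero}. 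Holomorphic compactness then gives a locally uniform limit $\Phi$, which is exactly hypothesis \eqref{fibre-patch-convergence}.

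\emph{Step 2: regularity and convergence.} Theorem~\ref{regular-rational-no-interior-hairs} shows that $h$ is a homeomorphism and that $f_j\to f=h^{-1}$ uniformly. Corollary~\ref{homeomorphic-barrier-full-qc} then gives the remaining regularity:
\begin{itemize}
\item $h$ and $f$ are locally quasiconformal and $C^{2,\alpha}_{\loc}$;
\item they are smooth off the conformal set;
\item since $\beta$ is quasisymmetric, they are globally quasiconformal and asymptotically conformal.
\end{itemize}
For strong $W^{1,2}_{\loc}$ convergence of $h_j$, the uniform local quasiconformal bounds on $f_j$, and hence on $h_j$, combined with $f_j\to f$ in $W^{1,1}_{\loc}$, give $Dh_j\to Dh$ almost everywhere along a subsequence. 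The change-of-variables formula used in the conformal case of Corollary~\ref{homeomorphic-barrier-full-qc} serves here too. The local $L^q$ higher integrability from Corollary~\ref{monotone-higher-sobolev}, or simply the local quasiconformal bounds together with $\int J_{h_j}=\pi$, supplies the uniform integrability needed to upgrade to $L^2_{\loc}$.

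\emph{Step 3: energy identification.} This is the crux. By Section~\ref{lambda-barrier}, $\I_{p,\lambda}(f_\lambda)=m(\lambda)\downarrow m_{\rm QC}$ along the whole family. Because $\I_p\le\I_{p,\lambda}$ and each $f_j$ is a quasiconformal competitor, this gives $m_{\rm QC}\le\E_p(h_j)\le m(\lambda_j)\to m_{\rm QC}$, so $\E_p(h_j)\to m_{\rm QC}$. Lemma~\ref{weighted-lsc} gives $\E_p(h)\le\liminf\E_p(h_j)=m_{\rm QC}$. On the other hand, $f$ is globally quasiconformal with $\I_p(f)=\E_p(h)<\infty$, so $f$ is itself an admissible competitor and $\E_p(h)\ge m_{\rm QC}$. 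Together these prove \eqref{selected-family-energy-convergence} and the minimality of $f$.

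The main obstacle is Step 2, which depends on the common-patch and branch-free propagation machinery. Within this corollary the remaining delicate point is matching the selection: the parameters $\lambda_j$ must lie both in $\Lambda$ and in the stress-small set of Proposition~\ref{parameter-selection}. Absolute minimality of each $h_{\lambda_j}$ for the $\lambda_j$ problem is also needed to apply that proposition. Strictly, Step 1 does not require the stress selection, since \eqref{fibre-patch-convergence} only uses a local $L^1$ bound on the Hopf differentials, which is uniform anyway. The selection is kept so that $\Phi$ is the unregularised limit.
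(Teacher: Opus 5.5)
Your Steps 1 and 3 follow the paper's proof. The genuine difference is how you obtain strong $W^{1,2}_{\loc}$ convergence of $h_j$, and your closing aside about the selection is wrong.

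\textbf{Where you agree with the paper.} You intersect $\Lambda$ with the differentiability set of $m$ and select $\lambda_j$ as in Proposition~\ref{parameter-selection}. You get \eqref{fibre-patch-convergence} from $|\Phi_p[h]|\leq B$ and \eqref{hopf-defect-zero}. You invoke Theorem~\ref{regular-rational-no-interior-hairs} and Corollaries~\ref{homeomorphic-barrier-scalar-limit} and~\ref{homeomorphic-barrier-full-qc}. You then sandwich $m_{\rm QC}\leq\E_p(h_j)\leq m(\lambda_j)$ and close with lower semicontinuity and admissibility of the limit.

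\textbf{Strong convergence: the paper's route.} The paper does this last, after \eqref{selected-family-energy-convergence}. Once $\E_p(h_j)\to\E_p(h)$, the convexity-defect (Radon--Riesz) argument of Corollary~\ref{strong-monotone-minimising} applies verbatim. It uses only strict convexity of $G_p$ and the weighted Jacobian convergence \eqref{weighted-jacobian-convergence}, with no dilatation bounds.

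\textbf{Strong convergence: your route.} You pull the strong $W^{1,1}_{\loc}$ convergence of $f_j$ back through
\[
 \int_C|Dh_j-Dh|\area=\int_{h_j(C)}\bigl|\operatorname{adj}(Df_j)-J_{f_j}\,Dh(f_j)\bigr|\area
\]
and then upgrade to $L^2_{\loc}$ by Vitali. This is valid and independent of the energy identification, but it costs more, and your sketch leaves some of it implicit:
\begin{itemize}
\item It needs common local dilatation bounds. Theorem~\ref{regular-rational-no-interior-hairs} supplies these only when $\Phi\not\equiv0$. For $\Phi\equiv0$ you must use the direct argument in Corollary~\ref{homeomorphic-barrier-full-qc}.
\item It needs genuine uniform integrability of $J_{f_j}$ and $J_{h_j}$. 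Gehring's higher integrability for locally uniformly quasiconformal families gives this, as does Scheff\'e's lemma with $|h_j(C)|\to|h(C)|$. The bound $\int J_{h_j}=\pi$ alone does not.
\item Corollary~\ref{monotone-higher-sobolev} is stated for minimisers of $\E_p$, not of $\E_{p,\lambda_j}$. Its proof does transfer, via $|(h_j)_{\bar w}|^2\leq|\Phi_{p,\lambda_j}|/\eta(h_j)$.
\end{itemize}

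\textbf{The aside about the selection.} You say Step~1 does not need the stress selection because the local $L^1$ bound on the Hopf differentials is ``uniform anyway''. That is incorrect. In the normalisation of Section~\ref{parameter-selection-section}, $|\Phi_{p,\lambda}[h]|=\eta(h)J_hT_\lambda(r)$. The $N=\infty$ form of \eqref{truncated-stress-identity} then gives
\[
 |\Phi_{p,\lambda}[h]|\geq(1-\lambda)\,\eta(h)J_h\,r\,D_\lambda(r).
\]
This term blows up at the moving pole, and the energy bound $M$ does not control it. The uniform local $L^1$ bound you actually used is $|\Phi_p[h_j]|\leq B$ plus \eqref{stress-difference}--\eqref{hopf-defect-zero}. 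That requires $(1-\lambda_j)D_{\lambda_j}$ to stay bounded; the selection gives $\to0$. So the selection is what makes \eqref{fibre-patch-convergence} available at all. Your argument keeps the selection, so this misdiagnosis creates no gap in the proof itself.
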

\begin{proof}
The full-measure set of regular parameters can be intersected with
the differentiability set of the minimum $m$.
The proof of Proposition~\ref{parameter-selection} then selects
$\lambda_j\in\Lambda$ with $(1-\lambda_j)(-m'(\lambda_j))\to0$.
It follows that the extra barrier stress vanishes as in
\eqref{hopf-defect-zero}, and the holomorphic Hopf differentials
have a locally uniformly convergent subsequence. The energy bound
is supplied by the common barrier. Theorem~\ref{regular-rational-no-interior-hairs}
and Corollaries~\ref{homeomorphic-barrier-scalar-limit} and
\ref{homeomorphic-barrier-full-qc} give the stated topology,
regularity and quasiconformality of the limit.

Each $h_j$ is quasiconformal, and comparison with every fixed
finite-energy quasiconformal competitor gives
$m(\lambda)\downarrow m_{\rm QC}$. Hence
$m_{\rm QC}\leq\E_p(h_j)\leq m(\lambda_j)$.
Lower semicontinuity gives $\E_p(h)\leq m_{\rm QC}$; the reverse
inequality follows because the limit itself belongs to that
quasiconformal class. This proves \eqref{selected-family-energy-convergence}.
The convexity-defect argument of
Corollary~\ref{strong-monotone-minimising} proves strong local
derivative convergence. After extraction, Scheff\'e's lemma also
gives convergence of the full weighted energy densities in $L^1$.
\end{proof}
\subsection{The lower range: a uniform cutoff for regular barrier minimisers}\label{lambda-variational-selection}
Below the threshold the pole of the $\lambda$ density gives a quasiconformal bound for each fixed parameter, but continuity of the distortion, together with minimality and finite hyperbolic energy, gives a bound independent of that parameter. No differentiable dependence on $\lambda$ is required. For a derivative normalised to $\operatorname{diag}(1,t)$, write
\begin{equation}\label{lambda-stretch-density}
 \phi_{p,\lambda}(t)
 =\phi_p(t)\frac{4t}{4t-(1-\lambda)(1+t)^2},
 \qquad
 \tau_\lambda:=\frac{1-\sqrt\lambda}{1+\sqrt\lambda}<t\leq1,
\end{equation}
with $\phi_p$ as in Lemma~\ref{rankone-weierstrass}. This is the inverse density $J R_\lambda(r^2)$ with $r=(1-t)/(1+t)$, and it converges to $\phi_p$ on every compact subinterval of $(0,1]$, together with its derivatives away from $1$.

\begin{theorem}[A uniform cutoff for regular barrier minimisers]\label{lambda-uniform-selection}
Fix $1<p<p_{\mathrm c}$ and $R$ with
\[
 r_{\mathrm v}(p)<R<r_{\mathrm c}(p).
\]
There is $\lambda_R<1$, depending only on $p,R$, such that every finite-energy minimiser $f_\lambda$ with $\lambda_R<\lambda<1$ satisfies  
\begin{equation}\label{lambda-uniform-cutoff}
 |\mu_{h_\lambda}|<R\quad\hbox{throughout }\D,
 \qquad h_\lambda=f_\lambda^{-1},
\end{equation}
provided $h_\lambda$ is a $C^1$ orientation-preserving diffeomorphism and compactly supported admissible changes of domain are allowed. In particular, for any such family as $\lambda\uparrow1$,
\begin{equation}\label{lambda-limsup-cutoff}
 \limsup_{\lambda\uparrow1}\|\mu_{h_\lambda}\|_\infty
 \leq r_{\mathrm v}(p). 
\end{equation}
On this family the principal coefficients in \eqref{natural-second} have an ellipticity margin independent of $\lambda$, after discarding an initial parameter interval.
\end{theorem}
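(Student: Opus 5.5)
The plan is to redo the finite rank-one comparison of Lemma~\ref{rankone-weierstrass} with the $\lambda$-density $\phi_{p,\lambda}$ of \eqref{lambda-stretch-density}, but with the comparison parameters $\delta,\theta$ \emph{fixed} and depending only on $p,R$. Continuity of $|\mu_{h_\lambda}|$ then reduces the exclusion of the whole interval $[R,1)$ to the exclusion of one compact window $[R,R']$, with $r_{\mathrm v}<R<R'<r_{\mathrm c}$. On that window $\tau=(1-r)/(1+r)$ stays in a compact subinterval of $(0,1)$, where $\phi_{p,\lambda}\to\phi_p$ uniformly with derivatives. This is why a bound independent of $\lambda$ is plausible, even though the pole at $\tau_\lambda$ prevents the limit $\delta\downarrow0$ used in Theorem~\ref{weak-variational-cutoff}.

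\emph{Step 1: a strict finite comparison for $\phi_p$.} Fix $R'\in(R,r_{\mathrm c})$. On $[R,R']$ we have $Q_p(r)\geq\min\{Q_p(R),Q_p(R')\}>1$, since $Q_p$ increases on $(r_{\mathrm v},r_{\mathrm c})$. By the identity $4(\phi_p(\tau)-\tau\phi_p'(\tau))=Q_p(r)$ from the proof of Theorem~\ref{weak-variational-cutoff}, this gives
\[
 \phi_p(0)<\phi_p(\tau)-\tau\phi_p'(\tau)-\kappa
\]
uniformly on the corresponding $\tau$-window, for some $\kappa>0$. I then choose $\delta>0$ small so that $\phi_p(\delta)<\phi_p(\tau)+(\delta-\tau)\phi_p'(\tau)-\kappa/2$ on the window. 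Next I choose $\theta>0$ small, by a second-order Taylor bound, so that
\[
 \theta\phi_p(\delta)+(1-\theta)\phi_p(s_\theta)\leq\phi_p(\tau)-\theta\kappa/4,\qquad s_\theta=\frac{\tau-\theta\delta}{1-\theta}<1,
\]
again uniformly on the window. These choices depend only on $p,R,R'$.

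\emph{Step 2: transfer to $\phi_{p,\lambda}$ and localise.} All matrices $\mathrm{diag}(1,\delta)$, $\mathrm{diag}(1,s_\theta)$ and $A$ have $t$-values in a fixed compact subinterval of $(0,1]$, and that interval lies above $\tau_\lambda$ once $\lambda$ is close to $1$. There $\phi_{p,\lambda}\to\phi_p$ uniformly, so there is $\lambda_R$ with
\[
 \theta\phi_{p,\lambda}(\delta)+(1-\theta)\phi_{p,\lambda}(s_\theta)<\phi_{p,\lambda}(\tau)
\]
for all $\lambda>\lambda_R$ and all $\tau$ in the window. Because $h_\lambda$ is $C^1$, every point is an $L^2$-Lebesgue point of $Dh_\lambda$. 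Hence Lemma~\ref{weak-inner-localisation} applies at every point, with the $\lambda$-density in place of $W_p$: it is continuous, and only finite on the gradients used, since the stripe competitors of Lemma~\ref{rankone-weierstrass} keep all gradients near $A\,\mathrm{diag}(1,\delta/\tau)$ or $A\,\mathrm{diag}(1,s_\theta/\tau)$, below the pole. These competitors are smooth diffeomorphisms of positive Jacobian. The stripe construction then yields the reverse inequality $\phi_{p,\lambda}(\tau)\leq\theta\phi_{p,\lambda}(\delta)+(1-\theta)\phi_{p,\lambda}(s_\theta)$, which contradicts Step 1. So no point has $|\mu_{h_\lambda}|\in[R,R']$.

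\emph{Step 3: connectedness.} Finite energy forces $\inf_\D|\mu_{h_\lambda}|=0$: the integrand is at least $r^p\,dA_\eta/\lambda$ after the change of variables, and $\D$ has infinite hyperbolic area. By Step 2, $|\mu_{h_\lambda}|$ is a continuous function on the connected disk that omits $[R,R']$ and takes values below $R$. It therefore stays below $R$ everywhere, giving \eqref{lambda-uniform-cutoff}. Letting $R\downarrow r_{\mathrm v}$ gives \eqref{lambda-limsup-cutoff}. The ellipticity margin follows because $H_p\geq H_p(R)>0$ on $[0,R]$, combined with the $\lambda$-version of the coefficient comparison \eqref{lambda-scalar-ratio-comparison}.

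The step I expect to be the main obstacle is Step 2. One must check that the localisation lemma and the stripe competitors remain admissible for the singular density. The competitors must stay strictly off the pole uniformly in $\lambda$, and the boundary layer of the cutoff $\chi$ must not create gradients with $t\leq\tau_\lambda$. I would handle this by noting that in the boundary layer $AD\psi_n$ interpolates between $A$ and the two stripe matrices, so its $t$-value stays at least $\min\{\delta,\tau\}$ up to $O(n^{-1})$.
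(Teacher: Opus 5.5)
\textbf{The cutoff.} Your argument for the cutoff itself is correct and is essentially the paper's. You fix the stripe parameters in terms of $p,R$ alone, carry the strict inequality over to $\phi_{p,\lambda}$ by its uniform convergence on $[\delta,1]$, make a frozen stripe comparison at a single point, and finish with infinite hyperbolic area plus connectedness. Two remarks on this part.
\begin{itemize}
\item The window $[R,R']$ is unnecessary. The paper excludes only the level set $\{|\mu_{h_\lambda}|=R\}$, which already suffices for the connectedness step and removes the need for uniformity in $\tau$.
\item The frozen comparison should not be obtained by applying Lemma~\ref{weak-inner-localisation} at $L^2$-Lebesgue points. That proof relies on quadratic growth and uniform integrability, which fail for a density equal to $+\infty$ beyond the pole. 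What legitimises the comparison is continuity of $Dh_\lambda$ together with $J_{h_\lambda}(w_0)>0$. Then $Dh_\lambda(w_0+\epsilon\psi(x))D\psi(x)\to A\,D\psi(x)$ uniformly, every test gradient stays in a fixed compact part of $\{|\mu|^2<\lambda\}$, and the densities converge uniformly. The paper argues exactly this way and explicitly avoids extending weak localisation to the infinite-valued density.
\end{itemize}

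\textbf{The ellipticity margin.} There is a genuine gap in the final assertion. The statement concerns the principal coefficients $\alpha_\lambda,\beta_\lambda$ of \eqref{natural-second}, that is, the quantity $\kappa_\lambda(r)$ in \eqref{ell-norm}. The comparison you cite, \eqref{lambda-scalar-ratio-comparison}, instead bounds the radial ratio $\vartheta_{p,\lambda}$ of the forward scalar equation. That is a different formulation, and nothing in your argument converts a margin for one into a margin for the other when $\lambda<1$.

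The missing ingredient is the comparison $a_\lambda\geq a_1$ from \eqref{alambda}, applied in two cases.
\begin{itemize}
\item Where $a_\lambda\geq0$, the ratio in \eqref{ell-norm} is at most $r\leq R$.
\item Where $a_1\leq a_\lambda<0$, one has $1+a_1>2r/(1+r)\geq r$ because $H_p>0$ on $[0,R]$, by \eqref{ell-polynomial}. The ratio $-ar/(1+a-r)$ decreases as $a$ increases, so $\kappa_\lambda(r)\leq\kappa_1(r)$.
\end{itemize}
Together these give $\kappa_\lambda(r)\leq\max\{R,\max_{0\leq s\leq R}\kappa_1(s)\}<1$ for all $\lambda>\lambda_R$. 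This is the $\lambda$-independent margin the statement asks for, and it is how the paper concludes.
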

\begin{proof}
Put $t=(1-R)/(1+R)$. Since $Q_p(R)>1$, the endpoint Weierstrass inequality is strictly violated:
\[
 \phi_p(0)<\phi_p(t)-t\phi_p'(t).
\]
Choose a fixed $0<d<t$ sufficiently small and then a fixed $\theta>0$ sufficiently small, with $s=(t-\theta d)/(1-\theta)<1$, so that
\[
 \theta\phi_p(d)+(1-\theta)\phi_p(s)<\phi_p(t).
\]
All three stretches are positive. Convergence in \eqref{lambda-stretch-density} preserves this strict inequality for $\lambda$ sufficiently close to one, and the corresponding distortions remain strictly below $\sqrt\lambda$.

If $|\mu_{h_\lambda}(w_0)|=R$, use the finite stripe comparison from Lemma~\ref{rankone-weierstrass} near $w_0$. After fixing the cutoff and a sufficiently fine smooth stripe pattern, every test derivative remains strictly inside the pole bound $|\mu|^2<\lambda$. Continuity of $Dh_\lambda$ and $J_{h_\lambda}(w_0)>0$ preserve this margin on a sufficiently small neighbourhood. Thus these are admissible barrier competitors, and freezing the positive weight gives the strict energy decrease above. Therefore such a point cannot occur. This argument uses continuity of the derivative to justify the hard constraint and is not an unproved extension of weak localisation to an infinite-valued density.

Next,  finite energy gives us a point with $|\mu_{h_\lambda}|<R$. In fact, if the continuous distortion were everywhere at least $R$, its forward counterpart would be at least $R$ almost everywhere, and
\[
 \I_{p,\lambda}(f_\lambda)
 \geq\frac{R^p}{\lambda}\int_\D\eta\area=\infty.
\]
Continuity and connectedness now prevent the distortion from crossing the forbidden value $R$. This proves \eqref{lambda-uniform-cutoff}. Taking $R\downarrow r_{\mathrm v}(p)$ proves \eqref{lambda-limsup-cutoff}.

Finally $a_\lambda\geq a_1$ and \eqref{ell-norm} give, on $0\leq r\leq R$,
\[
 \kappa_\lambda(r)\leq
 \max\left\{R,\ \max_{0\leq s\leq R}\kappa_1(s)\right\}<1.
\]
The maximum is strict because $R<r_{\mathrm c}$. At $r=0$ the coefficient norm is interpreted by its zero limit. This is a bound for the principal coefficients; no second derivative at a conformal point has been assumed.
\end{proof}

\begin{remark}[Why the initial regularity hypothesis matters]\label{lambda-nonconvex-remark}
For $1<p<p_{\mathrm c}$ and $\lambda<1$ sufficiently close to $1$, the stretch profile $\phi_{p,\lambda}$ is nonconvex: a negative second derivative of $\phi_p$ persists, whereas the profile is convex near the pole and the conformal endpoint. Its lower convex envelope therefore has an affine segment with interior contacts $\tau_\lambda<t_-<t_+<1$ and supporting line $\phi_{p,\lambda}(t)\geq B+mt$, $B>0$, $m<0$. At $p=3/2$, $\lambda=0.999$, the contact distortions are approximately $0.334418$ and $0.909382$, both larger than $r_{\mathrm v}(3/2)=0.295598\ldots$. On a flat square, $h(x,y)=x+ig(y)$ with $g'=t_-$ below a horizontal line and $g'=t_+$ above it is bi-Lipschitz but not $C^1$. Jensen's inequality applied to
\[
 W_{p,\lambda}(A)\geq B|Ae_1|^2+m\det A
\]
proves minimality for its trace; its Hopf differential is the constant $B$. This flat example shows why the pole bound alone does not supply the derivative continuity used in Theorem~\ref{lambda-uniform-selection}.

\end{remark}

\section{Global comparison and uniqueness}\label{monotone-qc-comparison}
The regular quasiconformal minimiser that we constructed above is also the
unique absolute monotone minimiser. We prove this by a global
Reich--Strebel comparison using the inverse of the regular map.
The argument has two separate inputs: convexity in the principal
stretch ratio, and a weighted approximation of the holomorphic
differential. Neither the convexity of $\Psi_p$ itself nor global
integrability of the Hopf differential is required.
We write
\[
 m_{\rm mon}=\min_{H\in\mathcal M_\beta}\E_p(H),\qquad
 m_{\rm QC}=\inf_{\substack{H\in\mathcal M_\beta\\
                            H\ \mathrm{quasiconformal}}}\E_p(H).
\]

\subsection{Convexity in the principal stretch ratio}
\begin{lemma}[Convexity in the principal stretch ratio]
\label{principal-stretch-convexity}
For $p_{\mathrm c}\leq p\leq2$, put
\begin{equation}\label{principal-stretch-profile}
 \Psi_p(S)=\tfrac12(S-1)^{p/2}(S+1)^{1-p/2},\qquad
 P_p(t)=\Psi_p\!\left(\frac{t+t^{-1}}2\right)
       =\frac{|t-1|^p(t+1)^{2-p}}{4t},\quad t>0.
\end{equation}
Then $P_p$ is $C^1$ and strictly convex. For $t\ne1$,
\begin{equation}\label{principal-stretch-second-derivative}
 P_p''(t)=
 \frac{|t-1|^{p-2}(t+1)^{-p}}{2t^3}
 \bigl[\ell_p t^2+2(p-1)t+1\bigr]>0,
 \qquad \ell_p=2(p-1)^2-1.
\end{equation}
Moreover $P_p'(t)\uparrow1/4$ as $t\to\infty$. With
\[
 F_p(r)=\tfrac12r^{p-1}[p+(2-p)r^2],\qquad
 k=\frac{1+r}{1-r},
\]
one has
\begin{equation}\label{comparison-rankone-margin}
 P_p'(k)=\frac{F_p(r)}{(1+r)^2},\qquad
 \mathfrak n_p(r):=(1+r)^2-4F_p(r)>0\quad(0\leq r<1).
\end{equation}
These assertions include $p=p_{\mathrm c}$. Below that exponent,
$P_p''(t)<0$ for all sufficiently large $t$.
\end{lemma}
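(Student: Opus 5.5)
The argument is a direct one-variable computation on $P_p(t)=\tfrac14|t-1|^p(t+1)^{2-p}t^{-1}$, carried out separately on $0<t<1$ and $t>1$. The two pieces are then matched at $t=1$.

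\emph{Step 1 (the identity $P_p=\Psi_p\circ$).} With $S=(t+t^{-1})/2$ one has $S-1=(t-1)^2/(2t)$ and $S+1=(t+1)^2/(2t)$. Substituting into $\Psi_p$ gives the displayed closed form, since the powers of $2t$ combine to $(2t)^{-1}$ and $\tfrac12\cdot\tfrac12=\tfrac14$.

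\emph{Step 2 (the second derivative).} I would write $\log P_p=p\log|t-1|+(2-p)\log(t+1)-\log t-\log 4$. This gives
\[
 \frac{P_p'}{P_p}=\frac{p}{t-1}+\frac{2-p}{t+1}-\frac1t,
\]
which simplifies to $\bigl[t^2+2(p-1)t+1\bigr]/\bigl(t(t^2-1)\bigr)$. The second derivative follows from $P_p''=P_p\bigl[(P_p'/P_p)^2+(P_p'/P_p)'\bigr]$. Collecting over the common denominator $t^2(t^2-1)^2$ should yield the numerator $2\bigl[\ell_pt^2+2(p-1)t+1\bigr]$. The prefactor $P_p/(t^2(t^2-1)^2)$ then reduces to $|t-1|^{p-2}(t+1)^{-p}/(4t^3)$, which produces \eqref{principal-stretch-second-derivative}. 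This algebra is the main bookkeeping step.

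Positivity splits by range. For $p\ge p_{\mathrm c}$ one has $\ell_p\ge0$, and the other coefficients are positive, so the quadratic is positive for $t>0$. For $p<p_{\mathrm c}$ one has $\ell_p<0$, so the leading term dominates and the quadratic is negative for large $t$; this proves the final sentence of the lemma.

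\emph{Step 3 ($C^1$ regularity and strict convexity across $t=1$).} Near $t=1$, $P_p$ is of order $|t-1|^p$ with $p>1$. Hence $P_p'(1)=0$ and $P_p'$ is continuous there. The formula shows $P_p''$ is of order $|t-1|^{p-2}$, which is integrable because $p>1$. Since $P_p'$ is continuous and strictly increasing on each side of $1$, it is strictly increasing on all of $(0,\infty)$, and so $P_p$ is strictly convex.

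\emph{Step 4 (the limit of $P_p'$).} For large $t$, the formula for $P_p'/P_p$ behaves like $1/t$, while $P_p\sim t/4$; so $P_p'\to1/4$. The convergence is monotone increasing because $P_p''>0$.

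\emph{Step 5 (the identity and margin in \eqref{comparison-rankone-margin}).} Substitute $t=k=(1+r)/(1-r)$, using $k-1=2r/(1-r)$, $k+1=2/(1-r)$ and $k^2+2(p-1)k+1=2\bigl[(1+r^2)+(p-1)(1-r^2)\bigr]/(1-r)^2$. The last bracket equals $p+(2-p)r^2$. The powers of $(1-r)$ and of $2$ should then cancel, leaving $r^{p-1}\bigl[p+(2-p)r^2\bigr]/\bigl(2(1+r)^2\bigr)=F_p(r)/(1+r)^2$.

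The margin statement follows from Step 4. Strict convexity makes $P_p'(k)<\lim_{t\to\infty}P_p'(t)=1/4$ for every finite $k$, and this inequality is exactly $4F_p(r)<(1+r)^2$. I expect no genuine obstacle beyond the algebra in Step 2. The one point needing care is to keep $p=p_{\mathrm c}$ included: there $\ell_p=0$, and the strict positivity comes from the linear and constant terms of the quadratic.
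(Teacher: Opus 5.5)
Your proposal is correct and follows the paper's own argument: differentiate the closed form on each side of $t=1$, glue using $P_p'(1)=0$, read $P_p'\uparrow 1/4$ from the expansion at infinity, and deduce $\mathfrak n_p(r)>0$ from the strict increase of $P_p'$. Your logarithmic-derivative bookkeeping checks out: the numerator is $2[\ell_pt^2+2(p-1)t+1]$, and one gets $P_p'(k)=F_p(r)/(1+r)^2$. At $r=0$, where your formula for $P_p'/P_p$ at $k=1$ is singular, the identity instead holds by continuity, and $\mathfrak n_p(0)=1$ directly.
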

\begin{proof}
Differentiate the explicit expression in
\eqref{principal-stretch-profile} separately on the two sides of
$1$. Its first derivative tends to zero at $1$, and the
second derivative is positive because $\ell_p\geq0$. Thus there
is no loss of convexity at the joining point. Expansion at infinity
gives the derivative limit. Since $k$ is finite when $r<1$, strict
increase of $P_p'$ gives \eqref{comparison-rankone-margin}.
When $\ell_p<0$, the quadratic factor in
\eqref{principal-stretch-second-derivative} is eventually negative.
\end{proof}

This is the convexity needed for the comparison. Although
$\Psi_p''<0$ for $p<2$, the profile $P_p$ is strictly convex
throughout the upper range. This threshold agrees with the
polyconvexity and scalar ellipticity thresholds we have already obtained.

\subsection{A weighted holomorphic norm and the regular reference}
\begin{lemma}[A weighted holomorphic norm]
\label{weighted-hopf-approximation}
For $1<q<\infty$, let $\mathcal Q^q(\D)$ be the holomorphic quadratic differentials with finite norm
\begin{equation}\label{weighted-hopf-Bq}
 \|\phi\|_{\mathcal Q^q}^q
 =\int_\D\left(\frac{|\phi|}{\eta}\right)^q dA_\eta
 =\int_\D|\phi(w)|^q(1-|w|^2)^{2q-2}\area.
\end{equation}
This norm is invariant under pullback of quadratic differentials
by disk automorphisms. If it is finite, then
\begin{equation}\label{weighted-hopf-bloch-embedding}
 \sup_\D\frac{|\phi|}{\eta}
 \leq C_q\|\phi\|_{\mathcal Q^q}.
\end{equation}
The bounded holomorphic differentials
$\phi_\rho(w)=\rho^2\phi(\rho w)$ satisfy
\begin{equation}\label{weighted-hopf-dilation}
 \|\phi_\rho-\phi\|_{\mathcal Q^q}\longrightarrow0
 \qquad(\rho\uparrow1).
\end{equation}
\end{lemma}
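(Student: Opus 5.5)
The proof has three parts: Möbius invariance, the pointwise bound, and the dilation convergence. We prove them in that order and use each in the next.

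\emph{Invariance.} For a disk automorphism $A$, the pullback is $A^*\phi=\phi(A)(A')^2$. Since $\eta(A)|A'|^2=\eta$, the ratio satisfies $|A^*\phi|/\eta=(|\phi|/\eta)\circ A$. The measure $dA_\eta$ is $A$-invariant, so the norm in \eqref{weighted-hopf-Bq} is unchanged.

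\emph{Pointwise bound \eqref{weighted-hopf-bloch-embedding}.} By invariance it suffices to bound $|\phi(0)|$, because $|\phi|/\eta$ at $w$ equals $|A^*\phi(0)|$ for an automorphism $A$ with $A(0)=w$. Subharmonicity of $|\phi|^q$ on $|w|<1/2$ gives
\[
 |\phi(0)|^q\leq\frac{4}{\pi}\int_{|w|<1/2}|\phi|^q\area.
\]
On this disk the weight $(1-|w|^2)^{2q-2}$ is bounded below by $(3/4)^{2q-2}$. Hence $|\phi(0)|\leq C_q\|\phi\|_{\mathcal Q^q}$.

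\emph{Dilations \eqref{weighted-hopf-dilation}.} We first show that $\|\phi_\rho\|_{\mathcal Q^q}$ is bounded uniformly in $\rho$. Substituting $u=\rho w$ gives
\[
 \|\phi_\rho\|^q=\int_{|u|<\rho}|\phi(u)|^q\rho^{2q-2}\bigl(1-|u|^2/\rho^2\bigr)^{2q-2}\area(u).
\]
For $q\geq1$ and $\rho\leq1$ we have $\rho^2-|u|^2\leq1-|u|^2$. Therefore $\rho^{2q-2}(1-|u|^2/\rho^2)^{2q-2}=\rho^{2-2q}(\rho^2-|u|^2)^{2q-2}\leq\rho^{2-2q}(1-|u|^2)^{2q-2}$. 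This gives $\|\phi_\rho\|\leq\rho^{(2-2q)/q}\|\phi\|$, which is bounded for $\rho\geq1/2$.

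Next split $\D$ into a compact disk $|w|\leq s$ and the annulus $s<|w|<1$. On $|w|\leq s$, $\phi_\rho\to\phi$ uniformly, so that contribution tends to zero. On the annulus we need the tails $\int_{|w|>s}|\phi_\rho|^q(1-|w|^2)^{2q-2}\area$ to be uniformly small. The same substitution bounds this tail by $\rho^{2-2q}\int_{|u|>\rho s}|\phi|^q(1-|u|^2)^{2q-2}\area$. That integral is small once $\rho s$ is close to one, by integrability of $|\phi|^q(1-|u|^2)^{2q-2}$. Fixing $s$ first and then letting $\rho\uparrow1$ proves \eqref{weighted-hopf-dilation}; a Vitali-type argument gives the same conclusion.

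Finally, each $\phi_\rho$ is bounded, being holomorphic on a neighbourhood of $\overline\D$, so $\phi_\rho\in\mathcal Q^q$.

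The main obstacle is the monotonicity inequality in the tail step. The weight comparison must run in the right direction: for $q>1$ the exponent $2q-2$ is positive, so we need $\rho^2-|u|^2\leq1-|u|^2$, which holds. For $q<1$ the comparison would reverse, but this case is excluded by the hypothesis $1<q<\infty$.
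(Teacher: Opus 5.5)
Your proof is correct. The invariance step is the paper's, but the other two steps use different mechanisms.

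For the pointwise bound, the paper applies the submean inequality directly on the Euclidean disk of radius $(1-|w|)/2$ about $w$, where the weight is comparable to its central value. You instead use the invariance to move $w$ to the origin and then work on the fixed disk $|w|<1/2$. This is slightly cleaner and makes $C_q$ explicit.

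For the tail in the dilation step, the paper uses that the circular means of $|\phi|^q$ increase with the radius. In polar coordinates this bounds the weighted integral of $|\phi_\rho|^q$ over an annulus $s<|w|<1$ by that of $|\phi|^q$ over the \emph{same} annulus, with constant one. You instead substitute $u=\rho w$ and compare weights pointwise, which bounds it by the tail of $\phi$ over $|u|>\rho s$ with a factor $\rho^{2-2q}$. Your route needs no holomorphy in the tail, since it holds for any function in the weighted $L^q$ space, but it depends on $2q-2>0$, as you note. The paper's route uses subharmonicity, but it works for any radial weight and gives the sharper same-annulus bound.

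Your factor $\rho^{2-2q}$ is in fact unnecessary: $\rho^{2q-2}(1-|u|^2/\rho^2)^{2q-2}=(\rho-|u|^2/\rho)^{2q-2}\leq(1-|u|^2)^{2q-2}$, because $(1-|u|^2)-(\rho-|u|^2/\rho)=(1-\rho)(1+|u|^2/\rho)\geq0$.

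You should also write out the inequality $|a-b|^q\leq2^{q-1}(|a|^q+|b|^q)$, which reduces the annular part of $\|\phi_\rho-\phi\|_{\mathcal Q^q}^q$ to the two separate tails.
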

\begin{proof}
M\"obius invariance follows from
$\eta(A)|A'|^2=\eta$ and $A^*\phi=(\phi\circ A)(A')^2$.
The submean inequality for $|\phi|^q$ on a disk of radius
$(1-|w|)/2$, where the weight is comparable to its central value,
proves \eqref{weighted-hopf-bloch-embedding}. For dilation, first
work on a fixed compact disk, where convergence is uniform.
The circular means of $|\phi|^q$ are increasing with the radius.
Consequently the weighted integral of $|\phi_\rho|^q$ over any
outer annulus is at most the corresponding integral of $|\phi|^q$.
The inequality $|a-b|^q\leq2^{q-1}(|a|^q+|b|^q)$ controls
the tail uniformly in $\rho$. We now let the compact disks exhaust $\D$ to obtain the result.
\end{proof}

The following quantitative consequence of the forward regularity
argument provides the required weighted norm. It uses stationarity
as well as quasiconformality.

\begin{lemma}[Hyperbolic derivative bounds for a quasiconformal reference]
\label{qc-stationary-hyperbolic-bilip}
Let $p_{\mathrm c}\leq p\leq2$. Suppose $f:\D\to\D$ is a
quasiconformal homeomorphism, stationary under compactly supported
inner variations of $\I_p$, with $|\mu_f|\leq\kappa<1$.
Then $f$ and $h=f^{-1}$ are diffeomorphisms and are bi-Lipschitz
for the hyperbolic metric, with constants depending only on
$p,\kappa$. In particular, for some $C=C(p,\kappa)$,
\begin{equation}\label{reference-hyperbolic-derivatives}
 C^{-1}\eta(w)\leq
 \eta(h(w))J_h(w)\leq\eta(h(w))|h_w(w)|^2
 \leq C\eta(w).
\end{equation}
\end{lemma}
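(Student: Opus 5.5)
The plan is to reduce everything to a uniform estimate at the origin after Möbius normalisation, using the forward regularity machinery of Proposition~\ref{forward-regularity-proposition} made quantitative. Both the hypotheses and the conclusion \eqref{reference-hyperbolic-derivatives} are invariant under pre- and postcomposition by disk automorphisms: $|\mu|$ is preserved, stationarity is preserved, and $\eta(h(w))|h_w|^2/\eta(w)$ is Möbius invariant. So it suffices to prove the bound at $w=0$ for maps normalised so that $h(0)=0$. I would consider the compact family $\mathcal F_{p,\kappa}$ of $\kappa$-quasiconformal self-maps $f$ of $\D$ with $f(0)=0$ that are stationary for $\I_p$. Quasiconformal compactness (normal families of normalised $K$-qc maps of the disk onto itself) makes this family sequentially compact in the locally uniform topology. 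Because $f(0)=0$ and the maps are $K$-qc of $\D$ onto $\D$, the Mori-type distortion estimates give $f(B_{1/2})\subset B_{\rho_\kappa}$ and $f^{-1}(B_{1/2})\subset B_{\rho_\kappa}$ with $\rho_\kappa<1$.

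Next I would show the family is closed under limits, including the stationarity condition. The condition $H_p>0$ on $[0,\kappa]$ is automatic in the upper range. The proof of Proposition~\ref{forward-regularity-proposition} is quantitative: the contraction constant $k$ of $\mathcal B$ on $[0,T(\kappa)]$ depends only on $p,\kappa$; $\eta$ is smooth with bounded derivatives on $B_{3/4}$; and $U$, $e(r)$ are bounded by constants depending on $\kappa$. Hence the difference-quotient, higher-integrability and Schauder steps give $\|U\|_{C^{1,\gamma}(B_{1/2})}\le C(p,\kappa)$ uniformly on the family. This yields uniform $C^{1,\alpha}$ bounds for $\mu_f$ on $B_{1/2}$, and then, by linear Beltrami estimates with normalisation $f(0)=0$ and the uniform modulus of continuity, a uniform $C^{2,\alpha}$ bound for $f$ on $B_{1/4}$. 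Uniform $C^{1,\gamma}$ convergence of the stresses then passes \eqref{forward-weighted-distribution} to limits, so limits remain stationary and $\kappa$-qc.

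The main obstacle is the lower bound $J_f(0)\ge c(p,\kappa)>0$; the upper bound $|Df(0)|\le C$ is immediate from the uniform $C^{2,\alpha}$ estimate. I would argue by contradiction and compactness. If $J_{f_j}(0)\to0$ along a sequence, extract a limit $f\in\mathcal F_{p,\kappa}$ with $f_j\to f$ in $C^1(B_{1/4})$, so $J_f(0)=0$. This contradicts the nonvanishing theorem \cite[Theorem 1.1]{ACFJK} applied to the homeomorphic solution $f$ with Hölder Beltrami coefficient, as in Proposition~\ref{forward-regularity-proposition}. Hence $c\le J_f(0)\le |f_z(0)|^2\le C$.

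Transferring the bounds to the inverse at $w=0$, with $h(0)=0$ and $\eta(0)=1$, uses $J_h(0)=1/J_f(0)$ and $|h_w(0)|^2\le J_h(0)/(1-\kappa^2)$. This gives \eqref{reference-hyperbolic-derivatives} at the origin, and Möbius invariance gives it everywhere. The middle inequality is simply $J_h\le|h_w|^2$. The diffeomorphism property comes from Proposition~\ref{forward-regularity-proposition} together with the inverse function theorem. The hyperbolic bi-Lipschitz statement follows by integrating the pointwise bound $\eta(h)|Dh|^2_{\mathrm{op}}\le C\eta$ along geodesics, and likewise for $f$.
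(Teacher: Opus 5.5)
Your proposal is correct and follows essentially the same route as the paper. Both arguments use Möbius normalisation at each point, constants depending only on $p,\kappa$ in the stress-potential argument of Proposition~\ref{forward-regularity-proposition}, compactness combined with the nonvanishing-Jacobian theorem \cite[Theorem 1.1]{ACFJK} for the lower derivative bound, and integration along geodesics for the bi-Lipschitz statement. The differences are minor. You pass stationarity itself to the limit, whereas the paper only needs the uniform H\"older bound on $\mu_f$ to survive the limit. You should also note explicitly that $p=2$ lies outside the stated range of that proposition, although the same contraction argument applies there directly to $\mu_f$.
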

\begin{proof}
For $p<2$, Proposition~\ref{forward-regularity-proposition} applies
since $H_p>0$ on $[0,\kappa]$. This proof also reveals uniform
local H\"older bounds for $\mu_f$ in a normalised family:
$U=T(|\mu_f|)\overline{\mu_f}/|\mu_f|$ is bounded by
$T(\kappa)$ and so the contraction margin in
\eqref{forward-stress-contraction} depends only on $p,\kappa$. Then
the Cauchy correction, difference-quotient estimate and local
higher-integrability estimate have uniform constants on
fixed concentric disks. Reconstruction by
\eqref{forward-stress-reconstruction} preserves the uniform
H\"older modulus. At $p=2$ the same argument applies to the
coefficient $\mu_f$. Namely its reconstruction is smooth at zero, and
the stress equation has the same strict contraction on
$[0,\kappa]$. We do not need differentiability of $|\mu_f|$ at its zeros in that case.

\medskip

Next, for each $a\in\D$, choose disk automorphisms $A,B$ with
$A(0)=a$ and $B(f(a))=0$, and set $f_a=B\circ f\circ A$.
M\"obius invariance preserves stationarity and the same
quasiconformal bound. On fixed smaller disks the preceding bounds
and linear Beltrami Schauder estimates give a uniform
$C^{1,\alpha}$ bound for $f_a$, since $|f_a|\leq1$.
The derivatives at zero also have a uniform positive lower bound.
Otherwise a sequence with $Df_a(0)$ tending to a singular matrix
would have a $C^1$ limit near zero. Normalised quasiconformal
compactness makes that limit a nonconstant quasiconformal disk
homeomorphism. The limiting Beltrami coefficient is H\"older and
the nonvanishing-Jacobian theorem
\cite[Theorem 1.1]{ACFJK}, applied to its linear Beltrami field,
excludes the singular derivative. The quasiconformal bound relates
the two singular values, so both are bounded above and below.

The derivative of $f_a$ at zero is exactly the hyperbolic
derivative of $f$ at $a$, up to rotations. This gives uniform
upper and lower bounds at every point. Integrating along
hyperbolic geodesics, and doing the same for the inverse, proves
the bi-Lipschitz assertion and
\eqref{reference-hyperbolic-derivatives}.
\end{proof}

\begin{corollary}[Excess energy and the weighted Hopf norm]
\label{reference-weighted-hopf-bound}
Let $h=f^{-1}$ be as in the preceding lemma, with
$\E_p(h)<\infty$, and suppose its inverse inner variation gives
the holomorphic differential $\Phi=\Phi_p$ in
\eqref{subquadratic-hopf}. For $q=p/(p-1)$,
\begin{equation}\label{reference-Bq-energy-bound}
 C(p,\kappa)^{-1}\E_p(h)
 \leq\|\Phi\|_{\mathcal Q^q}^{q}
 \leq C(p,\kappa)\E_p(h).
\end{equation}
In particular this applies to a finite-energy quasiconformal
minimiser. It does not assert $\Phi\in L^1(\D)$.
\end{corollary}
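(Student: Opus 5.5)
The plan is a pointwise comparison of the two integrands, followed by a change of variables. Write $B=\eta(h)|h_w|^2$ and $r=|\mu_h|$. By \eqref{monotone-hopf} and \eqref{flux-functions},
\[
 \frac{|\Phi|}{\eta}=\frac{B}{\eta}F_p(r),
\]
so the $\mathcal Q^q$ density is
\[
 \Big(\frac{|\Phi|}{\eta}\Big)^q\eta=\Big(\frac{B}{\eta}\Big)^qF_p(r)^q\,\eta .
\]
The energy density of $\E_p(h)$ is $\eta(h)|h_w|^2r^p=Br^p$. By \eqref{reference-hyperbolic-derivatives}, the ratio $B/\eta$ lies in $[C^{-1},C]$ with $C=C(p,\kappa)$. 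Moreover $r\le\kappa$ everywhere, because $|\mu_h(w)|=|\mu_f(h(w))|$.

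Step 1 compares the profiles. For $q=p/(p-1)$ we have $F_p(r)^q\asymp_p r^{(p-1)q}=r^p$ on $[0,\kappa]$, since $p\le p+(2-p)r^2\le2$. Combining this with $B\asymp\eta$ gives
\[
 \Big(\frac{|\Phi|}{\eta}\Big)^q\eta\asymp_{p,\kappa} r^p\eta\asymp_{p,\kappa} Br^p
\]
pointwise. This is the reason for the choice of $q$: it matches the vanishing order $r^{p-1}$ of the Hopf coefficient, raised to the power $q$, with the energy order $r^p$.

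Step 2 integrates this pointwise relation over $\D$. Both integrals are taken in the same variable $w$, so no change of variables is actually needed. Each integral is bounded by the other up to a constant depending on $p$ and $\kappa$, which gives \eqref{reference-Bq-energy-bound}.

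The only genuine input is the two-sided bound $B\asymp\eta$. It combines the upper bound $\eta(h)|h_w|^2\le C\eta$ with the lower bound $\eta(h)J_h\ge C^{-1}\eta$, which we convert into a lower bound on $B$ using $|h_w|^2\ge J_h$. Both come from Lemma~\ref{qc-stationary-hyperbolic-bilip}, and that lemma is where stationarity enters. So the corollary should be essentially immediate once the lemma is available, and I expect no real obstacle beyond careful bookkeeping of the constants at $r=0$, where both sides vanish.

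For the final remark, a finite-energy quasiconformal minimiser is stationary and has $\kappa<1$, so the hypotheses hold. That $\Phi\in L^1(\D)$ is not claimed is clear from the formulas: $|\Phi|\asymp\eta r^{p-1}$, and $\eta$ is not integrable, so the Euclidean integral of $|\Phi|$ need not be finite.
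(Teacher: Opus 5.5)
Your proposal is correct and follows the paper's proof. Both arguments write $|\Phi|=\eta(h)|h_w|^2F_p(r)$, use $\tfrac12[p+(2-p)r^2]\in[p/2,1]$ and $(p-1)q=p$ to match $F_p(r)^q$ with $r^p$, and then invoke the two-sided bound $\eta(h)|h_w|^2\asymp\eta$ from Lemma~\ref{qc-stationary-hyperbolic-bilip}. The paper phrases this as the exact identity $\int|\Phi|^qA_h^{1-q}\,dA=\int c_p(r)^qA_hr^p\,dA$ and only then replaces $A_h^{1-q}$ by $\eta^{1-q}$, which is a bookkeeping variant of your pointwise comparison; your restriction to $r\le\kappa$ in the profile comparison is harmless but unnecessary, since it holds on all of $[0,1]$.
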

\begin{proof}
Put $r=|\mu_h|$ and $A_h=\eta(h)|h_w|^2$. The normalisation
of the Hopf differential used here is
\begin{equation}\label{comparison-hopf-normalisation}
 \Phi=2\eta(h)\Psi_p'(\K_h)h_w\overline{h_{\bar w}},
 \qquad |\Phi|=A_hF_p(r),
\end{equation}
with its continuous value zero at $r=0$. Put
$c_p(r)=\tfrac12[p+(2-p)r^2]$, so that $p/2\leq c_p(r)\leq1$.
The identity $(p-1)q=p$ gives the exact moving-weight comparison
\[
 \left(\frac p2\right)^q\E_p(h)
 \leq\int_\D|\Phi|^q A_h^{1-q}\area
 =\int_\D c_p(r)^q A_h r^p\area
 \leq\E_p(h).
\]
Since $A_h\asymp\eta$ by
\eqref{reference-hyperbolic-derivatives}, replacing $A_h^{1-q}$
by $\eta^{1-q}$ proves the two-sided estimate.
Both stationarity identities are legitimate for a quasiconformal
minimiser, since compact smooth variations remain in its admissible
class and stay bounded away from $|\mu|=1$.
\end{proof}

\subsection{The weighted Reich--Strebel inequality for monotone maps}
\begin{lemma}[The Jacobian identity for a proper monotone map]
\label{monotone-weighted-degree-identity}
Suppose $\chi:\overline\D\to\overline\D$ is a continuous
monotone surjection, is the identity on $\partial\D$, maps
$\D$ into $\D$, belongs to $W^{1,2}_{\loc}$, and has
$J_\chi\geq0$ almost everywhere. Then, for every nonnegative
Borel function $g$,
\begin{equation}\label{monotone-weighted-degree-formula}
 \int_\D g(\chi)J_\chi\area=\int_\D g\area.
\end{equation}
\end{lemma}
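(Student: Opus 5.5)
The plan is to prove \eqref{monotone-weighted-degree-formula} through the Brouwer degree. Monotonicity will play almost no role; what matters is that $\chi$ is continuous on $\overline\D$, equals the identity on the circle, and maps $\D$ into $\D$. I would first establish the identity for $g\in C_c(\D)$ with $g\geq0$. I would then extend it to nonnegative Borel $g$ by noting that both sides are Radon measures on $\D$. The left side is the push-forward $\chi_\#(J_\chi\,dA)$, which is Radon because $J_\chi\in L^1_{\loc}$ and, as shown next, preimages of compact sets are compact. The right side is Lebesgue measure. Two Radon measures on $\D$ that agree on $C_c(\D)$ coincide, and monotone convergence then gives the Borel case.

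Fix $g\in C_c(\D)$ with support $K$. The preimage $\chi^{-1}(K)$ is closed in $\overline\D$. It misses $\Sone$, since $\chi$ is the identity there and $K\subset\D$. Hence $\chi^{-1}(K)$ is a compact subset of $\D$. Uniform continuity of $\chi$ on $\overline\D$ gives $\sup_\theta|\chi(\rho e^{i\theta})-e^{i\theta}|\to0$ as $\rho\uparrow1$. Choose $\rho$ so close to $1$ that $\chi^{-1}(K)\subset\D_\rho$ and that $\chi$ on $\partial\D_\rho$ stays within $\tfrac12\operatorname{dist}(K,\Sone)$ of $\rho^{-1}$ times the identity. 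The straight-line homotopy between $\chi|_{\partial\D_\rho}$ and $w\mapsto w/\rho$ then avoids $K$. Consequently $\deg(\chi,\D_\rho,y)=1$ for every $y\in K$.

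The analytic step is the change-of-variables formula with degree for a continuous $W^{1,2}$ map:
\[
 \int_{\D_\rho}g(\chi)J_\chi\,dA=\int_{\IC}g(y)\deg(\chi,\D_\rho,y)\,dA(y).
\]
I would prove it by mollification. Take $\chi_\epsilon\to\chi$ uniformly on $\overline{\D_\rho}$ and in $W^{1,2}(\D_\rho)$, using $\chi\in W^{1,2}$ on a slightly larger disk. For smooth maps the formula is classical. As $\epsilon\to0$, $J_{\chi_\epsilon}\to J_\chi$ in $L^1$, since the Jacobian is a product of $L^2$ functions. Also $g(\chi_\epsilon)\to g(\chi)$ uniformly. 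For $y\in K$ the degrees are stable once the uniform error is less than $\operatorname{dist}(K,\chi(\partial\D_\rho))>0$. Outside $K$ the integrand vanishes, so no degree information is needed there. Finally, $g(\chi)J_\chi=0$ off $\chi^{-1}(K)\subset\D_\rho$, so the left integral over $\D_\rho$ equals the integral over $\D$. Combining these gives $\int_\D g(\chi)J_\chi=\int g$.

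I expect the main obstacle to be this passage to the limit in the degree formula, specifically the uniform separation of $K$ from $\chi_\epsilon(\partial\D_\rho)$ and the $L^1$ convergence of the Jacobians. Both follow from $W^{1,2}$ regularity and continuity. The hypothesis $J_\chi\geq0$ is used only to make the left side a nonnegative measure in the Borel extension. Monotonicity is not needed beyond what is already encoded in continuity and the boundary condition.
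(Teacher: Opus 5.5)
Your proposal is correct and follows essentially the paper's argument. The paper likewise gets degree one on the support from the boundary identity, passes the smooth degree formula to $\chi$ by uniform and $W^{1,2}$ mollification ($L^1$ convergence of Jacobians, stability of degree away from the boundary image), and extends from compactly supported $g$ to nonnegative Borel $g$ by equality of positive measures. The only cosmetic difference is that you use a concentric disk $\D_\rho$ and an explicit straight-line homotopy, whereas the paper encloses $\chi^{-1}(\operatorname{supp} g)$ in a general smooth domain and reads off the degree from $\chi=\id$ on the circle.
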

\begin{proof}
For a smooth function $g$ compactly supported in $\D$, its
preimage is compact in $\D$. We enclose this preimage in a smooth
relatively compact domain whose boundary image avoids
$\operatorname{supp}g$. Uniform and strong local $W^{1,2}$
mollification passes the ordinary degree formula to $\chi$:
Jacobians converge in $L^1$, and the degree is stable away from
the boundary image. That degree equals one on the support,
because $\chi$ fixes the outer circle and has no preimage there
outside our chosen domain. Equality of the resulting positive
measures proves the Borel version by exhaustion. Thus the
formula does not assume an inverse map or a Lusin property for
$\chi$.
\end{proof}

\begin{lemma}[Composition with the inverse of the regular reference]
\label{monotone-reference-composition}
Let $1<p\leq2$. Suppose $h$ is a quasiconformal diffeomorphism
with finite $\E_p(h)$, $|\mu_h|\leq\kappa<1$, and the bounds
\eqref{reference-hyperbolic-derivatives}. For every
$H\in\mathcal M_\beta$ with the same trace as $h$, the map
$\chi=h^{-1}\circ H$ belongs to $W^{1,2}(\D)$, is monotone,
fixes the boundary, and satisfies
\begin{equation}\label{monotone-composition-energy}
 \E_p(\chi)\leq C\bigl(\E_p(H)+\E_p(h)\bigr).
\end{equation}
Here $C$ depends on $p,\kappa$ and the constants in
\eqref{reference-hyperbolic-derivatives}.
\end{lemma}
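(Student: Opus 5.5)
We first settle the topology and Sobolev regularity of $\chi=h^{-1}\circ H$, and then prove the energy bound pointwise. Put $f=h^{-1}$. By \eqref{reference-hyperbolic-derivatives}, and since $h$ is a $\kappa$-quasiconformal diffeomorphism, $f$ is a $C^1$ diffeomorphism of $\D$ that is bi-Lipschitz for the hyperbolic metric and extends homeomorphically to $\overline\D$. Hence $\chi$ is a continuous surjection of $\overline\D$. It is the identity on $\Sone$, because $H$ and $h$ have the same trace $\beta$. It maps $\D$ into $\D$, because $H(\D)\subset\D$. Its point fibres are $H^{-1}(h(w))$, which are connected, so $\chi$ is monotone.

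On each compact set $C\Subset\D$ the image $H(C)$ lies in a compact subset of $\D$ (Proposition~\ref{monotone-compactness}), and $f$ is $C^1$ there. The chain rule therefore gives $\chi\in W^{1,2}_{\loc}$, with $D\chi=Df(H)\,DH$ and $J_\chi=J_f(H)J_H\geq0$.

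For the energy, work pointwise in complex derivatives. Set $a=H_w$, $b=H_{\bw}$ and $\nu=\mu_f(H)$, so $|\nu|\leq\kappa$. Then
\[
 \chi_w=f_z(H)\bigl(a+\nu\bar b\bigr),\qquad
 \chi_{\bw}=f_z(H)\bigl(b+\nu\bar a\bigr).
\]
The bounds \eqref{reference-hyperbolic-derivatives}, transferred to $f$ (the inverse's hyperbolic derivative is comparable to $1$), give $\eta(\chi)|f_z(H)|^2\leq C\eta(H)$. The density $W_p$ is positively $2$-homogeneous, so
\[
 \eta(\chi)W_p(D\chi)\leq C\eta(H)\,W_p\bigl(a+\nu\bar b,\;b+\nu\bar a\bigr).
\]
Put $x=|a|$, $y=|b|\leq x$. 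Then $|a+\nu\bar b|\leq 2x$ and $|b+\nu\bar a|\leq y+\kappa x$. Using $|\alpha|^{2-p}|\beta|^p$ with $2-p\geq0$ and $(y+\kappa x)^p\leq 2^{p-1}(y^p+\kappa^p x^p)$, we get
\[
 \eta(\chi)W_p(D\chi)\leq C\eta(H)\bigl(x^{2-p}y^p+\kappa^px^2\bigr).
\]
The first term integrates to at most $C\E_p(H)$.

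The second term is the main obstacle. We have $\eta(H)|a|^2\geq\eta(H)J_H$, and $\int\eta(H)J_H$ is infinite, so a crude bound by $\kappa^p x^2$ diverges. The fix is to use $|\mu_f(H)|$ itself rather than its sup $\kappa$:
\[
 C\eta(H)|\mu_f(H)|^p|H_w|^2\leq C'\eta(H)\bigl(|\mu_f(H)|^pJ_H+|\mu_f(H)|^p|H_{\bw}|^2\bigr).
\]
This uses $|a|^2=J_H+|b|^2$. The piece with $|b|^2\leq x^{2-p}y^p$ is again controlled by $\E_p(H)$. For the Jacobian piece, apply Lemma~\ref{monotone-weighted-degree-identity} to $H$ composed with a fixed quasiconformal reference; more simply, use the degree formula for the proper monotone map $H$ (same proof) with the Borel function $g=\eta|\mu_f|^p$. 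This gives
\[
 \int_\D\eta(H)|\mu_f(H)|^pJ_H\,dA=\int_\D\eta|\mu_f|^p\,dA\leq C\I_p(f)=C\E_p(h),
\]
with the last equality from \eqref{inverseenergy}, since $|\mu_f|\leq\kappa$ makes the denominators in $\I_p$ bounded.

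Collecting the three pieces gives \eqref{monotone-composition-energy}. The global inequality $\mathcal D(\chi)\leq\pi+2\E_p(\chi)$ from \eqref{trace-jacobian} then upgrades $W^{1,2}_{\loc}$ to $\chi\in W^{1,2}(\D)$, since $\int J_\chi=\pi$ by the degree identity.
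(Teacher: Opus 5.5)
Your proposal is correct and takes essentially the paper's route: a pointwise bound of $\eta(\chi)W_p(D\chi)$ by a multiple of $\eta(H)W_p(DH)$ plus a Jacobian-weighted reference term $|\mu_h\circ\chi|^p=|\mu_f\circ H|^p$, which the degree identity converts into $\int_\D\eta|\mu|^p\,dA\leq C\E_p(h)$, followed by the same $\int_\D J_\chi\,dA=\pi$ argument to upgrade $W^{1,2}_{\loc}$ to $W^{1,2}(\D)$. The differences are cosmetic. You expand the chain rule in Wirtinger derivatives, which also covers rank-one points without a continuity argument, and you apply the degree identity to $H$. The paper instead uses the Beltrami composition bound $|\mu_\chi|\leq(r+s)/(1+rs)$ and applies the degree identity to $\chi$.
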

\begin{proof}
Initially $\chi$ is locally $W^{1,2}$ by the ordinary chain
rule with the smooth local inverse of $h$. It is proper and has
nonnegative Jacobian, so Lemma~\ref{monotone-weighted-degree-identity}
applies. On $J_\chi>0$, put
$r=|\mu_h|\circ\chi$, $s=|\mu_H|$, $t=|\mu_\chi|$, and
$e_p(u)=u^p/(1-u^2)$. The composition formula gives
\[
 t\leq\frac{r+s}{1+rs},\qquad
 e_p(t)\leq C_\kappa\frac{r^p+s^p}{1-s^2}
          \leq C_{p,\kappa}\bigl(r^p+e_p(s)\bigr),
\]
as $s^2\leq s^p$. Multiplication by
$\eta(\chi)J_\chi$ and
$J_H=(J_h\circ\chi)J_\chi$ gives the pointwise estimate
\[
 \eta(\chi)W_p(D\chi)
 \leq C\{\eta(\chi)J_\chi r^p+\eta(H)W_p(DH)\}.
\]
Both sides are continuous functions of the derivative on the
closed cone $J_\chi\geq0$, so the inequality also holds at
rank-one derivatives. We integrate this and use
\eqref{monotone-weighted-degree-formula} to obtain
\[
 \int_\D\eta(\chi)J_\chi(|\mu_h|^p\circ\chi)\area
 =\int_\D\eta|\mu_h|^p\area\leq C\E_p(h).
\]
This proves \eqref{monotone-composition-energy}. Finally,
$\int J_\chi=\pi$ and $W_p(D\chi)\geq|\chi_{\bar w}|^2$
give
\[
 \int_\D\|D\chi\|^2\area
 =\pi+2\int_\D|\chi_{\bar w}|^2\area
 \leq\pi+2\E_p(\chi).
\]
The locally Sobolev map is therefore globally $W^{1,2}$.
\end{proof}

\begin{lemma}[Reich--Strebel comparison with finite excess]
\label{excess-reich-strebel-monotone}
Let $1<p\leq2$, $q=p/(p-1)$, and let $\chi$ be a proper
monotone $W^{1,2}$ disk map with identity boundary trace,
$J_\chi\geq0$, and $\E_p(\chi)<\infty$. For every holomorphic
$\phi\in\mathcal Q^q$, the following integral is absolutely
convergent and nonnegative:
\begin{equation}\label{monotone-RS-excess}
 \mathcal R_\phi(\chi)
 :=2\int_\D\left\{
 |\phi(\chi)||\chi_{\bar w}|^2+
 \operatorname{Re}\bigl(\phi(\chi)\chi_w\chi_{\bar w}\bigr)
 \right\}\area\ \geq0.
\end{equation}
For two such differentials,
\begin{equation}\label{monotone-RS-continuity}
 |\mathcal R_\phi(\chi)-\mathcal R_\psi(\chi)|
 \leq C_p\|\phi-\psi\|_{\mathcal Q^q}
       \bigl(\E_p(\chi)^{1/p}+\E_p(\chi)\bigr).
\end{equation}
\end{lemma}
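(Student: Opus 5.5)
The plan is to prove the quantitative estimate \eqref{monotone-RS-continuity} first, by Hölder's inequality together with the degree identity of Lemma~\ref{monotone-weighted-degree-identity}. Nonnegativity is then proved for the bounded dilations $\phi_\rho$ of Lemma~\ref{weighted-hopf-approximation} by a classical Reich--Strebel argument, and passed to general $\phi\in\mathcal Q^q$ via \eqref{weighted-hopf-dilation}.

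\emph{Absolute convergence and continuity in $\phi$.} Write $a=\chi_w$, $b=\chi_{\bar w}$ and $\omega=|\phi|/\eta$. The integrand in \eqref{monotone-RS-excess} is linear in $\phi$ and bounded by
\[
 2|\phi(\chi)|\bigl(|b|^2+|a||b|\bigr),
\]
so it suffices to bound these two terms for a single $\phi$ and then apply the bound to $\phi-\psi$.

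For the first term, use $|b|^2\leq W_p(D\chi)$ on the cone $|b|\leq|a|$ and the embedding \eqref{weighted-hopf-bloch-embedding}:
\[
 \int_\D|\phi(\chi)||b|^2\,dA
 \leq\sup_\D\omega\int_\D\eta(\chi)|b|^2\,dA
 \leq C_q\|\phi\|_{\mathcal Q^q}\,\E_p(\chi).
\]

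For the second term, split
\[
 \eta(\chi)|a||b|
 =\bigl(\eta(\chi)|a|^{2-p}|b|^p\bigr)^{1/p}\bigl(\eta(\chi)|a|^2\bigr)^{1/q},
\]
which is exact because $(2-p)/p+2/q=1$. Hölder's inequality with exponents $p,q$ then gives
\[
 \int_\D\omega(\chi)\,\eta(\chi)|a||b|\,dA
 \leq\E_p(\chi)^{1/p}\left(\int_\D\omega(\chi)^q\,\eta(\chi)|a|^2\,dA\right)^{1/q}.
\]
In the last integral write $|a|^2=J_\chi+|b|^2$:
\begin{itemize}
\item The Jacobian part equals $\int_\D\omega^q\eta\,dA=\|\phi\|_{\mathcal Q^q}^q$ by \eqref{monotone-weighted-degree-formula} applied to $g=\omega^q\eta$.
\item The $|b|^2$ part is at most $(\sup_\D\omega)^q\,\E_p(\chi)\leq C\|\phi\|_{\mathcal Q^q}^q\E_p(\chi)$.
\end{itemize}
Taking $q$-th roots and using $1/p+1/q=1$ yields the bound $C_p\|\phi\|_{\mathcal Q^q}\bigl(\E_p(\chi)^{1/p}+\E_p(\chi)\bigr)$. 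This gives absolute convergence, and, applied to $\phi-\psi$, gives \eqref{monotone-RS-continuity}.

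\emph{Nonnegativity for bounded differentials.} Fix $\rho<1$. The differential $\phi_\rho$ is holomorphic on a neighbourhood of $\overline\D$, so it is bounded and lies in $L^1(\D)$. The pointwise identity I intend to use is
\[
 |\phi(\chi)|\,\bigl|a+\theta(\chi)\,\bar b\bigr|^2
 =|\phi(\chi)|\bigl(J_\chi+2|b|^2\bigr)+2\operatorname{Re}\bigl(\phi(\chi)ab\bigr),
 \qquad \theta=\bar\phi/|\phi|.
\]
I expect this identity to need a convention adjustment to match the exact form of \eqref{monotone-RS-excess}, but the structure is standard. By \eqref{monotone-weighted-degree-formula}, $\int_\D|\phi(\chi)|J_\chi\,dA=\int_\D|\phi|\,dA$. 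Hence $\mathcal R_\phi(\chi)\geq0$ is equivalent to the classical Reich--Strebel main inequality
\[
 \int_\D|\phi|\,dA\leq\int_\D|\phi(\chi)|\,\bigl|\chi_w+\theta(\chi)\overline{\chi_{\bar w}}\bigr|^2\,dA
\]
for maps $\chi$ equal to the identity on the boundary.

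The main obstacle is establishing this inequality for a merely monotone $W^{1,2}$ map $\chi$, possibly with hairs and rank-one gradients. Two routes are available.
\begin{enumerate}
\item \textbf{Approximation (first attempt).} Approximate $\chi$ uniformly and strongly in $W^{1,2}$ by diffeomorphisms with identity trace; monotone Sobolev maps of the disk admit such approximations by Iwaniec--Kovalev--Onninen type results. For bounded continuous $\phi_\rho$, the functional $\mathcal R_{\phi_\rho}$ is quadratic in $D\chi$ and continuous in $\chi$, so it passes to the limit. The classical inequality holds for each diffeomorphism, since $\phi_\rho\in L^1$.
\item \textbf{Direct length--area proof (fallback).} Argue along horizontal trajectories of $\phi_\rho$ in the natural coordinate, using only the trace condition and Fubini. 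This uses the degree identity for the area term and so avoids injectivity altogether.
\end{enumerate}

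\emph{Passage to general $\phi$.} Finally, let $\rho\uparrow1$. By \eqref{weighted-hopf-dilation}, $\|\phi_\rho-\phi\|_{\mathcal Q^q}\to0$, and by \eqref{monotone-RS-continuity},
\[
 \mathcal R_\phi(\chi)=\lim_{\rho\uparrow1}\mathcal R_{\phi_\rho}(\chi)\geq0.
\]
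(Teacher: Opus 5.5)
Your proposal is correct and follows essentially the paper's route: the continuity estimate \eqref{monotone-RS-continuity} from H\"older, the degree identity and the embedding \eqref{weighted-hopf-bloch-embedding}; nonnegativity for the dilations $\phi_\rho$ by approximating $\chi$ uniformly and strongly in $W^{1,2}$ by identity-trace diffeomorphisms and passing the quadratic expression to the limit; and then $\rho\uparrow1$ via \eqref{weighted-hopf-dilation}. The differences are cosmetic: you use one global H\"older step with $|a|^2=J_\chi+|b|^2$ where the paper splits at $|b|/|a|=1/2$, and your pointwise identity (exact as written, no sign adjustment needed) replaces the paper's inverse-Beltrami form; note, however, that the approximants are only $W^{1,2}$ homeomorphisms, not a priori quasiconformal, so you need the finite-distortion form of the Reich--Strebel inequality that the paper cites, and $\phi_\rho\in L^1$ alone does not justify that step.
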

\begin{proof}
Write $a=\chi_w$, $b=\chi_{\bar w}$, and
$s=|b|/|a|$, setting $s=0$ where $D\chi=0$.
The difference of the integrands is bounded by
$4|(\phi-\psi)(\chi)||a|^2s$. On $s\leq1/2$,
$|a|^2\leq4J_\chi/3$. H\"older's inequality for the measure
$\eta(\chi)J_\chi\,dA$ and
\eqref{monotone-weighted-degree-formula} therefore bound this
part by
\[
 C\|\phi-\psi\|_{\mathcal Q^q}
 \left(\int_\D\eta(\chi)J_\chi s^p\area\right)^{1/p}
 \leq C\|\phi-\psi\|_{\mathcal Q^q}\E_p(\chi)^{1/p}.
\]
On $s>1/2$, including $s=1$, use
$s\leq2^{p-1}s^p$ and
\eqref{weighted-hopf-bloch-embedding}. This gives the second
term in \eqref{monotone-RS-continuity}. Taking $\psi=0$
also proves absolute integrability.

First suppose $\phi$ is holomorphic on a neighbourhood of
$\overline\D$. By \cite[Theorem 1.3 and Remark 1.4]{IOMonotoneApprox},
choose diffeomorphisms $\chi_n$ with identity boundary values,
converging uniformly and strongly in $W^{1,2}$ to $\chi$.
For $\xi_n=\chi_n^{-1}$ and $\nu_n=\mu_{\xi_n}$, the
classical Reich--Strebel inequality in its finite-distortion
form \cite[Section 3]{MY} gives
\[
 2\int_\D\frac{|\phi||\nu_n|^2-
                   \operatorname{Re}(\phi\nu_n)}
                  {1-|\nu_n|^2}\area\geq0.
\]
The inverse derivative formula
$\nu_n\circ\chi_n=-(\chi_n)_{\bar w}/
\overline{(\chi_n)_w}$ changes this expression exactly into
$\mathcal R_\phi(\chi_n)$. These integrals are finite since
$\phi$ is bounded and $\chi_n\in W^{1,2}$. The resulting
formula is quadratic in $D\chi_n$ with bounded continuous
coefficients depending on $\chi_n$, so strong Sobolev and
uniform convergence yield $\mathcal R_\phi(\chi)\geq0$.

For general $\phi\in\mathcal Q^q$, apply this conclusion to
$\phi_\rho=\rho^2\phi(\rho\,\cdot)$ and then use
\eqref{weighted-hopf-dilation} and
\eqref{monotone-RS-continuity}. The order is essential: first
approximate the map for a fixed bounded differential, and only
then remove the differential's regularisation. No convergence of
$\E_p(\chi_n)$ is used.
\end{proof}

For an invertible comparison with the inverse change-of-variables
properties, the preceding formula is equivalently
\begin{equation}\label{homeomorphic-RS-excess}
 \mathcal R_\phi(\chi)
 =2\int_\D\frac{|\phi||\nu|^2-
                     \operatorname{Re}(\phi\nu)}{1-|\nu|^2}\area,
 \qquad \nu=\mu_{\chi^{-1}}.
\end{equation}
Equation~\eqref{monotone-RS-excess} is the form that remains
meaningful without an inverse of the weak map. The generalised
path principle is also explicitly discussed after Theorem~8 in
\cite[Section 3]{MYDiffeomorphic}. Our two-stage approximation
provides its excess-energy version when $\phi$ is not integrable.

\subsection{The comparison theorem}
\begin{theorem}[Strict comparison with every monotone competitor]
\label{absolute-monotone-comparison}
Let $p_{\mathrm c}\leq p\leq2$. Suppose $h$ is a finite-energy
quasiconformal disk diffeomorphism with trace $\beta$, with
holomorphic Hopf differential \eqref{comparison-hopf-normalisation},
and with the hyperbolic derivative bounds
\eqref{reference-hyperbolic-derivatives}. Then, for every
$H\in\mathcal M_\beta$ and $\chi=h^{-1}\circ H$,
\begin{equation}\label{absolute-excess-comparison}
 \E_p(H)-\E_p(h)\geq\mathcal R_\Phi(\chi)\geq0.
\end{equation}
Equality of energies holds if and only if $H=h$. In particular,
the theorem applies to the quasiconformal minimiser furnished by
Corollary~\ref{selected-rational-family}. This result does not use the Hopf
equation, the finite-distortion property or the existence of an inverse for $H$.
\end{theorem}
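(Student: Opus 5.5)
The plan is to reduce \eqref{absolute-excess-comparison} to a pointwise inequality between energy densities, and then integrate it using the Jacobian identity of Lemma~\ref{monotone-weighted-degree-identity}. Fix $H\in\mathcal M_\beta$ and put $\chi=h^{-1}\circ H$. By Lemma~\ref{monotone-reference-composition}, $\chi$ is a proper monotone $W^{1,2}$ map with identity trace, $J_\chi\geq0$ and $\E_p(\chi)<\infty$. By Corollary~\ref{reference-weighted-hopf-bound}, $\Phi\in\mathcal Q^q$ with $q=p/(p-1)$. Hence Lemma~\ref{excess-reich-strebel-monotone} applies: $\mathcal R_\Phi(\chi)$ is absolutely convergent and nonnegative. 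This already gives the second inequality in \eqref{absolute-excess-comparison}, so the work is in the first.

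\emph{The pointwise inequality.} Since $H=h\circ\chi$ with $h$ a smooth diffeomorphism, at almost every $w$ we have $DH(w)=Dh(\chi(w))D\chi(w)$. I would prove
\[
 \eta(H)W_p(DH)\ \geq\ J_\chi\,\eta(h(\chi))\,W_p\bigl(Dh(\chi)\bigr)
 +2\bigl\{|\Phi(\chi)||\chi_{\bw}|^2+\operatorname{Re}\bigl(\Phi(\chi)\chi_w\chi_{\bw}\bigr)\bigr\}.
\]
Work at the point $z=\chi(w)$ with $\Phi(z)\neq0$, and rotate so that the Hopf value is positive. In these coordinates $Dh(z)$ is diagonal, stretching the two axes by $L$ and $M$ as in \eqref{orthogonal-pullback}. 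The inverse density, divided by $\eta(h)$, is then the function $\det(\cdot)\,P_p(\text{stretch ratio})$ of the matrix $\operatorname{diag}(L,M)B$, where $B=D\chi$.

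The strategy is to regard this as a function of the pulled-back quadratic form $B^T\operatorname{diag}(L^2,M^2)B$. Writing $P_p(t)$ with $t$ the principal stretch ratio, and using the supporting line of the convex function $P_p$ at $k=(1+r)/(1-r)$ (Lemma~\ref{principal-stretch-convexity}), the density is bounded below by an expression that is affine in the entries of that quadratic form. The slope of the supporting line is $P_p'(k)=F_p(r)/(1+r)^2$. By \eqref{comparison-hopf-normalisation} this slope is exactly the coefficient $|\Phi|=\eta(h)|h_w|^2F_p(r)$, so the affine term should collapse to $J_\chi\,\eta(h)W_p(Dh)+2|\Phi|(|\chi_{\bw}|^2+\operatorname{Re}(\chi_w\chi_{\bw}))$, up to the fixed phase of $\Phi$.

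The main obstacle is doing this algebra cleanly, including the rank-one cone $J_\chi=0$ and the points where $\Phi=0$. At the rank-one cone the margin $\mathfrak n_p(r)>0$ of \eqref{comparison-rankone-margin} should make the inequality strict whenever $D\chi\neq0$. At points with $\Phi=0$ we have $r=0$, and the inequality reduces to $W_p(DH)\geq J_\chi W_p(Dh)$ there, which follows from conformality of $Dh$ at such points. I would first verify the inequality on $J_\chi>0$ by the supporting-line argument, and then extend it by continuity of both sides in $D\chi$, as in Lemma~\ref{monotone-reference-composition}.

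\emph{Integration.} Integrate the pointwise inequality. The first term on the right is $g(\chi)J_\chi$ with $g=\eta(h)W_p(Dh)$, so \eqref{monotone-weighted-degree-formula} turns its integral into $\E_p(h)$. The second term integrates to $\mathcal R_\Phi(\chi)$. This gives $\E_p(H)-\E_p(h)\geq\mathcal R_\Phi(\chi)$. All subtractions are legitimate, because $\E_p(h)$ is finite and $\mathcal R_\Phi$ converges absolutely. No inverse of $H$ and no Hopf equation for $H$ is used.

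\emph{Equality.} Suppose $\E_p(H)=\E_p(h)$. Then $\mathcal R_\Phi(\chi)=0$, and the pointwise inequality is an equality almost everywhere. On the rank-one set this contradicts the strict margin unless $D\chi=0$, so $J_\chi>0$ almost everywhere off $\{D\chi=0\}$. On $J_\chi>0$, strict convexity of $P_p$ (strict also at $p=p_{\mathrm c}$) forces the pulled-back stretch ratio to equal $k$ with the same axes. Together with equality in the affine part, this should give $\chi_{\bw}=0$ where $\Phi(\chi)\neq0$. At zeros of $\Phi$ the same conclusion follows from strict positivity of $W_p$ away from conformal matrices.

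Thus $\chi_{\bw}=0$ almost everywhere, and Weyl's lemma makes $\chi$ holomorphic. A holomorphic monotone self-map of the disk with identity boundary trace is the identity. Hence $H=h$.
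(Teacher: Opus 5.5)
Your proposal is correct and is essentially the paper's proof: the same pointwise inequality from the supporting line of the convex profile $P_p$ at $k=(1+r)/(1-r)$, with $\eta(h)J_h\,kP_p'(k)=|\Phi|$, extended by continuity to $J_\chi=0$ and integrated via the degree identity and Lemma~\ref{excess-reich-strebel-monotone}, and the same equality argument via strict convexity, the rank-one margin $\mathfrak n_p(r)>0$ and Weyl's lemma. The step you leave as ``should collapse'' is made precise in the paper by writing $\K_H=\tfrac12(kT+U/k)$ with $T,U=\bigl(|\chi_w|^2+|\chi_{\bw}|^2\pm2\operatorname{Re}(q_0\chi_w\chi_{\bw})\bigr)/J_\chi$ and $q_0=\Phi(\chi)/|\Phi(\chi)|$, and using $TU\geq1$ (equivalently $K_H\geq kT$) to get $\Psi_p(\K_H)\geq P_p(kT)\geq P_p(k)+kP_p'(k)(T-1)$; at rank-one points you also need $F_p(r)\geq r$ to reduce to the worst phase, where the gap is exactly $\mathfrak n_p(r)$.
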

\begin{proof}
The composition and weighted Hopf estimates above justify all
the integrals in \eqref{absolute-excess-comparison}. It remains 
to prove the pointwise inequality and its equality statement.
At a point $w$, put $x=\chi(w)$, $a=\chi_w$, $b=\chi_{\bar w}$,
and $r=|\mu_h(x)|$. First suppose $J_\chi>0$ and $r>0$.
Let
\[
 k=\frac{1+r}{1-r},\quad q_0=\frac{\Phi(x)}{|\Phi(x)|},\quad
 T=\frac{|a|^2+|b|^2+2\operatorname{Re}(q_0ab)}{J_\chi},\quad
 U=\frac{|a|^2+|b|^2-2\operatorname{Re}(q_0ab)}{J_\chi}.
\]
Then $T,U>0$ and
\[
 TU=1+\frac{4[\operatorname{Im}(q_0ab)]^2}{J_\chi^2}\geq1,
 \qquad \K_H(w)=\tfrac12(kT+U/k).
\]
Since $\Psi_p$ is strictly increasing and $P_p$ is convex,
\begin{align*}
 \Psi_p(\K_H)
 &\geq P_p(kT)\\
 &\geq P_p(k)+kP_p'(k)(T-1).
\end{align*}
The normalisation of $\Phi$ gives
$\eta(h(x))J_h(x)kP_p'(k)=|\Phi(x)|$.
Now multiply this by $\eta(h(x))J_h(x)J_\chi(w)$ to obtain
\begin{equation}\label{pointwise-monotone-excess-comparison}
 \eta(H)W_p(DH)-
       [\eta(h)W_p(Dh)](\chi)J_\chi
 \geq2\{|\Phi(\chi)||b|^2+
              \operatorname{Re}(\Phi(\chi)ab)\}.
\end{equation}
At $r=0$, the reference is conformal, the right side and the
reference energy vanish, and the inequality is immediate.
Continuity in the derivative extends it to $J_\chi=0$.
Integration, the degree identity for the nonnegative integrable
reference energy density, and Lemma~\ref{excess-reich-strebel-monotone}
together prove \eqref{absolute-excess-comparison}.

\medskip

If the total energies agree, the nonnegative pointwise gap in
\eqref{pointwise-monotone-excess-comparison} must vanish almost
everywhere. Where $J_\chi>0$ and $r>0$, strict convexity first
forces $T=1$, and strict increase of $\Psi_p$ then forces $U=1$.
Thus $b=0$. The same conclusion follows directly when $r=0$.
At a nonzero rank-one derivative, write $|a|=|b|=t>0$ and
$c=\operatorname{Re}(q_0ab)/t^2\in[-1,1]$.
After factoring out $\eta(h(x))|h_w(x)|^2t^2$, the gap equals
\[
 1+r^2-2F_p(r)+2\bigl(r-F_p(r)\bigr)c
 \geq(1+r)^2-4F_p(r)=\mathfrak n_p(r)>0.
\]
We used $F_p(r)\geq r$ for $p\leq2$ and
\eqref{comparison-rankone-margin}; the case $r=0$ has gap
$\eta(h(x))|h_w(x)|^2t^2>0$ directly. A zero Jacobian can
therefore occur in the equality case only with $D\chi=0$.
It follows that $\chi_{\bar w}=0$ almost everywhere on $\D$.
Weyl's lemma and the identity boundary trace give
$\chi=\id$, hence $H=h$ as desired.
\end{proof}

\begin{corollary}[Equality of infima and uniqueness in the strict upper range]
\label{upper-absolute-attainment}
Let $p_{\mathrm c}<p<2$, and suppose the inverse trace $\beta$
has a finite-energy quasiconformal extension. Then $\E_p$ has
one and only one minimiser in $\mathcal M_\beta$. This is the
asymptotically conformal quasiconformal diffeomorphism constructed
by $\lambda$ approximation again. It is $C^{2,\alpha}_{\loc}$ for
$0<\alpha<\min\{1,(2-p)/(p-1)\}$, and real analytic off its
conformal set. If $m_{\rm FD}$ denotes the infimum over the
finite-distortion homeomorphisms in $\mathcal M_\beta$, then
\begin{equation}\label{monotone-qc-infimum-target}
 m_{\rm mon}=m_{\rm FD}=m_{\rm QC}=\E_p(h).
\end{equation}
The selected $\lambda$ sequence is an absolute monotone minimising
sequence for the unregularised energy. Every absolute monotone
minimising sequence has subsequences converging uniformly on the
closed disk and strongly in $W^{1,2}_{\loc}$ to this same $h$.
\end{corollary}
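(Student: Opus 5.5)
The plan is to assemble this corollary from the construction in Corollary~\ref{selected-rational-family} and the comparison in Theorem~\ref{absolute-monotone-comparison}, checking only that the hypotheses of the comparison hold for the constructed map.

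First, since $\beta$ has a finite-energy quasiconformal extension $h_*$, pick $\lambda_0$ with $\|\mu_{h_*}\|_\infty^2<\lambda_0<1$. Then $\E_{p,\lambda_0}(h_*)<\infty$ by \eqref{lambda-smooth-barrier}. Theorem~\ref{selected-rational-regularity} and Corollary~\ref{selected-rational-family} now give a quasiconformal diffeomorphism $h$ with trace $\beta$ and $\E_p(h)=m_{\rm QC}$. This $h$ is globally quasiconformal, asymptotically conformal and $C^{2,\alpha}_{\loc}$, and it is the limit of the selected $\lambda_j$ family, with $\E_p(h_j)\to\E_p(h)$. Real analyticity off the conformal set follows from \S\ref{analyticity-scope}, since the strict inequality in \eqref{implicit-hopf-ellipticity} holds for $0<r<1$ and $p\geq p_{\mathrm c}$.

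Second, I verify the hypotheses of Theorem~\ref{absolute-monotone-comparison} for this $h$.

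\begin{itemize}
\item \emph{Stationarity.} Its inverse $f$ minimises $\I_p$ among finite-energy quasiconformal maps. Compactly supported inner variations stay in that class, because they preserve the dilatation bound up to a small change. So $f$ is stationary and the Hopf differential \eqref{comparison-hopf-normalisation} is holomorphic.
\item \emph{Hyperbolic derivative bounds.} With $\kappa=\|\mu_f\|_\infty<1$, Lemma~\ref{qc-stationary-hyperbolic-bilip} gives \eqref{reference-hyperbolic-derivatives}.
\item \emph{Weighted Hopf norm.} Corollary~\ref{reference-weighted-hopf-bound} places $\Phi$ in $\mathcal Q^q$. This is what Lemma~\ref{excess-reich-strebel-monotone} needs.
\end{itemize}

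The theorem then gives $\E_p(H)\geq\E_p(h)$ for every $H\in\mathcal M_\beta$, with equality only when $H=h$. Hence $m_{\rm mon}=\E_p(h)$ and the minimiser is unique. Since
\[
 \mathcal M_\beta\supset\{\text{finite distortion}\}\supset\{\text{QC}\},
\]
we get $m_{\rm mon}\leq m_{\rm FD}\leq m_{\rm QC}=\E_p(h)=m_{\rm mon}$. The inclusions use Proposition~\ref{finite-distortion-inverse-calculus} for finite-distortion homeomorphisms. This proves \eqref{monotone-qc-infimum-target}, and shows that the selected sequence is an absolute monotone minimising sequence.

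For an arbitrary absolute minimising sequence, Corollary~\ref{strong-monotone-minimising} yields subsequences converging uniformly and strongly in $W^{1,2}_{\loc}$ to a minimiser. By uniqueness that minimiser is $h$, so every subsequential limit coincides with it.

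The main point needing care is the stationarity step. The comparison theorem only needs holomorphy of $\Phi$ and the bi-Lipschitz bounds, but both rest on stationarity of a minimiser taken in the quasiconformal class rather than the monotone class. I would settle this as in Corollary~\ref{reference-weighted-hopf-bound}: on compact supports, variations remain bounded away from $|\mu|=1$, so they are admissible in both directions. The strict range $p<2$ is used only for the $C^{2,\alpha}$ and analyticity assertions, which come from Proposition~\ref{forward-regularity-proposition} and the construction.
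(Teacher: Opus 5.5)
Your proposal is correct and follows essentially the same route as the paper. It takes the barrier with $\kappa_*^2<\lambda_0<1$ into Corollary~\ref{selected-rational-family}, uses stationarity of the quasiconformal minimiser under both kinds of compact variations, applies Lemma~\ref{qc-stationary-hyperbolic-bilip} and Theorem~\ref{absolute-monotone-comparison} for absolute minimality and uniqueness, gets the equalities of infima from inclusion of classes, and uses Corollary~\ref{strong-monotone-minimising} with uniqueness for minimising sequences. Your separate check that $\Phi\in\mathcal Q^q$ via Corollary~\ref{reference-weighted-hopf-bound} is already used inside the proof of the comparison theorem, so it is harmless redundancy.
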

\begin{proof}
If the barrier's dilatation ratio is at most $\kappa_*<1$,
choose $\kappa_*^2<\lambda_0<1$. The $\lambda$-density is at
most a fixed multiple of the unregularised density on that
barrier, so the hypothesis of
Corollary~\ref{selected-rational-family} holds. Its quasiconformal
minimiser is stationary for both sets of compact variations.
Lemma~\ref{qc-stationary-hyperbolic-bilip} and
Theorem~\ref{absolute-monotone-comparison} give absolute
minimality and uniqueness. The equalities follow from inclusion
of classes. The $\lambda$ energy convergence is
\eqref{selected-family-energy-convergence}; compactness and
Corollary~\ref{strong-monotone-minimising} give the last assertion.
Analyticity away from the conformal set follows from
Section~\ref{analyticity-scope}.
\end{proof}

The quasiconformal hypothesis on the barrier can now be dropped.

\begin{corollary}[Finite-distortion barriers suffice]
\label{finite-distortion-barrier-attainment}
Let $p_{\mathrm c}<p<2$, and suppose that $\mathcal M_\beta$
contains the inverse of a finite-distortion homeomorphism
$G:\D\to\D$ with $\I_p(G)<\infty$. Then all the conclusions of
Corollary~\ref{upper-absolute-attainment} hold: $\E_p$ has a
unique minimiser $h$ in $\mathcal M_\beta$, it is a quasiconformal
diffeomorphism with the stated regularity, and
\[
 \E_p(h)=m_{\rm mon}=m_{\rm FD}=m_{\rm QC}\leq\I_p(G),
\]
with equality in the last inequality only if $G=h^{-1}$.
\end{corollary}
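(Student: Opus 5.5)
The plan is to reduce to Corollary~\ref{upper-absolute-attainment} by manufacturing a quasiconformal barrier from the finite-distortion one. Then the comparison theorem gives the inequality and its equality case.

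\emph{Step 1: a quasiconformal barrier.} Set $h_G=G^{-1}$. By Proposition~\ref{finite-distortion-inverse-calculus}(iv), $h_G\in W^{1,2}(\D)$ has finite distortion, $\E_p(h_G)\leq\I_p(G)$, and $h_G\in\mathcal M_\beta$. By Proposition~\ref{finite-energy-boundary-moduli}, $G$ extends homeomorphically and its trace $\beta^{-1}$ is quasisymmetric. That proposition, i.e.\ \cite[Theorem E]{MYWP}, also places $\beta^{-1}$ in $\mathcal T_p=T^p$. Hence $\beta^{-1}$ has a quasiconformal extension $F_*$ with $\mu_{F_*}\in L^p(dA_\eta)$ and $\|\mu_{F_*}\|_\infty\leq k<1$. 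For such a map
\[
\I_p(F_*)\leq(1-k^2)^{-1}\int_\D|\mu_{F_*}|^p\,dA_\eta<\infty .
\]
Thus $F_*^{-1}$ is a finite-energy quasiconformal extension of $\beta$.

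\emph{Step 2: invoke the upper-range theorem.} The hypotheses of Corollary~\ref{upper-absolute-attainment} now hold. It yields a unique minimiser $h$ of $\E_p$ on $\mathcal M_\beta$, which is an asymptotically conformal quasiconformal $C^{2,\alpha}_{\loc}$ diffeomorphism. It also gives $m_{\rm mon}=m_{\rm FD}=m_{\rm QC}=\E_p(h)$. One point needs checking: $m_{\rm FD}$ is an infimum over finite-distortion homeomorphisms in $\mathcal M_\beta$, and $h$ itself is such a map, so these equalities survive. Since $h_G\in\mathcal M_\beta$, we get $\E_p(h)\leq\E_p(h_G)\leq\I_p(G)$.

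\emph{Step 3: the equality case.} This is the step I expect to need the most care. Suppose $\I_p(G)=\E_p(h)$. Then $\E_p(h_G)\leq\I_p(G)=\E_p(h)=m_{\rm mon}$, so $h_G$ is also a minimiser in $\mathcal M_\beta$. Uniqueness, via the strict comparison Theorem~\ref{absolute-monotone-comparison} applied with $H=h_G$, then forces $h_G=h$, hence $G=h^{-1}$.

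This argument only requires the inequality $\E_p(h_G)\leq\I_p(G)$ from \eqref{finite-distortion-inverse-energy}, not equality. The one subtlety is that the comparison theorem needs the reference $h$ to have the hyperbolic derivative bounds \eqref{reference-hyperbolic-derivatives}. Those bounds are supplied by Lemma~\ref{qc-stationary-hyperbolic-bilip}, since $h$ is a stationary quasiconformal map with $\|\mu_h\|_\infty<1$, and this step was already used inside Corollary~\ref{upper-absolute-attainment}. I would also remark that the constants of the $T^p$ extension in Step~1 play no role: only existence of the quasiconformal barrier is used, so no quantitative control from Theorem~E is needed.
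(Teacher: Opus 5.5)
Your proposal is correct and is essentially the paper's argument. Theorem~E of Part~I upgrades the finite-distortion barrier to a quasiconformal one, and Corollary~\ref{upper-absolute-attainment} then supplies the unique minimiser and the equality of infima. Proposition~\ref{finite-distortion-inverse-calculus}, together with minimality, gives $\E_p(h)\leq\E_p(G^{-1})\leq\I_p(G)$, and uniqueness gives the equality case. Your extra details, namely the explicit bound on $\I_p(F_*)$ and the check on $m_{\rm FD}$, are correct but only make explicit what the paper's shorter proof leaves implicit.
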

\begin{proof}
The trace $G|_{\Sone}=\beta^{-1}$ lies in $T^p$ by
Proposition~\ref{finite-distortion-boundary-class}, so $\beta^{-1}$
has a quasiconformal extension $F$ with $\I_p(F)<\infty$, and
$F^{-1}\in\mathcal M_\beta$ is a quasiconformal barrier.
Corollary~\ref{upper-absolute-attainment} applies; the class
$\mathcal M_\beta$ is the same, so its minimiser is unchanged.
The inequality $\E_p(h)\leq\E_p(G^{-1})\leq\I_p(G)$ is
Proposition~\ref{finite-distortion-inverse-calculus} and minimality, and
equality forces $G^{-1}=h$ by uniqueness.
\end{proof}

At $p=2$ the analogous conclusions already follow from the
quadratic variational theorem, and the new comparison applies as
well. At $p=p_{\mathrm c}$,
Theorem~\ref{absolute-monotone-comparison} is still valid, but
the construction of a quasiconformal reference at that endpoint
is not supplied by the strict-range $\lambda$ compactness theorem.

\subsection{The critical exponent}
\begin{proposition}[A precise remaining estimate at the critical exponent]
\label{critical-exponent-reduction}
Fix boundary data with a quasiconformal barrier $h_*$ satisfying
$\E_{p_{\mathrm c}}(h_*)<\infty$. Let $p_n\downarrow p_{\mathrm c}$,
$p_{\mathrm c}<p_n<2$, and let $h_n$ be the unique absolute
minimiser at exponent $p_n$. Suppose, for every $V\Subset\D$,
\begin{equation}\label{critical-local-qc-target}
 \sup_n\|\mu_{h_n}\|_{L^\infty(V)}<1.
\end{equation}
Then the critical energy has a unique minimiser in
$\mathcal M_\beta$, and this is a globally quasiconformal
$C^{2,\alpha}_{\loc}$ diffeomorphism for
$0<\alpha<\sqrt2-1$. It is real analytic off its conformal set.
\end{proposition}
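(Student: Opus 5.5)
The plan is to pass to the limit $p_n\downarrow p_{\mathrm c}$ in the family of absolute minimisers, identify the limit as a critical minimiser by a monotone-in-exponent lower-semicontinuity argument, and then apply the regularity and comparison results that the paper already states to include $p=p_{\mathrm c}$.

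\emph{Uniform bounds and compactness.} By \eqref{subquadratic-energy-order}, $W_{p}(a,b)=|a|^{2-p}|b|^p$ is nonincreasing in $p$ on $|b|\leq|a|$. Hence
\[
 \E_{p_n}(h_n)\leq\E_{p_n}(h_*)\leq\E_{p_{\mathrm c}}(h_*)<\infty .
\]
By \eqref{trace-jacobian} this gives a uniform Dirichlet bound. Proposition~\ref{monotone-compactness} then yields a subsequence with $h_n\to h$ uniformly on $\overline\D$ and weakly in $W^{1,2}$. The limit $h$ is a monotone surjection with trace $\beta$, $h^{-1}(\Sone)=\Sone$, and compact containment holds. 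Hypothesis \eqref{critical-local-qc-target} bounds $\K_{h_n}$ uniformly on each $V\Subset\D$, which is exactly \eqref{inverse-distortion-Lone-target}. Proposition~\ref{forward-Dirichlet-homeomorphic-limit} therefore makes $h$ a homeomorphism of $\overline\D$ with $f_n=h_n^{-1}\to f=h^{-1}$ uniformly. Local quasiconformal compactness, again using \eqref{critical-local-qc-target}, shows that $h$ and $f$ are locally quasiconformal.

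\emph{Minimality of the limit.} Fix $m$. For $n\geq m$ we have $p_n\leq p_m$, so $W_{p_m}\leq W_{p_n}$. Lemma~\ref{weighted-lsc} at the fixed exponent $p_m\geq p_{\mathrm c}$ then gives
\[
 \E_{p_m}(h)\leq\liminf_n\E_{p_m}(h_n)\leq\liminf_n\E_{p_n}(h_n).
\]
Letting $m\to\infty$, monotone convergence gives $\E_{p_{\mathrm c}}(h)\leq\liminf_n\E_{p_n}(h_n)$. Now take any $H\in\mathcal M_\beta$ with $\E_{p_{\mathrm c}}(H)<\infty$. Each $h_n$ is an absolute minimiser at exponent $p_n$, so $\E_{p_n}(h_n)\leq\E_{p_n}(H)$. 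Dominated convergence with majorant $W_{p_{\mathrm c}}(DH)$ gives $\E_{p_n}(H)\to\E_{p_{\mathrm c}}(H)$. Together these show that $h$ minimises $\E_{p_{\mathrm c}}$ on $\mathcal M_\beta$. By Theorem~\ref{monotone-existence}, $h$ is stationary and $\Phi_{p_{\mathrm c}}$ is holomorphic.

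\emph{Regularity and global conclusions.} Since $h$ is a local quasiconformal homeomorphism, the inverse and area formulas hold locally. Compactly supported forward inner variations are admissible, so $f$ is stationary for $\I_{p_{\mathrm c}}$ in the sense of \eqref{forward-weighted-distribution}. Proposition~\ref{forward-regularity-proposition} is local, and at $p_{\mathrm c}$ we have $H_p>0$ on $[0,R]$ for every $R<1$. Applying it on each $V\Subset\D$, with $R=\sup_V|\mu_f|<1$, shows that $f$ is a $C^{2,\alpha}_{\loc}$ diffeomorphism for
\[
 \alpha<\frac{2-p_{\mathrm c}}{p_{\mathrm c}-1}=\sqrt2-1 .
\]
Proposition~\ref{subquadratic-asymptotic}, whose iteration Lemma~\ref{ceiling-iteration} covers the linear degeneracy at $p_{\mathrm c}$, then gives global quasiconformality and asymptotic conformality. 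Real analyticity off the conformal set follows from Section~\ref{analyticity-scope}, because $H_{p_{\mathrm c}}(r)>0$ for $0<r<1$.

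\emph{Uniqueness.} With the global bound $|\mu_f|\leq\kappa<1$, Lemma~\ref{qc-stationary-hyperbolic-bilip} (stated for $p\geq p_{\mathrm c}$) supplies \eqref{reference-hyperbolic-derivatives}. Theorem~\ref{absolute-monotone-comparison}, valid at $p=p_{\mathrm c}$, then shows that every $H\in\mathcal M_\beta$ with $\E_{p_{\mathrm c}}(H)=\E_{p_{\mathrm c}}(h)$ equals $h$. Since any subsequential limit of the $h_n$ is this unique minimiser, the whole sequence converges as well.

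The step I expect to need most care is the homeomorphic-limit step. Proposition~\ref{forward-Dirichlet-homeomorphic-limit} needs the $L^1$ distortion bound on sets $f_n(U)$ lying in a common compact set, and the proof of that proposition supplies this from $h^{-1}(\Sone)=\Sone$. A second point to check is that forward stationarity of $f$ is genuinely available before any regularity is known. This holds because local quasiconformality keeps compact inner variations of $f$ bounded away from $|\mu|=1$.
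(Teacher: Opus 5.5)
Your proposal is correct and follows the paper's proof: compactness from the monotonicity of $W_p$ in $p$, lower semicontinuity at a fixed exponent $p_m$ followed by monotone convergence to show the limit minimises the critical energy, local forward regularity via Proposition~\ref{forward-regularity-proposition}, the global conclusions from Proposition~\ref{subquadratic-asymptotic}, and uniqueness from Theorem~\ref{absolute-monotone-comparison}. The differences are minor: you get the homeomorphic limit from the forward Dirichlet bound of Proposition~\ref{forward-Dirichlet-homeomorphic-limit} rather than from the paper's quasiregular openness plus monotone fibres, and you use dominated convergence where the direct bound $\E_{p_n}(H)\leq\E_{p_{\mathrm c}}(H)$ already suffices.
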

\begin{proof}
The bounds
\[
 \E_2(h_n)\leq\E_{p_n}(h_n)
 \leq\E_{p_n}(h_*)\leq\E_{p_{\mathrm c}}(h_*)
\]
give uniform closed-disk compactness and a monotone limit $h$
mapping interior to interior. Under
\eqref{critical-local-qc-target}, local quasiregular compactness
makes this nonconstant limit open and discrete. Monotonicity then
makes it a homeomorphism, locally quasiconformal.

For each fixed $k$, eventually $p_n\leq p_k$, and for every
$H\in\mathcal M_\beta$ of finite critical energy,
\[
 \E_{p_k}(h_n)\leq\E_{p_n}(h_n)
 \leq\E_{p_n}(H)\leq\E_{p_{\mathrm c}}(H).
\]
Lower semicontinuity at $p_k$, followed by monotone convergence
as $k\to\infty$, proves
$\E_{p_{\mathrm c}}(h)\leq\E_{p_{\mathrm c}}(H)$.
This argument does not assume any critical-energy convergence of
$h_n$. Local quasiconformality permits the forward stationarity
and regularity argument of
Proposition~\ref{forward-regularity-proposition} on compact sets.
The resulting diffeomorphism satisfies the hypotheses of
Proposition~\ref{subquadratic-asymptotic}, which includes the
critical exponent and gives global quasiconformality.
Theorem~\ref{absolute-monotone-comparison} gives uniqueness.
\end{proof}

The estimate \eqref{critical-local-qc-target} is not proved here.
This isolates the endpoint issue: the strict-range scalar gradient
factor degenerates as $p\downarrow p_{\mathrm c}$, whereas the
comparison and its strict equality statement remain valid at
that exponent. A direct regularity proof for the critical
monotone minimiser would be an alternative to this estimate.

\section{Above two}\label{above-two}
\subsection{The original functional has an energy-dependent bound}\label{superquadratic-original}
For $p>2$, the original convention \eqref{funct} is
\begin{equation}\label{superquadratic-original-functional}
 \I_p(f)=\int_\D
       \left(\frac{|\mu_f|}{\sqrt{1-|\mu_f|^2}}\right)^p\eta\area,
 \qquad
 \E_p(h)=\int_\D\eta(h)|h_{\bw}|^pJ_h^{1-p/2}\area.
\end{equation}

\begin{theorem}[The uniformly elliptic logarithmic-distortion equation for $p>2$]\label{superquadratic-original-qc}
Let $p>2$. Suppose $f:\D\to\D$ is an orientation-preserving $C^1$ diffeomorphism, $C^2$ on $\{|\mu_f|>0\}$, with finite \eqref{superquadratic-original-functional}, and its inverse has holomorphic Ahlfors--Hopf differential \eqref{variation}. Put
\[
 r(z)=|\mu_f(z)|,\qquad
 \sigma(z)=\log\frac{r(z)}{\sqrt{1-r(z)^2}}
 \quad\text{on }\{r>0\}.
\]
Then, on that set in the weak sense,
\begin{equation}\label{superquadratic-forward-log-equation}
 \boxed{-\operatorname{div}(\mathsf B_p\nabla\sigma)=8\eta,\qquad
 \mathsf B_p=
 \begin{pmatrix}
 p-1+r^2-2\operatorname{Re}\mu_f&-2\operatorname{Im}\mu_f\\
 -2\operatorname{Im}\mu_f&p-1+r^2+2\operatorname{Re}\mu_f
 \end{pmatrix}.}
\end{equation}
In particular
\begin{equation}\label{superquadratic-forward-ellipticity}
 (p-2)I\leq\mathsf B_p\leq(p+2)I.
\end{equation}
For a disk automorphism $T_z$ with $T_z(0)=z$, set $\mathcal B_z=T_z(\D_{1/2})$. There is a constant $C_p<\infty$, depending only on $p$, such that
\begin{equation}\label{superquadratic-local-energy-bound}
 \boxed{\K_f(z)-1\leq
 C_p\left(\int_{\mathcal B_z}
       \frac{|\mu_f|^p}{(1-|\mu_f|^2)^{p/2}}\eta\area\right)^{2/p}.}
\end{equation}
Consequently $f$ and $h=f^{-1}$ are globally quasiconformal and asymptotically conformal. This proposition does not require a prescribed quasisymmetric boundary extension: its conclusion follows from regularity, stationarity and finite energy.
\end{theorem}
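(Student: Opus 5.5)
The plan is to mirror the subquadratic scalar reduction of Section~\ref{scalar-equations}, but to work on the forward side with the variable $\sigma$, and then to run a uniformly elliptic local estimate.

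\emph{Step 1: the equation.} For $p\geq2$ the Hopf coefficient of Theorem~\ref{hopf} is $\Phi=\tfrac p2\eta(h)\bigl(r^2/(1-r^2)\bigr)^{p/2-1}h_w\overline{h_{\bw}}$, with $r=|\mu_h|=|\mu_f|\circ h$. On a patch where $\Phi\ne0$ we take a natural coordinate with $\Phi\,dw^2=d\zeta^2$. Exactly as in \eqref{orthogonal-pullback}, the pullback $\eta(h)|dh|^2$ in that coordinate is an orthogonal metric $L(r)^2dx^2+M(r)^2dy^2$, where now $\eta(h)|h_\zeta|^2\,\tfrac p2\,e^{(p-2)\sigma}r=1$. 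The curvature $-4$ condition then gives a divergence equation for $r$, which we rewrite in the variable $\sigma=\log(r/\sqrt{1-r^2})$.

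The key algebraic point to check is that, in terms of $\sigma$, one has $L=(1+r)e^{-\sigma(p-1)/2}\cdots$ and $M=(1-r)e^{-\sigma(p-1)/2}\cdots$. Their logarithmic derivatives should then combine so that the two coefficients become $(p-1+r^2\mp2r)$ times a common factor. After pulling back to the $w$ coordinate and then pushing forward by $f$ (so that the equation is for $\sigma$ as a function of $z$), the conformal factor $|\Phi|$ cancels against the Jacobian. The eigendirections, set by $\arg\Phi$ composed with the inverse derivative, become those of $\mu_f$, giving the matrix $\mathsf B_p$ with right side $8\eta(z)$. I expect this bookkeeping to be the main obstacle: identifying the rotation through $\mu_f$ rather than $\Phi$, and the constant $8$. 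I would verify it first at a diagonal point by the Gauss-curvature formula, and then check rotation covariance.

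The weak form on $\{r>0\}$ uses only $C^2$ regularity, as in the connection-form argument following \eqref{scalar-natural}. Since $\sigma$ is defined globally on $\{r>0\}$, including near Hopf zeros of the image, no branch issue arises. The ellipticity \eqref{superquadratic-forward-ellipticity} is immediate, because the eigenvalues of $\mathsf B_p$ are $p-1+r^2\pm2r$, which lie in $[(1-r)^2+p-2,\,(1+r)^2+p-2]$.

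\emph{Step 2: the local bound.} By Möbius invariance (precompose with $T_z$ and postcompose with an automorphism) we may take $z=0$; both the equation and $\mathcal B_0=\D_{1/2}$ are invariant. On $\D_{1/2}$ we have $\eta\leq16/9$. We apply Moser iteration, i.e.\ the uniformly elliptic case of Lemma~\ref{ceiling-iteration}, to the subsolution $w=e^{2\sigma}=r^2/(1-r^2)=(\K_f-1)/2$.

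For the sign we use $-\operatorname{div}(\mathsf B\nabla e^{2\sigma})=2e^{2\sigma}(8\eta)-4e^{2\sigma}\mathsf B\nabla\sigma\cdot\nabla\sigma$, which is at most $Cw$ with $C$ bounded. It is cleaner to use $u=(\sigma-\log\epsilon)_+$ or powers $w^{p/2}$. Local $L^\infty$–$L^{s}$ estimates for nonnegative subsolutions of $-\operatorname{div}(\mathsf B\nabla w)\leq c\,w$, with $s=p/2$, give $\sup_{\D_{1/4}}w\leq C_p\bigl(\int_{\D_{1/2}}w^{p/2}\bigr)^{2/p}$. We extend by zero across $\{r=0\}$: the positive part is a subsolution after truncation at level $\epsilon$, and we then let $\epsilon\to0$. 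The integral is bounded by the energy on $\mathcal B_z$, since $w^{p/2}=r^p/(1-r^2)^{p/2}$. This yields \eqref{superquadratic-local-energy-bound}, after evaluating at the centre.

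\emph{Step 3: the global conclusions.} As $|z|\to1$ the balls $\mathcal B_z$ escape every compact set, so the right side of \eqref{superquadratic-local-energy-bound} tends to zero by finite energy, and $\K_f$ is uniformly bounded. Hence $f$ is quasiconformal and $\mu_f(z)\to0$. Transferring to $h=f^{-1}$ by properness gives the same conclusions for $h$.
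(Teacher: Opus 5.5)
Your proposal is correct. Steps 1 and 3 follow the paper. The paper does the curvature computation on the inverse side with $\widetilde F_p=\tfrac p2 r_h^{p-1}(1-r_h^2)^{1-p/2}$, which is your $\tfrac p2 re^{(p-2)\sigma}$, and obtains the conductivities $\frac1{2r_h}\bigl(1+\frac{p-2}{(1\pm r_h)^2}\bigr)$. It transports these by $Dh\,\widetilde{\mathsf A}_p(Dh)^T/J_h$ and then uses $d\sigma/dr=1/[r(1-r^2)]$. This produces exactly your factors $(1\mp r)^2+p-2$ and the constant $8$. The eigendirections are those of $\mu_f$ because $\mu_f\circ h=-r\,h_\zeta/\overline{h_\zeta}$ in the natural coordinate.

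Step 2 is where you differ. The paper stays with the logarithmic variable. It applies the constant-source Moser bound to $v=(\sigma\circ T-a)_+$, which satisfies $-\operatorname{div}(\widehat{\mathsf B}_p\nabla v)\le 128/9$, and gets $v(0)\le M_p(\|v\|_{L^2(\D_{1/2})}+1)$. It then recovers the scale-free form by choosing the level $a=p^{-1}\log e_T$, with $e_T=\int_{\D_{1/2}}e^{p\sigma\circ T}\eta\,dA$. Together with $(s-a)_+^2\le\frac{4}{e^2p^2}e^{p(s-a)}$, this forces $\|v\|_2\le 2/(ep)$.

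You instead use that $w=e^{2\sigma}$ is a subsolution of the linear homogeneous inequality $-\operatorname{div}(\mathsf B_p\nabla w)\le 16\eta w$; the dropped term $-4e^{2\sigma}\mathsf B_p\nabla\sigma\cdot\nabla\sigma$ is nonpositive. The local boundedness estimate for subsolutions with a bounded zero-order coefficient and an $L^{p/2}$ norm on the right (e.g.\ Gilbarg--Trudinger, Theorem~8.17) then gives the homogeneous bound at once. The truncation $(w-\epsilon)_+$ only adds a source $16\eta\epsilon$, which is harmless as $\epsilon\to0$.

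Your route is more direct and needs no level selection. The price is a slightly stronger form of the Moser estimate than the constant-source one the paper uses. In particular it is not literally the uniformly elliptic case of Lemma~\ref{ceiling-iteration}, which allows no zero-order term. The paper's route needs only the truncated original equation.

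Of the three options you list, commit to one of the exponential subsolutions. A truncated logarithm $(\sigma-\log\epsilon)_+$ at a fixed level gives only an additive bound. Reaching \eqref{superquadratic-local-energy-bound} from it requires precisely the paper's energy-adapted choice of level.
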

\begin{proof}
Write $r_h=|\mu_h|$. In the inverse Hopf equation the function replacing $F_p$ in \eqref{orthogonal-pullback} is
\[
 \widetilde F_p(r_h)
 =\frac p2 r_h^{p-1}(1-r_h^2)^{1-p/2},\qquad
 |\Phi|=\eta(h)|h_w|^2\widetilde F_p(r_h).
\]
The same orthogonal-metric curvature calculation is valid for this positive function. With $L=(1+r_h)/\sqrt{\widetilde F_p}$ and $M=(1-r_h)/\sqrt{\widetilde F_p}$, its two conductivity eigenvalues are
\begin{equation}\label{superquadratic-inverse-eigenvalues}
 \widetilde\alpha_p(r_h)=\frac1{2r_h}
     \left(1+\frac{p-2}{(1+r_h)^2}\right),\qquad
 \widetilde\beta_p(r_h)=\frac1{2r_h}
     \left(1+\frac{p-2}{(1-r_h)^2}\right).
\end{equation}
Thus $-\operatorname{div}_w(\widetilde{\mathsf A}_p\nabla r_h)
 =4\eta(h)J_h$. Passing to $z=h(w)$ transforms the conductivity to
\[
 \mathsf C_p(z)=
 \left(\frac{Dh\,\widetilde{\mathsf A}_p(Dh)^T}{J_h}\right)(f(z))
\]
and the right side to $4\eta(z)$. Its eigenvalues in the image directions of the natural coordinate are
\[
 \frac{(1+r)^2+p-2}{2r(1-r^2)},\qquad
 \frac{(1-r)^2+p-2}{2r(1-r^2)}.
\]
Since $d\sigma/dr=1/[r(1-r^2)]$, the transformed flux is $\mathsf B_p\nabla\sigma/2$. The inverse Beltrami formula identifies the matrix in \eqref{superquadratic-forward-log-equation}. Its eigenvalues are $(1+r)^2+p-2$ and $(1-r)^2+p-2$, proving \eqref{superquadratic-forward-ellipticity}. The weak curvature justification and the positive-level truncation are the same as for \eqref{scalar-original}; no third derivatives or regularity across Hopf zeros are required.

Here is the quantitative estimate, including the role of energy on escaping balls. Under $z=T(\zeta)$, two-dimensional conductivity transforms by an orthogonal conjugation, preserving \eqref{superquadratic-forward-ellipticity}, while $|T'|^2\eta(T)=\eta$. Thus $\sigma\circ T$ satisfies an equation of the same form. For any real $a$, put $v=(\sigma\circ T-a)_+$, with $v=0$ at conformal points. It belongs to $W^{1,2}(\D_{1/2})$, and positive-part truncation gives
\[
 -\operatorname{div}(\widehat{\mathsf B}_p\nabla v)
 \leq8\eta\leq128/9\quad\text{in }\D_{1/2}.
\]
The uniformly elliptic local boundedness estimate, obtained by the classical power iteration \cite{Moser1960}, yields
\begin{equation}\label{superquadratic-moser-bound}
 v(0)\leq M_p\bigl(\|v\|_{L^2(\D_{1/2})}+1\bigr),
\end{equation}
where $M_p$ depends only on $p$, not on $a$, $T$ or the map. This estimate is applied above a finite level $a$, hence away from all zeros of $\mu_f$.

Set
\[
 e_T=\int_{\D_{1/2}}e^{p\sigma(T(\zeta))}\eta(\zeta)\,dA(\zeta).
\]
If $e_T>0$, take $a=p^{-1}\log e_T$. The elementary inequality
\[
 (s-a)_+^2\leq\frac4{e^2p^2}e^{p(s-a)}
\]
and $\eta\geq1$ give $\|v\|_2\leq2/(ep)$. Equation~\eqref{superquadratic-moser-bound} therefore implies
\[
 \sigma(T(0))\leq\frac1p\log e_T+
 M_p\left(1+\frac2{ep}\right).
\]
Since $\K_f-1=2e^{2\sigma}$, this proves \eqref{superquadratic-local-energy-bound}, for example with
\[
 C_p=2\exp\!\left(2M_p\left(1+\frac2{ep}\right)\right).
\]
If $e_T=0$, continuity makes $\mu_f$ zero on $T(\D_{1/2})$, and the estimate is immediate.

The local energy is at most $\I_p(f)$, proving the global bound \eqref{early-superquadratic-qc-bound}. As $|z|\to1$, the fixed-radius hyperbolic balls $\mathcal B_z$ escape every compact subset. Integrability makes their energies tend to zero, so \eqref{superquadratic-local-energy-bound} proves $|\mu_f(z)|\to0$. The inverse distortion identity and properness transfer quasiconformality and asymptotic conformality to $h$.
\end{proof}

The dependence on the energy is explicit in \eqref{superquadratic-local-energy-bound} and the constant $M_p$ is the uniform elliptic-estimate constant, and probably not best possible. The argument gives no constant depending only on $p$ for arbitrary boundary data. It also does not extend to $p=2$ by substitution, because the lower bound $p-2$ in \eqref{superquadratic-forward-ellipticity} vanishes. The inverse scalar argument above handles that endpoint separately.

\begin{corollary}[A boundary rate from the local excess energy]
\label{superquadratic-boundary-rate}
Under Theorem~\ref{superquadratic-original-qc}, write
\[
 \varepsilon_f(z)=\int_{\mathcal B_z}
       \frac{|\mu_f|^p}{(1-|\mu_f|^2)^{p/2}}\,dA_\eta.
\]
Then
\[
 |\mu_f(z)|\leq
 \left(\frac{C_p\varepsilon_f(z)^{2/p}}
             {2+C_p\varepsilon_f(z)^{2/p}}\right)^{1/2}
 \leq\sqrt{C_p/2}\,\varepsilon_f(z)^{1/p}.
\]
Thus every specified decay bound for this fixed-radius local energy
gives the corresponding rate of asymptotic conformality.
\end{corollary}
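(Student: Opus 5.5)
This is elementary algebra on the pointwise estimate \eqref{superquadratic-local-energy-bound} of Theorem~\ref{superquadratic-original-qc}. Set
\[
 X=C_p\varepsilon_f(z)^{2/p},
\]
so that the theorem gives $\K_f(z)-1\leq X$. The plan is to convert this bound on $\K_f$ into a bound on $r=|\mu_f(z)|$ using the identity \eqref{db},
\[
 \K_f-1=\frac{2r^2}{1-r^2}.
\]

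Write $u=r^2\in[0,1)$. The map $u\mapsto 2u/(1-u)$ is strictly increasing, so the inequality $2u/(1-u)\leq X$ is equivalent to $2u\leq X-Xu$, that is, to $u\leq X/(2+X)$. Taking square roots gives the first displayed inequality,
\[
 |\mu_f(z)|\leq\left(\frac{X}{2+X}\right)^{1/2}.
\]

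For the second inequality, use $2+X\geq2$, which gives $X/(2+X)\leq X/2$. Taking square roots,
\[
 |\mu_f(z)|\leq\sqrt{X/2}=\sqrt{C_p/2}\,\varepsilon_f(z)^{1/p},
\]
since $\bigl(\varepsilon_f^{2/p}\bigr)^{1/2}=\varepsilon_f^{1/p}$.

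The case $\varepsilon_f(z)=0$ needs no separate argument: both bounds then give $\mu_f(z)=0$, which agrees with the theorem's treatment of $e_T=0$.

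The final sentence of the statement follows by monotonicity. Both bounds are increasing functions of $\varepsilon_f(z)$, so any decay bound on $\varepsilon_f(z)$ as $|z|\to1$ yields the corresponding rate for $|\mu_f(z)|$.

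No step is a genuine obstacle. The only point requiring care is that the normalisation of $\varepsilon_f$ in the corollary matches the integrand of \eqref{superquadratic-local-energy-bound}, and it does, because $dA_\eta=\eta\,dA$.
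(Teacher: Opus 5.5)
Your proposal is correct and is the same argument as the paper's, which substitutes $r^2=(\K_f-1)/(\K_f+1)$ into \eqref{superquadratic-local-energy-bound}. Your solving of $2u/(1-u)\leq X$ for $u=r^2$ is that same monotone inversion, followed by the trivial bound $X/(2+X)\leq X/2$.
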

\begin{proof}
Use $r^2=(\K_f-1)/(\K_f+1)$ in
\eqref{superquadratic-local-energy-bound}.
\end{proof}

The logarithmic equation is uniformly elliptic on $\{\mu_f\ne0\}$,
so its local estimates apply there. The logarithm itself tends to
$-\infty$ at a conformal point; a Harnack statement across that
set is not inferred. The positive-level truncations in the proof
are what make the distortion estimate valid at all points.

\subsection{Proofs of Theorems~\ref{thm-C} and \ref{thm-D}}
\begin{proof}[Proof of Theorem~\ref{thm-C}]
The first variation gives the holomorphic differential \eqref{variation}. For $1<p\leq2$, apply Proposition~\ref{subquadratic-asymptotic}. Below $p_{\mathrm c}$, Theorem~\ref{weak-variational-cutoff} supplies $|\mu_h|\leq r_{\mathrm v}(p)<r_{\mathrm c}(p)$, since a diffeomorphism has no rank-one set; this also gives $K_h\leq K_{\mathrm v}(p)$. For $p>2$, Theorem~\ref{superquadratic-original-qc} gives the conclusions and \eqref{early-superquadratic-qc-bound}. The identity $K=\K+\sqrt{\K^2-1}$ gives the stated bound, and inversion preserves supremal dilatation.

For each fixed $p_{\mathrm c}<p\leq2$, the smooth boundary maps constructed in Proposition~\ref{lower-range-boundary-obstruction} can require arbitrarily large extension dilatation. They have finite-energy quasiconformal barriers. Corollary~\ref{upper-absolute-attainment} for $p<2$, and Theorem~\ref{quadratic-wp-existence} for $p=2$, give regular minimisers for these data. Their dilatations cannot have a bound depending only on $p$.

\end{proof}

\begin{proof}[Proof of Theorem~\ref{thm-D}]
This is Theorem~\ref{superquadratic-original-qc} and Corollary~\ref{superquadratic-boundary-rate}.

\end{proof}

\section{The boundary class and the canonical Hopf family}\label{besov-section}
\subsection{Besov boundary data and the regular minimiser theorem}
\label{besov-boundary-minimisers}
For $1<p<\infty$, the critical Besov space on the circle is
$B^{1/p}_{p,p}(\IS)$. For a real-valued $u\in L^p(\IS)$ we use the
Slobodeckij seminorm
\begin{equation}\label{boundary-besov-seminorm}
 [u]_{B^{1/p}_{p,p}}^p
 =\int_0^{2\pi}\int_0^{2\pi}
 \frac{|u(\theta)-u(t)|^p}{|e^{i\theta}-e^{it}|^2}
 \,d\theta\,dt.
\end{equation}
The notation $B_p$ in the integrable Teichm\"uller literature denotes
this space, modulo constants. We distinguish it from a class of
boundary homeomorphisms: let $\mathfrak B_p$ consist of the
orientation-preserving quasisymmetric maps
\[
 f_0(e^{i\theta})=e^{i\varphi(\theta)},\qquad
 \varphi(\theta+2\pi)=\varphi(\theta)+2\pi,
\]
whose increasing lift is absolutely continuous, with
$\varphi'>0$ almost everywhere and
\begin{equation} \log\varphi'\in B^{1/p}_{p,p}(\IS)\end{equation}
Three-point normalisation gives the corresponding Teichm\"uller
space; we retain the same notation for its unnormalised boundary
representatives when no quotient is needed.

Wei--Matsuzaki's characterisation gives
\begin{equation}\label{besov-classical-class}
 \mathfrak B_p=T^p,\qquad 1<p<\infty
 \quad\cite[Theorem 5.5]{WeiMatsuzakiIntegrable}.
\end{equation}
The  heat-kernel extension in that paper gives us  a quasiconformal representative
with a hyperbolically $L^p$ Beltrami coefficient
\cite[Theorem 4.5 and Appendix]{WeiMatsuzakiHeat}.
For a quasiconformal map $F$ with $\|\mu_F\|_\infty\leq k<1$,
\begin{equation}\label{besov-barrier-energy-equivalence}
 \int_\D|\mu_F|^p\,dA_\eta
 \leq\I_p(F)
 \leq(1-k^2)^{-p_*/2}
       \int_\D|\mu_F|^p\,dA_\eta.
\end{equation}
Thus $\mathfrak B_p$ is exactly the class admitting a finite-energy quasiconformal barrier, and Proposition~\ref{finite-distortion-boundary-class} identifies it with $\mathcal T_p$. For $p_{\mathrm c}<p<2$, Corollary~\ref{upper-absolute-attainment} supplies the unique inverse minimisers $F_{(p)},H_{(p)}$; at $p=2$ the same conclusion, with an analytic hyperbolic harmonic inverse, is Theorem~\ref{quadratic-wp-existence}. Writing $r_p=|(H_{(p)})_{\bar w}|/|(H_{(p)})_w|$, their canonical Hopf differential is

\begin{equation}\label{besov-canonical-hopf}
 \boxed{\begin{gathered}
 \Phi_p[f_0]
 =\frac12\eta(H_{(p)})\bigl[p+(2-p)r_p^2\bigr]
       r_p^{p-2}(H_{(p)})_w\overline{(H_{(p)})_{\bar w}},\\
 \partial_{\bar w}\Phi_p[f_0]=0.
 \end{gathered}}
\end{equation}
The whole product is defined as zero at $r_p=0$.
At $p=1$ we retain the definition of $T^1$ by a quasiconformal extension with $\mu\in L^1(dA_\eta)$; the intrinsic endpoint description uses the modified Besov space in \cite{Matsuzaki}.

\begin{corollary}[Membership in $T^p$ from the monotone minimiser]\label{regular-minimiser-class-entry}
Let $p_{\mathrm c}<p<2$, let $\beta$ be quasisymmetric, and suppose
that $\mathcal M_\beta$ contains a finite-energy competitor, so that
$\E_p$ has an absolute minimiser $h$ in $\mathcal M_\beta$ by
Theorem~\ref{monotone-existence}. Then $h$ is a quasiconformal
diffeomorphism if and only if $\beta^{-1}\in T^p$. More generally,
for $1<p\leq2$, if $h$ satisfies the hypotheses of
Proposition~\ref{subquadratic-asymptotic}, then $f=h^{-1}$ has $f|_{\Sone}\in T^p$.

\end{corollary}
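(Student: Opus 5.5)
The two assertions are handled separately. The general one is proved first, and the equivalence is then obtained from it together with the uniqueness results of Section~\ref{monotone-qc-comparison}.

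\emph{The general assertion.} Let $1<p\leq2$ and let $h$ satisfy the hypotheses of Proposition~\ref{subquadratic-asymptotic}. That proposition shows that $h$ and $f=h^{-1}$ are globally quasiconformal, with $\|\mu_f\|_\infty\leq k<1$. Since $h\in\mathcal M_\beta$ is a diffeomorphism, the inverse energy identity \eqref{inverseenergy} applies and gives $\I_p(f)=\E_p(h)<\infty$. So $f$ is a quasiconformal extension of $f|_{\Sone}$. By the left inequality in \eqref{besov-barrier-energy-equivalence}, $\int_\D|\mu_f|^p\,dA_\eta\leq\I_p(f)<\infty$. By the definition of $T^p$ (or by Proposition~\ref{finite-distortion-boundary-class}, since $f$ is a finite-distortion homeomorphism of finite $\I_p$), we get $f|_{\Sone}\in T^p$. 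Note that $f|_{\Sone}=\beta^{-1}$.

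\emph{The equivalence for $p_{\mathrm c}<p<2$, ``only if''.} Suppose the minimiser $h$ is a quasiconformal diffeomorphism. Then $f=h^{-1}$ is a quasiconformal homeomorphism with $\I_p(f)=\E_p(h)<\infty$, by the inverse and area formulas for quasiconformal maps. Hence $\beta^{-1}\in T^p$ directly from \eqref{besov-barrier-energy-equivalence}. Alternatively, one checks the regularity hypotheses and invokes the general part: minimality gives the holomorphic Hopf differential of Theorem~\ref{monotone-existence}, and $C^2$ regularity off the conformal set follows from Proposition~\ref{forward-regularity-proposition} applied to the stationary quasiconformal $f$. The first route needs no regularity, so I would use it.

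\emph{``If''.} Suppose $\beta^{-1}\in T^p$. Then there is a quasiconformal extension $F$ with $\mu_F\in L^p(dA_\eta)$, and \eqref{besov-barrier-energy-equivalence} gives $\I_p(F)<\infty$. Thus $F^{-1}\in\mathcal M_\beta$ is a finite-energy quasiconformal barrier, by Proposition~\ref{finite-distortion-inverse-calculus}. Corollary~\ref{upper-absolute-attainment} then says that $\E_p$ has exactly one minimiser in $\mathcal M_\beta$, and that it is a quasiconformal $C^{2,\alpha}_{\loc}$ diffeomorphism. The given absolute minimiser $h$ must therefore be this map.

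The main point to watch is exactly this uniqueness identification: a minimiser obtained from an arbitrary monotone barrier through Theorem~\ref{monotone-existence} must coincide with the $\lambda$-constructed one. This is supplied by Theorem~\ref{absolute-monotone-comparison}, whose strict equality statement holds for every $H\in\mathcal M_\beta$. No input beyond the earlier results is needed. The open question, whether a monotone barrier alone forces $\beta^{-1}\in T^p$, is not addressed here, since the equivalence assumes nothing about $h$ beyond minimality.
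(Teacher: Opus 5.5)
Your proposal is correct and takes essentially the same route as the paper's proof. The ``if'' direction comes from Corollary~\ref{upper-absolute-attainment}, whose uniqueness via Theorem~\ref{absolute-monotone-comparison} identifies the given minimiser with the constructed one. The ``only if'' direction and the general assertion both reduce to $\int_\D|\mu_f|^p\,dA_\eta\leq\I_p(f)=\E_p(h)<\infty$, with Proposition~\ref{subquadratic-asymptotic} supplying the quasiconformality of $f$ in the general case.
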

\begin{proof}
If $\beta^{-1}\in T^p$, then $\mathcal M_\beta$ contains a
quasiconformal barrier and Corollary~\ref{upper-absolute-attainment}
makes the unique minimiser a quasiconformal diffeomorphism.
Conversely, if $h$ is a quasiconformal diffeomorphism with
$\E_p(h)<\infty$, then $f=h^{-1}$ is quasiconformal with
\[
 \int_\D|\mu_f|^p\eta\area
 \leq\int_\D\frac{|\mu_f|^p}{1-|\mu_f|^2}\eta\area
 =\E_p(h)<\infty,
\]
so $f$ is a representative of $f|_{\Sone}=\beta^{-1}$ in the
definition of $T^p$. The same calculation proves the last assertion,
since Proposition~\ref{subquadratic-asymptotic} makes $f$
quasiconformal.
\end{proof}

\subsection{Canonical Hopf differentials and variation of the exponent}\label{canonical-hopf-family}
For $f_0\in T^p$, $p_{\mathrm c}<p\leq2$, fix a three-point normalisation of the unique inverse minimisers and set

\begin{equation}\label{canonical-p-data}
 m_p(f_0)=\I_p(F_{(p)})=\E_p(H_{(p)}),\qquad
 \mathfrak H_p(f_0)=\Phi_p[f_0].
\end{equation}

\subsubsection{The dual exponent and the weighted Hopf norm}
Let $q=p/(p-1)$ and use the space $\mathcal Q^q(\D)$ of Lemma~\ref{weighted-hopf-approximation}.

Corollary~\ref{reference-weighted-hopf-bound} gives
\begin{equation}\label{canonical-hopf-dual-norm}
 \mathfrak H_p:T^p\longrightarrow\mathcal Q^{p/(p-1)}(\D),
 \qquad
 C^{-1}m_p(f_0)\leq
 \|\Phi_p[f_0]\|_{\mathcal Q^{p/(p-1)}}^{p/(p-1)}
 \leq C m_p(f_0),
\end{equation}
where $C$ depends on $p$ and the quasiconformal bound of the minimiser. The exponent $q$ is the H\"older conjugate of $p$. At $p=2$ this recovers the weighted Hilbert norm and canonical energy of \cite[Theorem D]{MYWP}.

\subsubsection{A common class for varying the exponent}
Fix $f_0\in T^1$. Choose one quasiconformal extension $F_*$ with
$\mu_{F_*}\in L^1(dA_\eta)$, and put $M=\I_1(F_*)<\infty$.
Since $0\leq|\mu_{F_*}|<1$,
\begin{equation}\label{common-p-family-barrier}
 f_0\in T^p\quad(p>1),\qquad
 \I_p(F_*)\leq M\quad(1\leq p\leq2).
\end{equation}
The second assertion concerns the denominator power one; no
common bound for the different convention above $p=2$ is being used.
For every $p\in(p_{\mathrm c},2]$ let $F_{(p)},H_{(p)},\Phi_p$ be the
canonical maps and differential with this fixed trace.
Here $T^1$ is a convenient common boundary class, not the assertion
that the variational problem at exponent one has a diffeomorphic
minimum. Corollary~\ref{endpoint-nonattainment} excludes that
conclusion for non-M\"obius data.

\begin{proposition}[Compactness and one-sided continuity in the exponent]
\label{hopf-exponent-stability}
For fixed $f_0\in T^1$, the following hold on $(p_{\mathrm c},2]$.
\begin{enumerate}[label=\textup{(\roman*)}]
\item $0\leq m_p(f_0)\leq M$, and $m_p(f_0)$ is nonincreasing
in $p$. It is strictly decreasing unless $f_0$ is the trace of a
M\"obius automorphism.
\item The family $\{\Phi_p[f_0]:p_{\mathrm c}<p\leq2\}$ is
locally uniformly bounded, and hence relatively compact for
locally uniform convergence of holomorphic coefficients.
\item If $p_j\to p_0\in(p_{\mathrm c},2]$ and
$m_{p_j}(f_0)\to m_{p_0}(f_0)$, then
\begin{equation}\label{exponent-energy-stability}
 \begin{gathered}
 H_{(p_j)}\to H_{(p_0)}\quad\hbox{uniformly on }\overline\D
 \hbox{ and strongly in }W^{1,2}_{\loc}(\D),\\
 F_{(p_j)}\to F_{(p_0)}\quad\hbox{uniformly on }\overline\D,
 \qquad
 \Phi_{p_j}\to\Phi_{p_0}\quad\hbox{locally uniformly on }\D.
 \end{gathered}
\end{equation}
\item These conclusions hold whenever $p_j\downarrow p_0$ with
$p_0<2$. In particular $m_p$ and $\Phi_p$ are right-continuous.
The family in (iii) is continuous at every continuity point of
$m_p$, so there are at most countably many possible exceptional
parameters in the open interval $(p_{\mathrm c},2)$.
\end{enumerate}
\end{proposition}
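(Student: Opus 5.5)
The plan is to treat the exponent as a parameter in the direct method. The class $\mathcal M_\beta$ is the same for every exponent, since the fixed barrier $F_*^{-1}$ has finite energy for all $1\leq p\leq2$. The two inputs are the monotonicity of the densities in $p$ and uniqueness of the minimiser at each exponent in $(p_{\mathrm c},2]$: Corollary~\ref{upper-absolute-attainment} for $p<2$, and Theorem~\ref{quadratic-wp-existence} at $p=2$.

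\textbf{Part (i).} Since $r^p/(1-r^2)$ is nonincreasing in $p$ for $0\leq r<1$, $\E_p$ is pointwise nonincreasing in $p$ on $\mathcal M_\beta$. Hence $m_p$ is nonincreasing, and $m_p\leq\E_p(F_*^{-1})\leq\I_p(F_*)\leq M$ by Proposition~\ref{finite-distortion-inverse-calculus}. For strictness, suppose $p<p'$ and $m_p=m_{p'}$. Then
\[
 m_{p'}\leq\E_{p'}(H_{(p)})\leq\E_p(H_{(p)})=m_p=m_{p'},
\]
so $H_{(p)}$ also minimises at $p'$, and uniqueness gives $H_{(p)}=H_{(p')}$. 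Equality $\E_{p'}(H_{(p)})=\E_p(H_{(p)})$ then forces $r^{p}=r^{p'}$ almost everywhere on the positive-Jacobian set. Since the minimiser is quasiconformal, $r<1$ there, so $r=0$ almost everywhere. Weyl's lemma makes $H_{(p)}$ holomorphic, hence a disk automorphism, and $f_0$ is the trace of a M\"obius map.

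\textbf{Part (ii).} From the Hopf normalisation, $|\Phi_p|=\eta(H_{(p)})|(H_{(p)})_w|^2F_p(r_p)\leq\eta(H_{(p)})|(H_{(p)})_w|^2$, because $F_p\leq1$. By \eqref{trace-jacobian} the Dirichlet bound $\pi+2M$ is uniform in $p$. What remains is a common compact containment $H_{(p)}(C)\Subset\D$ for all $p$. I would obtain it by contradiction: a sequence $p_j$ violating it has energies $\E_2(H_{(p_j)})\leq M$, so the $H_{(p_j)}$ lie in a bounded-energy subset of $\mathcal M_\beta$ for the fixed exponent $2$. Proposition~\ref{monotone-compactness}(iii), applied at that exponent, gives a uniform limit with $h^{-1}(\Sone)=\Sone$ and common containment, a contradiction. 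This yields local $L^1$ bounds on $\Phi_p$, and the mean-value inequality upgrades them to local uniform bounds; Montel's theorem then gives relative compactness.

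\textbf{Part (iii).} This is the core step, and it rests on a perturbed lower-semicontinuity estimate. For a compactly supported continuous $\chi\geq0$, I would show
\[
 \int\chi\eta(h_j)\bigl|W_{p_j}(Dh_j)-W_{p_0}(Dh_j)\bigr|\area\leq C|p_j-p_0|\,\sup_j\mathcal D(h_j)\longrightarrow0.
\]
This holds because $|\partial_pW_p(a,b)|=|a|^2r^p|\log r|\leq C|a|^2$ uniformly for $p$ near $p_0>1$, and because of the common containment from (ii).

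With this in hand, take the subsequential uniform and weak $W^{1,2}$ limit $H$ of $H_{(p_j)}$. Lemma~\ref{weighted-lsc} at the fixed exponent $p_0$, followed by exhaustion, gives
\[
 \E_{p_0}(H)\leq\liminf_j m_{p_j}=m_{p_0}.
\]
Uniqueness then gives $H=H_{(p_0)}$, so the whole sequence converges. Convergence of the $\chi$-weighted energies follows as in Corollary~\ref{strong-monotone-minimising}, using the same $o(1)$ perturbation. The convexity-defect argument with $G_{p_0}$ then gives strong $W^{1,2}_{\loc}$ convergence.

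Along an a.e.-convergent subsequence of gradients, the Hopf expressions converge almost everywhere. They are dominated by $\eta\|Dh_j\|^2$, which converges in $L^1_{\loc}$, so the Hopf differentials converge in $L^1_{\loc}$; by holomorphy this convergence is locally uniform. Finally, $F_{(p_j)}\to F_{(p_0)}$ uniformly by the modulus-of-continuity comparison in the proof of Proposition~\ref{forward-Dirichlet-homeomorphic-limit}, using $\omega_{F_{(p_0)}}$.

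\textbf{Part (iv).} By (iii) it suffices to show $m_{p_j}\to m_{p_0}$ when $p_j\downarrow p_0<2$. The lower semicontinuity just established gives $m_{p_0}\leq\liminf m_{p_j}$. For the reverse, $m_{p_j}\leq\E_{p_j}(H_{(p_0)})$, and this tends to $m_{p_0}$ by dominated convergence: since $r^{p_j}\leq r^{p_0}$, the integrand is dominated by the $\E_{p_0}$-density of $H_{(p_0)}$.

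From the left this domination fails, because $\E_{p_j}(H_{(p_0)})$ need not be finite for $p_j<p_0$. That is why only right-continuity is claimed. The monotone function $m_p$ has at most countably many discontinuities, which gives the final assertion.

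I expect the main obstacle to be the uniform-in-$p$ compact containment together with the exponent-perturbed local lower semicontinuity. All other steps reuse the fixed-exponent arguments.
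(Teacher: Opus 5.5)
Your proposal is correct and follows essentially the same route as the paper: the bound $|\Phi_p|\leq\eta(H_{(p)})|(H_{(p)})_w|^2$ with compact containment taken from exponent-two compactness, the estimate $|W_p-W_q|\leq C|p-q|(|a|^2+|b|^2)$ feeding Lemma~\ref{weighted-lsc} at $p_0$, uniqueness to identify the limit, the $G_{p_0}$ convexity-defect argument for strong convergence, and monotonicity together with lower semicontinuity for right-continuity. The differences are cosmetic. Uniqueness in (i) and dominated convergence in (iv) are not needed, since strict pointwise decrease of the integrand and monotonicity of $m_p$ suffice. Also, $\mathcal M_\beta$ is not literally the same class for all exponents, because finite $\E_p$ is part of its definition, but your argument only ever uses the inclusion for increasing $p$.
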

\begin{proof}
The common barrier proves the energy bound. If $p<q\leq2$,
the minimiser $F_{(p)}$ is an admissible competitor at exponent $q$ and
\[
 m_q\leq\I_q(F_{(p)})\leq\I_p(F_{(p)})=m_p.
\]
The second inequality is strict whenever $F_{(p)}$ is nonconformal:
the integrand decreases strictly on the positive-area set where
$0<|\mu_{F_{(p)}}|<1$. A conformal disk diffeomorphism has M\"obius
boundary trace. This proves (i).

By \eqref{subquadratic-energy-order},
$\E_2(H_{(p)})\leq\E_p(H_{(p)})\leq M$. Therefore
Proposition~\ref{compact-containment}, applied at the fixed exponent
two, gives uniform equicontinuity, a common $W^{1,2}$ bound, and
compact containment of $H_p(V)$ for every $V\Subset\D$.
The modulus of \eqref{besov-canonical-hopf} satisfies
\begin{equation}\label{hopf-family-local-bound}
 |\Phi_p|\leq\eta(H_{(p)})|(H_{(p)})_w|^2,
 \qquad 1<p\leq2.
\end{equation}
Indeed $\tfrac12[p+(2-p)r^2]r^{p-1}\leq1$ for $0\leq r\leq1$.
Consequently the differentials have a common local $L^1$ bound.
The holomorphic mean-value inequality and Montel's theorem prove (ii).

For (iii), take a subsequential uniform and weak $W^{1,2}$ limit
$H$ of $H_{(p_j)}$. It is a proper monotone map with the prescribed
trace. We first record the lower-semicontinuity argument with
varying exponents. For $p,q$ in a compact subinterval of $(1,2]$,
and $|b|\leq|a|$, the inverse density obeys
\begin{equation}\label{varying-p-local-density}
 |W_p(a,b)-W_q(a,b)|
 \leq C|p-q|(|a|^2+|b|^2).
\end{equation}
To see this, write $W_p=|a|^2r^p$ and use
$\sup_{0\leq r\leq1}r^s|\log r|\leq1/(es)$ for $s>0$,
with the continuous value zero at $r=0$.
On compact source sets the weights $\eta(H_{(p_j)})$ are uniformly
bounded. Hence \eqref{varying-p-local-density}, the common
$W^{1,2}$ bound and Lemma~\ref{weighted-lsc} imply
\begin{equation}\label{varying-p-lsc}
 \E_{p_0}(H)\leq\liminf_j\E_{p_j}(H_{(p_j)}).
\end{equation}
This passage first uses compact cutoffs and then exhaustion;
It does not compare two infinite global Jacobian terms.
Under the energy-convergence hypothesis, $H$ is the unique
minimiser $H_{(p_0)}$.

Convergence of the total energies and the same lower bound on
the complementary cutoffs give convergence of the local energies.
Estimate~\eqref{varying-p-local-density} replaces their exponent
by $p_0$ at a cost tending to zero. The weighted Jacobians converge
by \eqref{weighted-jacobian-convergence}. Thus the local integrals
of the convexified density $G_{p_0}$ converge. Its positive quadratic
convexity constant, since $p_0>p_{\mathrm c}$, gives strong local
$W^{1,2}$ convergence exactly as in
Corollary~\ref{strong-monotone-minimising}.
Uniform convergence to the homeomorphism $H_{(p_0)}$ also gives
uniform convergence of the inverses on the closed disk.

The Hopf expression is continuous in $(p,a,b)$ on the
orientation-preserving cone for $p>1$, including $b=0$ and the
zero gradient, and is bounded by a constant times $|a|^2+|b|^2$.
Strong local $L^2$ convergence of the gradients therefore gives
local $L^1$ convergence of the Hopf coefficients, using uniform
integrability. Holomorphy improves this to locally uniform
convergence. Uniqueness of the subsequential limit proves (iii)
for the whole sequence; Hopf zeros require no separate exclusion.

If $p_j\downarrow p_0$, monotonicity and the competitor $F_{(p_0)}$
give $m_{p_j}\leq m_{p_0}$. Applying
\eqref{varying-p-lsc} to a compactness limit gives the reverse
liminf bound. Thus the energies converge and (iii) applies.
Finally a finite monotone real function has at most countably
many discontinuities. This proves (iv).
\end{proof}

\paragraph{What remains for a continuous or smooth parameter family.}
To obtain continuity at every parameter, it suffices to prove the
missing recovery estimate from the stronger-energy side:
\begin{equation}\label{left-exponent-recovery}
 \limsup_{q\uparrow p}\,m_q(f_0)\leq m_p(f_0),
 \qquad p_{\mathrm c}<p\leq2.
\end{equation}
The opposite inequality follows from monotonicity. It would suffice
that, for every $\epsilon>0$, there is a quasiconformal extension
$G_\epsilon$ with the same trace, $\I_1(G_\epsilon)<\infty$, and
$\I_p(G_\epsilon)\leq m_p+\epsilon$, since dominated convergence
then gives $\I_q(G_\epsilon)\to\I_p(G_\epsilon)$ as $q\uparrow p$.
Thus one common barrier does not by itself supply the required recovery estimate.

\subsection{Proof of Theorem~\ref{thm-B}}
\begin{proof}[Proof of Theorem~\ref{thm-B}]
The equivalence of \textup{(a)} and \textup{(c)} follows from \eqref{besov-classical-class} and \eqref{besov-barrier-energy-equivalence}. The implication \textup{(c)}$\Rightarrow$\textup{(b)} is immediate, and \textup{(b)}$\Rightarrow$\textup{(a)} is Proposition~\ref{finite-distortion-boundary-class}. Given \textup{(c)}, the inverse barrier is admissible for Corollary~\ref{upper-absolute-attainment}, which yields the unique absolute monotone minimiser and its regularity. For every finite-energy finite-distortion competitor $G$, Proposition~\ref{finite-distortion-inverse-calculus} gives
\[
 \I_p(F_{(p)})=\E_p(H_{(p)})
 \leq\E_p(G^{-1})\leq\I_p(G).
\]
Equality forces $G^{-1}=H_{(p)}$ by uniqueness. This proves \textup{(d)}, the equality of the infima and the asserted uniqueness; \textup{(d)}$\Rightarrow$\textup{(c)} is immediate. Corollary~\ref{reference-weighted-hopf-bound} gives \eqref{intro-hopf-norm}, and Proposition~\ref{hopf-exponent-stability} gives the continuity assertions.

\end{proof}
\appendix
\section{The second-order Hopf equation and its ellipticity}\label{other-formulations}
For $1<p\leq2$, we record the natural-coordinate equation for the inverse map and the coefficient bounds used in Theorem~\ref{lambda-uniform-selection}.

\subsection{A concise derivation of the second-order Hopf equation}
Set
\[
 S=\frac{1+r^2}{1-r^2},\qquad
 \Psi_\lambda(S)=R_\lambda\left(\frac{S-1}{S+1}\right).
\]
Use the Hopf normalisation
\[
 q_\lambda=\eta(h)\Psi_\lambda'(S)h_w\overline{h_{\bw}}.
\]
This is one half of the normalisation in \eqref{variation}. For a $C^2$ diffeomorphic stationary map with $0<r^2<\lambda$, a natural coordinate makes $q_\lambda=1$ and $\mu_h=r$ real and positive throughout the patch.

Put
\[
 U=h_w,\quad X=h_{ww},\quad V=h_{w\bw},\quad
 \xi=\frac{U}{\overline U},\quad
 g=rU\,\partial_w\log\eta(h),
 \qquad
 a_\lambda=(S^2-1)\frac{\Psi_\lambda''(S)}{\Psi_\lambda'(S)}.
\]
Differentiating the constant Hopf equation gives
\begin{equation}\label{natural-system}
 (1+a_\lambda)V+r\xi\overline V=a_\lambda rX-g.
\end{equation}
Together with its conjugate this yields, when the denominator is nonzero,
\begin{align}\label{natural-second}
 V&=\alpha_\lambda X+\beta_\lambda\xi\overline X+\ell_\lambda,\\
 \alpha_\lambda&=\frac{a_\lambda r(1+a_\lambda)}{(1+a_\lambda)^2-r^2},\qquad
 \beta_\lambda=-\frac{a_\lambda r^2}{(1+a_\lambda)^2-r^2},\notag\\
 \ell_\lambda&=\frac{-(1+a_\lambda)g+r\xi\overline g}
 {(1+a_\lambda)^2-r^2}.\notag
\end{align}
The two coefficients are real; $|\xi|=1$. The familiar sufficient ellipticity condition for the equation on $U=h_w$ is
\begin{equation}\label{ell-norm}
 \kappa_\lambda(r):=|\alpha_\lambda|+|\beta_\lambda|
 =\frac{|a_\lambda|r}{\bigl||1+a_\lambda|-r\bigr|}<1.
\end{equation}
These are principal-part identities for classical solutions away from Hopf zeros.

\subsection{The ellipticity threshold and a necessary branch restriction}
For $\lambda=1$, direct differentiation gives
\begin{equation}\label{alim}
 a_1(r)=-\frac{p(2-p)(1-r^2)}{p+(2-p)r^2}.
\end{equation}
Writing $c=p(2-p)$, the condition in \eqref{ell-norm} is equivalent to
\begin{equation}\label{ell-polynomial}
 H_p(r)>0.
\end{equation}
Indeed, it says $1+a_1>2r/(1+r)$, or $p+(2-p)r^2>p(2-p)(1+r)^2$.

For $(1+\sqrt3)/2<p<2$, cancellation of $1-r$ gives the coefficient bound
\begin{equation}\label{uniform-limit-ell}
 \kappa_1(r)=
 \frac{p(2-p)r(1+r)}{p(p-1)-p(2-p)r+(2-p)r^2}
 \leq\frac{p(2-p)}{(p-1)^2}.
\end{equation}
The right side is strictly less than one precisely when $p>p_{\mathrm c}$. For example, the bound is $19/81$ at $p=1.9$. To check the bound algebraically, put $d=p(p-1)$ and $A=1+p-p^2$. The denominator minus $(p-1)^2r(1+r)$ is $(1-r)(d-Ar)\geq0$ in this range. At $p=p_{\mathrm c}$, every $r<1$ is elliptic, but the supremum is one.

\subsection{What the $\lambda$ barrier does to ellipticity}\label{lambda-ellipticity}
With $S=(1+r^2)/(1-r^2)$ and $\Psi_\lambda(S)=r^p/(\lambda-r^2)$, direct differentiation gives

\begin{equation}\label{alambda}
 \boxed{a_\lambda(r)=p-2+
 \frac{2(2-p)r^2}{p\lambda+(2-p)r^2}
 +\frac{4r^2(1-\lambda)}
 {(\lambda-r^2)(1-r^2)}.}
\end{equation}
 On the allowed branch, $a_\lambda\geq a_1$. Hence for $p_{\mathrm c}<p\leq2$ and $0<\lambda<1$,
\begin{equation}\label{lambda-ell-bound}
 \kappa_\lambda(r)\leq
 \max\left\{\sqrt\lambda,\frac{p(2-p)}{(p-1)^2}\right\}<1.
\end{equation}
To see this, when $a_\lambda\geq0$ the ratio in \eqref{ell-norm} is at most $r$. When $a_1\leq a_\lambda<0$, that ratio decreases as $a_\lambda$ increases and is bounded by \eqref{uniform-limit-ell}. The critical exponent also has a strict bound for each fixed $\lambda<1$, obtained by restricting $r\leq\sqrt\lambda<1$.

Below $p_{\mathrm c}$, nonellipticity at a fixed distortion of the limiting equation persists for $\lambda$ sufficiently close to one. Even above $p_{\mathrm c}$, the bound is not uniform across the moving pole: for $\delta=1-\lambda$ and $r_\lambda^2=\lambda-\delta^2$, one has $a_\lambda\sim4/\delta^2$ and $\kappa_\lambda(r_\lambda)\to1$. The uniform scalar estimates of Section~\ref{scalar-equations} use a different formulation.

\section*{Acknowledgements and author information}
We thank Professor M.~Brakalova for encouraging this investigation after our talk at the third Quasiworld conference.

\noindent\textbf{AI disclosure.} This work was substantially developed before the widespread use of generative AI. Claude and GPT assisted with proofreading, reference checks and exposition, and suggested relevant literature, including work of Riestenberg--Smillie and Ripoll--Tomi.

Work of both authors was partially supported by the New Zealand Marsden Fund. C. Yao was partially supported by the Young Scientist Program of the Ministry of Science and Technology of China (No. 2021YFA1002200), the National Natural Science Foundation of China (Nos. 12401096 and 12101362), the Natural Science Foundation of Shandong Province (Nos. ZR2024QA035 and ZR2022YQ01), and the Fundamental Research Funds for the Central Universities.

\medskip
\noindent Gaven Martin: Institute for Advanced Study, Massey University, Auckland, New Zealand.\\
\texttt{g.j.martin@massey.ac.nz}

\medskip
\noindent Cong Yao: Research Center for Mathematics and Interdisciplinary Sciences, Shandong University, Qingdao 266237, and Frontiers Science Center for Nonlinear Expectations, Ministry of Education, P. R. China.\\
\texttt{c.yao@sdu.edu.cn}

\end{document}